\newcommand{\eps}{\varepsilon}
\newcommand \commentout[1] {}
\newcommand{\R}{\mathbb{R}}
\newcommand {\vp} {\varphi}
\newcommand {\bv} {\mathbf{v}}
\newcommand {\Chi} {{\bf \raise 2pt \hbox{$\chi$}} }
\newcommand {\dt}   {{\Delta t}}
\newcommand {\dx}   {{\Delta x}}
\newcommand {\Div}  { {\rm div} }
\newcommand {\dv}  { {\rm div} }
\newcommand {\f}   {\frac}
\newcommand {\p}   {\partial}
\newcommand*{\dd}{\mathop{\kern0pt\mathrm{d}}\!{}}
\newcommand*{\DD}{\mathop{\kern0pt\mathrm{D}}\!{}}
\newcommand*{\vecu}{\mathbf{v}}
\newcommand*{\vecsig}{\pmb{\sigma}}
\newcommand*{\vectau}{\pmb{\tau}}
\newcommand{\duality}[2]{\langle #1, #2 \rangle}
\newcommand{\corr}[2]{\textcolor{red}{\sout{#1}} \textcolor{blue}{#2}}
\def\onedot{$\mathsurround0pt\ldotp$}
\def\cddot{% two dots stacked vertically
  \mathbin{\vcenter{\baselineskip.67ex
    \hbox{\onedot}\hbox{\onedot}}%
  }}%
\DeclarePairedDelimiter{\abs}{\lvert}{\rvert}
\DeclarePairedDelimiter{\norm}{\|}{\|}
\newcommand*{\Id}{\mathbb{I}}
\DeclareMathOperator*{\spann}{span}
\theoremstyle{plain}
\newtheorem*{thm*}{Theorem}
\newtheorem{thm}{Theorem}[section]
\newtheorem{lemma}[thm]{Lemma}
\newtheorem{proposition}[thm]{Proposition}
\theoremstyle{remark}
\newtheorem{remark}[thm]{\bf Remark}
\newtheorem{definition}[thm]{\bf Definition}
\newcommand{\ie}{\textit{i.e.}\;}
\newcommand{\eg}{\textit{e.g.}\;}
\newcommand{\etal}{\textit{et al.}\;}
\newcommand{\etc}{\textit{etc.}\;}
\newcommand{\apriori}{\textit{a priori}\;}
\newcommand{\beq}{\begin{equation}}
\newcommand{\eeq}{\end{equation}}
\newcommand{\bea} {\begin{array}{rl}}
\newcommand{\eea} {\end{array}}
\newcommand{\bepa}{\left\{ \begin{array}{l}}
\newcommand{\eepa} {\end{array}\right.}
\newcommand{\diff}{\mathop{}\!\mathrm{d}}
\numberwithin{equation}{section}
\title{Analysis and numerical simulation of a generalized compressible Cahn-Hilliard-Navier-Stokes model with friction effects}
\author{Charles Elbar\thanks{Sorbonne Universit\'{e}, CNRS, Universit\'{e} de Paris, Inria, Laboratoire Jacques-Louis Lions (LJLL), F-75005 Paris, France } \thanks{Corresponding author. Email: charles.elbar@sorbonne-universite.fr} 
\and Alexandre Poulain\thanks{Univ. Lille, CNRS, UMR 8524 - Laboratoire Paul Painlevé, F-59000 Lille, France} \thanks{Email: alexandre.poulain@univ-lille.fr}
}
\date{\today}
\begin{document}

\maketitle

\begin{abstract}
We propose a new generalized compressible diphasic Navier-Stokes Cahn-Hilliard model that we name G-NSCH. 
This new G-NSCH model takes into account important properties of diphasic compressible fluids such as possible non-matching densities and contrast in mechanical properties (viscosity, friction) between the two phases of the fluid. the model also comprises a term to account for possible exchange of mass between the two phases.
Our G-NSCH system is derived rigorously and satisfies basic mechanics of fluids and thermodynamics of particles. 
Under some simplifying assumptions, we prove the existence of global weak solutions.  
We also propose a structure preserving numerical scheme based on the scalar auxiliary variable method to simulate our system and present some numerical simulations validating the properties of the numerical scheme and illustrating the solutions of the G-NSCH model. 
\end{abstract}
\vskip .7cm

\noindent{\makebox[1in]\hrulefill}\newline
2010 \textit{Mathematics Subject Classification.} 
35B40; 35B45; 35G20 ; 35Q35; 35Q92; 65M08
\newline\textit{Keywords and phrases.} 
Cahn-Hilliard equation; Navier-Stokes equation; Asymptotic analysis; Mathematical modeling; Numerical simulations; Scalar Auxiliary Variable method. 

\section{Introduction}

We derive, analyze and simulate numerically the generalized compressible Navier-Stokes-Cahn-Hilliard variant (\textit{G-NSCH} in short)
\begin{align}
    &\f{\p \rho}{\p t} + \dv\left(\rho \vecu\right)=0,\label{eq:main1}\\
    &\frac{\p (\rho c)}{\p t} + \dv\left(\rho c \vecu \right) = \dv\left(b(c) \nabla \mu \right) + F_c,\label{eq:main2}\\ 
    &\rho \mu = -\gamma\Delta c  + \rho \f{\p \psi_0}{\p c},\label{eq:main3}\\
     &\f{\p (\rho \vecu)}{\p t} + \dv\left(\rho \vecu \otimes\vecu\right) = \begin{multlined}[t][10cm] -\left[\nabla p+ \gamma\dv\left( \nabla c \otimes \nabla c - \frac{1}{2}\abs{\nabla c}^2 \Id \right) \right] + \dv\left(\nu(c)\left(\nabla \vecu + \nabla \vecu^T \right) \right)\\
   -\f23 \dv\left(\nu(c)\dv\left(\vecu\right)\Id\right) + \dv\left(\eta(c) \dv\left(\vecu\right) \Id \right)  - \kappa(\rho,c) \vecu\label{eq:main4},
    \end{multlined}
\end{align}
stated in $(0,T) \times \Omega$, where $T > 0$ is finite time horizon, and $\Omega \subset \R^d$ $(d=1,2,3)$ is an open bounded domain with a smooth boundary $\partial \Omega$. 

Interested by the modeling of invasive growth of tumors in healthy tissues, we motivate the different terms of the model with this biological application in mind.  However, we emphasize that the model is a general compressible diphasic fluid model that could be used for other applications. 

System~\eqref{eq:main1}--\eqref{eq:main4} models the motion of a diphasic fluid composed of two immiscible components, \ie two different cell types (\eg tumor and healthy cells), and comprises viscosity effects, surface tension, and friction on rigid fibers representing the extracellular matrix (\textit{ECM} in short). In System~\eqref{eq:main1}--\eqref{eq:main4}, $\rho$ is the total density of the mixture (\ie the sum of the two partial densities), $c$ is the relative mass fraction of one component (\eg the cancer cells), $\vecu$ is the mass averaged total velocity, $\mu$ is called the chemical potential, $p$ is the pressure. The coefficient $\gamma$ is related to the surface tension and is equal to the square of the width of the diffuse interface existing between the two populations. The friction coefficient $\kappa(\rho, c)$ is a non-negative function of the density and the mass fraction, and takes into account the possible difference of friction strength between the two populations. We use this friction term to model possible adhesive effects of the cells on the ECM. The coefficients $\nu(c)$ and $\eta(c)$ represents the viscosity coefficients (shear and dilatational, respectively) of the mixture. Possible differences in viscosities could be considered for the two populations. The function $\psi_0$ represents the separation of the two components of the mixture and phenomenologically models the behavior of cells (\ie cells tend to form aggregates of the same cell type). The function $F_c(\cdot)$ accounts for the possible proliferation and death of cells and these two effects are assumed to be modelled as an exchange of mass between the populations. 
The non-negative function $b(\cdot)$ models the mobility of cells. This function models the probability for a cell of any of the two populations to find an available neighboring spot to which it can move. More details about the general assumptions and precise forms of the different functions will be given in the next sections.

The motivation of our model stands from the modeling of tumor progression and invasion in healthy tissues. Indeed, as explained in Appendix~\ref{sec:assumptions-functions-2pop}, under suitable choices of functionals, our model can be viewed as a representation of a proliferating population of cells, \ie the tumor cells, in a domain filled with a non-proliferating population, \ie the healthy cells and the rest of tissue (ECM, extracellular fluid, \etc). The proliferation of cells happens by consuming mass from the other phase (we are not injecting mass in the system). Both cell populations move in an ECM constituted of rigid fibers on which they can adhere. As we only focus on the mechanical effects generated by the properties of the cells that could play a role during invasion, we do not consider in the model other effects that are known to be important in tumor progression: \eg angiogenesis, digestion of the ECM by proteolic enzymes, role of helping cells located in the stroma. 

We emphasize that this article only concerns the analysis and the numerical simulation of the G-NSCH model~\eqref{eq:main1}--\eqref{eq:main4}. This latter comprises effects that are negligible in biological situations, \eg inertia effects. We propose here an analysis of the model and a structure preserving numerical scheme for the G-NSCH model.

\paragraph{Literature review}
% Compressible NSCH and its use
%% Generalities
The motion of a binary mixture of two immiscible and compressible fluids can be described by the Navier-Stokes equation coupled to the Cahn-Hilliard model: the Navier-Stokes-Cahn-Hilliard model (\textit{NSCH model} in short). 
The well-known incompressible variant of the compressible NSCH model has been denominated model H (see \eg~\cite{hohenberg_1977_critical,gurtin_1996_binary}). 
%% Analysis
Model H has been proposed to represent viscous fluid flow in an incompressible binary mixture undergoing phase separation. This model assumes matching densities, \ie $\rho_1 = \rho_2$ and, hence, constant total density $\rho$. To consider non-matching densities, Lowengrub and Truskinovsky~\cite{Lowengrub-quasi-incompressible-1998} proposed the compressible NSCH model. Expanding the divergence term in the mass balance equation, the authors found a relation denoting the quasi-compressible nature of the fluid. Concomitantly, Anderson, FcFadden, and Wheeler~\cite{Anderson-1998-Diffuse} proposed a similar system. In the present work, we use a similar system. We also remark that a very recent work~\cite{eikelder_2023_unified} proposed a unified framework for the incompressible NSCH system and shows that the different NSCH models found in the literature only differ from their general modelling framework by specific constitutive hypotheses. 

Under some simplifying assumptions compared to the system proposed in~\cite{Lowengrub-quasi-incompressible-1998} but being closer to the system in~\cite{Anderson-1998-Diffuse}, the analysis of the compressible NSCH model with no-flux boundary conditions has been realized by Abels and Feireisl~\cite{abels_diffuse_2008}. Their analysis requires to simplify the model proposed in~\cite{Lowengrub-quasi-incompressible-1998} to avoid zones with zero density which would make this analysis a lot more difficult since the control from certain estimates would be lost.
% FOR CHARLES: Some words on the techniques they use 
In another article, for the same system, Abels proved the existence of strong solutions for short times~\cite{Abels_2012_strongwell}.
Considering the same assumptions and dynamic boundary conditions, Cherfils \etal~\cite{Miranville-2019-compressible} proved the well-posedness of the compressible NSCH model with these special boundary conditions. These latter allow to model the interaction of the fluid components and the walls of the domain.

Results on the analysis of the incompressible variant of the NSCH model, \ie the model H, are numerous and we here mention only a few of them since a complete review would be out of the scope of the present article. With a non-degenerate mobility coefficient and a physically relevant choice of potential, the well-posedness and regularity analysis of model H has been performed by Abels~\cite{abels2009diffuse} using tools both from the analysis of Navier-Stokes model and the Cahn-Hilliard model.
It is worth mentioning that the non-degeneracy of the mobility coefficient leads to non-physical effects, \ie Ostwald ripening effects (see~\cite{Abels_2012_thermo}). For this reason, Abels, Depner and Garcke studied model H with a degenerate mobility~\cite{abels2013incompressible}. Their analysis relies on a regularization of the mobility and singular potential into, respectively, a non-degenerate and non-singular potential. Then, suitable \textit{a-priori} estimates uniform in the regularization parameter allow to pass to the limit in the regularization and show the existence of weak solutions to the degenerate model H. 

% CH variants for tumor growth
We now review partially the extensive literature about the Cahn-Hilliard equation and its use for the modelling of tumors. 
The Cahn-Hilliard equation has been initially used to represent the phase separation in binary mixtures and has been applied to the spinodal decomposition of binary alloys under a sudden cooling~\cite{cahn_free_1958,cahn_spinodal_1961}. 
The model represents the two phases of the fluids as continua separated by a diffuse interface. 
This equation has been used later in many different applications and we do not intend here to give an overview of all these. However, we refer the reader interested in the topic to the presentation of the Cahn-Hilliard equation and its applications to the review book~\cite{miranville-book-2019}. We are interested here in the application of the Cahn-Hilliard framework to tumor modelling (see \eg~\cite{lowengrub_analysis_2012,lowengrub_analysis_2013}). Latter, different variants of the Cahn-Hilliard model appeared: \eg (without giving a complete overview) its coupling to Darcy's law~\cite{garcke_2016_CHdarcy}, Brinkman's law~\cite{Ebenbeck_2021_CH}, chemotaxis~\cite{ROCCA2023530}.  
Recently, a variant of the CH equation has been used to better represent the growth and organization of tumors. The main change is the use of a single-well logarithmic degenerate potential instead of a double-well potential~\cite{chatelain2011emergence,Agosti-CH-2017,poulain_relaxation_2019}. This type of potential has been proposed in~\cite{ambrosi_closure_2002} to represent the action of the cells depending only on the local density, \ie attraction at low cell density and repulsion for large cell density representing the tendency of cells to avoid overcrowding. 
The Cahn-Hilliard framework has also been utilized in systems representing invasive growth of tumors. The interested reader can find a lot of information about phase-field type systems modelling tumor growth and invasion in the very recent survey paper~\cite{Fritz_tumor_Tumor} and references therein.

% What about its numerical simulation
We now review some of the literature about the numerical simulation of NSCH models.
The numerical simulation of Model H for binary fluids with non-matching densities has been the subject of numerous works (see \eg~\cite{Hosseini-Isogeometric-2017} and references therein). However, in part due to its complexity, the numerical simulation of the compressible NSCH system has been less explored. 
A $C^0$ finite element numerical scheme for a variant of the quasi-compressible NSCH model proposed in~\cite{Lowengrub-quasi-incompressible-1998} has been proposed in~\cite{Guo-quasiNSCH-2014}. Around the same time, Giesselmann and Pryer~\cite{aki_quasi_2014, Giesselmann_2015_energy} designed a discontinuous Galerkin finite element scheme to simulate the quasi-incompressible NSCH system which preserves the total mass and the dissipation of energy.    
A numerical method has also been proposed in~\cite{Quaolin-2020-compressible} in the case of constant mobility $b(c)$ and smooth polynomial potential $\psi(c)$. However, the system simulated in~\cite{Quaolin-2020-compressible} is a simplification of the compressible NSCH system since the pressure does not appear in the definition of the chemical potential $\mu$ in their system.  

The previous works we presented for the simulation of the compressible or quasi-compressible NSCH systems deal with constant mobility combined with a smooth polynomial potential. We aim to simulate the compressible NSCH model with choices of mobility and potential relevant for biology (but also relevant for material sciences and fluid mechanics), \ie degenerate mobility combined with a logarithmic potential. We now review briefly some relevant discretization methods for the Cahn-Hilliard equation with degenerate mobility and singular potentials. Considering a degenerate mobility and a double-well logarithmic potential, we mention the work of Barrett, Blowey and Garcke~\cite{barrett_finite_1999}. In this article the authors proposed a finite element scheme with a variational inequality to preserve the bounds of the solution. 
Based on these ideas, Agosti \etal~\cite{Agosti-CH-2017} proposed a similar finite element scheme for the single-well logarithmic potential case. The difficulty in this latter case lies in the fact that the degeneracy and the singularity sets do not coincide and, considering an order parameter that must remain within the bounds $[0,1)$, negative solutions can appear if a standard discretization method is used. The numerical scheme proposed in~\cite{Agosti-CH-2017} solves this issue but does not preserve the mass. In a more recent work, Agosti~\cite{agosti_discontinuous_2019} proposed a discontinuous Galerkin finite element scheme that preserves the bounds $[0,1)$ and preserves the exact mass. 
However, the main drawback of the previously mentioned methods is that they are computationally expensive: they solve a strongly coupled nonlinear system and resort to the use of iterative algorithms. 

Since the Cahn-Hilliard equation is a gradient flow (see \eg~\cite{Lisini-CH-gradient-flow}), a structure-preserving linear scheme can be constructed using the Scalar Auxiliary Variable (\textit{SAV} in short) method~\cite{shen_2018_sav}. 
The SAV method is a very powerful tool to design unconditionally energy-stable numerical schemes for models possessing a gradient-flow (see \eg~\cite{Shen_SAVapproaches_2020, Yanrong_2022_generalizedSAV} and references therein) or Hamiltonian structure (see \eg~\cite{Antoine_2021_SAV, Poulain_2022_SAV} and references therein). The SAV method has evolved during the past 6 years starting from the original SAV method~\cite{shen_2018_sav, shen_new_2019} to improved variants such as the generalized version GSAV (see \eg~\cite{Yanrong_2022_generalizedSAV, Fukeng_newclassSAV_2022}) and the relaxed RSAV method~\cite{Maosheng_2022_improvingSAV}. In our work, we use the GSAV method that has been already used in~\cite{huang-SAV-fourth} for the Cahn-Hilliard equation. In this latter work, the scheme is structure-preserving from the use of a scalar variable that represents the discrete energy, and an additional equation is solved to ensure dissipation at the discrete level. The bounds of the order parameter are ensured using a transformation that maps $\mathbb{R}$ to the physical relevant interval ($(0,1)$ in the case of a double-well potential).  
Hence, compared to other techniques, the SAV method has the advantage to allow for the design of a linear, efficient, structure-preserving scheme and can easily be used for our G-NSCH system. We also emphasize that the SAV method has been used for the simulation of the incompressible NSCH model with positive mobility and polynomial potential in~\cite{Xiaoli_SAV_2020}. In the present work, we use the GSAV method to design a numerical scheme for the G-NSCH model. Our numerical scheme allows to use degenerate mobility and singular potential functionals which is more physically relevant. 
To the best of our knowledge, our G-NSCH model is new because it comprises the friction force term and the exchange between the two phases of the fluid. Moreover, the use of the GSAV method for a compressible NSCH system is new, especially with our choice of functionals (\ie degenerate mobility and singular potential). 

\paragraph{Objectives of our work}
The first objective of our work is to study the well-posedness of the G-NSCH model under some simplifying assumptions (\ie smooth potential and positive mobility). The second objective is the design of an efficient and structure-preserving numerical scheme for the G-NSCH model with singular double-well potential and degenerate mobility. The third focus of the present work concerns the rigorous derivation of the G-NSCH model that is presented in the Appendix.  

\paragraph{Outline of the paper} 
Section~\ref{sec:notation} presents the notations, functional spaces and assumptions we use in our work for the analytical part but also for the numerical part.  
Section~\ref{sec:existence} concerns the proof of the existence of weak solutions for the G-NSCH system~\eqref{eq:main1}--\eqref{eq:main4} under simplifying assumptions. A structure preserving numerical scheme based on the GSAV method is then proposed in Section~\ref{sec:scheme} and some numerical results are presented in Section~\ref{sec:results}.   
Our model's equations come from a thermodynamically consistent derivation of the compressible Navier-Stokes-Cahn-Hilliard model including friction effects and source terms. The derivation is described in Appendix~\ref{sec:derivation-model-2pop}. From the general model, we propose in Appendix~\ref{sec:assumptions-functions-2pop} two reductions: The G-NSCH studied and simulated in the present work and one biologically relevant reduction that will be the focus of a forthcoming work.

\section{General assumptions, notations and functional setting} \label{sec:notation}
\begin{sloppypar}
The equations are set in a domain $\Omega_T = \Omega \times (0,T)$ with $\Omega$ an open and bounded subset of $\R^d$ ($d=1,2,3$). We assume that the boundary $\p \Omega$ is sufficiently smooth. We indicate the usual Lebesgue and Sobolev spaces by respectively $L^p(\Omega)$, $W^{m,p}(\Omega)$ with ${H^m(\Omega) := W^{m,2}(\Omega)}$, where $1 \le p \le +\infty$ and $m \in  \mathbb{N}$. \iffalse We denote the corresponding norms by $||\cdot||_{m,p,\Omega}$, $||\cdot||_{m,\Omega}$ and semi-norms by $|\cdot|_{m,p,\Omega}$, $|\cdot|_{m,\Omega}$. The standard $L^2$ inner product will be denoted by $(\cdot,\cdot)_\Omega$ and the duality pairing between $(H^1(\Omega))'$ and $H^1(\Omega)$ by $<\cdot,\cdot>_\Omega$. \fi For $q\in [1,+\infty]$, we indicate the Bochner spaces by $L^q(0,T;X)$ (where $X$ is a Banach space). Finally, $C$ denotes a generic constant that appears in inequalities and whose value can change from one line to another. This constant can depend on various parameters unless specified otherwise.
\end{sloppypar}

\subsection{Assumptions on functionals}\label{subsec:assumptions}
We divide the assumptions on the different terms appearing in system ~\eqref{eq:main1}--\eqref{eq:main4} into two parts: analytical and numerical assumptions. Indeed we are not able to prove the existence of weak solutions in the general setting used for the numerical simulations. For instance, the case of the usual logarithmic double-well potential in the Cahn-Hilliard equation is not treated but can be implemented in our numerical scheme. However, we can analyze our system with a polynomial approximation of the double well. We also consider non-degenerate mobilities to obtain estimates on the chemical potential $\mu$ directly. The case of degenerate mobility, see for instance~\cite{elliott_cahn-hilliard_1996}, seems unavailable as we do not have anymore the classical “entropy” estimates of the Cahn-Hilliard equation that provide bound on second-order derivatives of the mass fraction $c$.

\textbf{Framework for numerical simulations}
We assume that the viscosity $\nu(c), \eta(c)$ and permeability $\kappa(\rho,c)$ coefficients are smooth non-negative functions. The mobility is a non-negative function of the order parameter (mass fraction) $c$. Hence, we assume that 
\begin{equation}
\begin{aligned}
    b \in C^1([0,1];\R^+),\quad &\text{and}\quad b(c)\ge 0\quad \text{for}\quad 0\le c\le1.
    \end{aligned}
\end{equation}

In agreement with the literature (see e.g \cite{Miranville-2019-compressible}), the homogeneous free energy $\psi_0(\rho,c)$ is assumed to be of the form
\begin{equation}
    \psi_0(\rho,c) = \psi_e(\rho) + \psi_\text{mix}(\rho,c),
\end{equation}
with $ \psi_\text{mix}(\rho,c)= H(c)\log \rho + Q(c)$ 
and $Q(c)$ is a double-well (or single-well) potential. Then, using the constitutive relation for the pressure, we have
\begin{equation}
    p(\rho,c) = \rho^2\f{\p \psi_0}{\p\rho} = p_e(\rho) + \rho H(c), \label{eq:def-pressure}
\end{equation}
where $p_e = \rho^2 \psi'_e(\rho)$ and is assumed to satisfy
\begin{equation}
    p_1\rho^{a-1}-p_2\le p'_e(\rho) \le p_3(1+\rho^{a-1}), \quad \text{for}\quad a>3/2,\quad p_1,p_2,p_3>0.
\end{equation}

We assume that the exchange term $F_c$ (that can depend on the mass fraction and the density) is bounded,
\begin{equation}
\left|F_c(\rho,c)\right| + \left|\f{F_{c}(\rho,c)}{\rho}\right| \le C,\, \forall(\rho,c)\in \R^{2}.  \label{eq:assumpt-source}
\end{equation}
%We have in mind examples of the form $F_c(\rho,c)= \rho c G(p)$ with $G$ depending on the pressure such that $G(p)=0$ for $|p|\ge p_H$ where $p_H$ is the homeostatic pressure which is the lowest level of pressure that prevents cell multiplication due to contact-inhibition.  

\begin{remark}[Double-well logarithmic potential]
    In the present work, we aim to use a double-well logarithmic potential in the definition of the mixing potential. A relevant example of potential is 
    \begin{equation}
    \psi_\text{mix} = \f12 \left(\alpha_1(1-c)\log(\rho(1-c)) + \alpha_2c\log(\rho c)\right) - \f\theta 2 (c-\frac 1 2 )^2.
\end{equation}
This potential gives 
\[
    H(c) = \frac{1}{2} \left(\alpha_1(1-c) +  \alpha_2c\right), \quad Q(c) = \f12 \left(\alpha_1(1-c)\log(1-c) + \alpha_2c\log(c)\right) - \f\theta 2 (c-\frac 1 2 )^2,
\]
where $\theta>1$. 

\end{remark}

\paragraph{Additional assumptions for the existence of weak solutions and analysis of the numerical scheme.}
Concerning the existence of weak solutions and analysis of the numerical scheme, we need to strengthen our assumptions. The viscosity coefficients $\nu(c), \eta(c)$ are assumed to be bounded from below by a positive constant and the friction coefficient $\kappa(\rho,c)$ is assumed to be nonnegative. Moreover, $\nu(c), \eta(c)$ and $\kappa(\rho,c)$ are functions bounded in $L^{2}(0,T;L^{2}(\Omega))$ whenever $c$ is bounded in $L^{\infty}(0,T;H^{1}(\Omega))$ and $\rho$ is smooth (for instance $C(0,T;C^{2}(\overline{\Omega}))$. We consider $a>2$ the exponent of the pressure law. In the numerical simulations, we take degenerate mobilities of the form $b(c)=c(1-c)^{\alpha}$. However, in the analysis, we consider a non-degenerate mobility by truncating the previous mobility. For instance, using a small parameter $0<\varepsilon_{b}<<1$, we approximate the mobility $b(\cdot)$ by 
\[
b_{\eps_{b}}(c) = \begin{cases}
    b(1-\eps_{b}),\quad \text{if } c \ge 1-\eps_{b},\\
    b(\eps_{b}),\quad \text{if } c \le \eps_{b},\\
    b(c),\quad \text{otherwise},
\end{cases}
\]
and consider the case of a fixed $\eps_{b}$. Dropping the $\eps_b$ subscript, we obtain that 
\begin{equation}
\begin{aligned}
    b \in C^1(\R;\R^+),\quad &\text{and}\quad b(c)\ge C>0\quad \forall  c\in\R.
    \end{aligned}
    \label{eq:cond-mot}
\end{equation}

Concerning the functionals appearing in the definition of the free energy $\psi_{0}$ we assume that $H$ and $H'$ are bounded and that $Q$ is a polynomial approximation of the double well potential. More precisely we take 
\begin{equation}
    \begin{aligned}
        &H_1\le H'(c),\, H(c) \le H_2,\quad c\in\R,\quad H_1,H_2>0,\\
        &Q(c)=\f{1}{4}c^{2} (1-c)^{2}. 
    \end{aligned}
    \label{eq:assumptionHQ}
\end{equation}

The case of the double-well logarithmic potential has not been tackled yet even though this is the main motivation for the decomposition of $\psi_{mix}$ as in the works~\cite{abels_diffuse_2008} and~\cite{Miranville-2019-compressible}.

Also, to make the computations simpler, we assume that
\begin{itemize}
    \item  $a>6$ where $a$ is the pressure exponent,
    \item $\psi_{e}(\rho)=\f{\rho^{a-1}}{a-1}$ and therefore $p_e(\rho)=\rho^{a}.$
\end{itemize}
These two assumptions are not necessary {and could be removed} but simplify the analysis. We refer for instance to~\cite{abels_diffuse_2008,feireisl-book} for the more general setting. For instance, the condition $a>6$ is used to not introduce another parameter in the approximating scheme which would make the article longer. Note that the assumptions on $\psi_{0}$ imply in particular the following lemma which is essential to obtain estimates on the energy dissipation: 

\begin{lemma}\label{lem:psi_0_estim}
There exists a constant $C$ such that 
\begin{equation*}
\left|\rho \f{\p\psi_0}{\p c}\right|\le C \rho \psi_{0} + C.   
\end{equation*}
\end{lemma}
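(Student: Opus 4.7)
\textbf{Proof plan for Lemma~\ref{lem:psi_0_estim}.} The plan is to exploit the explicit form of $\psi_0$ under the simplifying hypotheses, then match each term of $\partial_c \psi_0$ against a term of $\psi_0$ itself, using the positivity (up to a bounded defect) of the energy. Starting from $\psi_0(\rho,c) = \frac{\rho^{a-1}}{a-1} + H(c)\log\rho + Q(c)$ with $Q(c)=\tfrac14 c^2(1-c)^2$, one gets
\[
\rho\,\partial_c \psi_0 \;=\; \rho\,H'(c)\log\rho \;+\; \rho\,Q'(c),
\]
so it suffices to control each piece by $C\rho\psi_0 + C$.

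First I would handle $\rho H'(c)\log\rho$. Since $H'$ is bounded by hypothesis~\eqref{eq:assumptionHQ}, this reduces to a universal bound on $\rho|\log\rho|$. On $(0,1]$ the map $\rho\mapsto \rho|\log\rho|$ is bounded, and on $[1,\infty)$ one has $\rho\log\rho \le C_a\,\rho^a$ because $a>6>1$ and $\log\rho/\rho^{a-1}\to 0$. Hence $\rho|\log\rho|\le C(1+\rho^a)$, which will be absorbed into $\rho\psi_e(\rho)$ plus a constant.

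Next I would deal with $\rho Q'(c)$. Since $Q$ is a polynomial of degree $4$ with leading coefficient $1/4>0$ and $Q'$ is of degree $3$, a standard polynomial comparison (or Young's inequality applied to $|c|^3 \le 1 + c^4$) gives $|Q'(c)|\le C(1+Q(c))$ for all $c\in\R$. Therefore
\[
|\rho Q'(c)| \le C\rho + C\rho\, Q(c).
\]
The term $C\rho$ is again absorbed by $C(1+\rho^a)$, and $\rho Q(c)$ needs to be compared with $\rho\psi_0$.

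The key observation, which I expect to be the only mildly delicate point, is that although $\psi_0$ is not itself nonnegative (the contribution $H(c)\log\rho$ is negative for small $\rho$), the product $\rho\psi_0$ is bounded below: for $\rho\ge 1$ we have $\rho H(c)\log\rho\ge 0$ because $H\ge H_1>0$, while for $\rho\in(0,1)$ we use $\sup_{\rho\in(0,1)} \rho|\log\rho| = 1/e$ together with $H\le H_2$ to get $|\rho H(c)\log\rho|\le H_2/e$. Combined with $Q\ge 0$ and $\psi_e\ge 0$, this yields both $\rho Q(c)\le \rho\psi_0 + C$ and $\rho\psi_e(\rho)\le \rho\psi_0 + C$, hence $\rho^a\le (a-1)\rho\psi_0+C$. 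Putting these estimates together gives $|\rho\partial_c\psi_0| \le C\rho\psi_0+C$, which is the desired bound.
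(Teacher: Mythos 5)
Your proposal is correct and follows essentially the same route as the paper, which proves the lemma in one line by invoking the boundedness of $H$ and $H'$ and the polynomial comparison $|Q'(c)|\approx |c|^3\le c^4+1\approx Q(c)+1$. Your write-up simply makes explicit the two absorption steps the paper leaves implicit, namely $\rho|\log\rho|\le C(1+\rho^a)$ and the lower bound on $\rho\psi_0$ coming from $\rho H(c)\log\rho\ge -H_2/e$, which are exactly the ingredients the authors have in mind.
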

Its proof uses the assumption on $H$ and the fact that for $c$ large, $Q'(c)\approx c^3\le c^4+1\approx Q(c)+1$.

\section{Existence of weak solutions} \label{sec:existence}
We now turn to the proof of the existence of weak solutions for the G-NSCH model~\eqref{eq:main1}--\eqref{eq:main4} subjected to boundary conditions
\begin{equation}
\label{eq:boundary}
\vecu=0,\,\f{\p c}{\p\mathbf{n}}=b(c)\f{\p\mu}{\p\mathbf{n}}=0, \quad \text{on $\p\Omega$},
\end{equation}
and initial conditions
\begin{equation}\label{init_cond}
    \rho(0,x) = \rho_0\ge 0 \in L^a(\Omega),\quad   c(0,x) = c_0 \in H^1(\Omega) \quad \rho_0 \vecu(0,x) = \mathbf{m}_0,\, \text{with } \frac{\abs{\mathbf{m}_0}^2}{\rho_0} \in L^1(\Omega). 
\end{equation}

Also, we suppose $\rho_0\ne 0$. 
In this section we take $d=3$.
The proof of the result is quite long and technical. Therefore, when possible and for the sake of clarity, we omit some proofs and give instead appropriate references. 

\paragraph{Outline of the analysis} For readability reasons, we here present the outline of the analysis of the G-NSCH model. We first start with the analysis of a "truncated" version of G-NSCH model in the sense that the double-well is truncated for large values of $c$ with a parameter $\eps_{Q}$. Then, for this fixed truncation, we prove the existence of weak solutions using the ideas of~\cite{Lions-1998-fluids,feireisl-book,abels_diffuse_2008,Miranville-2019-compressible}. Then, we pass to the limit $\eps_{Q}\to 0$. Namely, recalling that $Q(c)=\f{1}{4}c^{2}(1-c)^{2}$ we first consider $Q_{\eps_{Q}}(c)$ a smooth truncated approximation of $Q$ that satisfies 

\iffalse
\[
Q_{\eps_{Q}}''(c) = \begin{cases}
    Q''(1-\eps_{Q}),\quad \text{if } c \ge 1-\eps_{Q},\\
    Q''(\eps_{Q}),\quad \text{if } c \le \eps_{Q},\\
    Q''(c),\quad \text{otherwise}.
\end{cases}
\]
\fi

\begin{equation}\label{ass:Q_reg}
    |Q_{\eps_{Q}}|, |Q'_{\eps_{Q}}|, |Q''_{\eps_{Q}}|\le C\left(\f{1}{\eps_{Q}}\right).  
\end{equation}

 In the first subsections, we drop the $\varepsilon_{Q}$ notation and work with the regularized problem. We will use the $\varepsilon_{Q}$ notation when we pass to the limit. For the moment, we benefit from the properties of the regularization. 
%%%%%%%%%%%%%%%%%%%%%%%%%%COMMENT PREVIOUS ASSUMPTIONS%%%%%%%%%%%%%%%%%%%%%%%%%%%%%%%%%%%%%%%%%%%%%%%%%ù
\commentout{
Dimension $d=3$. We take $C_{LV}=1$, $C_{\eps}=1$, $M=1$, modify the equation for $\rho\mu=-\Div(\rho\nabla c)+\psi'(\rho)\rho\to \rho\mu=-\Delta c+\psi'(\rho)\rho $ and write $\f{1}{P_{e}(c)}=b(c)$, we add the $1/2  |\nabla c|^{2}$ term, we consider $0<b\le b(c)\le B$, $\nu(c)\ge\nu>0$, we let $G(p)$ %depend also on $\rho\mu$%
, change the term $\Div (\rho \nabla c\otimes \nabla c)\to \Div(\nabla c\otimes \nabla c)$ 
At the end the system is for $p(\rho,c)=\rho^{2}\f{\p \psi_{0}(\rho,c)}{\p \rho}=\rho^{\gamma}+\rho H(c)$ and the derivatives of $H(c)$ are bounded by a constant. Moreover $H(c)>a\ge 0$ (we might add a constant to the potential). $\psi_{0}(\rho,c)=\f{\rho^{\gamma-1}}{\gamma-1}+\log(\rho)H(c)+Q(c)$. $Q$ satisfies that it is bounded in absolute value by a polynomial of order 2. I think we need more: $Q'(c)$ bounded by a constant when $c$ is large. We also have assumptions on $Q$ and $H$ to have $\rho\psi_{0}$ bounded from below to have energy   + Dans la dérivation il faut rajouter un terme source à la deuxième fraction de masse du type $-\phi_{1}G(p_{0})$ avec $p_{0}=\rho^{\gamma}$ ou $p_{0}=p ? $

\begin{align}
\f{\p\rho}{\p t}&=-\Div(\rho \mathbf{v})\label{eq:modelref1},\\
\rho \f{Dc}{dt}&=\Div (b(c) \nabla \mu)+\rho c G(p_{0})\label{eq:modelref2},\\
\rho\mu &= -\Delta c+ \rho \f{\p\psi_{0}}{\p c}\label{eq:modelref3},\\
\rho \f{Dv}{dt}&= -\nabla p(\rho,c)+ \Div(\f{1}{2} |\nabla c|^{2}\mathbb{I} -( \nabla c\otimes \nabla c)) + \Div (\nu(c) (\nabla \mathbf{v}+ \nabla \mathbf{v}^{T}-\f{2}{3}\Div(\mathbf{v})\mathbb{I}))-\kappa(\rho,c)\mathbf{v}\label{eq:modelref4}.
\end{align}

with the convention $\f{Dg}{dt}=\f{\p g}{\p t}+(\mathbf{v}\cdot\nabla)g$.

Summing $\mathbf{v}\eqref{eq:modelref1}+\eqref{eq:modelref4}$ and then $c\eqref{eq:modelref1}+\eqref{eq:modelref2}$  we obtain 

\begin{align}
&\f{\p\rho}{\p t}+\Div(\rho \mathbf{v})= 0\label{eq:model1},\\
&\f{\p (\rho\mathbf{v})}{dt}+ \Div (\rho \mathbf{v}\otimes \mathbf{v})+\nabla p(\rho,c)=\Div(\f{1}{2} |\nabla c|^{2}\mathbb{I} -( \nabla c\otimes \nabla c)) + \Div (\nu(c) (\nabla \mathbf{v}+ \nabla \mathbf{v}^{T}-\f{2}{3}\Div(\mathbf{v})\mathbb{I}))-\kappa(\rho,c)\mathbf{v}\label{eq:model2},\\
&\f{\p (\rho c)}{\p t}+\Div (\rho c \mathbf{v})=\Div(b(c)\nabla\mu)+\rho c G(p_{0})\label{eq:model3}\\
&\rho\mu = -\Delta c+ \rho\f{\p\psi_{0}}{\p c}\label{eq:model4}.
\end{align}

with boundary conditions 
}

%%%%%%%%%%%%%%%%%%%%%%%%%%COMMENT PREVIOUS ASSUMPTIONS%%%%%%%%%%%%%%%%%%%%%%%%%%%%%%%%%%%%%%%%%%%%%%%%%ù

\subsection{Energy estimates}

The G-NSCH system comes with an energy structure which is useful to obtain first \apriori estimates. 

\begin{proposition}\label{prop:energy_smooth}
Smooth solutions of the system~\eqref{eq:main1}--\eqref{eq:main4} satisfy the following energy relation
\begin{equation}
\label{eq:energy-diff}
\f{d}{dt}E+D=\int_{\Omega}\mu F_c\diff x,
\end{equation}

where $E$ is the energy, and $D$ is the dissipation defined as 

\begin{align}
E&=\int_{\Omega}\rho \f{|\bv|^{2}}{2}+\rho \psi_{0}+ \f{\gamma}{2}|\nabla c|^{2}\diff \mathbf{x}\label{eq:energy},\\
D&=\int_{\Omega}\f{\nu(c)}{2}\left|\nabla \mathbf{v}+ \nabla \mathbf{v}^{T}-\f{2}{3}\Div(\mathbf{v})\mathbb{I}\right|^{2} + \eta(c) \abs{\dv\left( \vecu\right)\Id}^2 +b(c)|\nabla\mu|^{2}+\kappa(\rho,c)|\bv|^{2}\diff\mathbf{x}.\label{eq:diffusion}
\end{align}

This yields a priori estimates on the solution \ie there exists a positive constant $C$ such that

\begin{equation*}
E(t)+\int_{0}^{t}D(s)\diff s\le C+ CE(0). 
\end{equation*}
\end{proposition}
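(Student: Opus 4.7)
The identity \eqref{eq:energy-diff} is obtained by testing each of the four equations against its natural dual variable and summing; the a priori bound then follows from Gronwall's lemma. I use the material derivative $D_t c := \partial_t c + \vecu\cdot\nabla c$ throughout.

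\emph{Kinetic energy.} Testing the momentum equation \eqref{eq:main4} with $\vecu$ and invoking \eqref{eq:main1} together with the no-slip condition $\vecu|_{\partial\Omega}=0$, the inertial terms produce $\frac{d}{dt}\int \rho|\vecu|^2/2\,dx$. Integration by parts on the viscous stress, combined with the symmetry of $\nabla\vecu+\nabla\vecu^T$ and the algebraic identity
\[
\int \frac{\nu(c)}{2}|\nabla\vecu+\nabla\vecu^T|^2 dx - \frac{2}{3}\int \nu(c)(\dv\vecu)^2 dx = \int \frac{\nu(c)}{2}\Bigl|\nabla\vecu+\nabla\vecu^T-\tfrac{2}{3}(\dv\vecu)\mathbb{I}\Bigr|^2 dx,
\]
yields the Newtonian dissipation of $D$; the friction term contributes $\int\kappa|\vecu|^2dx$; the pressure gradient integrates by parts to $\int p\,\dv\vecu\,dx$; and the Korteweg stress, through the smooth-field identity $\dv(\nabla c\otimes\nabla c-\tfrac{1}{2}|\nabla c|^2\mathbb{I})=(\Delta c)\nabla c$, contributes $-\gamma\int(\Delta c)(\vecu\cdot\nabla c)\,dx$.

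\emph{Cahn--Hilliard step and assembly.} Rewriting \eqref{eq:main2} via \eqref{eq:main1} as $\rho D_tc = \dv(b(c)\nabla\mu)+F_c$, testing with $\mu$ and using the no-flux condition on $b(c)\nabla\mu$ yields $\int\rho\mu D_tc\,dx = -\int b(c)|\nabla\mu|^2dx + \int\mu F_c\,dx$. Substituting $\rho\mu = -\gamma\Delta c + \rho\partial_c\psi_0$ from \eqref{eq:main3} and integrating $-\gamma\int(\Delta c)\partial_t c\,dx$ by parts (using $\partial_\mathbf{n}c=0$) rewrites the left-hand side as $\frac{d}{dt}\int \frac{\gamma}{2}|\nabla c|^2 dx -\gamma\int(\Delta c)(\vecu\cdot\nabla c)dx + \int\rho\partial_c\psi_0\, D_tc\,dx$; added to the kinetic balance, the Korteweg cross-terms cancel. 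Finally, $p=\rho^2\partial_\rho\psi_0$ together with \eqref{eq:main1} gives the pointwise relation
\[
\partial_t(\rho\psi_0) + \dv(\rho\psi_0\vecu) = -p\,\dv\vecu + \rho\partial_c\psi_0\, D_tc,
\]
whose integrated form absorbs the pressure work and the remaining $\rho\partial_c\psi_0\,D_tc$ contributions into $-\frac{d}{dt}\int\rho\psi_0\,dx$. Collecting everything gives exactly \eqref{eq:energy-diff}.

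\emph{A priori bound and main obstacle.} The one non-routine step is controlling $\int\mu F_c\,dx$, since no direct $L^p$ bound on $\mu$ follows from the energy alone. Integrating \eqref{eq:main3} and using $\partial_\mathbf{n}c=0$ gives $\int\rho\mu\,dx = \int\rho\partial_c\psi_0\,dx$, which Lemma \ref{lem:psi_0_estim} bounds by $C(1+E(t))$. Since \eqref{eq:main1} conserves the total mass $M=\int\rho\,dx>0$, the $\rho$-weighted mean $\bar\mu_\rho := M^{-1}\int\rho\mu\,dx$ satisfies $|\bar\mu_\rho|\le C(1+E(t))$. Splitting $\mu=(\mu-\bar\mu_\rho)+\bar\mu_\rho$, exploiting $|F_c|\le C$ from \eqref{eq:assumpt-source}, the lower bound $b(c)\ge b_0>0$ from \eqref{eq:cond-mot}, a Poincaré-type inequality and the $L^2$ control of $\rho$ coming from the pressure term in the energy (since $p_e(\rho)=\rho^a$ with $a>2$), one obtains $\bigl|\int\mu F_c\,dx\bigr| \le \tfrac{1}{2}\int b(c)|\nabla\mu|^2\,dx + C(1+E(t))$. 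Absorbing the first term into $D$ and applying Gronwall's lemma yields $E(t)+\int_0^t D(s)\,ds\le C+CE(0)$. This is also where the main obstacle lies: reconstructing an $L^2$ control of $\mu$ out of the nonlinear chemical potential relation, the conservation of mass, and the pointwise bound of Lemma \ref{lem:psi_0_estim}.
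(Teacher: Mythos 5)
Your proposal is correct and follows essentially the same route as the paper: the same testing of each equation against its dual variable with the Korteweg/pressure cancellations, and the same control of $\int_\Omega \mu F_c\,\diff x$ via the $\rho$-weighted mean of $\mu$ (Lemma~\ref{lemma:PW}), the identity $\int_\Omega\rho\mu\,\diff x=\int_\Omega\rho\,\p_c\psi_0\,\diff x$ combined with Lemma~\ref{lem:psi_0_estim}, absorption of $\|\nabla\mu\|_{L^2}^2$ into the dissipation, and Gronwall.
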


Note that the energy is bounded from below since $\rho\log\rho H(c)$ is bounded from below with~\eqref{eq:assumptionHQ}. Also, the purpose of the assumptions $\nu(c), \eta(c)$ and $b(c)$ bounded from below by a positive constant becomes clear, they are crucial to obtain estimates on the $H^{1}(\Omega)$ norm of $\mu$ and $\vecu$. 

\begin{proof}
We recall the formula 
\begin{equation}\label{eq:formulaDelta}
\nabla c\Delta c=\mbox{div}(\nabla c\otimes \nabla c)-\frac{1}{2}\nabla |\nabla c|^{2}.
\end{equation}
We denote by $\mathbb{T}$ the tensor 
\begin{equation}\label{def:tensorT}
\mathbb{T}=\nu(c) (\nabla \vecu+ \nabla \vecu^{T}-\f{2}{3}\Div(\vecu)\mathbb{I}) +  \eta(c)\dv\left(\vecu \right) \Id.
\end{equation}
Then we multiply Equation~\eqref{eq:main1} by $\f{|\vecu|^2}{2}$ and sum it with the scalar product of Equation~\eqref{eq:main4} with $\vecu$. We obtain 
\begin{multline*}
\f{\p}{\p t}\left(\rho\f{|\vecu|^{2}}{2}\right) + \Div\left(\f{1}{2}\rho |\vecu|^{2} \vecu + p(\rho,c)\vecu-\mathbb{T}\cdot\vecu\right) + \mathbb{T}:\nabla\vecu+\kappa(\rho,c)\vecu^2=p(\rho,c)\Div (\vecu) \\+\gamma \Div(\f{1}{2} |\nabla c|^{2}\mathbb{I} -( \nabla c\otimes \nabla c))\cdot\bv, 
\end{multline*}
which is equivalent to 
\begin{equation}
\label{eq:apriori1}
\f{\p}{\p t}\left(\rho\f{|\mathbf{v}|^{2}}{2}\right) + \Div\left(\f{1}{2}\rho |\mathbf{v}|^{2} \mathbf{v} + p(\rho,c)\mathbf{v}-\mathbb{T}\cdot\mathbf{v}\right) + \mathbb{T}:\nabla\mathbf{v}+\kappa(\rho,c)\mathbf{v}^2=p(\rho,c)\Div \mathbf{v}  -\gamma \Delta c\nabla c\cdot\mathbf{v}.
\end{equation}
Then, we multiply Equation~\eqref{eq:main2} by $\mu$ and obtain using also~\eqref{eq:main1}
\begin{equation*}
\rho\mu(\p_{t}c+\mathbf{v}\cdot\nabla c)=\Div(b(c)\nabla\mu)\mu+\mu F_c. 
\end{equation*}
And, using~\eqref{eq:main3} we obtain 
\begin{equation*}
\rho\f{\p\psi_{0}}{\p c}(\p_{t}c+\mathbf{v}\cdot\nabla c)=\Div(b(c)\nabla\mu)\mu+\gamma \Delta c(\p_{t}c+\mathbf{v}\cdot\nabla c) + \mu F_c.   
\end{equation*}
The previous equation can be rewritten using the chain rule as
\begin{equation*}
\begin{aligned}
\p_{t}(\rho\psi_{0})+\Div(\rho\psi_{0}\mathbf{v})&-\psi_{0}(\p_{t}\rho+\Div(\rho\mathbf{v}))-\rho\f{\p\psi_{0}}{\p\rho}(\p_{t}\rho+\mathbf{v}\cdot\nabla \rho)\\
&=\Div(b(c)\nabla\mu)\mu+\gamma \Delta c(\p_{t}c+\mathbf{v}\cdot\nabla c) + \mu F_c.  
\end{aligned}
\end{equation*}
We have $\rho\f{\p\psi_{0}}{\p\rho}(\p_{t}\rho+\mathbf{v}\cdot\nabla \rho)=\rho\f{\p\psi_{0}}{\p\rho}(-\rho\Div(\vecu))=-p\Div(\mathbf{v})$ (see Equation~\eqref{eq:def-pressure} for the definition of the pressure). Moreover, we know that $\Delta c\p_{t}c=\Div(\p_{t}c\nabla c)-\p_{t}\left(\f{|\nabla c|^{2}}{2}\right)$ and, hence, 
\begin{multline}
 \label{eq:apriori2}
\p_{t}(\rho\psi_{0})+\Div(\rho\psi_{0}\mathbf{v})+p\Div(\mathbf{v})=\Div(b(c)\nabla\mu)\mu+\gamma \left[\Div(\p_{t}c\nabla c)-\p_{t}\left(\f{|\nabla c|^{2}}{2}\right)+\Delta c\mathbf{v}\cdot\nabla c \right]\\+\mu F_c.    
\end{multline}
Summing~\eqref{eq:apriori1} and~\eqref{eq:apriori2} we obtain
\begin{multline*}
\f{\p}{\p t}\left(\rho \f{|\vecu|^{2}}{2}+\rho \psi_{0}+ \f{\gamma}{2} |\nabla c|^{2}\right)+\Div \left(\rho\psi_{0}\mathbf{v}+\f{1}{2}\rho |\bv|^{2}\bv+p(\rho,c)\bv - \mathbb{T}:\bv - \gamma \p_{t}c\nabla c\right) - \Div(b(c)\nabla\mu)\mu  \\+ \mathbb{T}:\nabla\bv + \kappa(\rho,c)|\bv|^{2}= \mu F_c .
\end{multline*}
Now we use the fact that 

\begin{equation}
\label{eq:tensorT}
\mathbb{T}:\nabla \bv=\f{\nu(c)}{2}\left|\nabla \mathbf{v}+ \nabla \mathbf{v}^{T}-\f{2}{3}\Div(\mathbf{v})\mathbb{I}\right|^{2} + \eta(c) \left|\dv\left(\vecu \right) \Id\right|^2. 
\end{equation}

Integrating in space and using the boundary conditions~\eqref{eq:boundary} ends the proof of the first part of the proposition. To prove the second part, we integrate the equation in time and control the right-hand side. 
Indeed, due to the assumption on the source term~\eqref{eq:assumpt-source}, we have
 $$
\left| \int_{0}^{t}\int_{\Omega}\mu F_c\diff x\diff t\right|\le C\int_{0}^{t}\int_{\Omega}|\mu|.  
 $$

We want to use Lemma~\ref{lemma:PW} to control the $L^{1}$ norm of $\mu$. Integrating the equations on $\rho$ to obtain $\int_{\Omega}\rho\diff x=\int_{\Omega}\rho_{0}\diff x> M_{0}$ we satisfy the first assumption of the lemma. For the second, we notice that we can consider a variant of this lemma such that instead of asking $\rho$ to be in $L^{6/5}$ we have the inequality 
$$
\left\|\mathbf{u}- \f{1}{|\Omega|}\int_{\Omega}\rho\mathbf{u}\right\|_{L^{2}}\le C \norm{\nabla \mathbf{u}}_{L^{2}}+\norm{\rho}_{L^{6/5}}. 
$$

Using Young's inequality, the fact that in the energy $\rho\psi_{0}$ contains a term of the form $\rho^{a+1}$ we obtain for $\widetilde{C}$ small enough
$$
\int_{0}^{t}\int_{\Omega}|\mu|\diff x \le C + \widetilde{C}\int_{0}^{t}\int_{\Omega}|\mu|^2\diff x \le C + C E(t)+ \f{\inf_{c}b(c)}{2}\int_{\Omega}|\nabla \mu|^{2}\diff x + C \left|\int_{\Omega}\rho\mu\diff x\right|.    
 $$
Since the energy dissipation controls the third term of the right-hand side, it remains to control the last term of the right-hand side. We recall that $\rho\mu=\rho\f{\p \psi_{0}}{\p c}-\gamma\Delta c$. Using the Neumann boundary conditions on $c$, it remains to control $\left|\int_{\Omega}\rho\f{\p \psi_{0}}{\p c} \right|$. Using Lemma~\ref{lem:psi_0_estim}, we obtain 

$$
\left|\int_{\Omega}\rho\f{\p \psi_{0}}{\p c}\diff x\right|\le C+ CE(t).
$$
We conclude using Gronwall's lemma.  
\end{proof}

\subsection{Existence of weak solutions for fixed $\eps_{Q}$}

%We first prove existence of weak solutions with regularized $Q=Q_{\eps_{Q}}:=Q_{1,\eps_{Q}}+Q_{2}$ and then we pass to the limit when $\eps_{Q}$ tends to 0.  

The weak solutions of system~\eqref{eq:main1}--\eqref{eq:main4} are defined as follows

\begin{definition}\label{def:existence_sol}
We say that $(\rho,\bv,c, \mu)$ is a weak of system~\eqref{eq:main1}--\eqref{eq:main4} provided:
\begin{itemize}
\item $\rho\ge 0$ and we have the regularity \begin{align*}
 &\rho\in L^{\infty}(0,T;L^{a}(\Omega)),\\ 
 & \bv\in L^{2}(0,T;H_{0}^{1}(\R^{3})), \quad \sqrt{\rho}\bv\in L^{\infty}(0,T;L^{2}(\Omega;\R^{3})), \quad \mathbb{T}:\nabla\bv\in L^{1}(0,T;L^{1}(\Omega)),\\
 & c\in L^{\infty}(0,T;H^{1}(\Omega)),\\
 &\mu\in L^{2}(0,T;H^{1}(\Omega)). 
\end{align*}

\item Equations~\eqref{eq:main1}--\eqref{eq:main4} are satisfied in the distributional sense.
\item The initial conditions~\eqref{init_cond} are satisfied a.e. in $\Omega$. 
\item The boundary conditions~\eqref{eq:boundary} are satisfied. 
\end{itemize}

\end{definition}

We state our main theorem about the existence of weak solutions 

\begin{thm}[Existence of weak solutions]\label{thm:existence_weak_sol}
There exist $(\rho,\bv,c, \mu)$ weak solutions of~\eqref{eq:main1}--\eqref{eq:main4} in the sense of Definition~\ref{def:existence_sol}.
\end{thm}

In order to prove the existence of weak solutions, we use an approximating scheme with a small parameter $\eps>0$ borrowing the idea from~\cite{MR1637634,feireisl-book}. 
More precisely,  let $X_{n}=\spann\{\eta_{i}\}_{i=1,...,n}$ be the set of the first $n$ vectors of a basis of $H_{0}^{1}(\Omega;\R^{3})$ such that $X_{n}\subset C^{2}(\overline{\Omega};\R^{3})$. We consider the following problem for $(\rho, \bv_{n},c)$ with $\bv_{n}\in X_{n}$ (with coordinates depending on time):
\begin{equation}
\label{eq:density_reg}
\p_{t}\rho+\Div(\rho \bv_{n})=\eps\Delta\rho,
\end{equation}
and for every $\eta\in X_{n}$, 
\begin{multline}
\label{eq:velocity_reg}
\int_{\Omega}\rho\bv_{n}(t)\cdot\eta\diff x- \int_{\Omega}\bold{m}_{0}\cdot\eta\diff x- \int_{0}^{t}\int_{\Omega}\rho \bv_{n}\otimes\bv_{n}:\nabla\eta\diff x\diff s- \int_{0}^{t}\int_{\Omega}p(\rho,c)\Div(\eta)\diff x\diff s \\
+ \eps\int_{0}^{t}\int_{\Omega}(\nabla\bv_{n}\nabla\rho)\cdot\eta\diff x\diff s+\int_{0}^{t}\int_{\Omega}\mathbb{T}:\nabla\eta\diff x\diff s+ \gamma\int_{0}^{t}\int_{\Omega}(\f{1}{2} |\nabla c|^{2}\mathbb{I} -( \nabla c\otimes \nabla c))
:\nabla\eta\diff x\diff s\\ + \int_{\Omega}\int_{0}^{t}\kappa(\rho,c)\bv_{n}\cdot\eta\diff x\diff s=0.
\end{multline}
And for the equation on the mass fraction 
\begin{equation}
\label{eq:mass_fraction_reg}
\p_{t}c+\bv_{n}\cdot\nabla c=\f{1}{\rho}\Div(b(c)\nabla\mu)+ \frac{F_c}{\rho},\quad \mu=\f{\p\psi_{0}}{\p c}-\gamma\f{\Delta c}{\rho}.
\end{equation}
We consider Neumann boundary conditions
\begin{equation}
\label{eq:neumann_reg}
\nabla \rho\cdot\bold{n}=b(c)\nabla\mu\cdot\bold{n}=\nabla c\cdot\bold{n}=0\quad \text{on $\p\Omega$},
\end{equation}
and the Dirichlet boundary condition for $\bv_{n}$ is included in the definition of $X_{n}$. 
Finally, we consider the initial conditions
\begin{equation}
\label{eq:init_cond_reg}
\rho(0,\cdot)=\rho_{0,\eps}>0, \quad c(0,\cdot)=c_{0,\eps}, \quad \rho \bv_{n}(0,\cdot)=\bold{m}_{0},    
\end{equation}
where $\rho_{0,\eps}$, $c_{0,\eps}$ satisfy the Neumann boundary conditions and they are smooth approximations of $\rho_0, c_0$ (when $\eps\to 0$). 

We now comment on the scheme used above and detail the strategy of the proof. We add the artificial diffusion in~\eqref{eq:density_reg} with the parameter $\eps>0$. Here, $\bv_{n}$ is fixed and we can conclude the global in time existence of classical solutions to~\eqref{eq:density_reg} which are positive since the initial condition is positive (and using maximum principle). Using this positivity, we conclude the existence of a strong solution to Equation~\eqref{eq:mass_fraction_reg} which is in fact a fourth-order parabolic equation. Having obtained $c$, we focus on Equation~\eqref{eq:velocity_reg} and we prove existence for a small time with Schauder's fixed point theorem. Note the presence of the additional term $\eps\int(\nabla\bv_n \nabla \rho)\cdot\eta$ which is useful to cancel energy terms introduced by $\eps\Delta\rho$ in~\eqref{eq:density_reg}. Having obtained existence on a short time interval we compute the energy of the system and obtain global existence. Then, we pass to the limit $n\to\infty$. It remains to send $\eps$ and $\eps_{Q}$ to 0 and obtain solutions of system~\eqref{eq:main1}--\eqref{eq:main4}.

We first turn our attention to Equation~\eqref{eq:density_reg}. From~\cite{feireisl-book}, we obtain the following proposition, and lemma

\begin{proposition}\label{prop:density_reg}
Let $\Omega\subset \R^{3}$ be a bounded domain of class $C^{2+\beta}$ for some $\beta>0$. For a fixed $\bv_{n}\in X_{n}$, there exists a unique solution to Equation~\eqref{eq:density_reg}
with Neumann boundary conditions~\eqref{eq:neumann_reg} and initial data conditions~\eqref{eq:init_cond_reg}. 
Furthermore, the mapping $\bv_{n}\mapsto \rho[\bv_{n}]$, that assigns to any $\bv_{n}\in X_{n}$ the unique solution of~\eqref{eq:density_reg}, takes bounded sets in the space $C(0,T;C^{2}_{0}(\overline{\Omega},\R^{3}))$ into bounded sets in the space
\begin{equation*}
V:=\{\p_{t}\rho \in C(0,T;C^{\beta}(\overline{\Omega})) , \,\rho\in C(0,T;C^{2+\beta}(\overline{\Omega})) \}.  
\end{equation*}
\end{proposition}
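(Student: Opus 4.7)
The plan is to treat the equation \eqref{eq:density_reg} for a fixed $\bv_{n}\in X_{n}$ as a linear, uniformly parabolic equation in $\rho$ alone, namely
\begin{equation*}
\p_{t}\rho - \eps\Delta\rho + \bv_{n}\cdot\nabla\rho = -\rho\,\Div(\bv_{n}),
\end{equation*}
supplemented with the homogeneous Neumann condition $\nabla\rho\cdot\mathbf{n}=0$ on $\p\Omega$ and the smooth positive initial datum $\rho_{0,\eps}$. Since $\bv_{n}$ belongs to the finite-dimensional space $X_{n}\subset C^{2}(\overline{\Omega};\R^{3})$ and depends only on time through its coordinates, the drift $\bv_{n}(t,\cdot)$ and the zeroth order coefficient $\Div(\bv_{n}(t,\cdot))$ lie in $C^{\beta}(\overline{\Omega})$ uniformly in $t$. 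The leading operator is $-\eps\Delta$ which is uniformly parabolic with constant (hence smooth) coefficients and trivially compatible with the Neumann boundary condition.

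The first step is to invoke the classical Ladyzhenskaya–Solonnikov–Ural'ceva theory for linear parabolic equations with Hölder coefficients and Neumann boundary conditions on a $C^{2+\beta}$ domain (as recalled in~\cite{feireisl-book}). This gives a unique classical solution $\rho$ in the Hölder–parabolic space $C^{2+\beta,1+\beta/2}(\overline{\Omega}\times[0,T])$ whenever $\rho_{0,\eps}\in C^{2+\beta}(\overline{\Omega})$ satisfies the compatibility condition $\nabla\rho_{0,\eps}\cdot\mathbf{n}=0$, which we have enforced in~\eqref{eq:init_cond_reg}. Uniqueness follows from the maximum principle applied to the difference of two solutions.

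The second step is the regularity statement on the solution map. The Schauder estimate for the Neumann problem reads
\begin{equation*}
\|\rho\|_{C^{2+\beta,1+\beta/2}} \le C\Bigl(\|\rho_{0,\eps}\|_{C^{2+\beta}} + \|\bv_{n}\|_{C(0,T;C^{2}(\overline{\Omega}))}\cdot\|\rho\|_{C^{\beta,\beta/2}}\Bigr),
\end{equation*}
and combined with an $L^{\infty}$ bound obtained from the maximum principle (using that $\bv_{n}$ has bounded divergence and $\rho_{0,\eps}$ is bounded), a standard bootstrap yields that bounded sets of $\bv_{n}\in C(0,T;C^{2}_{0}(\overline{\Omega};\R^{d}))$ are sent into bounded sets of the stated space $V$. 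The equation itself then recovers the $C(0,T;C^{\beta})$ regularity of $\p_{t}\rho$ from the bound on $\Delta\rho$ and on the drift term.

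I do not expect any serious obstacle here; the whole statement is a direct application of linear parabolic theory on a smooth domain with Neumann conditions, and the main point to be careful about is simply that the compatibility of the smoothed initial data $\rho_{0,\eps}$ with the Neumann condition is preserved by the regularization, which is standard. The only mildly delicate point is making the constant in the Schauder estimate depend only on the $C^{2}$-norm of $\bv_{n}$ (and not on $\bv_{n}$ itself), but this follows from the linearity of the problem in $\rho$ and the uniform parabolicity provided by the fixed diffusion coefficient $\eps>0$; accordingly I would, as the authors do, refer the reader to~\cite{feireisl-book} for the technical details.
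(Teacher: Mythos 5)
Your proposal is correct and matches the paper's treatment: the paper gives no proof of this proposition at all, simply importing it from~\cite{feireisl-book}, and your sketch (linear uniformly parabolic Neumann problem with Hölder coefficients, LSU/Schauder theory for existence and the mapping property, maximum principle for uniqueness) is precisely the standard argument contained in that reference. The only technicality worth flagging is that $\bv_{n}$ is merely continuous in time, so the classical parabolic Hölder theory has to be applied with some care (which is why the conclusion is stated in $C(0,T;C^{2+\beta})$ rather than in a parabolic Hölder space), but this is handled in the cited reference and does not affect the validity of your outline.
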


\begin{lemma}\label{lem:maximum_principle}
The solutions of~\eqref{eq:density_reg} satisfy

\begin{equation*}
\begin{aligned}
(\inf_{x\in\Omega}\rho(0,x))\exp\left(-\int_{0}^{t}\norm{\Div\,  \bv_{n}(s)}_{L^{\infty}(\Omega)}\diff s\right)&\le  \rho(t,x)\\
&\le  (\sup_{x\in\Omega}\rho(0,x))\exp\left(\int_{0}^{t}\norm{\Div\,  \bv_{n}(s)}_{L^{\infty}(\Omega)}\diff s\right),
\end{aligned}
\end{equation*}
for all $t\in[0,T]$ and $x\in\Omega$.  
\end{lemma}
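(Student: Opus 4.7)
The plan is to establish both inequalities as one-sided comparison principles via an energy argument applied to the non-conservative form of~\eqref{eq:density_reg}. Rewriting that equation as
\begin{equation*}
\p_t \rho - \eps \Delta \rho + \bv_n\cdot \nabla \rho = -\rho\, \Div \bv_n,
\end{equation*}
I would compare $\rho$ with the purely time-dependent upper barrier
\begin{equation*}
\bar{\rho}(t) := \Big(\sup_{x\in\Omega}\rho(0,x)\Big)\exp\!\left(\int_{0}^{t}\norm{\Div\,\bv_n(s)}_{L^\infty(\Omega)}\diff s\right),
\end{equation*}
which satisfies $\bar{\rho}'(t)=\norm{\Div\,\bv_n(t)}_{L^\infty}\bar{\rho}(t)$ and automatically obeys the Neumann condition. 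Setting $w:=\rho-\bar{\rho}$ and using $\bar{\rho}>0$ together with $\Div\bv_n+\norm{\Div\bv_n}_{L^\infty}\ge 0$, a direct computation yields the inequality
\begin{equation*}
\p_t w - \eps \Delta w + \bv_n\cdot\nabla w + w\, \Div\bv_n \le 0,
\qquad w(0,\cdot)\le 0.
\end{equation*}

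The second step is to test this inequality by the truncation $w^+=\max(w,0)$, which is admissible since the regularity provided by Proposition~\ref{prop:density_reg} places $\rho$ (and hence $w$) in $C(0,T;C^{2+\beta}(\overline{\Omega}))$ with $\p_t\rho\in C(0,T;C^\beta(\overline{\Omega}))$. Because $\bv_n\in X_n\subset H^{1}_0(\Omega)$ vanishes on $\p\Omega$ and $\nabla\rho\cdot\vecn=0$ (so $\nabla w^+\cdot \vecn=0$ on $\{w>0\}$), integration by parts introduces no boundary contributions and gives
\begin{equation*}
\f{1}{2}\f{d}{dt}\int_\Omega (w^+)^2 \diff x + \eps\int_\Omega |\nabla w^+|^2\diff x + \f{1}{2}\int_\Omega (w^+)^2\,\Div\bv_n \diff x \le 0,
\end{equation*}
after noticing that $\int \bv_n\cdot\nabla w^+\,w^+= \tfrac{1}{2}\int \bv_n\cdot\nabla(w^+)^2 = -\tfrac{1}{2}\int (w^+)^2\,\Div\bv_n$. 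Dropping the non-negative diffusion term and bounding $|\Div\bv_n|$ by $\norm{\Div\bv_n}_{L^\infty}$, Gronwall's lemma applied to $t\mapsto\int(w^+)^2$ with zero initial value forces $w^+\equiv 0$, i.e.\ $\rho(t,x)\le \bar{\rho}(t)$.

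Finally, the lower bound is obtained by the mirror argument with the barrier
\begin{equation*}
\underline{\rho}(t) := \Big(\inf_{x\in\Omega}\rho(0,x)\Big)\exp\!\left(-\int_{0}^{t}\norm{\Div\,\bv_n(s)}_{L^\infty(\Omega)}\diff s\right),
\end{equation*}
applied to $z := \underline{\rho}-\rho$ (taking $z^+$ as a test function), which is meaningful since the initial datum $\rho_{0,\eps}>0$. The only subtle point is the advection term in the energy identity; this is where the Dirichlet condition $\bv_n=0$ on $\p\Omega$ built into $X_n$ is essential, as otherwise an uncontrolled boundary flux would appear. Everything else is a standard Gronwall estimate, so the expected main obstacle is purely bookkeeping of the signs when combining the reaction term $w\,\Div\bv_n$ with the reshuffled advection contribution $-\tfrac12(w^+)^2\,\Div\bv_n$.
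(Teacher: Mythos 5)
Your argument is correct. Note first that the paper does not prove this lemma at all: it is quoted from the reference \cite{feireisl-book}, where the standard proof applies the classical parabolic comparison (maximum) principle to the operator $\p_t-\eps\Delta+\bv_n\cdot\nabla+\Div\bv_n$ with the purely time-dependent super- and subsolutions $\bar\rho$ and $\underline\rho$ — this is what the surrounding text alludes to with "using maximum principle". You reach the same conclusion by an $L^2$ energy argument on the positive part of the difference, which is a genuinely different (weaker-hypothesis) route: your computation of the differential inequality for $w=\rho-\bar\rho$ is right, the boundary terms do vanish (the Neumann term because $\p_{\mathbf n}\rho=0$ and $\bar\rho$ is spatially constant, the advective flux because $\bv_n\in X_n$ vanishes on $\p\Omega$), the combined zero-order coefficient $\tfrac12\Div\bv_n$ is controlled by $\tfrac12\norm{\Div\bv_n}_{L^\infty}$ so Gronwall applies, and the mirror argument for $z=\underline\rho-\rho$ uses $\underline\rho\ge 0$ and $\Div\bv_n-\norm{\Div\bv_n}_{L^\infty}\le 0$ in the same way. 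What the classical route buys is brevity — since Proposition~\ref{prop:density_reg} already gives classical $C^{2+\beta}$ solutions, one can evaluate the equation at an interior extremum directly; what your route buys is that it only uses the weak formulation and testing with $w^+$, so it would survive with much less regularity on $\rho$ and $\bv_n$. Either way the conclusion and the constants are identical to the lemma as stated.
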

Using the latter lemma, if the velocity field is in $W^{1,\infty}$, the density is bounded from below by a positive constant (provided the initial condition is positive). We now focus on Equation~\eqref{eq:mass_fraction_reg}. 

\begin{proposition}\label{prop:mass_fraction_reg}
Let $\rho$ be given such that $\rho\in C(0,T;C^{2}(\overline{\Omega}))$ and $\rho\ge \underline{\rho}>0$. Then Equation~\eqref{eq:mass_fraction_reg} with Neumann boundary conditions~\eqref{eq:neumann_reg} admits a strong solution. Moreover, the mapping $\bv_{n}\mapsto c[\bv_{n}]$ takes bounded sets in the space $C(0,T;C^{2}_{0}(\overline{\Omega},\R^{3}))$ into bounded sets in the space
\begin{equation}\label{eq:set_W}
W:=\{ c\in L^{\infty}(0,T;H^{1}(\Omega))\cap L^{2}(0,T;H^{3}(\Omega))\}.  
\end{equation}
\end{proposition}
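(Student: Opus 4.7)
Equation~\eqref{eq:mass_fraction_reg}, after elimination of $\mu$, is a fourth-order quasilinear parabolic equation for $c$:
\[
\partial_t c + \bv_n \cdot \nabla c \;=\; \frac{1}{\rho}\dv\!\left(b(c)\nabla\left(\frac{\partial \psi_0}{\partial c} - \gamma\frac{\Delta c}{\rho}\right)\right) + \frac{F_c}{\rho},
\]
with the natural boundary conditions $\nabla c\cdot\vecn = b(c)\nabla\mu\cdot\vecn = 0$. By Lemma~\ref{lem:maximum_principle} the prescribed $\rho$ is bounded from above and away from zero by constants depending only on $\|\bv_n\|_{C(0,T;C^1)}$ and $\rho_{0,\eps}$, and by~\eqref{eq:cond-mot} we have $b(c)\ge C>0$. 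Consequently the principal part $-\gamma\,b(c)\rho^{-2}\Delta^2 c$ is uniformly elliptic, so the equation is uniformly parabolic of order four with smooth lower-order coefficients. The plan is to build strong solutions by a Galerkin approximation in the basis $\{\phi_k\}$ of eigenfunctions of the Neumann Laplacian (which automatically encodes both natural boundary conditions); local existence of the resulting ODE system follows from Cauchy-Lipschitz, and globalness from the a priori estimates below.

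The crucial energy estimate comes from testing the equation by $\rho\mu$. Using $\rho\mu = -\gamma\Delta c + \rho\,\partial_c \psi_0$ and the splitting $\psi_0 = \psi_e(\rho) + H(c)\log\rho + Q_{\eps_Q}(c)$, together with the continuity equation~\eqref{eq:density_reg} to rearrange the terms containing $\partial_t\rho$ (which inevitably appear because $\psi_0$ carries a $\rho$-dependence), one arrives at an inequality of the form
\[
\frac{d}{dt}\!\left(\frac{\gamma}{2}\!\int_\Omega |\nabla c|^2 + \int_\Omega \rho \log\rho\,\tilde H(c) + \int_\Omega \rho\, Q_{\eps_Q}(c)\right) + \int_\Omega b(c)|\nabla\mu|^2 \;\le\; C\bigl(1+\|c\|_{H^1}^2\bigr),
\]
where $\tilde H' = H'$ and the constant $C$ depends on $\|\bv_n\|_{C(0,T;C^2)}$, on $\eps_Q$, on the $\eps$-regularization of~\eqref{eq:density_reg}, and on the bounds on $\rho$. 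Gronwall yields $c\in L^\infty(0,T;H^1(\Omega))$ and $\nabla\mu\in L^2(0,T;L^2(\Omega))$. The mean of $\mu$ is then controlled by integrating the identity $\mu = \partial_c\psi_0 - \gamma\Delta c/\rho$ against $1$, using the Neumann condition on $c$, and invoking~\eqref{eq:assumptionHQ} together with Lemma~\ref{lem:psi_0_estim}, so Poincaré-Wirtinger upgrades the estimate to $\mu\in L^2(0,T;H^1(\Omega))$.

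The $H^3$-regularity follows by a bootstrap on the elliptic identity $-\gamma\Delta c = \rho\mu - \rho\,\partial_c\psi_0$: the right-hand side lies in $L^2(0,T;H^1(\Omega))$ since $\rho\in C(0,T;C^2(\overline\Omega))$, $\mu\in L^2(H^1)$, $c\in L^\infty(H^1)$, and the truncated potential derivative $H'(c)\log\rho + Q'_{\eps_Q}(c)$ maps $H^1$ into $H^1$ by~\eqref{eq:assumptionHQ} and~\eqref{ass:Q_reg}; standard Neumann elliptic regularity then gives $c\in L^2(0,T;H^3(\Omega))$. Passing to the limit in the Galerkin scheme is routine via Aubin-Lions, and since every bound depends on $\bv_n$ only through $\|\bv_n\|_{C(0,T;C^2(\overline\Omega))}$, bounded sets in that space are mapped into bounded sets in $W$. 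The main technical obstacle is the energy identity itself: because $\psi_0$ couples $c$ and $\rho$, the term $\int_\Omega \rho\,\partial_c\psi_0\,\partial_t c$ is not an exact time derivative and must be rearranged against $\partial_t\rho$, producing remainders that are controllable only thanks to the specific logarithmic form of $\psi_{\mathrm{mix}}$, the truncation $Q_{\eps_Q}$, and the $\eps$-smoothing of $\rho$ provided by~\eqref{eq:density_reg}.
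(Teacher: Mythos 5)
Your proposal is correct in substance but follows a genuinely different route from the paper. The paper's proof multiplies \eqref{eq:mass_fraction_reg} by $-\Delta c$ rather than by $\rho\mu$: this produces the dissipation term $\gamma\int_\Omega b(c)\,|\nabla(\Delta c/\rho)|^2$ directly, and since $\int_\Omega \rho\cdot(\Delta c/\rho)=\int_\Omega\Delta c=0$ by the Neumann condition, Lemma~\ref{lemma:PW} applied to $\Delta c/\rho$ gives \eqref{eq:regularity_laplace_c} and hence the $L^2(0,T;H^3)$ bound immediately from the dissipation, with no elliptic bootstrap. The choice of $-\Delta c$ as test function also sidesteps entirely the term $\int_\Omega \rho\,\partial_c\psi_0\,\partial_t c$, which in your approach is not an exact time derivative and must be rearranged against $\partial_t\rho$ via \eqref{eq:density_reg}; your handling of it is correct (and is essentially what the paper does later for the full energy in \eqref{eq:weakenergy3}--\eqref{eq:weakenergy4}), but it is extra work at this stage. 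What your route buys is that the $\mu\in L^2(0,T;H^1)$ bound falls out of the estimate itself, whereas the paper recovers it only later (estimate \ref{item_estim102} of Proposition~\ref{prop:estim_uniform_n}); since $\nabla\mu$ and $\gamma\nabla(\Delta c/\rho)$ differ by $\nabla\partial_c\psi_0$, which is controlled in $L^2$ once $\nabla c$ is, the two dissipations carry equivalent information here. Two small cautions: your claim that the Neumann-Laplacian eigenbasis ``automatically encodes both natural boundary conditions'' is optimistic --- $b(c)\nabla\mu\cdot\vecn=0$ is only recovered weakly in the limit, not satisfied by finite Galerkin approximants; and the mean-value control of $\mu$ requires one integration by parts, since $\int_\Omega\Delta c/\rho\neq 0$ in general (only $\int_\Omega\Delta c=0$). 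Both points are at the level of detail the paper itself elides (it ``admits'' existence and focuses on the estimates), so neither is a genuine gap.
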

The existence of a strong solution is based on the remark that the highest order term of this equation is $-\gamma\f{b(c)}{\rho}\Delta^{2}c$. Using $b(c),\rho\ge C>0$ we obtain a fourth-order parabolic equation with smooth coefficients and with zero Neumann boundary conditions. Therefore, we can admit the global in time strong solution than can be achieved through a Galerkin scheme and we focus on the estimates~\eqref{eq:set_W}. In the proof, we need the following two lemmas

\begin{lemma}[Lemma 3.1 in~\cite{MR2384572}]
\label{lemma:PW}
Let $\Omega\in\R^{3}$ be  a bounded Lipschitz domain and let $M_{0}>0$, $K>0$. Assume that $\rho$ is a nonnegative function such that
\begin{equation*}
    0<M_{0}\le \int_{\Omega}\rho\diff x, \int_{\Omega}\rho^{a}\diff x\le K, \quad\text{ with ${a>\f{6}{5}}$}. 
\end{equation*}
Then, there exists a positive constant $C=C(M_{0},K,a)$ such that the inequality
\begin{equation*}
\left\|\bold{u}-\f{1}{|\Omega|}\int_{\Omega}\rho\bold{u}\right\|_{L^{2}(\Omega;\R^{3})}\le C\norm{\nabla\bold{u}}_{L^{2}(\Omega;\R^{3\times 3})},
\end{equation*}
holds for any $\bold{u}\in W^{1,2}(\Omega;\R^{3})$.
\end{lemma}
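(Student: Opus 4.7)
This is a standard weighted Poincar\'e--Wirtinger inequality from Feireisl's book, and the natural strategy is a contradiction argument playing the weak $L^{a}$-compactness of the density against the Rellich compactness in $W^{1,2}$. Suppose the inequality fails. Then there exist sequences $\rho_n$ satisfying the two mass bounds and $\mathbf{u}_n\in W^{1,2}(\Omega;\R^{3})$ with
\begin{equation*}
\left\|\mathbf{u}_n - \tfrac{1}{|\Omega|}\int_{\Omega}\rho_n\mathbf{u}_n\right\|_{L^{2}} > n\,\|\nabla\mathbf{u}_n\|_{L^{2}}.
\end{equation*}
I renormalize by setting $\mathbf{v}_n:=(\mathbf{u}_n-\tfrac{1}{|\Omega|}\int\rho_n\mathbf{u}_n)/\|\mathbf{u}_n-\tfrac{1}{|\Omega|}\int\rho_n\mathbf{u}_n\|_{L^{2}}$, so that $\|\mathbf{v}_n\|_{L^{2}}=1$ and $\|\nabla\mathbf{v}_n\|_{L^{2}}\to 0$; crucially, linearity of the centering operator yields the algebraic identity $\int_{\Omega}\rho_n\mathbf{v}_n=0$ (reading the average as the mass-weighted mean $\tfrac{1}{\int\rho_n}\int\rho_n\mathbf{u}_n$, the only formulation under which constants lie in the kernel), and this identity is what I must not lose when passing to the limit.

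\emph{Compactness.} Boundedness of $\{\mathbf{v}_n\}$ in $W^{1,2}(\Omega;\R^{3})$ combined with the Rellich--Kondrachov theorem provides, along a subsequence, $\mathbf{v}_n\to\mathbf{v}$ strongly in $L^{2}$; since $\nabla\mathbf{v}_n\to 0$ in $L^{2}$, the limit satisfies $\nabla\mathbf{v}=0$, hence $\mathbf{v}$ is a constant vector with $\|\mathbf{v}\|_{L^{2}}=1$, in particular $\mathbf{v}\neq 0$. Simultaneously, the bound $\|\rho_n\|_{L^{a}}\le K^{1/a}$ furnishes, along a further subsequence, $\rho_n\rightharpoonup\rho$ weakly in $L^{a}(\Omega)$; testing this convergence against the constant function $1\in L^{a'}(\Omega)$ preserves the lower mass bound $\int_{\Omega}\rho\ge M_{0}>0$.

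\emph{Weak-strong limit and contradiction.} Rellich upgrades the $L^{2}$-convergence of $\mathbf{v}_n$ to strong $L^{q}$-convergence for every $q<6$, in particular in $L^{a'}$ when $a>6/5$ (the marginal range $a\in(1,6/5]$ is handled by an elementary interpolation trading the $L^{a}$ bound on $\rho_n$ against the uniform bound on $\int_{\Omega}\rho_n$). Weak-strong convergence then yields
\begin{equation*}
0 \;=\; \int_{\Omega}\rho_n\mathbf{v}_n \;\to\; \mathbf{v}\int_{\Omega}\rho,
\end{equation*}
and because $\int_{\Omega}\rho\ge M_{0}>0$ this forces $\mathbf{v}=0$, contradicting $\|\mathbf{v}\|_{L^{2}}=1$.

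\emph{Main obstacle.} The only genuine subtlety lies in calibrating the Sobolev embedding $W^{1,2}\hookrightarrow L^{a'}$ against the density integrability exponent $a$: this is precisely where the hypothesis $a>1$ is used, together with the three-dimensionality of $\Omega$. Otherwise the scheme is a textbook compactness-and-contradiction argument.
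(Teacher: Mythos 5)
The paper does not prove this lemma: it is quoted verbatim as Lemma~3.2 of the cited Feireisl monograph, so there is no in-paper argument to compare against. Your compactness-and-contradiction proof is the standard one for this type of weighted Poincar\'e inequality and is essentially correct; I record two remarks. First, you are right to insist on reading the average as the mass-weighted mean $\frac{1}{\int_\Omega\rho}\int_\Omega\rho\mathbf{u}$: with the normalization $\frac{1}{|\Omega|}\int_\Omega\rho\mathbf{u}$ as printed, the inequality fails already for a nonzero constant $\mathbf{u}$ whenever $\int_\Omega\rho\neq|\Omega|$, since the left-hand side is $|\mathbf{u}|\,|1-\tfrac{1}{|\Omega|}\int_\Omega\rho|\,|\Omega|^{1/2}>0$ while the right-hand side vanishes; only the mass-weighted centering makes $\int_\Omega\rho_n\mathbf{v}_n=0$ exact, which is the identity your limit passage hinges on. Second, your treatment of the range $a\in(1,6/5]$ is too optimistic: there $a'\ge 6$, and the obstruction is not merely the loss of compactness of $W^{1,2}\hookrightarrow L^{a'}$ but the fact that for $a<6/5$ the pairing $\int_\Omega\rho\,\mathbf{u}$ need not even be finite for $\rho\in L^{a}$ and $\mathbf{u}\in W^{1,2}\subset L^{6}$, so no interpolation between the $L^{a}$ bound and the $L^{1}$ bound on $\rho_n$ will rescue the weak--strong convergence $\int_\Omega\rho_n(\mathbf{v}_n-\mathbf{v})\to 0$; in that regime the lemma should be stated in the form $\|\mathbf{u}\|_{L^2}\le C(\|\nabla\mathbf{u}\|_{L^2}+\int_\Omega\rho|\mathbf{u}|)$, which is trivially true when the last integral is infinite. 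This caveat is immaterial for the paper, which only invokes the lemma with the pressure exponent $a>6$ (and even $a>3/2$ would suffice for your argument as written), but the phrase ``elementary interpolation'' papers over a genuine failure rather than a technicality.
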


\begin{lemma}[Theorem 10.17 in~\cite{MR2499296}]\label{lem:Korn_Poincaré}
Let $\Omega\subset\R^{3}$ be a bounded Lipschitz domain, and let $1<p<+\infty$, $M_{0}>0$, $K>0$, $a>1$. Then there exists a postive constant $C=C(p,M_{0},K,a)$ such that the inequality
\begin{equation*}
\norm{\bold{u}}_{W^{1,p}(\Omega;\R^{3})}\le C\left(\norm{\nabla\bold{u}+\nabla^{T}\bold{u}-\f{2}{3}\Div\bold{u}\mathbb{I}}_{L^{p}(\Omega;\R^{3\times 3})}+\int_{\Omega}\rho|\bold{u}|\diff x\right),
\end{equation*}
holds for any $\bold{u}\in W^{1,p}(\Omega;\R^{3})$ and any non-negative function $\rho$ such that
\begin{equation*}
0<M_{0}\le \int_{\Omega}\rho \diff x, \quad \int_{\Omega}\rho^{a}\diff x\le K.     
\end{equation*}
\end{lemma}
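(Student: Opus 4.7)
The plan is a two-step strategy combining a \emph{pure} Korn-type inequality with a compactness/contradiction argument that absorbs the weighted lower-order term $\int_{\Omega}\rho|\mathbf{u}|\diff x$. Step 1 is to prove the unweighted version
\begin{equation*}
\norm{\mathbf{u}}_{W^{1,p}(\Omega;\R^{3})}\le C\left(\norm{\mathcal{D}(\mathbf{u})}_{L^{p}(\Omega;\R^{3\times 3})}+\norm{\mathbf{u}}_{L^{p}(\Omega;\R^{3})}\right),
\end{equation*}
with the shorthand $\mathcal{D}(\mathbf{u}):=\nabla\mathbf{u}+\nabla\mathbf{u}^{T}-\f{2}{3}\Div(\mathbf{u})\mathbb{I}$. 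This is the classical generalized Korn inequality for the traceless symmetric gradient; on a bounded Lipschitz domain it follows from the pointwise identity $|\nabla\mathbf{u}+\nabla\mathbf{u}^{T}|^{2}=|\mathcal{D}(\mathbf{u})|^{2}+\f{4}{3}(\Div \mathbf{u})^{2}$, the algebraic relation $\Div\mathcal{D}(\mathbf{u})=\Delta\mathbf{u}+\f{1}{3}\nabla\Div\mathbf{u}$, and Ne\v{c}as' lemma ($\norm{\pi}_{L^{p}/\R}\le C\norm{\nabla\pi}_{W^{-1,p}}$), which together allow us to recover the full deformation tensor and hence $\nabla\mathbf{u}$ up to $L^{p}$ corrections.

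Step 2 is to upgrade $\norm{\mathbf{u}}_{L^{p}}$ to the weighted functional by contradiction. Assume the inequality fails: then there exist sequences $(\mathbf{u}_{n},\rho_{n})$ with $\norm{\mathbf{u}_{n}}_{W^{1,p}}=1$, $\int_{\Omega}\rho_{n}\diff x\ge M_{0}$, $\int_{\Omega}\rho_{n}^{a}\diff x\le K$, but
\begin{equation*}
\norm{\mathcal{D}(\mathbf{u}_{n})}_{L^{p}(\Omega)}+\int_{\Omega}\rho_{n}|\mathbf{u}_{n}|\diff x\longrightarrow 0\quad \text{as } n\to\infty.
\end{equation*}
Up to subsequences, Rellich-Kondrachov gives $\mathbf{u}_{n}\rightharpoonup \mathbf{u}$ in $W^{1,p}$ and $\mathbf{u}_{n}\to\mathbf{u}$ strongly in every $L^{q}$ with $q$ below the Sobolev exponent, while $\rho_{n}\rightharpoonup \rho$ weakly in $L^{a}$. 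Weak lower semicontinuity forces $\mathcal{D}(\mathbf{u})=0$, so $\mathbf{u}$ solves the conformal Killing equation $\nabla\mathbf{u}+\nabla\mathbf{u}^{T}=\f{2}{3}(\Div\mathbf{u})\mathbb{I}$; solutions form a finite-dimensional space of real-analytic vector fields on $\Omega$. Choosing the strong convergence exponent $q=a'$ (which is possible since $\Omega$ is bounded and $a>1$) and combining with the weak $L^{a}$-convergence of $\rho_{n}$, we pass to the limit in the product to get $\int_{\Omega}\rho|\mathbf{u}|\diff x=0$. Since $\rho\ge 0$ and $\int_{\Omega}\rho\diff x\ge M_{0}>0$, the set $\{\rho>0\}$ has positive Lebesgue measure, so $\mathbf{u}$ vanishes there; analyticity then forces $\mathbf{u}\equiv 0$ on $\Omega$. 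But Step 1 combined with the strong $L^{p}$ convergence yields $\norm{\mathbf{u}}_{W^{1,p}}=\lim_{n}\norm{\mathbf{u}_{n}}_{W^{1,p}}=1$, a contradiction.

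The principal obstacle is Step 1: the generalized Korn inequality for the traceless part of the symmetric gradient on a merely Lipschitz domain is delicate and fundamentally relies on Ne\v{c}as' inversion lemma for the divergence, whose $L^{p}$ version requires Calder\'on-Zygmund theory rather than a straightforward Fourier multiplier argument. A secondary technical point in Step 2 is the exponent bookkeeping needed to extract a strong limit of $\mathbf{u}_{n}$ in a space dual to $L^{a}$: when $p$ or $a$ is close to $1$ the Sobolev embedding is tight, but the boundedness of $\Omega$ together with Hölder's inequality always brings us into a range where the embedding is compact, so the argument goes through without any essential complication. The rigidity of conformal Killing fields in $\R^{3}$, which closes the contradiction, is classical.
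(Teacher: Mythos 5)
First, note that the paper does not prove this lemma at all: it is imported verbatim as Theorem 10.17 of the cited monograph of Feireisl and Novotn\'y, so your proposal is being measured against the standard proof of that theorem rather than against anything in the text. Your architecture (generalized Korn inequality for the traceless symmetric gradient, then a compactness/contradiction argument to absorb the weighted lower-order term, closed by the rigidity of conformal Killing fields) is exactly the standard one, and Step 1, including the identity $\Div\mathcal{D}(\mathbf{u})=\Delta\mathbf{u}+\f{1}{3}\nabla\Div\mathbf{u}$ and the appeal to Ne\v{c}as' lemma, is fine as a sketch.

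There is, however, a genuine gap in Step 2, precisely at the point you dismiss as a ``secondary technical point''. You need to pass to the limit in $\int_{\Omega}\rho_{n}|\mathbf{u}_{n}|\diff x$ by pairing $\rho_{n}\rightharpoonup\rho$ in $L^{a}$ against $|\mathbf{u}_{n}|\to|\mathbf{u}|$ strongly in $L^{a'}$, and you claim that the compact Sobolev embedding always reaches the exponent $q=a'$ because $\Omega$ is bounded. This is false: boundedness of $\Omega$ and H\"older's inequality only let you \emph{decrease} integrability exponents, never increase them, while $W^{1,p}$ embeds compactly only into $L^{q}$ with $q<p^{*}=3p/(3-p)$. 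Since the lemma is claimed for all $1<p<\infty$ and all $a>1$, take for instance $p=1.1$ and $a=1.1$: then $p^{*}\approx 1.74$ while $a'=11$, so no strong convergence of $\mathbf{u}_{n}$ in $L^{a'}$ is available and the duality pairing argument collapses. The correct mechanism does not use $L^{a}$--$L^{a'}$ duality at all: it uses the fact that the bound $\int_{\Omega}\rho_{n}^{a}\diff x\le K$ with $a>1$ makes $\{\rho_{n}\}$ equi-integrable. Concretely, since the limit $\mathbf{u}$ is a nonzero conformal Killing field (a polynomial of degree at most two, hence bounded, with Lebesgue-null zero set), the set $Z_{\delta}=\{|\mathbf{u}|<\delta\}$ has small measure for small $\delta$; by Egorov, $\mathbf{u}_{n}\to\mathbf{u}$ uniformly off a set $E_{\eta}$ of measure $\eta$; and then
\begin{equation*}
\int_{\Omega}\rho_{n}|\mathbf{u}_{n}|\diff x\ \ge\ \bigl(\delta-\sup_{\Omega\setminus E_{\eta}}|\mathbf{u}_{n}-\mathbf{u}|\bigr)\left(\int_{\Omega}\rho_{n}\diff x-K^{1/a}\,|Z_{\delta}\cup E_{\eta}|^{1/a'}\right)\ \ge\ \f{\delta M_{0}}{4}>0
\end{equation*}
for $\delta,\eta$ small and $n$ large, contradicting $\int_{\Omega}\rho_{n}|\mathbf{u}_{n}|\diff x\to 0$. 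With this replacement (and with the observation, which you state only implicitly, that Step 1 applied to $\mathbf{u}_{n}-\mathbf{u}_{m}$ upgrades the weak convergence to strong convergence in $W^{1,p}$, so that $\norm{\mathbf{u}}_{W^{1,p}}=1$), the proof closes.
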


\begin{proof}[Proof of Proposition~\ref{prop:mass_fraction_reg}]
We admit the existence of solutions and focus on a priori estimates. We multiply Equation~\eqref{eq:mass_fraction_reg} by $-\Delta c$. Using the boundary conditions and integrating in space yields

%%%%%%%%%%%%%%%%%%%%%%%%%%%%%Comment: attempt on estimates on c in L^2%%%%%%%%%%%%%%%%%%%%%%%%%%%%%%%%%%%%%%%%%%%

\commentout{
{\color{red} We multiply Equation~\eqref{eq:mass_fraction_reg} by $c$ to obtain
\begin{equation*}
\p_{t}\int_{\Omega}\f{c^{2}}{2}\diff x+\int_{\Omega}b(c)\nabla\mu\cdot\nabla\left(\f{c}{\rho}\right)\diff x=\int_{\Omega}\Div(\bv_{n})\f{c^{2}}{2}\diff x+\int_{\Omega}c^{2}G(p_{0})\diff x.     
\end{equation*}

The term of the right-hand side can be estimated using the $L^{\infty}$ norm of $\Div (\bv_{n})$ and $G$. There exists a constant $C$ such that
\begin{equation*}
  \int_{\Omega}\Div(\bv_{n})\f{c^{2}}{2}\diff x+\int_{\Omega}c^{2}G(p_{0})\diff x\le C\norm{c}_{L^{2}(\Omega)}^{2}.    
\end{equation*}
We turn our attention to the second term on the left-hand side. Using the Neumann boundary conditions of $\rho$ and $c$ we obtain
\begin{align*}
\int_{\Omega}b(c)\nabla\mu\cdot\nabla\left(\f{c}{\rho}\right)\diff x&=-\int_{\Omega}b'(c) \mu \nabla c\cdot\nabla\left(\f{c}{\rho}\right)\diff x-\int_{\Omega}b(c)\mu\Delta\left(\f{c}{\rho}\right)\diff x\\
&=-\int_{\Omega}b'(c) \mu \nabla c\cdot\nabla\left(\f{c}{\rho}\right)\diff x-\int_{\Omega}b(c)\f{\p\psi_{0}}{\p c}\Delta \left(\f{c}{\rho}\right)\diff x+\int_{\Omega}b(c)\f{\Delta c}{\rho}\Delta\left(\f{c}{\rho}\right)\\
&=I_{1}+I_{2}+I_{3}.
\end{align*}
We start with $I_{3}$. We use the formula 
\begin{equation}\label{eq:laplacianfrac}
\Delta\left(\f{c}{\rho}\right)=\f{\Delta c}{\rho}-2\f{\nabla c\cdot\nabla\rho}{\rho^{2}}-c\Div\f{\nabla\rho}{\rho^{2}}.     
\end{equation}
With this formula, we write
\begin{equation*}
    I_{3}=\int_{\Omega}b(c)\f{(\Delta c)^{2}}{\rho^{2}}+J_{3}, \quad |J_{3}|\le C\int_{\Omega}(|c|+|\nabla c|)|\Delta c|. 
\end{equation*}
Using assumptions on $\psi_{0}$ and $b(c)$ we obtain in a similar way
\begin{equation*}
|I_{2}|\le C\int_{\Omega}(|c|+|\nabla c|)|\Delta c|.
\end{equation*}
We focus on $I_{1}$. It can be written as
\begin{equation*}
I_{1}=-\int_{\Omega}b'(c)\f{\p\psi_{0}}{\p c}\nabla c\cdot\nabla\left(\f{c}{\rho}\right)\diff x+\int_{\Omega}b'(c)\f{\Delta c}{\rho}\nabla c\cdot\nabla\left(\f{c}{\rho}\right)\diff x=J_{1}+J_{2}.     
\end{equation*}
Using assumptions on $\psi_{0}$ we obtain
\begin{equation*}
|J_{1}|\le C\int_{\Omega}b'(c)(|c||\nabla c|^{2}+|c|^{2}|\nabla c|)\diff x,\quad |J_{2}|\le C \int_{\Omega} b'(c)|\Delta c|(|\nabla c|^{2}+|c||\nabla c|).      
\end{equation*}
}
{\color{purple}Pour la plupart des termes il faut b'(c)*c borné (donc rajouter hypothèse sur $c$) le terme dur est le premier dans $J_2$. \\
Calcul au dessus probablement impossible.}
............................................................................................................................................................
}

%%%%%%%%%%%%%%%%%%%%%%%%%%%%%Comment: attempt on estimates on c in L^2%%%%%%%%%%%%%%%%%%%%%%%%%%%%%%%%%%%%%%%%%%%

\begin{multline*}
\p_{t}\int_{\Omega}\f{|\nabla c|^{2}}{2}\diff x+\gamma\int_{\Omega}b(c)\left|\nabla\left(\f{\Delta c}{\rho}\right)\right|^{2}\diff x\\=\int_{\Omega}\f{1}{2}\Div(\bv_{n})|\nabla c|^{2}-\nabla\bv_{n}:\nabla c\otimes\nabla c \diff x +\int_{\Omega}b(c)\nabla\left(\f{\p\psi_{0}}{\p c}\right)\cdot\nabla\left(\f{\Delta c}{\rho}\right)\diff x-\int_{\Omega}\f{F_c}{\rho}\Delta c.    
\end{multline*}
Here, we have also used the formula~\eqref{eq:formulaDelta}. We use the $L^{\infty}$ bounds on $\bv_{n}$, $\Div(\bv_{n})$, $b(c),\rho$ the fact that $\f{F_{c}}{\rho}$ is also bounded in $L^{\infty}$, properties on $\p_{c}\psi_{0}$~\eqref{ass:Q_reg}, and obtain
\begin{equation*}
\p_{t}\int_{\Omega}\f{|\nabla c|^{2}}{2}\diff x+\gamma\int_{\Omega}b(c)\left|\nabla\left(\f{\Delta c}{\rho}\right)\right|^{2}\diff x\\ \le C\int_{\Omega}|\nabla c|^{2}\diff x+ C\int_{\Omega}\iffalse|\nabla c|\fi\left|\nabla\f{\Delta c}{\rho}\right|\diff x+ C\int_{\Omega}|\Delta c|.
\end{equation*}
We want to control the last term on the right-hand side. We use Lemma~\ref{lemma:PW} with $\bold{u}=\f{\Delta c}{\rho}(1,0,0)^{T}$ and obtain, together with Neumann boundary conditions on $c$, 
\begin{equation}\label{eq:regularity_laplace_c}
\left\|\f{\Delta c}{\rho}\right\|_{L^{2}(\Omega)}\le C\left\|\nabla\left(\f{\Delta c}{\rho}\right)\right\|_{L^{2}(\Omega;\R^{3})}.     
\end{equation}
Then, writing $\Delta c=\rho\f{\Delta c}{\rho}$ and using the $L^{\infty}$ bound on $\rho$,
\begin{equation*}
\int_{\Omega}|\Delta c|\le C\left\|\nabla\left(\f{\Delta c}{\rho}\right)\right\|_{L^{2}(\Omega;\R^{3})}. 
\end{equation*}
Finally, using Young's inequality and Gronwall's lemma, we obtain
\begin{equation}\label{eq:energy_c_reg}
\sup_{t\in(0,T)}\int_{\Omega}|\nabla c|^{2}\diff x+\gamma\int_{0}^{T}\int_{\Omega}b(c)\left|\nabla\left(\f{\Delta c}{\rho}\right)\right|^{2}\diff x\\ \le C. 
\end{equation}

With Lemma~\ref{lemma:PW} (and integrating the equation on $\rho c$ using also the boundary conditions) we obtain the bound
\begin{equation}\label{est:c_reg}
c\in L^{\infty}(0,T;H^{1}(\Omega))\cap L^{2}(0,T;H^{3}(\Omega)). 
\end{equation}

\iffalse Then, by definition of $\mu$ and Lemma~\ref{lemma:PW} \begin{equation}\label{est:mu_reg}
\mu\quad \in L^{2}(0,T;H^{1}(\Omega)).      
\end{equation}
Finally, using the two previous bounds and the equation on $c$ we also get 
\begin{equation*}
 \p_{t} c \in L^{2}(0,T;H^{-1}(\Omega)).    
\end{equation*}
\fi
\end{proof}

Having defined $\rho$ and $c$, we now solve Equation~\eqref{eq:velocity_reg} with a fixed point argument. We define the operator 
\begin{equation*}
\mathcal{M}[\rho]:X_{n}\to X_{n}^{*},\quad  \langle\mathcal{M}[\rho]\bv,\bold{w}\rangle:=\int_{\Omega}\rho \bv\cdot \bold{w} dx,\quad \bv,\bold{w}\in X_{n}.      
\end{equation*}
This operator (~\cite{feireisl-book}) $\mathcal{M}[\rho]$ is invertible, and 
\begin{equation}
\label{propertyM1}
\norm{\mathcal{M}^{-1}[\rho]}_{\mathcal{L}(X_{n}^{*};X_{n})} \le \f{1}{\inf_{\Omega}\rho},\qquad \norm{\mathcal{M}^{-1}[\rho_{1}]-\mathcal{M}^{-1}[\rho_{2}]}_{\mathcal{L}(X_{n}^{*};X_{n})}\le C(n,\underline{\rho})\norm{\rho_{1}-\rho_{2}}_{L^{1}(\Omega)},
\end{equation}
for any $\rho_{1},\rho_{2}\ge \underline{\rho}$. Finally, Equation~\eqref{eq:velocity_reg} can be reformulated as
\begin{equation}
\label{rephrase1}
\bv_{n}(t)=\mathcal{M}^{-1}[\rho(t)]\left(\bold{m}_{0}^{*}+\int_{0}^{t}\mathcal{N}[\bv_{n}(s),\rho(s),c(s)]\diff s\right), 
\end{equation}
with 
\begin{equation*}
    \langle \bold{m}^{*}_{0},\eta \rangle=\int_{\Omega}\bold{m}_{0}\cdot\eta \diff x,  
\end{equation*}
and
\begin{multline*}
\langle \mathcal{N}[\bv_{n},\rho,c],\eta\rangle=\int_{\Omega}\left(\rho \bv_{n}\otimes\bv_{n}-\mathbb{T}-\f{\gamma}{2} |\nabla c|^{2}\mathbb{I} + \gamma\nabla c\otimes \nabla c\right):\nabla\eta+p(\rho,c)\Div(\eta)\\
-(\eps\nabla\bv_{n}\nabla\rho+\kappa(\rho,c)\bv_{n})\cdot\eta\diff x.  
\end{multline*}

To prove that Equation~\eqref{rephrase1} has a solution, we apply Schauder's fixed-point theorem in a short time interval $[0,T(n)]$. Then, we need uniform estimates to iterate the procedure.

\begin{lemma}[Schauder Fixed Point Theorem]\label{Schauder}
Let $X$ be a Hausdorff topological vector space and $S$ be a closed, bounded, convex, and non-empty subset of $X$. Then, any compact operator $A : S \to S$ has at least one fixed point.
\end{lemma}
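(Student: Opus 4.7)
The plan is the classical reduction to Brouwer's fixed point theorem via finite-dimensional approximation (the Schauder projection argument). The leverage point is that compactness of $A$ forces $\overline{A(S)}$ to be a compact subset of $X$, hence totally bounded, while the hypotheses on $S$ (closed, convex) will be used to produce the limit point inside $S$.

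First, for a fixed $\varepsilon > 0$, I would use total boundedness of $\overline{A(S)}$ to cover it by finitely many balls $B(y_1,\varepsilon), \ldots, B(y_N,\varepsilon)$ with centers $y_i \in A(S) \subset S$. With a continuous partition of unity $\{\phi_i\}_{i=1}^N$ subordinate to this cover, I would define the Schauder projection
\[
P_\varepsilon(y) := \sum_{i=1}^N \phi_i(y)\, y_i,
\]
which maps $\overline{A(S)}$ continuously into the finite-dimensional convex set $K_N := \mathrm{conv}\{y_1,\ldots,y_N\} \subset S$ and satisfies $\|P_\varepsilon(y) - y\| < \varepsilon$ pointwise (on a Banach or locally convex space; a seminorm suffices in the latter case).

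Second, the composition $A_\varepsilon := P_\varepsilon \circ A$ restricts to a continuous map $K_N \to K_N$. Since $K_N$ is homeomorphic to a closed ball in a Euclidean space of dimension at most $N-1$, Brouwer's fixed point theorem applies and yields $x_\varepsilon \in K_N \subset S$ with $A_\varepsilon(x_\varepsilon) = x_\varepsilon$; this gives an approximate fixed point in the sense that $\|x_\varepsilon - A(x_\varepsilon)\| < \varepsilon$.

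Third, taking a sequence $\varepsilon_n \to 0$, I would use compactness of $\overline{A(S)}$ to extract a subsequence along which $A(x_{\varepsilon_n}) \to x^*$. The estimate $\|x_{\varepsilon_n} - A(x_{\varepsilon_n})\| < \varepsilon_n$ then forces $x_{\varepsilon_n} \to x^*$ as well, so $x^* \in S$ by closedness, and $A(x^*) = x^*$ by continuity of $A$. The only genuinely delicate point is the construction of the partition of unity, which requires some local convexity of $X$: the Banach/Hilbert settings relevant to the paper's application pose no difficulty, whereas the version for an arbitrary Hausdorff TVS is essentially Tychonoff's theorem and demands either additional hypotheses on $X$ or a more intricate argument via locally convex neighborhoods.
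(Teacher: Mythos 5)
The paper does not prove this lemma at all: it is quoted as a classical result (Schauder's fixed point theorem) and immediately applied to the operator $A$ on the ball $S=B(\mathbf{u}_{0,n})$ in the Banach space $C([0,T];X_n)$, so there is no in-paper argument to compare against. Your proposal is the standard and correct proof by Schauder projection: finite $\varepsilon$-net of the compact set $\overline{A(S)}$, partition of unity giving $P_\varepsilon$ with values in $\mathrm{conv}\{y_1,\dots,y_N\}\subset S$ and $\|P_\varepsilon(y)-y\|<\varepsilon$, Brouwer on the finite-dimensional compact convex set, and a compactness extraction to pass from approximate to exact fixed points. All steps are sound, and you rightly flag the one genuine subtlety: the lemma as stated in the paper claims the result for an arbitrary Hausdorff topological vector space, whereas the projection argument needs local convexity (the fully general Hausdorff TVS case is a much deeper theorem). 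Since the paper only ever invokes the lemma in a Banach space, your proof covers everything that is actually used.
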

With notation of the lemma~\ref{Schauder}, we call $A$ the operator from Equation~\eqref{rephrase1} and $S=B(\bold{u}_{0,n})$ the unit ball with center $\bold{u}_{0,n}$ in $C([0,T];X_{n})$,  $\bold{u}_{0,n}$ is defined by 
\begin{equation*}
\int_{\Omega}\rho_{0}\bold{u}_{0,n}\cdot \eta \diff x=\int_{\Omega}\bold{m}_{0}\cdot\eta \diff x,\quad \forall \eta\in X_{n}.     
\end{equation*}

More precisely, we consider 
\begin{align*}
A:\,&S\to C([0,T];X_{n}),\\
&\bold{u}\mapsto \mathcal{M}^{-1}[\rho(t)]\left(\bold{m}_{0}^{*}+\int_{0}^{t}\mathcal{N}[\bold{u}(s),\rho(s),c(s)]\diff s\right). 
\end{align*}

\begin{lemma}\label{lem:fixed_point_reg}
There exists a time $T=T(n)$ small enough such that the operator $A$ maps $S$ into itself. Moreover, the mapping is continuous.  
\end{lemma}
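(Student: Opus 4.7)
My plan is to exploit the finite dimensionality of $X_n$, on which all norms are equivalent, together with the invertibility properties of $\mathcal{M}[\rho]$ recorded in~\eqref{propertyM1} and the regularity of $\rho[\bv_n]$ and $c[\bv_n]$ from Propositions~\ref{prop:density_reg}--\ref{prop:mass_fraction_reg}. First I would observe that for any $\bold{u}\in S$ we have $A(\bold{u})(0)=\mathcal{M}^{-1}[\rho_0]\bold{m}_0^{*}=\bold{u}_{0,n}$ by the very definition of $\bold{u}_{0,n}$. Thus, to show $A(S)\subset S$ it suffices to bound $\|A(\bold{u})(t)-\bold{u}_{0,n}\|_{X_n}$ uniformly for $\bold{u}\in S$ on a sufficiently short interval $[0,T(n)]$.

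To that end I would write the decomposition
\[
A(\bold{u})(t)-\bold{u}_{0,n} = \bigl(\mathcal{M}^{-1}[\rho(t)]-\mathcal{M}^{-1}[\rho_0]\bigr)\bold{m}_0^{*}+\mathcal{M}^{-1}[\rho(t)]\int_0^{t}\mathcal{N}[\bold{u}(s),\rho(s),c(s)]\diff s.
\]
The first term tends to $0$ as $t\to 0$ by the Lipschitz estimate in~\eqref{propertyM1} together with the fact that, by Proposition~\ref{prop:density_reg}, $\rho=\rho[\bold{u}]$ belongs to $C(0,T;C^{2+\beta}(\overline\Omega))$ and satisfies $\rho(t)\to\rho_0$ in $L^1(\Omega)$ as $t\to 0$; Lemma~\ref{lem:maximum_principle} gives a uniform lower bound $\rho\ge\underline\rho>0$ on a short time interval, so the constant $C(n,\underline\rho)$ is controlled. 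For the second term, the first bound in~\eqref{propertyM1} yields $\|\mathcal{M}^{-1}[\rho(t)]\|\le 1/\underline\rho$, while each contribution to $\mathcal{N}$ can be tested against an arbitrary $\eta\in X_n$ and bounded using: $\bold{u}\in S$ bounded in $X_n\subset C^2(\overline\Omega)$, $\rho$ bounded in $C(0,T;C^2)$, $c$ bounded in $L^\infty(0,T;H^1)$ (which controls $\int|\nabla c|^2\Div\eta$ via the $C^2$-norm of $\eta$), and the boundedness of $\nu,\kappa$ and the pressure law. This gives $\|\int_0^{t}\mathcal{N}\diff s\|_{X_n^{*}}\le t\,K(n)$ for a constant $K(n)$ depending only on the radius of $S$, $\eps$, $n$ and the data. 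Choosing $T(n)$ small enough so that both pieces are $\le 1/2$ yields $A(\bold{u})\in S$.

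For continuity of $A:S\to S$ (in the $C([0,T];X_n)$ topology), I would argue by composition. Given $\bold{u}_k\to\bold{u}$ in $C([0,T];X_n)$, the continuous dependence of the solution of~\eqref{eq:density_reg} on the drift (which follows from standard parabolic theory for the linear equation~\eqref{eq:density_reg}, once one notes that $\bold{u}_k\to\bold{u}$ in $C(0,T;C^2(\overline\Omega))$ by finite-dimensionality of $X_n$) gives $\rho[\bold{u}_k]\to\rho[\bold{u}]$ in $C(0,T;C^{2}(\overline\Omega))$. A similar argument applied to~\eqref{eq:mass_fraction_reg}, using the positive lower bound on $\rho$ and the non-degeneracy~\eqref{eq:cond-mot} of $b$, yields $c[\bold{u}_k]\to c[\bold{u}]$ in $L^\infty(0,T;H^1(\Omega))$. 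Plugging these convergences into the explicit form of $\mathcal{N}$ and using once more the second estimate in~\eqref{propertyM1} to pass to the limit in $\mathcal{M}^{-1}[\rho_k(t)]$ delivers $A(\bold{u}_k)\to A(\bold{u})$ in $C([0,T];X_n)$.

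The main obstacle I expect is the continuous dependence $\bold{u}\mapsto(\rho[\bold{u}],c[\bold{u}])$, especially for $c$, since~\eqref{eq:mass_fraction_reg} is fourth-order with coefficients depending nonlinearly on $\rho$. However, because we work on a fixed finite-dimensional $X_n$ where velocities are automatically smooth, and because the truncation parameter $\eps_Q$ makes all coefficients of the $c$-equation uniformly smooth, this reduces to a standard well-posedness and stability statement for a linear parabolic equation with smooth coefficients, so it can be established by subtracting the two equations, using~\eqref{eq:energy_c_reg} for uniform $H^3$-bounds and Grönwall's inequality — avoiding any delicate estimate of the kind needed in the passage to the limits $n\to\infty$ or $\eps_Q\to 0$.
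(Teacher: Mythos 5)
Your proof follows essentially the same route as the paper's: reduce the self-mapping property to a short-time smallness estimate for $\int_0^t\mathcal{N}(s)\,\diff s$ in $X_n^*$ measured against the lower bound on $\rho$ coming from Lemma~\ref{lem:maximum_principle}, using the finite-dimensionality of $X_n$, the operator bounds~\eqref{propertyM1}, and the uniform bounds of Propositions~\ref{prop:density_reg} and~\ref{prop:mass_fraction_reg}; your explicit treatment of the term $\bigl(\mathcal{M}^{-1}[\rho(t)]-\mathcal{M}^{-1}[\rho_0]\bigr)\mathbf{m}_0^{*}$ and of the continuity of $A$ is in fact somewhat more careful than the paper, which passes over both points, and your estimate of the capillary term via $\norm{\nabla c}_{L^\infty(0,T;L^2)}^2$ sidesteps the $L^4$-interpolation the paper invokes. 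One minor inaccuracy: the claimed bound $t\,K(n)$ is slightly too strong, because $\nu(c)$ and $\kappa(\rho,c)$ are only assumed bounded in $L^2((0,T)\times\Omega)$, so Cauchy--Schwarz in time yields $\sqrt{t}\,K(n)$ (this is the origin of the $\sqrt{T}$ in the paper's estimate); the conclusion is unaffected since this still vanishes as $t\to 0$.
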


\begin{proof}
By definition of $A$ and $\bold{m}_{0}^{*}$, we need to prove that 
$\norm{\mathcal{M}^{-1}[\rho(t)]\int_{0}^{t}\mathcal{N}(s)ds}_{C(0,T;X_{n})}\le 1$. With properties~\eqref{propertyM1}, it is sufficient to prove that there exists a final time $T$ small enough such that 
\begin{equation*}
\left\|\int_{0}^{t}N(s)\diff s\right\|_{C(0,T;X_{n}^{*})}\le \inf_{\Omega_{T}}\rho.    
\end{equation*}
Note that the infimum of $\rho$ needs to be taken over the set $\Omega_{T}=(0,T)\times\Omega$ as $\rho$ depends on time. But, since we only consider small times, using Lemma~\ref{lem:maximum_principle} we see that this infimum is bounded by below. More precisely, for every $T_0$, there exists $C(T_0)>0$ such that for every $T\le T_0$, $\inf_{\Omega_T}\rho\ge C(T_0)$. We recall that $X_{n}\subset C^{2}(\Omega;\R^3)$ is finite-dimensional. {With the definition of the tensor $\mathbb{T}$ and the pressure $p(\rho,c)$ given by~\eqref{eq:tensorT}-\eqref{eq:def-pressure} we estimate by Hölder's inequality:}
\begin{align*}
&\int_{0}^{t}\int_{\Omega}(\rho \bold{u}\otimes\bold{u}-\mathbb{T}-\f{\gamma}{2} |\nabla c|^{2}\mathbb{I} +\gamma \nabla c\otimes \nabla c
):\nabla\eta+p(\rho,c)\Div(\eta)-(\eps\nabla\bold{u}\nabla\rho+\kappa(\rho,c)\bold{u})\cdot\eta\diff x \diff s\\
&\le { C (\sqrt{T}+T)}(\norm{\eta}_{X_{n}}+\norm{\nabla\eta}_{X_{n}})(\norm{\rho}_{L^{\infty}}\norm{\bold{u}}_{L^\infty}^2+C\norm{\nu(c)}_{L^{2}}\norm{\nabla\bold{u}}_{L^\infty}+C\norm{\nabla c}_{L^{4}}^2+\norm{\rho}_{L^{\infty}}^{a}\\&+\norm{\rho}_{L^{\infty}}\norm{H(c)}_{L^{\infty}}+\eps\norm{\bold{u}}_{X_{n}}\norm{\nabla\rho}_{L^{\infty}}+\norm{\bold{u}}_{L^{\infty}}\norm{\kappa(\rho,c)}_{L^{2}}).
\end{align*}
Using assumptions of the subsection~\ref{subsec:assumptions} and Propositions~\ref{prop:density_reg}-\ref{prop:mass_fraction_reg}, we prove that all the quantities on the right-hand side are bounded, {with a bound that may depend on $n$}, except $\norm{\nabla c}_{L^4}$ which needs an argument. Note that from~\eqref{eq:set_W}, we deduce $\nabla c$ is bounded in $L^{2}(0,T;H^{2}(\Omega))\cap L^{\infty}(0,T;L^{2}(\Omega))$ (by a constant which depends on $\rho$, and also on $\norm{\bold{u}}_{L^\infty}, \norm{\nabla\bold{u}}_{L^\infty}$). {By Sobolev embedding with $d=3$, $\nabla c$ is bounded in $L^{2}(0,T;L^{\infty}(\Omega))\cap L^{\infty}(0,T;L^{2}(\Omega))$. By Hölder inequality (or interpolation), we obtain an $L^{4}(0,T;L^{4}(\Omega))$ bound: $\|\nabla c\|_{L^{4}L^{4}}^4 \le \|\nabla c\|_{L^{\infty}L^2}^2 \|\nabla c\|_{L^2 L^\infty}^2$}. With the previous estimates, and for $T$ small enough, we obtain the result. 
\end{proof}

\begin{lemma}
The image of $S$ under $A$ is in fact a compact subset of $S$. Therefore, $A$ admits a fixed point. 
\end{lemma}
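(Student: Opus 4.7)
The previous lemma has already established that $A$ maps $S$ continuously into $S$ for some short time $T=T(n)$, so by Schauder's theorem (Lemma~\ref{Schauder}) it only remains to upgrade this into compactness of $A(S)$ in $C([0,T];X_n)$. Since $X_n$ is finite-dimensional, the plan is to apply the Arzelà--Ascoli theorem: pointwise (in time) the values $A\bold{u}(t)$ already lie in the bounded set $S(t)\subset X_n$, so we only need uniform equicontinuity in time on the family $\{A\bold{u}:\bold{u}\in S\}$.

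The natural decomposition to control $A\bold{u}(t_1)-A\bold{u}(t_2)$ for $0\le t_2<t_1\le T$ is
\begin{equation*}
A\bold{u}(t_1)-A\bold{u}(t_2)=\bigl(\mathcal{M}^{-1}[\rho(t_1)]-\mathcal{M}^{-1}[\rho(t_2)]\bigr)\!\left(\bold{m}_0^*+\int_0^{t_2}\!\!\mathcal{N}[\bold{u}(s),\rho(s),c(s)]\,\diff s\right)+\mathcal{M}^{-1}[\rho(t_1)]\!\int_{t_2}^{t_1}\!\!\mathcal{N}\,\diff s.
\end{equation*}
For the first term, I would invoke the second estimate of~\eqref{propertyM1}, namely the Lipschitz bound on $\rho\mapsto \mathcal{M}^{-1}[\rho]$ with respect to $\|\cdot\|_{L^1(\Omega)}$, together with the time regularity of $\rho$ given by Proposition~\ref{prop:density_reg} (which provides $\partial_t\rho\in C(0,T;C^\beta(\overline\Omega))$, hence $\rho\in\mathrm{Lip}(0,T;L^1(\Omega))$). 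The quantity inside the parentheses is bounded uniformly in $\bold{u}\in S$ by the estimate of $\mathcal{N}$ already carried out in the proof of Lemma~\ref{lem:fixed_point_reg} (bounds on $\rho$, $c$, $\nabla c$, $\nu(c)$, $\kappa(\rho,c)$ and the fact that $\bold{u}\in S$ is bounded in the finite-dimensional space $X_n$, where all norms are equivalent). For the second term, the first estimate in~\eqref{propertyM1} gives $\|\mathcal{M}^{-1}[\rho(t_1)]\|\le(\inf_{\Omega_T}\rho)^{-1}$, which is finite thanks to Lemma~\ref{lem:maximum_principle}, while $\|\int_{t_2}^{t_1}\mathcal{N}\,\diff s\|_{X_n^*}\le C\,|t_1-t_2|$ by the very same uniform bound on $\mathcal{N}$. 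Putting these together one obtains
\begin{equation*}
\|A\bold{u}(t_1)-A\bold{u}(t_2)\|_{X_n}\le C(n,T_0)\,|t_1-t_2|,\qquad \forall\,\bold{u}\in S,
\end{equation*}
with a constant independent of $\bold{u}\in S$, which is the required equicontinuity.

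Since $X_n$ is a finite-dimensional normed space, the pointwise boundedness $A\bold{u}(t)\in S(t)\Subset X_n$ together with the uniform Lipschitz bound above yields, by Arzelà--Ascoli, the relative compactness of $A(S)$ in $C([0,T];X_n)$. Combined with the continuity of $A$ proved in Lemma~\ref{lem:fixed_point_reg}, $A:S\to S$ is a compact operator on the closed, bounded, convex, non-empty set $S$, and Schauder's fixed point theorem (Lemma~\ref{Schauder}) delivers a fixed point $\bold{u}=\bold{u}_n\in S$, which by~\eqref{rephrase1} is a solution $\bold{v}_n\in C([0,T(n)];X_n)$ of the approximate velocity equation~\eqref{eq:velocity_reg}. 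I expect the only mildly delicate point to be checking that the Lipschitz-in-time estimate on $\rho$ used above, together with the bounds of Propositions~\ref{prop:density_reg}--\ref{prop:mass_fraction_reg} and the equivalence of norms on $X_n$, indeed makes the prefactor in front of $|t_1-t_2|$ independent of $\bold{u}\in S$; this is where the finite dimension of $X_n$ and the strict positivity of $\rho$ on $\Omega_T$ are essential.
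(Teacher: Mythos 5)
Your argument is correct and follows essentially the same route as the paper: Arzelà--Ascoli on $C([0,T];X_n)$, pointwise relative compactness from the finite dimension of $X_n$, and equicontinuity via the same two-term decomposition, controlling the first term with the Lipschitz estimate~\eqref{propertyM1} plus the time regularity of $\rho$ from Proposition~\ref{prop:density_reg} and the second by re-running the $\mathcal{N}$-estimate from Lemma~\ref{lem:fixed_point_reg}. The only cosmetic difference is that the $\norm{\nabla c}_{L^4}^2$ contribution in the $\mathcal{N}$-estimate really yields a modulus of continuity of order $|t_1-t_2|^{1/2}$ rather than the Lipschitz rate you state, which is immaterial for equicontinuity.
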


\begin{proof}
We want to apply the Arzelà-Ascoli theorem to deduce the relative compactness of $A(S)$. From the previous computation, and using the fact that $X_n$ is finite-dimensional, we can prove that $A(S)$ is pointwise relatively compact. It remains to prove its equicontinuity.  We want to estimate for $t'\le t$ the $X_{n}$ norm of $\mathcal{M}^{-1}[\rho(t)]\left(\bold{m}_{0}^{*}+\int_{0}^{t}\mathcal{N}[\bold{u}(s),\rho(s),c(s)]\diff s\right)-\mathcal{M}^{-1}[\rho(t')]\left(\bold{m}_{0}^{*}+\int_{0}^{t'}\mathcal{N}[\bold{u}(s),\rho(s),c(s)]\diff s\right)$. 
For simplicity, we write $\mathcal{N}(s):=\mathcal{N}[\bold{u}(s),\rho(s),c(s)]$, and rewrite the previous difference as
\begin{equation*}
\mathcal{M}^{-1}[\rho(t)-\rho(t')]\left(m_{0}^{*}+\int_{0}^{t}\mathcal{N}(s)\diff s\right)+\mathcal{M}^{-1}[\rho(t')]\left(m_{0}^{*}+\int_{t'}^{t}\mathcal{N}(s)\diff s\right).
\end{equation*}
For the first term, we use~\eqref{propertyM1} and the Hölder continuity of $\rho$ given by Proposition~\ref{prop:density_reg}. For the second term, we repeat the computations in the proof of Lemma~\ref{lem:fixed_point_reg}.
This ends the result. 
\end{proof}

We have the existence of a small interval $[0,T(n)]$. To iterate the procedure in order  to prove that $T(n)=T$, it remains to find a bound on $\bv_{n}$ independent of $T(n)$. 

\begin{lemma}
$\bv_{n}$ is bounded in $X_{n}$ independently of $T(n)$. 
\end{lemma}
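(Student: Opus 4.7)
The plan is to derive the natural energy identity for the Galerkin-regularized system~\eqref{eq:density_reg}--\eqref{eq:mass_fraction_reg} in complete analogy with Proposition~\ref{prop:energy_smooth}, and then to convert the resulting kinetic-energy bound into a pointwise-in-time $X_n$-bound using the finite dimensionality of $X_n$ together with the positivity of $\rho$ provided by Lemma~\ref{lem:maximum_principle}. First I would use $\eta = \bv_n(t)\in X_n$ as a test function in~\eqref{eq:velocity_reg} and combine it with~\eqref{eq:density_reg} multiplied by $|\bv_n|^2/2$, exactly as in the kinetic-energy step of Proposition~\ref{prop:energy_smooth}. The two artificial contributions, $\eps\Delta\rho$ in~\eqref{eq:density_reg} and $\eps(\nabla\bv_n\,\nabla\rho)\cdot\bv_n$ in~\eqref{eq:velocity_reg}, are designed to cancel each other when summed---this is precisely the role of the $\eps$-flux correction in the Feireisl--Lions scheme (see~\cite{feireisl-book}).

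Next I would test~\eqref{eq:mass_fraction_reg} by $\mu$ and apply the same chain-rule manipulation as in~\eqref{eq:apriori2} to obtain the identity controlling $\rho\psi_{0}+\tfrac{\gamma}{2}|\nabla c|^{2}$. Adding this to the kinetic-energy identity produces
\begin{equation*}
\f{d}{dt}\,E_{n}(t)+D_{n}(t)=\int_{\Omega}\mu F_{c}\,\diff x+\mathcal{R}_{\eps}(t),
\end{equation*}
where $E_{n},D_{n}$ are the natural energy/dissipation~\eqref{eq:energy}--\eqref{eq:diffusion} written for $(\rho,\bv_{n},c)$ and $\mathcal{R}_{\eps}(t)$ collects the residual $\eps$-artefacts stemming from the interaction of $\eps\Delta\rho$ with the pressure and with $\rho\psi_{0}$. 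Thanks to the truncation~\eqref{ass:Q_reg} and the smoothness of $\rho$ from Proposition~\ref{prop:density_reg}, $\mathcal{R}_{\eps}$ can be absorbed into $E_{n}$ plus a fraction of the dissipation. The term $\int\mu F_{c}\,\diff x$ is handled exactly as in Proposition~\ref{prop:energy_smooth}: using~\eqref{eq:cond-mot}, Lemma~\ref{lemma:PW}, and Lemma~\ref{lem:psi_0_estim}. Grönwall's lemma then yields
\begin{equation*}
E_{n}(t)+\int_{0}^{t}D_{n}(s)\,\diff s\le C\bigl(1+E_{n}(0)\bigr),\qquad t\in[0,T(n)],
\end{equation*}
with a constant $C$ depending on $n,\eps,\eps_{Q},T$ but not on $T(n)$.

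Finally, to convert this into an $X_{n}$ bound I would use that $X_{n}\subset C^{2}(\overline{\Omega})$ is finite-dimensional, so all norms on $X_{n}$ are equivalent; combined with $\bv_{n}\in L^{2}(0,T;H^{1}_{0}(\Omega))$ (coming from the viscous dissipation and~\eqref{eq:cond-mot}), this forces $\int_{0}^{T}\norm{\Div\,\bv_{n}(s)}_{L^{\infty}(\Omega)}\,\diff s\le C(n)$. Lemma~\ref{lem:maximum_principle} then yields a uniform lower bound $\rho(t,x)\ge \underline{\rho}(n)>0$ on $[0,T]\times\Omega$, whence
\begin{equation*}
\norm{\bv_{n}(t)}_{X_{n}}^{2}\le C(n)\,\norm{\bv_{n}(t)}_{L^{2}(\Omega)}^{2}\le \f{C(n)}{\underline{\rho}(n)}\,\norm{\sqrt{\rho}\,\bv_{n}(t)}_{L^{2}(\Omega)}^{2}\le \f{C(n)}{\underline{\rho}(n)}\,E_{n}(t),
\end{equation*}
which is bounded uniformly on $[0,T(n)]$ by a constant independent of $T(n)$; this is precisely what is needed to iterate the fixed-point procedure of Lemma~\ref{lem:fixed_point_reg} up to the full horizon $T$. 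The main delicate point, I expect, is the careful bookkeeping of the $\eps$-remainders $\mathcal{R}_{\eps}$ so that the energy inequality genuinely closes; once that is done the rest is a clean finite-dimensional argument resting on Lemma~\ref{lem:maximum_principle}.
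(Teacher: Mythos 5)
Your proposal is correct and follows essentially the same route as the paper: the kinetic-energy identity with the cancellation of the two $\eps$-artefacts, the $\mu$-test of the mass-fraction equation as in~\eqref{eq:apriori2}, absorption of the remaining $\eps$-cross term $\eps\int\bigl(H'(c)(\log\rho+1)+Q'(c)\bigr)\nabla c\cdot\nabla\rho$ using the truncation bound $|Q'_{\eps_Q}|\le C(1/\eps_Q)$, Gr\"onwall, and then the finite-dimensional norm equivalence plus Lemma~\ref{lem:maximum_principle} to get the lower bound on $\rho$ and the uniform $X_n$ bound. The only detail left implicit in your sketch, which the paper spells out, is that the good part of the $\eps$-term, $\eps\int\bigl(((a-1)+(a-1)^2)\rho^{a-2}+H(c)/\rho\bigr)|\nabla\rho|^2$, is kept on the left as dissipation and is exactly what absorbs the cross term via Young's inequality.
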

\begin{proof}
Note that we do not ask for a bound independent of $n$ but only of $T(n)$ since we use in the proof the fact that $X_{n}$ is finite-dimensional. The proof uses the energy structure of the equation. We differentiate Equation~\eqref{eq:velocity_reg} in time and take $\eta=\bv_{n}$ as a test function. 
This yields
\begin{multline}\label{eq:weakenergy1}
\f{d}{dt}\int_{\Omega}\rho\f{|\bv_{n}|^{2}}{2}\diff x+\f{1}{2}\int_{\Omega}\left(\p_{t}\rho+\Div(\rho\bv_{n})\right)|\bv_{n}|^{2}\diff x-\int_{\Omega}p(\rho,c)\Div(\bv_{n})\diff x -\f{\eps}{2}\int_{\Omega}\Delta\rho|\bv_{n}|^{2}\diff x\\+\int_{\Omega}\mathbb{T}:\nabla\bv_{n}\diff x+ \gamma\int_{\Omega}(\f{1}{2} |\nabla c|^{2}\mathbb{I} -( \nabla c\otimes \nabla c))
:\nabla\bv_{n}\diff x + \int_{\Omega}\kappa(\rho,c)|\bv_{n}|^{2}\diff x=0.
\end{multline}
Here we used
\begin{align*}
&\int_{\Omega}\p_{t}(\rho\bv_{n})\cdot\bv_{n}=\f{1}{2}\f{d}{dt}\int_{\Omega}\rho|\bv_{n}|^{2}\diff x + \f{1}{2}\int_{\Omega}\p_{t}\rho|\bv_n|^{2}\diff x,\\
&\int_{\Omega}\Div(\rho\bv_n \otimes \bv_n)\cdot \bv_n \diff x= \f{1}{2}\int_{\Omega}\Div(\rho \bv_n)|\bv_n|^2 \diff x,\\
&\eps\int_{\Omega}(\nabla\bv_{n}\nabla\rho)\cdot\bv_{n}\diff x=-\f{\eps}{2}\int_{\Omega}\Delta\rho|\bv_{n}|^{2}\diff x.   
\end{align*}
With~\eqref{eq:density_reg}, we see that~\eqref{eq:weakenergy1} reads
\begin{multline}\label{eq:weakenergy2}
\f{d}{dt}\int_{\Omega}\rho\f{|\bv_{n}|^{2}}{2}\diff x-\int_{\Omega}p(\rho,c)\Div(\bv_{n})\diff x+\int_{\Omega}\mathbb{T}:\nabla\bv_{n}\diff x\\+ \gamma\int_{\Omega}(\f{1}{2} |\nabla c|^{2}\mathbb{I} -( \nabla c\otimes \nabla c))
:\nabla\bv_{n}\diff x + \int_{\Omega}\kappa(\rho,c)|\bv_{n}|^{2}\diff x=0.
\end{multline}
Now as in~\eqref{eq:apriori2}, we obtain with the artificial viscosity
\begin{multline*}
\p_{t}(\rho\psi_{0})+\Div(\rho\psi_{0}\bv_{n})+p\Div(\bv_{n})-\psi_{0}\eps\Delta\rho-\eps\rho\f{\p\psi_{0}}{\p\rho}\Delta\rho=\Div(b(c)\nabla\mu)\mu\\+\Div(\p_{t}c\nabla c)-\p_{t}\left(\f{|\nabla c|^{2}}{2}\right)+\gamma\Delta c\bv_{n}\cdot\nabla c +\mu F_c.    
\end{multline*}
Integrating this equation in space, and summing with~\eqref{eq:weakenergy2}, we obtain 
\begin{multline}\label{eq:weakenergy3}
\f{d}{dt}\int_{\Omega}\rho\left(\f{|\bv_{n}|^{2}}{2}+\psi_{0}\right)+\gamma\f{|\nabla c|^{2}}{2}\diff x+\eps\int_{\Omega}\nabla\left(\psi_{0}+\rho\f{\p\psi_{0}}{\p\rho}\right)\cdot\nabla\rho\diff x\\+\int_{\Omega}\mathbb{T}:\nabla\bv_{n}\diff x+ \int_{\Omega}b(c)|\nabla\mu|^{2}\diff x + \int_{\Omega}\kappa(\rho,c)|\bv_{n}|^{2}\diff x=\int_{\Omega}\mu F_c\diff x.
\end{multline}
By definition of $\psi_{0}$, we obtain
\begin{multline*}
\eps\int_{\Omega}\nabla\left(\psi_{0}+\rho\f{\p\psi_{0}}{\p\rho}\right)\cdot\nabla\rho\diff x = \eps\int_{\Omega}\left(((a-1)+(a-1)^{2})\rho^{a-2}+\f{H(c)}{\rho}\right)|\nabla\rho|^{2}\diff x \\+ \eps\int_{\Omega}\left(H'(c) (\log(\rho)+1)+Q'(c)\right)\nabla c\cdot\nabla\rho\diff x.
\end{multline*}
Therefore, the energy reads
\begin{multline}\label{eq:weakenergy4}
\f{d}{dt}\int_{\Omega}\rho\left(\f{|\bv_{n}|^{2}}{2}+\psi_{0}\right)+\gamma\f{|\nabla c|^{2}}{2}\diff x+\eps\int_{\Omega}\left(((a-1)+(a-1)^{2})\rho^{a-2}+\f{H(c)}{\rho}\right)|\nabla\rho|^{2}\diff x\\+\int_{\Omega}\mathbb{T}:\nabla\bv_{n}\diff x+ \int_{\Omega}b(c)|\nabla\mu|^{2}\diff x + \int_{\Omega}\kappa(\rho,c)|\bv_{n}|^{2}\diff x=\int_{\Omega}\mu F_c\diff x\\- \eps\int_{\Omega}\left(H'(c) (\log(\rho)+1)+Q'(c)\right)\nabla c\cdot\nabla\rho\diff x.
\end{multline}
We need to prove that the right-hand side can be controlled in term of the left-hand side to obtain estimates. For the first term on the right-hand side, we treat it as in the proof of Proposition~\ref{prop:energy_smooth}. For the second term, we know 
by assumption on $H$ and $Q$, and the fact that  $(\log(\rho)+1)^{2}$ is bounded  by a constant times $\f{1}{\rho}+(a-1)\rho^{a-2}$ that it can be bounded in terms of the left-hand side. Note that we used the hypothesis $|Q'(c)|\le C.$ This is based on the fact that $Q$ is in fact $Q_{\eps_{Q}}$ so that we have $|Q'(c)|\le C(\f{1}{\eps_{Q}})$ with a constant that blows up when $\eps_{Q}$ is sent to 0. As we intend to send $\eps_{Q} \to 0$ in the next step, it is important to notice that we can still manage to have this energy inequality since in fact the term $\eps\int_{\Omega}Q'(c)\nabla c\cdot \nabla\rho\diff x$ can be estimated by $\f{\eps}{4}\int_{\Omega}\left(((a-1)+(a-1)^{2})\rho^{a-2}+\f{H(c)}{\rho}\right)|\nabla\rho|^{2}\diff x$ and $\int_{\Omega}\eps C(\frac{1}{\eps_{Q}})|\nabla c|^{2}\diff x$. Since $\eps$ will be sent to $0$ before $\eps_{Q}$, the energy inequality will still hold independently of $\eps_{Q}$ in the limit $\eps\to 0$. With Gronwall's lemma, and properties of the tensor $\mathbb{T}$, we deduce that $\bv_{n}$ is bounded in $L^{2}(0,T(n);H^{1}(\Omega;\R^{3}))$ independently of $T(n)$. {Also, the previous bounds do not depend on $n$}. Since all the norms are equivalent, it is also bounded in $L^{1}(0,T(n);W^{1,\infty}(\Omega,\R^{3}))$. Therefore, we can apply the maximum principle stated in Lemma~\ref{lem:maximum_principle}, and obtain that the density $\rho$ is bounded from below by a constant independent of $T(n)$. Then, using once again the energy inequality, we obtain that $\bv_{n}$ is bounded uniformly in time in $L^{2}(\Omega;\R^{3})$. This procedure can be repeated for every final time $T$.
\end{proof}

Finally, we are left with the following proposition
\begin{proposition}
For any fixed $n$ and $T$, there exists a solution ($\rho, c, \bv_n$) defined on $(0,T)$ (with appropriate regularity) to~\eqref{eq:density_reg}-\eqref{eq:mass_fraction_reg}-\eqref{eq:velocity_reg} subject to boundary conditions~\eqref{eq:neumann_reg} and initial conditions~\eqref{eq:density_reg}. Moreover, this solution satisfies the energy dissipation inequality
\begin{multline}\label{eq:weakenergy5}
E(t)+\eps\int_{\Omega_{t}}\left((a+a^2)\rho^{a-1}+\f{H(c)}{\rho}\right)|\nabla\rho|^{2}\diff x\diff t\\+\int_{\Omega_t}\mathbb{T}:\nabla\bv_{n}\diff x\diff t+ \int_{\Omega_t}b(c)|\nabla\mu|^{2}\diff x\diff t + \int_{\Omega_t}\kappa(\rho,c)|\bv_{n}|^{2}\diff x\diff t\le C + C E(0),
\end{multline}
where 
$$
E(t)=\int_{\Omega}\rho\left(\f{|\bv_{n}|^{2}}{2}+\psi_{0}\right)+\gamma\f{|\nabla c|^{2}}{2}\diff x,
$$
and with a constant $C=C\left(1,\f{\eps}{\eps_{Q}}\right)$ {that does not depend on $n$}. 
\end{proposition}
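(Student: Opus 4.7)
The plan is to assemble the pieces already established in the previous propositions and lemmas into a global existence statement on $[0,T]$, and then to derive the energy inequality by integrating in time the identity used earlier to produce the uniform bound on $\bv_n$. First, I would combine the three building blocks: for $\bv_n \in X_n$ fixed, Proposition~\ref{prop:density_reg} produces a unique smooth $\rho = \rho[\bv_n]$ solving~\eqref{eq:density_reg}, and Lemma~\ref{lem:maximum_principle} provides a strictly positive lower bound for $\rho$ as long as $\bv_n$ stays in a bounded set of $X_n$ (equivalently in $W^{1,\infty}$ by finite-dimensionality). Feeding this $\rho$ into Proposition~\ref{prop:mass_fraction_reg} yields a strong solution $c$ with the regularity~\eqref{eq:set_W}. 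With $\rho$ and $c$ in hand, Lemma~\ref{lem:fixed_point_reg} together with the compactness statement following it produce, via Schauder's theorem, a fixed point $\bv_n$ of the operator $A$ on a short interval $[0,T(n)]$.

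Next, I would extend to the full interval $[0,T]$ by iteration. The uniform bound established in the last lemma before the proposition shows that $\bv_n$ remains bounded in $L^2(0,T(n); H^1(\Omega;\R^3))$, hence in $L^1(0,T(n); W^{1,\infty})$ since $X_n$ is finite-dimensional, by a constant independent of $T(n)$. Lemma~\ref{lem:maximum_principle} then yields a strictly positive lower bound for $\rho$ on $[0,T(n)]$ depending only on the target time $T$ (and not on $T(n)$), together with a time-uniform $L^2$ bound on $\bv_n$. These bounds coincide in form with the hypotheses needed to restart the Schauder argument at time $T(n)$ using $\bv_n(T(n))$ as new initial datum, and the length of each step can be chosen uniformly as long as the a priori bounds stay controlled. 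Consequently, the solution extends to the whole interval $[0,T]$ in finitely many steps.

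Finally, for the energy inequality~\eqref{eq:weakenergy5}, I would integrate identity~\eqref{eq:weakenergy4} over $(0,t)$ and estimate the right-hand side. The term $\int \mu F_c \diff x$ is handled exactly as in the proof of Proposition~\ref{prop:energy_smooth}, using Lemma~\ref{lemma:PW} together with the control of $|\rho\, \p_c\psi_0|$ by $\rho\psi_0 + 1$ from Lemma~\ref{lem:psi_0_estim}. The hard part will be controlling the $\eps$-dependent cross term $\eps\int (H'(c)(\log\rho+1) + Q'(c))\,\nabla c \cdot \nabla\rho\, \diff x$: the $H'$ piece is absorbed by Young's inequality into a fraction of the good dissipation term $\eps\int ((a-1)+(a-1)^2)\rho^{a-2}|\nabla\rho|^2\,\diff x$, using that $(\log\rho+1)^2 \le C(1/\rho + \rho^{a-2})$ and the $L^\infty$ bounds on $H,H'$ in~\eqref{eq:assumptionHQ}; the $Q'$ piece is absorbed similarly, but because $|Q'| \le C(1/\eps_Q)$ on the regularized problem, the remainder must be split into a small multiple of the $\rho^{a-2}|\nabla\rho|^2$ term and a multiple of $\eps\, C(1/\eps_Q)\int|\nabla c|^2\,\diff x$ that is carried along and treated by Gronwall. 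The main obstacle is precisely this balancing: one has to verify that the $\eps_Q$-blowup in $Q'$ only enters the constant through the ratio $\eps/\eps_Q$ and not $1/\eps_Q$ alone, so that once $\eps \to 0$ is sent first in the subsequent step the inequality degenerates into a bound uniform in $\eps_Q$. Gronwall's lemma then closes the argument and delivers~\eqref{eq:weakenergy5}.
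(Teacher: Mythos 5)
Your proposal is correct and follows essentially the same route as the paper: the proposition is stated there without a separate proof precisely because it summarizes the preceding construction (Schauder fixed point on $[0,T(n)]$, iteration to $[0,T]$ via the uniform bound of the last lemma, and time-integration of identity~\eqref{eq:weakenergy4} with the $\mu F_c$ term handled as in Proposition~\ref{prop:energy_smooth} and the $\eps$ cross terms absorbed by Young's inequality, the $Q'$ piece carrying the $\eps\,C(1/\eps_{Q})$ factor into the Gronwall constant). The only cosmetic slip is attributing the time-uniform $L^2$ bound on $\bv_n$ to Lemma~\ref{lem:maximum_principle} rather than to a second application of the energy inequality, which does not affect the argument.
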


Now, we need to find estimates, independent of $n$, to pass to the limit $n\to\infty$. Since $\rho$ and $c$ depend on $n$, we write $\rho_{n}$ and $c_{n}$ from now on. 

\begin{proposition}\label{prop:estim_uniform_n}
We have the following estimates uniformly in $n$ and $\eps$:
\begin{enumerate}[label=(A\arabic*)]
\item \label{item_estim12} $\{\rho_{n}\psi_{0}\}$ in $L^{\infty}(0,T; L^1(\Omega))$,
\item \label{item_estim22} $\{\rho_{n} \}$ in $L^{\infty}(0,T; L^a(\Omega))$,
\item \label{item_estim32} $\{\mathbb{T}:\nabla \bv_{n} \}$ in $L^{1}(0,T; L^1(\Omega))$,
\item \label{item_estim42} $\{\sqrt{\rho_{n}}\bv_{n} \}$ in $L^{\infty}(0,T; L^2(\Omega;\R^{3}))$,
\item \label{item_estim52} $\{\sqrt{b(c_n)}\nabla\mu_n\}$ in $L^{2}(0,T; L^2(\Omega;\R^{3}))$,
\item \label{item_estim62} $\{\bv_{n} \}$ in $L^{2}(0,T; H^{1}_{0}(\Omega;\R^{3}))$,
\item \label{item_estim72} $\{\sqrt{\eps}\nabla\rho_{n} \}$ in $L^{2}(0,T; L^2(\Omega))$,
\item \label{item_estim82} $\{c_{n}\}$ in $L^{\infty}(0,T; H^1(\Omega))$,
\item \label{item_estim92} $\{\rho_{n}\p_{c}\psi_{0}\}$ in $L^{\infty}(0,T; L^r(\Omega))$ for $r<\f{6a}{6+a}$, 
\item \label{item_estim102} $\{\mu_{n}\}$ in $L^{2}(0,T; H^1(\Omega))$,
\item \label{item_estim112} $\{\rho_{n}\mu_{n}\}$ in $L^{2}(0,T; L^{6a/(6+a)})$,
\item \label{item_estim122} $\{c_{n}\}$ in $L^{2}(0,T; W^{2,r}(\Omega))\cap L^{2+\nu}(0,T;W^{1,2+\nu})$ for some $\nu>0$,
\item \label{item_estim132} $\{\rho_{n}c_{n}\}$ in $L^{\infty}(0,T; L^{\f{6a}{6+a}}(\Omega))$,
\item \label{item_estim142} $\{\rho_{n}c_{n}\bv_{n}\}$ in $L^{2}(0,T; L^{\f{6a}{3+4a}}(\Omega))$,
\item \label{item_estim152} $\{p(\rho_{n},c_{n})\}$ in $L^{1+\tilde{\nu}}((0,T)\times\Omega))$ for some $\tilde{\nu}>0.$

\end{enumerate}
\end{proposition}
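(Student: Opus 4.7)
The backbone of the argument is the energy inequality~\eqref{eq:weakenergy5}. Using the assumption $\psi_{e}(\rho)=\rho^{a-1}/(a-1)$ together with~\eqref{eq:assumptionHQ} (in particular $H\ge H_{1}>0$ so that $\rho\log\rho H(c)$ is bounded from below by $-C\rho$), we immediately read off \ref{item_estim12}, and hence \ref{item_estim22} by the $\rho^{a}$ term in $\rho\psi_{0}$. The kinetic piece of $E(t)$ gives \ref{item_estim42}, the $|\nabla c|^{2}$ piece gives \ref{item_estim82}, the dissipation terms in~\eqref{eq:weakenergy5} give \ref{item_estim32}, the $\eps$-term gives \ref{item_estim72}, and \ref{item_estim52} follows from $\int b(c_{n})|\nabla\mu_{n}|^{2}$. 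To pass from \ref{item_estim32} to \ref{item_estim62}, I would apply the Korn--Poincaré inequality of Lemma~\ref{lem:Korn_Poincaré} with the weight $\rho_{n}$: the constraints $\int\rho_{n}=\int\rho_{0,\eps}\ge M_{0}$ and $\int\rho_{n}^{a}\le K$ are satisfied uniformly in $n$ by mass conservation for~\eqref{eq:density_reg} and \ref{item_estim22}, while $\int\rho_{n}|\bv_{n}|$ is controlled by $\int\rho_{n}|\bv_{n}|^{2}$ via Cauchy--Schwarz and \ref{item_estim22}, \ref{item_estim42}.

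For the chemical-potential and $c$-estimates, I would first combine \ref{item_estim52} with $b\ge C>0$ (see~\eqref{eq:cond-mot}) to get $\nabla\mu_{n}\in L^{2}_{t,x}$. To promote this to \ref{item_estim102}, one uses the Poincaré-type inequality of Lemma~\ref{lemma:PW}: the mean $\int\rho_{n}\mu_{n}$ is controlled by expanding $\rho_{n}\mu_{n}=-\gamma\Delta c_{n}+\rho_{n}\partial_{c}\psi_{0}$, exploiting the Neumann boundary condition on $c$, and applying Lemma~\ref{lem:psi_0_estim} together with \ref{item_estim12}. The same identity combined with Lemma~\ref{lem:psi_0_estim}, \ref{item_estim22} and $c_{n}\in L^{\infty}_{t}L^{6}_{x}$ (from \ref{item_estim82} and Sobolev embedding in $d=3$) yields \ref{item_estim92} by Hölder's inequality with exponents determined by $1/r=1/a+1/6$. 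The Hölder products with $\mu_{n}\in L^{2}_{t}L^{6}_{x}$ give \ref{item_estim112}, and analogous products give \ref{item_estim132} (from $\rho_{n}\in L^{\infty}_{t}L^{a}_{x}$ and $c_{n}\in L^{\infty}_{t}L^{6}_{x}$) and \ref{item_estim142} (further multiplying by $\bv_{n}\in L^{2}_{t}L^{6}_{x}$).

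For \ref{item_estim122}, I would view~\eqref{eq:main3} as an elliptic equation for $c_{n}$: $-\gamma\Delta c_{n}=\rho_{n}\mu_{n}-\rho_{n}\partial_{c}\psi_{0}$ with homogeneous Neumann data. The right-hand side is bounded in $L^{2}_{t}L^{r}_{x}$ with $r<6a/(6+a)$ by \ref{item_estim92} and \ref{item_estim112}, so standard elliptic $W^{2,r}$ regularity for the Laplace-Neumann problem yields the $L^{2}_{t}W^{2,r}_{x}$ bound; the $L^{2+\nu}W^{1,2+\nu}$ bound then follows by interpolating between $L^{\infty}_{t}H^{1}_{x}$ (from \ref{item_estim82}) and $L^{2}_{t}W^{2,r}_{x}$, using Sobolev embedding in $d=3$ to pick up a small $\nu>0$.

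The main obstacle is \ref{item_estim152}. The inequality $p(\rho,c)\le C(\rho^{a}+\rho H_{2})$ only gives $p\in L^{\infty}_{t}L^{1}_{x}$, which is insufficient; the extra integrability is bought by the standard Bogovskii-type test-function argument from the compressible Navier--Stokes literature (see~\cite{feireisl-book}). Concretely, I would test the momentum equation~\eqref{eq:velocity_reg} against $\varphi=\mathcal{B}[\rho_{n}^{\tilde\nu}-\overline{\rho_{n}^{\tilde\nu}}]$, where $\mathcal{B}$ is the Bogovskii operator satisfying $\dv\varphi=\rho_{n}^{\tilde\nu}-\overline{\rho_{n}^{\tilde\nu}}$ with the standard $W^{1,q}$ bounds; for $\tilde\nu>0$ sufficiently small one controls all the other terms (convective, viscous, surface-tension, friction and capillary) using the bounds \ref{item_estim22}, \ref{item_estim42}, \ref{item_estim62}, \ref{item_estim82} and the Sobolev embedding, producing a uniform bound on $\int_{0}^{T}\int_{\Omega}p(\rho_{n},c_{n})\rho_{n}^{\tilde\nu}$, hence \ref{item_estim152}. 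Choosing $\tilde\nu$ small enough so that $a+\tilde\nu$ stays below the integrability provided by the convective term $\rho_{n}|\bv_{n}|^{2}\in L^{\infty}_{t}L^{a/(a-1)}_{x}\cdot L^{?}$ is where the hypothesis $a>6$ pays off and keeps the bookkeeping linear.
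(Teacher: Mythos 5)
Your proposal follows essentially the same route as the paper: items (A1)--(A5) are read off the energy inequality, (A6) comes from the Korn--Poincar\'e lemma, the weighted Poincar\'e inequality of Lemma~\ref{lemma:PW} handles the mean values of $\mu_n$, H\"older and Sobolev embeddings give the product estimates, elliptic regularity plus interpolation gives (A12), and a Bogovskii-type argument gives the improved pressure integrability (the paper simply cites~\cite{Miranville-2019-compressible} for this last step, so your explicit sketch is if anything more detailed). Two remarks. For (A7) you invoke the $\eps$-weighted gradient term in the dissipation; this does work, because $(a+a^2)\rho^{a-1}+H(c)/\rho$ is bounded below by a positive constant (using $H\ge H_1>0$ and $a>1$), but the paper instead multiplies the regularized continuity equation~\eqref{eq:density_reg} by $\rho_n$ and integrates by parts --- both routes are legitimate. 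For (A8), however, your one-line justification (``the $|\nabla c|^2$ piece gives (A8)'') is incomplete: the energy only controls $\nabla c_n$ in $L^\infty(0,T;L^2(\Omega))$, and to upgrade this to an $H^1$ bound on $c_n$ one must control a zeroth-order quantity of $c_n$. The paper does this by integrating the equation for $\rho_n c_n$ in space (using the boundary conditions, estimate (A7) to handle the term $\eps\int_\Omega c_n\Delta\rho_n$, and the boundedness of the source term) to conclude that $\int_\Omega\rho_n c_n$ is bounded in $L^\infty(0,T)$, and then applies Lemma~\ref{lemma:PW}. You clearly have this mechanism in hand --- you use exactly it for $\mu_n$ --- but it needs to be spelled out for $c_n$ as well, since (A8) feeds into your Sobolev-embedding arguments for (A9), (A13) and (A14).
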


\begin{proof}
Estimates~\ref{item_estim12}-\ref{item_estim22}-\ref{item_estim32}-\ref{item_estim42}-\ref{item_estim52} follow immediately from the energy equality~\eqref{eq:weakenergy5}. Estimate~\ref{item_estim62} is the result of Lemma~\ref{lem:Korn_Poincaré} and estimates~\ref{item_estim22}-\ref{item_estim32}-\ref{item_estim42}. To obtain estimate~\ref{item_estim72}, we multiply Equation~\eqref{eq:density_reg} by $\rho_{n}$, and using integration by parts, we obtain 
\begin{equation*}
2\eps\int_{0}^{T}\int_{\Omega}|\nabla \rho_{n}|^{2}\diff x\diff t\le  \norm{\rho_{0}}_{L^{2}(\Omega)}^{2}+\norm{\rho_{n}}_{L^{\infty}(0,T;L^{2}(\Omega))}^{2}+\norm{\rho_{n}}_{L^{2}(0,T;L^{4}(\Omega))}^{2}\norm{\nabla\bv_{n}}_{L^{2}(0,T;L^{2}(\Omega)^d)}.    
\end{equation*}
Using \ref{item_estim22} and~\ref{item_estim62}, we deduce~\ref{item_estim72}. To prove Estimate~\ref{item_estim82}, we first notice that equality~\eqref{eq:weakenergy5} provides the uniform bound on $\{\nabla c_{n}\}$ in $L^{2}(0,T;L^{2}(\Omega))$. To conclude with Lemma~\ref{lemma:PW}, we need to bound $\int_{\Omega}\rho_{n}c_{n}$. Combining Equations~\eqref{eq:density_reg}-\eqref{eq:mass_fraction_reg}, we obtain 
\begin{equation*}
\p_{t}(\rho_{n}c_{n})+\Div(\rho_{n}c_{n}\bv_{n})=-\eps c\Delta\rho+\Div(b(c)\nabla\mu)+F_c.     
\end{equation*}
Integrating in space, using the boundary conditions, and Estimate~\ref{item_estim72}, the $L^{2}$ bound on $\{\nabla c_{n}\}$, assumption~\ref{eq:assumpt-source} yields $\{\int_{\Omega}\rho_{n} c_{n}\}$ is in $L^{\infty}(0,T)$. We deduce Estimate~\ref{item_estim82}. Estimate~\ref{item_estim92}  follows from the definition of $\psi_{0}$ and Estimate~\ref{item_estim12}. Estimate~\ref{item_estim102} follows from Estimates~\ref{item_estim52}-\ref{item_estim92} and Lemma~\ref{lemma:PW}. Estimate~\ref{item_estim112} follows from Estimates~\ref{item_estim22}-\ref{item_estim102}. Estimate~\ref{item_estim122} is a consequence of Equation~\eqref{eq:main3}, the previous estimates and interpolation. The two next estimates are a consequence of the other estimates and Sobolev embeddings. Finally, the last estimate on the pressure can be adapted from~\cite[Subsection 2.5]{Miranville-2019-compressible}. This estimate is useful when we obtain the convergence a.e. of $\rho_n$ and $c_n$ so we can obtain strong convergence of $p(\rho_n , c_n)$ in $L^{1}$ by Vitali's convergence theorem.  
\end{proof}

From~\cite{feireisl-book}, we also obtain the following Proposition
\begin{proposition}
There exists $r>1$ and $p>2$ such that
\begin{align*}
& \p_{t}\rho_{n},\Delta\rho_{n} \quad\text{are bounded in $L^{r}((0,T)\times\Omega)$},\\
&\nabla\rho_{n} \quad \text {is bounded in $L^{p}(0,T;L^{2}(\Omega,\R^{3}))$},
\end{align*}
independently of $n$ (but not independently of $\eps$). 
\end{proposition}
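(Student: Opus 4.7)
The plan is to read the regularized continuity equation~\eqref{eq:density_reg} as the linear parabolic Neumann problem
\begin{equation*}
\partial_t \rho_n - \eps \Delta \rho_n = -\dv(\rho_n \vecu_n),
\end{equation*}
with the Neumann boundary condition and the initial datum from~\eqref{eq:neumann_reg}--\eqref{eq:init_cond_reg}, and to run two successive maximal parabolic regularity arguments fueled by the uniform-in-$n$ estimates of Proposition~\ref{prop:estim_uniform_n}. All constants may depend on $\eps$; only independence in $n$ is tracked. The strategy is a transcription of the one in~\cite[Chap.~7]{feireisl-book} for the compressible Navier--Stokes system, and the essential point is to exploit the assumption $a>6$ in order to step off borderline H\"older corners.

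The key preliminary observation is that $\rho_n\vecu_n$ enjoys two different Bochner bounds. On the one hand, since $a>6$, estimates~\ref{item_estim22} and~\ref{item_estim62} together with $H^1_0\hookrightarrow L^6$ yield $\rho_n\vecu_n\in L^2(0,T;L^{q_0}(\Omega;\R^3))$ uniformly in $n$ with $\tfrac{1}{q_0}=\tfrac{1}{a}+\tfrac{1}{6}$, hence $q_0>3$. On the other hand, the factorization $\rho_n\vecu_n=\sqrt{\rho_n}\cdot(\sqrt{\rho_n}\,\vecu_n)$ combined with~\ref{item_estim22} and~\ref{item_estim42} gives $\rho_n\vecu_n\in L^\infty(0,T;L^{2a/(a+1)}(\Omega;\R^3))$ uniformly in $n$. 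Interpolating these two Bochner scales produces exponents $p_0>2$ and $s_0\ge 2$ for which $\rho_n\vecu_n\in L^{p_0}(0,T;L^{s_0}(\Omega;\R^3))$ uniformly in $n$. Calder\'on--Zygmund/Solonnikov regularity for the Neumann heat semigroup, applied to this source in divergence form, then lifts this to $\nabla\rho_n\in L^{p_0}(0,T;L^{s_0}(\Omega;\R^3))\hookrightarrow L^{p_0}(0,T;L^2(\Omega;\R^3))$, which establishes the claim for $\nabla\rho_n$ with $p:=p_0>2$.

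Armed with this bound, I would re-expand the source $-\dv(\rho_n\vecu_n)=-\vecu_n\cdot\nabla\rho_n-\rho_n\,\dv\vecu_n$ and verify that it sits in $L^r((0,T)\times\Omega)$ for some $r>1$. The piece $\rho_n\,\dv\vecu_n$ is in $L^2(0,T;L^{2a/(a+2)}(\Omega))\hookrightarrow L^{2a/(a+2)}((0,T)\times\Omega)$, with $2a/(a+2)>3/2$ thanks to $a>6$, while $\vecu_n\cdot\nabla\rho_n$ is controlled via H\"older by $\vecu_n\in L^2 L^6$ and the just-established $\nabla\rho_n\in L^{p_0} L^2$ in $L^{2p_0/(p_0+2)}(0,T;L^{3/2}(\Omega))$, whose time exponent exceeds $1$ precisely because $p_0>2$. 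A second application of maximal $L^r$-regularity with the smooth Neumann datum $\rho_{0,\eps}$ then delivers $\partial_t\rho_n,\,\Delta\rho_n\in L^r((0,T)\times\Omega)$ uniformly in $n$.

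The hard part is that both bootstrap steps sit exactly on borderline exponents: the naive estimates only put $\vecu_n\cdot\nabla\rho_n$ in $L^1_t L^{3/2}_x$, just failing $r>1$. The two gaps of extra integrability that allow the argument to close are encoded respectively in the assumption $a>6$ (providing the slack $q_0>3$) and in the kinetic-energy bound $\sqrt{\rho_n}\vecu_n\in L^\infty L^2$ (which promotes $\rho_n\vecu_n$ to an $L^\infty$-in-time space). Once these two ingredients are combined via the interpolation step above, the rest reduces to a standard application of Solonnikov-type parabolic regularity as in~\cite{feireisl-book,MR1637634}.
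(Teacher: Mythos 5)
Your argument is correct and is exactly the standard maximal-parabolic-regularity bootstrap from~\cite{feireisl-book} that the paper invokes without proof: the paper simply cites the reference for this proposition, and your two-step scheme (interpolating the $L^2_tL^{q_0}_x$ and $L^\infty_tL^{2a/(a+1)}_x$ bounds on $\rho_n\vecu_n$ to gain $p_0>2$, then re-expanding the divergence to land in $L^r$ with $r>1$) is the intended argument, with the roles of $a>6$ and the kinetic-energy bound correctly identified. The only point worth making explicit is that the Solonnikov estimates are applied with the homogeneous Neumann condition and the fixed smooth datum $\rho_{0,\eps}$, which is compatible since $\vecu_n$ vanishes on $\p\Omega$, so the boundary contributes nothing.
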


With all the previous bound, we can pass to the limit when $n\to\infty$ and obtain the different equation and energy estimates in a weak formulation. Since the passage to the limit $n\to\infty$ is simpler than the next passage $\eps\to 0$, we only detail the latter. Indeed, as $n\to\infty$ we can obtain easily strong convergence of $\rho$ which helps a lot in the different limits. So we assume that we can pass to the limit and that the bounds obtained in Proposition~\ref{prop:estim_uniform_n} still hold independently of $\eps$.  It remains now to send $\eps$ to 0. 

We recall the equations that we want to pass to the limit into:
\begin{align}
\p_{t}\rho_{\eps} +\Div(\rho_{\eps}\bv_{\eps})=\eps\Delta\rho_{\eps},\label{eq:epsilon_eq1}\\
\p_{t}(\rho_{\eps}c_{\eps})+\Div(\rho_{\eps}c_{\eps}\bv_{\eps})=-\eps c_\eps \Delta\rho_{\eps}+\Div(b(c_{\eps})\nabla\mu_{\eps})+F_{c_\eps},\label{eq:epsilon_eq2}
\end{align}
and for every $\eta$ (sufficiently regular)
\begin{multline}\label{eq:epsilon_eq3}
\int_{\Omega}\rho_{\eps}\bv_{\eps}(t)\cdot\eta\diff x- \int_{\Omega}\bold{m}_{0}\cdot\eta\diff x- \int_{0}^{t}\int_{\Omega}\rho_{\eps} \bv_{\eps}\otimes\bv_{\eps}:\nabla_{x}\eta\diff x\diff s- \int_{0}^{t}\int_{\Omega}p(\rho_\eps,c_\eps)\Div(\eta)\diff x\diff s \\
+ \eps\int_{0}^{t}\int_{\Omega}(\nabla\bv_{\eps}\nabla\rho_{\eps})\cdot\eta\diff x\diff s+\int_{0}^{t}\int_{\Omega}\mathbb{T_\eps}:\nabla\eta\diff x\diff s+ \gamma\int_{0}^{t}\int_{\Omega}(\f{1}{2} |\nabla c_\eps|^{2}\mathbb{I} -( \nabla c_\eps \otimes \nabla c_\eps))
:\nabla\eta\diff x\diff s \\
+ \int_{\Omega}\int_{0}^{t}\kappa(c_\eps)\bv_{\eps}\cdot\eta\diff x\diff s=0.
\end{multline}

Using Proposition~\ref{prop:estim_uniform_n}, which yields uniform estimates in $\eps$, we pass to the limit in the previous equations. The difficult terms are the one involving nonlinear combinations. Indeed, it is not clear that we can obtain strong convergence of $\rho_{\eps}$ as we have no estimates on higher order derivatives. We use the following lemma, see~\cite{MR1637634}.

\begin{lemma}
Let $g_n$, $h_n$ converge weakly to $g$, $h$ respectively in $L^{p_1}(0,T;L^{p_2}(\Omega))$, $L^{q_1}(0,T;L^{q_2}(\Omega))$ where $1\le p_1 , p_2\le +\infty$ and 
$$
\f{1}{p_1}+\f{1}{q_1}=\f{1}{p_2}+\f{1}{q_2}=1. 
$$
We assume in addition that
\begin{equation}\label{ass:lemma_lions}
\f{\p g_n}{\p t}\quad \text{is bounded in $L^{1}(0,T;W^{-m,1}(\Omega))$ for some $m\ge 0$ independent of $n$}, 
\end{equation}
and 
\begin{equation}\label{ass2:lemma_lions}
\norm{h_{n}-h_{n}(t,\cdot+\xi)}_{L^{q_1}(0,T;L^{q_2}(\Omega))}\to 0\quad \text{as $|\xi|\to 0$, uniformly in $n$}.    
\end{equation}
Then, $g_n h_n$ converges to $gh$ in the sense of distributions. 
\end{lemma}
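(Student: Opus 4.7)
This is a Lions-type compactness-by-commutators lemma; the strategy is to combine the (very weak) time regularity of $g_n$ with the spatial equicontinuity of $h_n$ via a symmetric spatial mollification and Fubini. Fix a test function $\varphi\in C_c^\infty((0,T)\times\Omega)$; the goal is to show $\iint g_n h_n\,\varphi\to \iint gh\,\varphi$. Let $\omega_\delta$ denote a standard symmetric spatial mollifier on $\R^3$. By~\eqref{ass2:lemma_lions} one has $\norm{h_n-h_n*\omega_\delta}_{L^{q_1}(L^{q_2})}\to 0$ uniformly in $n$ as $\delta\to 0$, so the uniform bound on $g_n$ in $L^{p_1}(L^{p_2})$ combined with H\"older gives
\[
\left|\iint g_n(h_n-h_n*\omega_\delta)\varphi\,\diff x\diff t\right|\le\norm{\varphi}_\infty\norm{g_n}_{L^{p_1}L^{p_2}}\norm{h_n-h_n*\omega_\delta}_{L^{q_1}L^{q_2}}\xrightarrow[\delta\to 0]{}0
\]
uniformly in $n$, and the analogous bound with $(g,h)$ in place of $(g_n,h_n)$ follows from the usual strong convergence $h*\omega_\delta\to h$.

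For fixed $\delta>0$, Fubini and the symmetry of $\omega_\delta$ yield the key identity
\[
\iint g_n(h_n*\omega_\delta)\varphi\,\diff x\diff t=\iint h_n\,G_n\,\diff x\diff t,\qquad G_n:=(g_n\varphi)*\omega_\delta.
\]
Since $h_n\rightharpoonup h$ weakly in $L^{q_1}(L^{q_2})$, it is enough to show that $G_n\to G:=(g\varphi)*\omega_\delta$ strongly in $L^{p_1}(0,T;L^{p_2}(\Omega))$. Convolution with the fixed smooth kernel $\omega_\delta\in C_c^\infty$ makes $G_n$ bounded in $L^{p_1}(0,T;W^{1,p_2}(K))$ on any compact $K$ containing $\supp\varphi$. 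For the time derivative,
\[
\p_t G_n=\bigl((\p_t g_n)\varphi\bigr)*\omega_\delta+(g_n\,\p_t\varphi)*\omega_\delta.
\]
The second summand is bounded in $L^{p_1}(0,T;L^{p_2}(\Omega))$. For the first, $(\p_t g_n)\varphi$ is bounded in $L^1(0,T;W^{-m,1}(\Omega))$ by~\eqref{ass:lemma_lions}, and convolution by $\omega_\delta\in C_c^\infty$ maps $W^{-m,1}$ continuously into $L^\infty$ (by duality, since $\omega_\delta\in W^{m,\infty}$ with compact support), so the first summand is bounded in $L^1(0,T;L^\infty(\Omega))$. Altogether $\p_t G_n$ is bounded in $L^1(0,T;L^{p_2}(\Omega))$. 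The Aubin--Lions--Simon lemma, applied with $W^{1,p_2}(K)\hookrightarrow\hookrightarrow L^{p_2}(K)\hookrightarrow L^{p_2}(K)$, then gives relative compactness of $(G_n)$ in $L^{p_1}(0,T;L^{p_2}(\Omega))$, and the weak convergence $g_n\rightharpoonup g$ identifies the limit as $G$.

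Combining the two steps and letting first $n\to\infty$ (for each fixed $\delta$) and then $\delta\to 0$ (uniformly in $n$, thanks to the first step) yields $\iint g_n h_n\varphi\to\iint gh\varphi$ for every test function, which is the desired distributional convergence. The main obstacle is that neither sequence is individually compact: the assumption on $\p_t g_n$ lives in the very weak space $L^1(W^{-m,1})$, and $h_n$ has no time regularity at all. Everything hinges on the Fubini/symmetric-mollification trick of the second step, which transfers the smoothing onto the factor whose time derivative is controlled, so that Aubin--Lions--Simon can be applied there with only $L^1$-in-time control of $\p_t G_n$; the price of the mollification is paid by~\eqref{ass2:lemma_lions}, which is exactly what keeps the first-step error uniform in $n$.
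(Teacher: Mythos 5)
The paper gives no proof of this lemma: it is quoted from Lions~\cite{MR1637634} and used as a black box, so there is nothing internal to compare against. Your argument is, up to presentation, exactly the standard proof of that result --- split off the commutator $h_n-h_n*\omega_\delta$, which the translation-continuity hypothesis on $h_n$ makes small uniformly in $n$, and transfer the symmetric mollifier onto $g_n\varphi$ so that the $L^{1}(0,T;W^{-m,1})$ bound on $\p_t g_n$ yields strong compactness of $G_n=(g_n\varphi)*\omega_\delta$ via Aubin--Lions--Simon --- and each step is sound. The only point deserving one extra line is the endpoint $p_1=\infty$ (hence $q_1=1$), which the statement permits: Simon's theorem with an $L^{1}$-in-time control of $\p_t G_n$ gives relative compactness of $G_n$ in $L^{p}(0,T;L^{p_2})$ only for finite $p$, not in $L^{\infty}(0,T;L^{p_2})$, so the final pairing against $h_n\rightharpoonup h$ weakly in $L^{1}(0,T;L^{q_2})$ additionally needs the equi-integrability in time of $\|h_n(t)\|_{L^{q_2}}$ (Dunford--Pettis) together with the uniform $L^{\infty}$ bound on $G_n$ to close; for $p_1<\infty$ your proof is complete as written.
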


\begin{remark}
This lemma admits many variants, and it is possible to identify the weak limit of the products with lower regularity, we refer for instance to~\cite{MR3466213}. 
\end{remark}

We want to apply the previous lemma to the terms $\rho_{\eps}\bv_{\eps}$, $\rho_{\eps} c_{\eps}$, $\rho_{\eps}\mu_{\eps}$, $\rho_{\eps}c^{2}_\eps$, $\rho_{\eps}\bv_{\eps}$, $\rho_\eps \bv_\eps \otimes \bv_\eps$, $\rho_\eps \bv_\eps c_\eps$. We admit that $\f{\p \rho_\eps}{\p t}$, $\f{\p \rho_{\eps}\bv_{\eps}}{\p t}$ and $\f{\p \rho_{\eps}c_{\eps}}{\p t}$ satisfy~\eqref{ass:lemma_lions} by using Proposition~\ref{prop:estim_uniform_n} and Equations~\eqref{eq:epsilon_eq1}-\eqref{eq:epsilon_eq2}-\eqref{eq:epsilon_eq3}. The compactness in space required in~\eqref{ass2:lemma_lions} also uses Proposition~\ref{prop:estim_uniform_n}.  We refer also to~\cite[Subsection 3.1]{Miranville-2019-compressible} for similar results. The terms $\eps c_\eps \Delta\rho_\eps$ and $\eps\int_{0}^{t}\int_{\Omega}(\nabla\bv_{\eps}\nabla\rho)\cdot\eta\diff x\diff s$ converge to 0 (the first  one in the distributional sense) thanks to estimates~\ref{item_estim72}-\ref{item_estim82}.

It remains to pass to the limit in (i.e identifying the weak limits)
\begin{align*}
& p(\rho_\eps,c_\eps), \quad \f{1}{2}|\nabla c_{\eps}|^{2}, \quad \nabla c_{\eps}\otimes \nabla c_{\eps},\\
&b(c_{\eps})\nabla\mu_{\eps},\quad  F_{c_\eps}(\rho_{\eps},c_{\eps}), \quad \rho_{\eps}\p_{c}\psi_{0}.
\end{align*}
 The convergence of the last term is used to identify $\rho\mu$. To prove the previous convergences, we need to prove strong compactness in $L^{2}$ of $c_{\eps},\nabla c_{\eps}$ and convergence a.e. of $\rho_{\eps}$ to use Vitali's convergence theorem. But they follow from the arguments in~\cite{abels_diffuse_2008} and~\cite[Section 3.3 and 3.4]{Miranville-2019-compressible}.{We obtain
 \begin{lemma}
 Up to a subsequence (not relabeled),
 \begin{align}
 &\rho_\eps\to \rho \text{ a.e.}\label{conv1_eps}\\
 & c_\eps \to c \text{ a.e. and strongly in $L^{2}(0,T;L^{2}(\Omega))$}\label{conv2_eps}\\
  & \nabla c_\eps \to \nabla c \text{ a.e. and strongly in $L^{2}(0,T;L^{2}(\Omega))$} \label{conv3_eps}
 \end{align}
 \end{lemma}
Altogether, we can pass to the limit in every term of the equations:
\begin{itemize}
    \item $p(\rho_\eps,c_\eps)$: we use~\eqref{conv1_eps}, \eqref{conv2_eps} and \ref{item_estim152}
    \item $\f{1}{2}|\nabla c_{\eps}|^{2}$ and $\nabla c_\eps \otimes \nabla c_\eps$: \eqref{conv3_eps}
    \item $b(c_{\eps})\nabla \mu_\eps$: \eqref{conv2_eps} and \ref{item_estim102}
    \item $F_{c_\eps}(\rho_\eps, c_\eps)$: \eqref{conv1_eps}, \eqref{conv2_eps} and \eqref{eq:assumpt-source}
    \item $\rho_\eps \p_{c}\psi_{0}$: \eqref{conv1_eps}, \eqref{conv2_eps} and \ref{item_estim92}.
\end{itemize}
}

This concludes the argument.

\subsection{Sending $\eps_{Q}\to 0$}

The last step in our proof is to let $\eps_{Q}$ vanishes and recover the existence of weak solutions for the double well potential $Q(c)=\f{1}{4}c^{2} (1-c)^{2}.$ Since we have the energy estimates from before, {that still hold by properties of the weak convergence}, the work is essentially the same but we have to be careful about two points. The first one is to indeed have an energy estimate independent of $\eps_{Q}$. We discussed this point after Equation~\eqref{eq:weakenergy4} and, hence, we do not repeat it here. The second point are the estimates obtained in Proposition~\ref{prop:estim_uniform_n}. However,  the estimates are essentially the same, except for estimate~\ref{item_estim92} {(that is the only one containing $Q$)} which becomes 
\begin{equation}\label{new_item_estim92}
\{\rho\p_{c}\psi_{0}\} \text{ in $L^{\infty}(0,T;L^{\f{2a}{a+2}}(\Omega))$}.   \end{equation}
This can be proved knowing that, when $\eps_{Q}\approx 0$, we have that for $c$ large $\rho Q_{\eps_{Q}}'(c)\approx\rho c^{3}$, and we use estimates~\ref{item_estim22}-\ref{item_estim82}. Altogether, the reasoning to pass to the limit is the same and we conclude. 

\section{Numerical scheme for the G-NSCH model} \label{sec:scheme}

We propose a numerical scheme for the G-NSCH model~\eqref{eq:main1}--\eqref{eq:main4} subjected to periodic boundary conditions.

We combine ideas from the numerical scheme for the variant of the compressible NSCH system in~\cite{Quaolin-2020-compressible} and fast structure-preserving scheme for degenerate parabolic equations~\cite{Fukeng-2021-bounds, huang-SAV-fourth}. Namely, we adapt the relaxation~\cite{shi-1995-relaxation} of the Navier-stokes part as used in~\cite{Quaolin-2020-compressible}.  
The part of the scheme for the Cahn-Hilliard part of the system is designed using the GSAV method. More precisely, a variant used for degenerate parabolic models that preserves the physical bounds of the solution~\cite{Fukeng-2021-bounds,huang-SAV-fourth}. 

Indeed, we expect that the volume fraction $c$ remains within the physically (or biologically) relevant bounds $c \in (0,1)$. Thus, following~\cite{Fukeng-2021-bounds, huang-SAV-fourth}, we construct the invertible mapping $T: \mathbb{R} \to (0,1) $, with $c = T(v)$, transforming Equations~\eqref{eq:main2}--\eqref{eq:main3} into 
\begin{equation}
\begin{aligned}
\rho\left(\p_t v + (\vecu \cdot \nabla) v \right) &=  \frac{1}{T'(v)}\left( \Div(b(c) \nabla \mu ) + F_c\right),\\
\rho \mu &= -\gamma T'(v) \Delta v  -\gamma T''(v)\abs{\nabla v}^2 + \rho \frac{\p \psi_0}{\p c}.
\end{aligned}
\label{eq:transform1-bound}
\end{equation}

Following~\cite{Fukeng-2021-bounds} and~\cite{huang-SAV-fourth}, we can choose 
\[
T(v) = \frac{1}{2} \tanh(v) + \frac{1}{2},\text{ or } T(v) = \frac{1}{1+\exp(-v)},
\]
thus, preserving the bounds $c \in (0,1)$.

The SAV method allows to solve efficiently (and also linearly) the nonlinear Cahn-Hilliard part while preserving the dissipation of a modified energy.
In the following, we assume that it exists a positive constant $\underline C$ such that the energy associated with the Cahn-Hilliard part, \ie 
\[
E[t](\rho,c) = \int_\Omega \f\gamma2 \abs{\nabla c}^2 + \rho \psi_0( \rho, c) = E_0[t] + E_1[t] ,
\]
with $E_1$ the nonlinear part of the energy, and $E_0$ the linear part,
is bounded from below, \ie $E_1 + \underline C \ge 1$ 

We define 
\[
r(t) = {E(t) +C_0 } , \quad \text{with}\quad C_0 = 2\underline C + \norm{E(\rho^0,c^0)}_{L^\infty(\Omega)},
\]
and apply the SAV method. System~\eqref{eq:transform1-bound} becomes
\begin{equation}
\begin{aligned}
\rho\left(\p_t v + (\vecu \cdot \nabla) v \right) &=  \frac{1}{T'(v)}\left( \Div(b(c) \nabla \mu ) + F_c\right),\\
\rho \mu &= -\gamma T'(v) \Delta v  -\gamma T''(v)\abs{\nabla v}^2 + \rho \frac{\p \psi_0}{\p c},\\
\frac{\dd r}{\dd t} &= - \frac{r(t)}{E[t] +C_0} \int_\Omega b(c) \abs{\nabla \mu}^2-\mu F_{c} \,\dd x,
\end{aligned}
\end{equation}
One can easily see that the previous modifications do not change our system at the continuous level.

\subsection{One-dimensional scheme}
We consider our problem in a one-dimensional domain $\Omega = (0,L)$. Even though $\vecu$ is now a scalar, we still denote it in bold font to not make the confusion with $v$ from the transformation $c = T(v)$. 
As mentioned previously, we relax the Navier-Stokes part of our system. Namely, we introduce a relaxation parameter $\iota \ge 0$ and write $U = (\rho, \rho \vecu)$. We rewrite Equation~\eqref{eq:main4} as
\begin{equation}
    \begin{cases}
        \partial_t U + \partial_x V = G(U),\\
        \partial_t V + A \partial_x U = -\frac{1}{\iota}(V - F(U)),
    \end{cases}
\end{equation}
in which $G(U) = (0,-\kappa \vecu)$,$F(U) = (\rho \vecu, \rho \vecu^2 + p - \left(\frac{4}{3}\nu(c) + \eta(c)\right) \p_x \vecu   +\frac{\gamma}{2} \abs{\p_x c}^2)  $ and ${A = \text{diag}(a_1,a_2)}$ satisfying Liu's subcharacteristic condition
\[
A \ge F'(U), \quad \forall U.
\]
In what follows, and following~\cite{Quaolin-2020-compressible}, we use 
\[
a_1 = a_2 = \max\left\{\sup\left(\vecu + \sqrt{\partial_\rho p}\right)^2, \sup\left(\vecu - \sqrt{\partial_\rho p}\right)^2 \right\}.
\]

We discretize the domain using a set of $N_x$ nodes located at the center of control volumes of size $\Delta x$ such that $\Omega = \bigcup_{j=0,\dots,N_x-1} [x_{j-\frac{1}{2}}, x_{j+\frac{1}{2}}]$.

Our scheme follows the discrete set of equations 
\begin{align}
    U^*_j &= U^n_j, \label{eq:discrete1D1}\\
    V^*_j &= V^n_j - \frac{\Delta t}{\iota}\left(V^*_j - F(U^*_j) \right),\\
    U^{n+1}_j &= U^*_j - \frac{\Delta t}{\Delta x}\left(V^*_{j+\frac{1}{2}} - V^*_{j-\frac{1}{2}}  \right) + \Delta t G(U^{n+1}_j),\label{eq:discrete1D3}\\
    V^{n+1}_j &= V^*_j - \frac{\Delta t}{\Delta x}A \left(U^*_{j+\frac{1}{2}} - U^*_{j-\frac{1}{2}}  \right), \label{eq:discrete1D4}\\
    \rho^{n+1}_j  T'(v^n_j)\big( \frac{\overline{v}^{n+1}_j - v^n_j}{\Delta t} &+ \vecu^{n+1}_j\cdot (\nabla \overline{v}^{n+1})_j \big) = g(c^n,\mu^{n+1}, \rho^{n+1})_j,\label{eq:discrete1D5}\\
    g(c^n,\mu^{n+1}, \rho^{n+1})_j &= \left( \f{1}{\Delta x} \left((b(c^n)\nabla \mu^{n+1})_{j+\frac{1}{2}} -(b(c^n)\nabla \mu^{n+1})_{j-\frac{1}{2}}  \right) \right) + F_c(\rho^{n}_j,c^n_j),\\
    \rho^{n+1}_j \mu^{n+1}_j &= \left(-\gamma T'(v^n_j)(\Delta \overline{v}^{n+1})_j -\gamma T''(v^n_j) (\nabla v^n)_j \cdot (\nabla \overline{v}^{n+1})_j  \right)+ \rho^{n+1}_j\left(\frac{\p \psi_0}{\p c}\right)_{j}^n ,\label{eq:discrete1D7}\\
    \sum_j \dx T( \lambda \overline{v}^{n+1}_j) &= \sum_j\dx c^0 + \sum_{r=1}^{n} \dt\sum_j \dx F_c(\rho_j^r,c^r_j),\label{eq:Tdiscreteinteg} \\
    \overline{c}^{n+1} &=  T( \lambda \overline {v}^{n+1}),\\
    \frac{1}{\Delta t}\left(r^{n+1} - r^n\right) &= - \frac{r^{n+1}}{E(\overline{c}^{n+1}) + C_0}   \Delta x \sum_{j}  b(\overline{c}_j^{n+1}) \abs{(\nabla \mu^{n+1})_j}^2 \nonumber\\ & +\frac{{r^{n+1}}}{E(\overline{c}^{n+1}) + C_0}   \Delta x \sum_{j}\mu^{n+1}_j F_c(\rho_j^{n+1},\overline{c}_j^{n+1}),\label{eq:discrete1D8}\\
    \xi^{n+1} &= \frac{r^{n+1}}{E(\overline{c}^{n+1}) + C_0},\label{eq:discrete1D9}\\  
    c^{n+1}_j &= \nu^{n+1} \overline{c}^{n+1}_j, \quad \text{with}\quad \nu^{n+1} = 1-(1-\xi^{n+1})^2,\label{eq:discrete1D10}\\
    v^{n+1}_j &= \lambda \nu^{n+1}\overline{v}^{n+1}_j. \label{eq:discrete1D11}
\end{align}

\begin{remark}[Computation of interface values]
    To obtain the interface values $U^*_{j+\frac{1}{2}}, U^*_{j-\frac{1}{2}}$ and $V^*_{j+\frac{1}{2}}, V^*_{j-\frac{1}{2}}$, we use the upwind method, \ie 
    \[
        U_{j+\frac{1}{2} } = \frac{1}{2} (U_j + U_{j+1}) - \frac{\sqrt{a_1}}{2} \left(V_{j+1}-V_j \right), \;
        V_{j+\frac{1}{2} } = \frac{1}{2} (V_j + V_{j+1}) - \frac{1}{2\sqrt{a_2}} \left(U_{j+1}-U_j \right).
    \]
    
    We also mention that similarly to~\cite{Quaolin-2020-compressible}, one can implement a MUSCL scheme (see \eg~\cite{leveque2002finite}) to obtain a higher order reconstruction. The upwind method permits to rewrite Equations~\eqref{eq:discrete1D3}--\eqref{eq:discrete1D4} as
    \begin{align}
            U^{n+1}_j &= U^*_j -\frac{\dt}{2\Delta x} (V^*_{j+1}-V^*_{j-1}) + \frac{\dt}{2\Delta x} \sqrt{a} (\delta^2_x U^*_j) + \Delta tG(U_j^{n+1}), \label{eq:firstvelo-discrete-upw}\\
            V^{n+1}_j &= V^*_j -\frac{a \dt}{2\Delta x} (U^*_{j+1}-U^*_{j-1}) + \frac{\dt}{2\Delta x} \sqrt{a} (\delta^2_x V^*_j),\label{eq:secondvelo-discrete-upw}
    \end{align}
    where we used the notation $\delta^2_x U = U_{j+1} - 2 U_j + U_{j-1}$. In Equations~\eqref{eq:firstvelo-discrete-upw}--\eqref{eq:secondvelo-discrete-upw}, we emphasize that $U^\ast = U^n$ and $V^\ast = V^n - \frac{\dt}{\iota} \left(V^\ast - F(U^n)\right)$.
\end{remark}

\begin{remark}[Algorithm to compute the solution of the discrete equations' system]
Equations~\eqref{eq:discrete1D1} to~\eqref{eq:discrete1D4} are solved from Equations~\eqref{eq:firstvelo-discrete-upw}--\eqref{eq:secondvelo-discrete-upw}, hence, a solution $(U^{n+1}, V^{n+1})$ is computed just from vector computations. The coupling between Equation~\eqref{eq:discrete1D5} and Equation~\eqref{eq:discrete1D7} is also linear (nonlinear terms are taken at the previous time step to linearize the equations). We solve this coupled system using the GMRES algorithm but we emphasize that other iterative solver could work as long as they allow the matrix of the linear system to be non-symmetric.  
The coefficient $\lambda$ is computed using an iterative method. 
Then, the discrete solution $(\overline v^{n+1},\mu^{n+1})$, together with the coefficient $\lambda$, is used in Equation~\eqref{eq:discrete1D8} to find $r^{n+1}$ and, in Equation~\eqref{eq:discrete1D9}, $\xi^{n+1}$. At this point, we solve Equation~\eqref{eq:discrete1D10} and\eqref{eq:discrete1D11} from the previous steps.
\end{remark}

In the following, we use the notations,
\[
\duality{U}{V} = \Delta x \sum_{j} U_j V_j,\quad \text{and} \quad \norm{U}^2 = \duality{U}{U}.
\]
We also use $\Delta_{0,x} U := \frac{1}{2}(U_{j+1} - U_{j-1})$.

Our numerical scheme possesses the following important properties:
\begin{proposition}[Energy stability, bounds and mass preserving] \label{prop:1dschemeprops}
Assuming the CFL-like condition $\frac{\Delta t}{\Delta x}\sqrt{a_1}\le 1$ and the condition
{\begin{equation}
    \Delta t\le  C\f{C_0}{E[\overline{c}^{n}]}, 
    \label{eq:condrn}
\end{equation}}
our numerical scheme satisfies the energy dissipation-like inequality
\begin{equation}
\norm{\sqrt{a} U^{n+1}}^{\color{blue}2} + \norm{V^{n+1}}^{\color{blue}2} + r^{n+1} \le \norm{\sqrt{a} U^{n}}^{\color{blue}2}+ \norm{V^{\star}}^{\color{blue}2} + C^{n+1}r^{n},
\label{eq:dissnum}
\end{equation}
where {$r^{n+1}\ge 0$ and} 
$$
C^{n+1}=\f{1}{1+\f{\Delta t}{E(\overline{c}^{n+1})+C_{0}} \sum_{j=1}^{N_x}b(\overline{c}^{n+1}_j)|\nabla\mu^{n+1}_j|^{2} - \mu^{n+1}_jF_{c}(\rho^{n+1}_j,\overline{c}^{n+1}_j)}. 
$$
{The previous constant can be estimated only in terms of $E[\overline{c}^{n}]$ and therefore do not depend on the step $n+1$.} Furthermore, the numerical scheme preserves the physically relevant bounds of the mass fraction, \ie 
\[
0 < c^{n+1} <1.
\]
\end{proposition}

\begin{remark}
Note that the constant $C^{n+1}$ is smaller than $1$ whenever the nonnegative part of the dissipation of the energy is greater than the increase of energy induced by the source term $F_{c}$. This of course satisfied when we have $F_{c}=0$ for instance.      
\end{remark}

\begin{proof}
    We start with Equation~\eqref{eq:firstvelo-discrete-upw}, and using the definition of the function $G(U^{n+1}_j)$ as well as assuming $\kappa(c) \ge 0 $ (for $c\in \R$), after taking the square on both sides, multiplying by $\dx$ and summing over the nodes $j=0,...,N_x$, we have 
    \[
    \begin{aligned}
    \norm{U^{n+1}}^2 \le \norm{U^{n}}^2 + \left(\frac{\dt}{2\dx}\right)^2\norm{\Delta_{0,x}V^\star}^2 &+ \left(\frac{\dt \sqrt{a}}{2\dx} \right)^2\norm{\delta_x^2 U^n}^2 - \frac{\dt}{\dx}\duality{\Delta_{0,x}V^* }{U^n} \\ 
    &+ \frac{\dt\sqrt{a}}{\dx} \duality{U^n}{\delta^2_x U^n} - \frac{\sqrt{a}\dt^2}{2\dx^2}\duality{\Delta_{0,x} V^\star}{\delta^2_x U^n}.
    \end{aligned}
    \]
    Repeating the same computations for Equation~\eqref{eq:secondvelo-discrete-upw}, we have  
    \[
    \begin{aligned}
    \norm{V^{n+1}}^2 \le \norm{V^{n}}^2 + \left(\frac{a\dt}{2\dx}\right)^2\norm{\Delta_{0,x}U^n}^2 &+ \left(\frac{\dt \sqrt{a}}{2\dx} \right)^2\norm{\delta_x^2 V^\star}^2 - \frac{a\dt}{\dx}\duality{\Delta_{0,x}U^n }{V^\star} \\ 
    &+ \frac{\dt\sqrt{a}}{\dx} \duality{U^\star}{\delta^2_x V^\star} - \frac{a^\f32\dt^2}{2\dx^2}\duality{\Delta_{0,x} U^n}{\delta^2_x V^\star}.
    \end{aligned}
    \]
    At this point, the proof is similar to the proof of~\cite[Theorem 4.1]{Quaolin-2020-compressible} (these steps use the periodic boundary conditions and the summation by parts formula to cancel some terms when summing both of the previous equations together), to obtain for a constant $C>0$,
    \[
    \norm{\sqrt{a}U^{n+1}}^2 + \norm{V^{n+1}}^2 \le C\left( \norm{\sqrt{a}U^{n}}^2 + \norm{V^{*}}^2\right).
    \]
  Then, for the Cahn-Hilliard part, we easily obtain from Equation~\eqref{eq:discrete1D8}
    \[
        r^{n+1} \left(1 + \Delta t \f{\dx \sum_j   b(\overline{c}_j^{n+1}) \abs{(\nabla \mu^{n+1})_j}^2 - \mu^{n+1}_j F_c(\rho^{n+1}_j,\overline{c}^{n+1}_j)}{E[\overline{c}^{n+1}]+C_0}\right) = r^n.
    \]
    Therefore, as long as 
    \[
    E(\overline{c}^{n+1}) + C_{0} + \Delta t\left(\dx \sum_j b(\overline{c_j}^{n+1}) \abs{(\nabla \mu^{n+1})_j}^2- \mu^{n+1}_j F_c(\rho^{n+1}_j,\overline{c}^{n+1}_j)\right) \ge 0,
    \]
    so does $r^{n+1}$. 
    Assuming $\norm{F_c}_{L^\infty}< C$, it remains to control the discrete $L^1$ norm of $\mu^{n+1}$. Performing the same computations as in the proof of Proposition~\ref{prop:energy_smooth} in continuous case, it follows that
    $$
    \left|\dx \sum_j  \mu^{n+1}_j F_c(\rho^{n+1}_j,\overline{c}^{n+1}_j)\right| \le C + C E[\overline{c}^{n}]+ \f{1}{2}\dx \sum_j   b(\overline{c}^{n+1}_j) \abs{(\nabla \mu^{n+1})_j}^2.
    $$
    Of course, one first needs to prove that a discrete version of Lemma~\ref{lemma:PW} holds. At the continuous level, this theorem is proved by contradiction using Rellich's theorem. Hence, a similar proof can be obtained at the discrete level, in the spirit of the Poincaré-Wirtinger inequality, see~\cite[Lemma 3.8, Remark 3.16]{MR1804748}. Based on these evidences we use a discrete version of Lemma~\ref{lemma:PW} to conclude conclude that there exists $C$ a universal constant such that provided $r^n\ge 0$ and 
    $$
    \Delta t\le  C\f{C_0}{E[\overline{c}^{n}]}, 
    $$
     so does $r^{n+1}\ge 0$, and~\eqref{eq:dissnum} follows.

    \commentout{
{\color{blue} Then, for the Cahn-Hilliard part, we easily obtain from Equation~\eqref{eq:discrete1D8}
    \[
        r^{n+1} \left(1 + \Delta t \f{\int_\Omega   b(\overline{c}^{n+1}) \abs{(\nabla \mu^{n+1})}^2-\int_\Omega \mu^{n+1} F_c(\rho^{n+1},\overline{c}^{n+1})}{E[\overline{c}^{n+1}]+C_0}\right) = r_n
    \]
    where we kept integrals at the continuous level for sake of clarity. So as long as 
    \[
    E(\overline{c}^{n+1}) + C_{0} + \Delta t\left(\int_\Omega   b(\overline{c}^{n+1}) \abs{(\nabla \mu^{n+1})}^2-\int_\Omega \mu^{n+1} F_c(\rho^{n+1},\overline{c}^{n+1})\right) \ge 0,
    \]
    so does $r^{n+1}$. 
    Assuming $\norm{F_c}_{L^\infty}< C$, it remains to control the $L^1$ norm of $\mu^{n+1}$. Performing the same computations as in the proof of Proposition~\ref{prop:energy_smooth} in continuous case, it follows that
    $$
    \left|\int_\Omega \mu^{n+1} F_c(\rho^{n+1},\overline{c}^{n+1})\right| \le C + C E[\overline{c}^{n}]+ \f{1}{2}\int_\Omega   b(\overline{c}^{n+1}) \abs{(\nabla \mu^{n+1})}^2.
    $$
    Of course one first needs to prove that a discrete version of Lemma~\ref{lemma:PW} holds. At the continuous level this theorem is proved by contradiction using Rellich's theorem. Therefore, a similar proof can be made at the discrete level, in the spirit of the Poincaré-Wirtinger inequality, see~\cite[Lemma 3.8, Remark 3.16]{MR1804748}. 
    We conclude that there exists $C$ a universal constant such that provided $r^n\ge 0$ and 
    $$
    \Delta t\le  C\f{C_0}{E[\overline{c}^{n}]}, 
    $$
     so does $r^{n+1}\ge 0$, and~\eqref{eq:dissnum} follows.} }
     
    Finally, from the definition of $\xi^{n+1}$ and $C_0$, we have 
    \[
    0 < \xi^{n+1} < \frac{r^0}{E(\overline{c}^{n+1})+C_0} \le 2.
    \]

    The bounds for the mass fraction $c$ are ensured by the transformation $T(v)$. This finishes the proof. 
\end{proof}
\begin{remark}
   We observe during numerical simulations that the condition~\eqref{eq:condrn} is obtained for reasonably small $\dt$. 
    We also note that if we do not consider any source term, \ie $F_c = 0$, the scheme satisfies the dissipation relation 
    \[
    \norm{\sqrt{a} U^{n+1}}\corr{}{^2} + \norm{V^{n+1}}\corr{}{^2}  + r^{n+1} \le C\left(\norm{\sqrt{a} U^{n}}\corr{}{^2} + \norm{V^{\star}}\corr{}{^2}\right)  + r^n,
    \]
    with the stability condition 
    \[
    \frac{\Delta t}{\Delta x}\sqrt{a_1}\le 1.
    \]
\end{remark}

\section{Numerical experiments} \label{sec:results}
In this section, we use the assumptions on the functionals stated in the "Framework for numerical simulations" paragraph in subsection~\ref{subsec:assumptions}.  Throughout this section we use the double-well logarithmic potential 
\[
    \psi_\text{mix} = \f12 \left(\alpha_1(1-c)\log(\rho(1-c)) + \alpha_2c\log(\rho c)\right) - \f\theta 2 (c-\frac 1 2 )^2 + k,
\]
with $k=100$, and $\theta = 4$ ($\alpha_1$ and $\alpha_2$ are specified later).
We also use a degenerate mobility, \ie 
\[
b(c) = c(1-c). 
\]

We start by using the one-dimensional scheme~\eqref{eq:discrete1D1}--\eqref{eq:discrete1D11} with no exchange term and friction, \ie $\kappa(\rho,c) = 0$ and $F_c(\rho,c) = 0$, and we verify that the scheme preserves all the properties stated in Proposition~\ref{prop:1dschemeprops}. We then use a non-zero exchange term and we compare the solution with same friction forces for the two phases or contrast of friction forces.  
Then, we perform two-dimensional simulations with friction forces contrast. 

Finally, we verify the spatial and temporal convergence orders of the scheme. 

%Then, we perform a more biologically relevant test case. We assume that the source term $F_c$ is pressure dependent and models the growing of a tumor in a healthy tissue (that does not grow). 

\begin{remark}[Implementation details]
    All numerical schemes are implemented using Python 3 and the Numpy and Scipy modules. The linear system for the Cahn-Hilliard part of the model is solved using the Generalized Minimal RESidual iteration (GMRES) iterative solver (function available in the \textbf{scipy.sparse.linalg} module). The tolerance on the convergence of the residual is indicated in each of the following subsections. 
To find the $\lambda$ that allows to compute the correct mass, we use the function \textbf{fsolve} of the \textbf{scipy.optimize} module which uses a modification of the Powell's conjugate direction method. 
%The code is freely available at the PLMlab (Gitlab based software hosted by hosted by Mathrice-CNRS) at the url: \url{https://plmlab.math.cnrs.fr/apoulain/nsch-compressible}.
\end{remark}

\subsection{One dimensional numerical test cases }
\paragraph{Comparison between matching and non-matching densities.}
\label{subsec:testcase1}
We start with a one-dimensional test cases to show the spatiotemporal evolution of the density, mass fraction, and velocity. We also verify numerically the properties stated in Proposition~\ref{prop:1dschemeprops}. We compare numerical results for matching and non-matching densities for the phases of the fluid. For this comparison, we set $\kappa(\rho,c) = 0$ and $F_c(\rho,c) = 0$. 

We use the computational domain $\Omega = (0,1)$ discretized in $N_x = 128$ cells. We take $T = 0.5$  (this has been chosen because the system reaches a meta-stable state by that time) and use the initial time step $\dt = 1 \times 10^{-5}$ (this time step size is adapted from the CFL-like condition stated in Proposition~\ref{prop:1dschemeprops}.

We choose the width of the diffuse interface to be $\gamma = 1/600$, the viscosity to be constant $\nu(c) = 1 \times 10^{-2}$, and $\eta =2 \times 10^{-2} $ , the relaxation parameter to be $\iota = 1 \times 10^{-5}$, and the exponent for the barotropic pressure equals to $a = 3$.

To model matching densities for the two phases of the fluid, we choose $\alpha_1 = \alpha_2 = 1$. To represent non-matching densities for the two phases, we can choose $\alpha_1 \neq \alpha_2$.  
This allows us to model a fluid for which the phase denoted by the index $1$ is denser compared to the phase indicated by the index $2$. Indeed, this can been seen on the effect of the values $\alpha_1$ and $\alpha_2$ on the potential. Taking $\alpha_1 < \alpha_2$ shifts the well corresponding to phase $1$ very close to $0$ compared to the other phase. This models the fact that the fluid $1$ is in fact more compressible and thus aggregates of pure phase $1$ appear denser.  

We choose constant initial conditions for the density and the pressure, \ie 
\[
\rho^0_j = 0.8,\quad \vecu^0_j = 0.5, \quad j=0,\dots,N_x-1. 
\]
The initial mass fraction is assumed to be a constant with a small random noise, \ie 
\[
c^0_j = \underline{c} + 0.05 r_j, \quad j=0,\dots,N_x-1,
\]
with $\underline c = 0.5$ and $r$ is a vector of random values between 0 and 1 given by the uniform distribution.  

We choose a tolerance for the convergence of the residual for the GMRES algorithm of $\text{rtol} = 10^{-10}$. 

Figure~\ref{fig:1D_comparison_solu} compares the results obtained for matching and non-matching densities for the two phases of the fluid. For the two cases, we report the evolution of the density $\rho$, the mass fraction $c$, the velocity $\vecu$ and the pressure $p$ at different times. We observe that, for both cases, after an initial regularization of the initial condition, the separation of the two phases of the fluid occurs and small aggregates appear (see first and second columns of Figure~\ref{fig:1D_comparison_solu}). Then, the coarsening of the small aggregates into larger ones occurs. We arrive at the end of the simulation to the solution depicted in the two figures on the last column of Figure~\ref{fig:1D_comparison_solu}. Hence, we can conclude that our numerical scheme catches well the spinodal decomposition of the binary fluid while it is transported to the right (since the velocity $\vecu$ is positive during these simulations). 

A difference between the two simulations is observed on the densities and pressures. Indeed, for matching densities, we observe that $\rho$ organizes such that it is equal in aggregates of each phases and drops at the interfaces between the aggregates. We also observe a drop of pressure $p$ at the interface, probably explained by capillary effects.   
For non-matching densities, as expected, there is a density difference between aggregates of phase 1 and 2. Indeed, selecting $\alpha_1 < \alpha_2$ makes the aggregates of phase 1 denser compared to aggregates of phase 2. Our explanation is that, due to the fact that attractive effects are stronger in phase 1, more mass is allowed to move inside aggregates of phase 1. However, as the pressure function accounts for the difference $\alpha_1 \neq \alpha_2$, the pressure equilibrates to  a field similar to the matching density case (\ie $p$ varies from an equilibrium value and depicts a drop at the interface between the aggregates of the different phases). 

\begin{figure}
\centering 
     \begin{subfigure}[b]{0.24\textwidth}
\centering 
\includegraphics[width=0.99\linewidth]{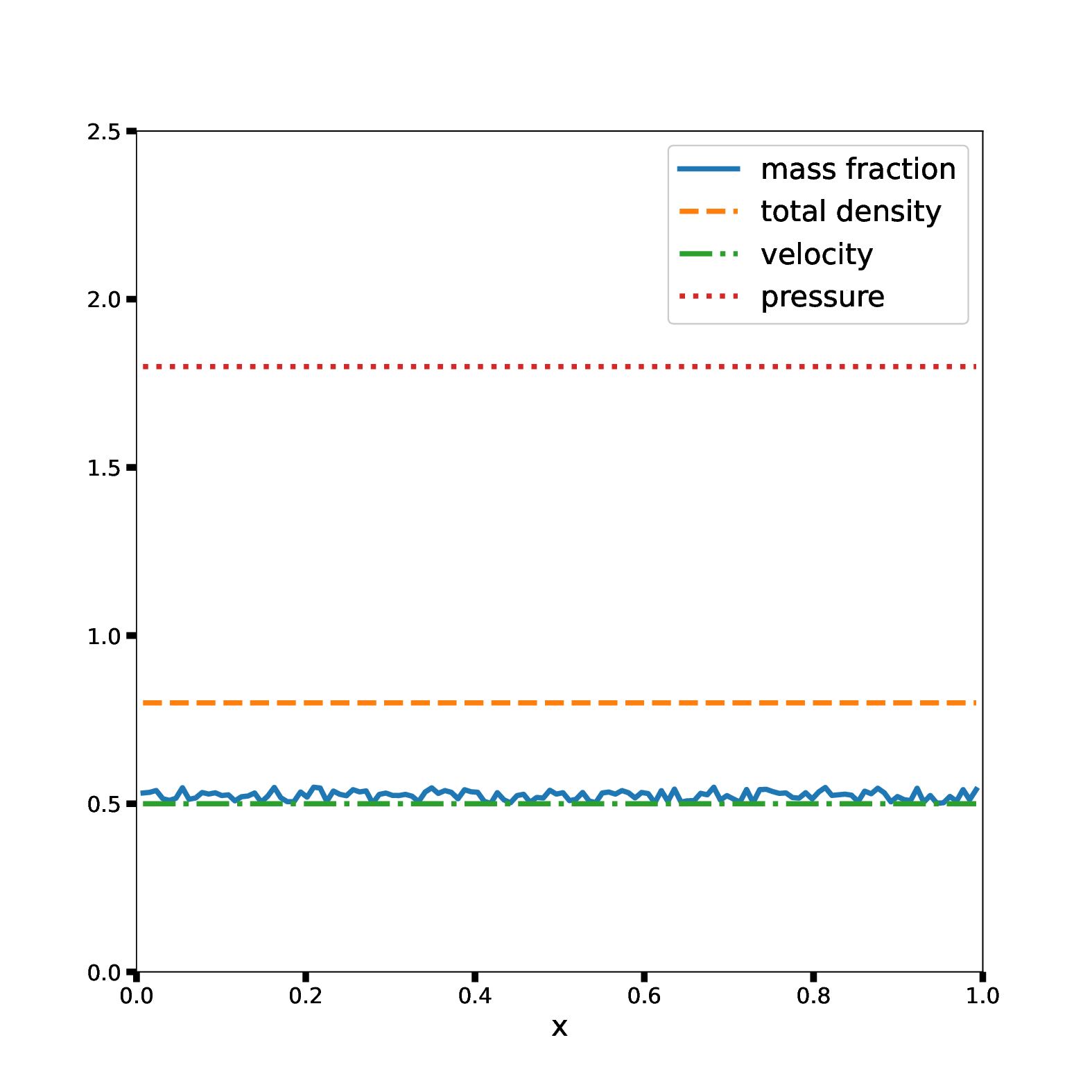}
\caption{$T=0$}
\end{subfigure}
\begin{subfigure}[b]{0.24\textwidth}
\centering 
\includegraphics[width=0.99\linewidth]{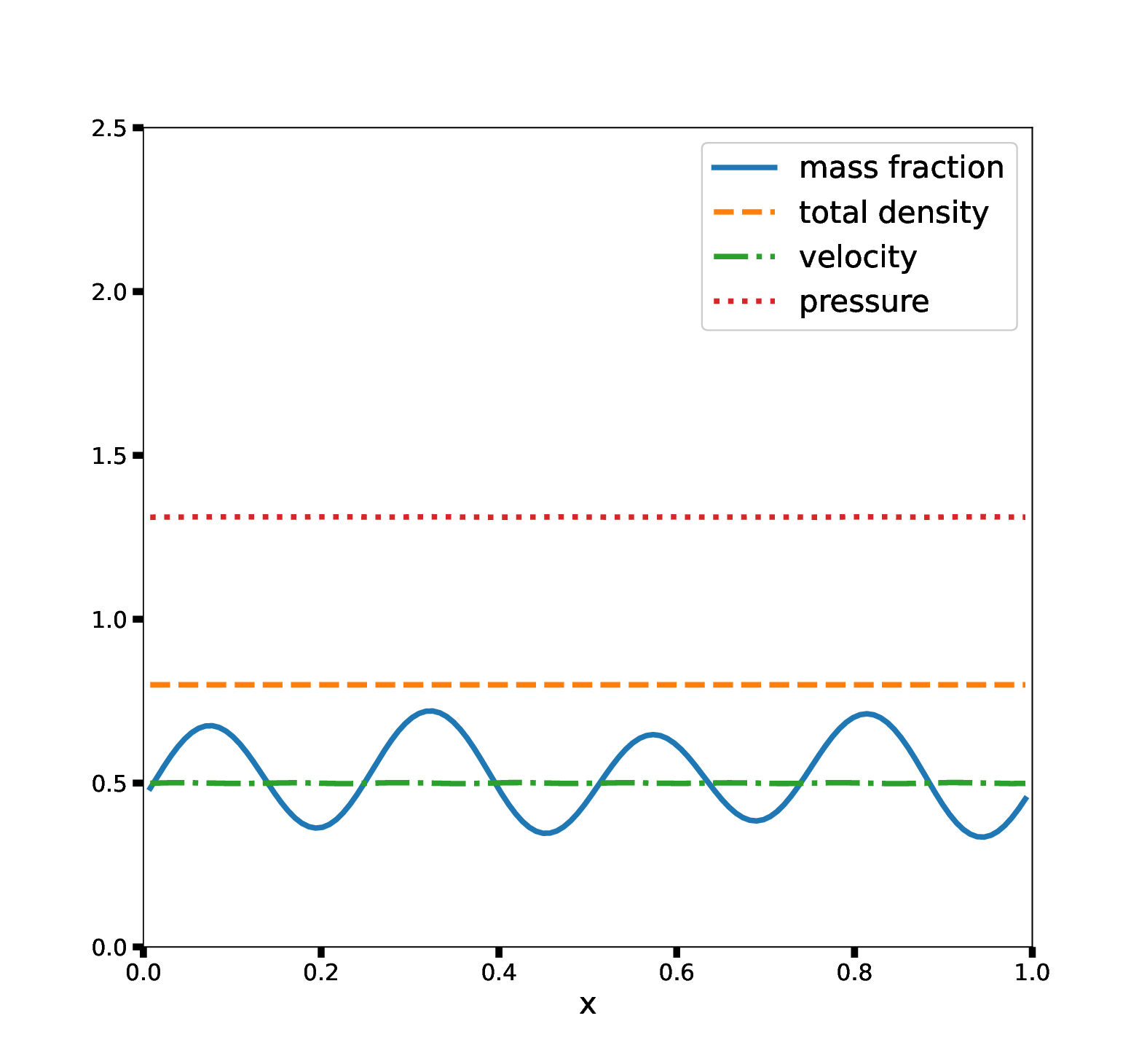}
\caption{$T=0.03$}
\end{subfigure}
\begin{subfigure}[b]{0.24\textwidth}
\centering 
\includegraphics[width=0.99\linewidth]{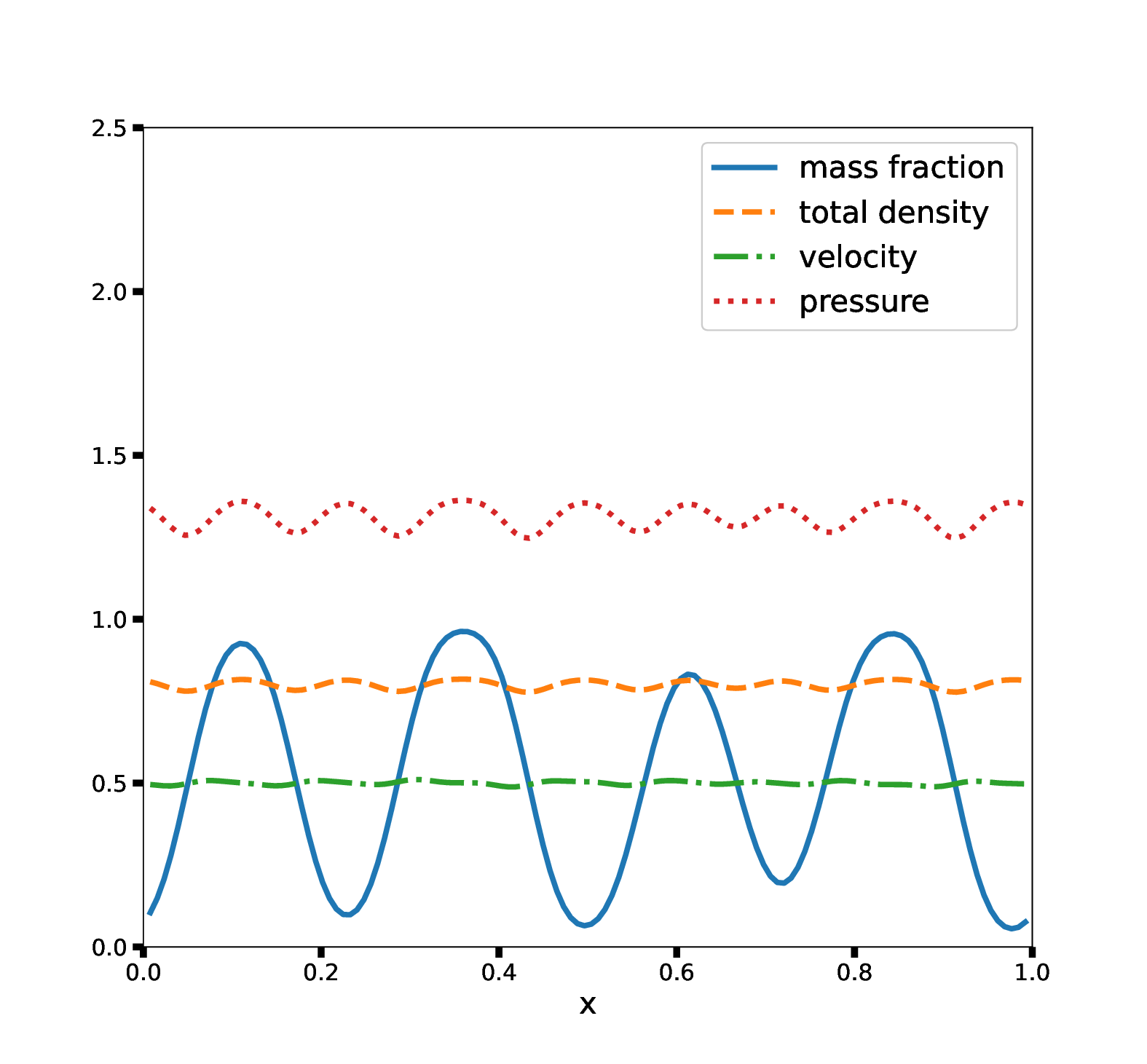}
\caption{$T=0.1$}
\end{subfigure}
\begin{subfigure}[b]{0.24\textwidth}
\centering 
\includegraphics[width=0.99\linewidth]{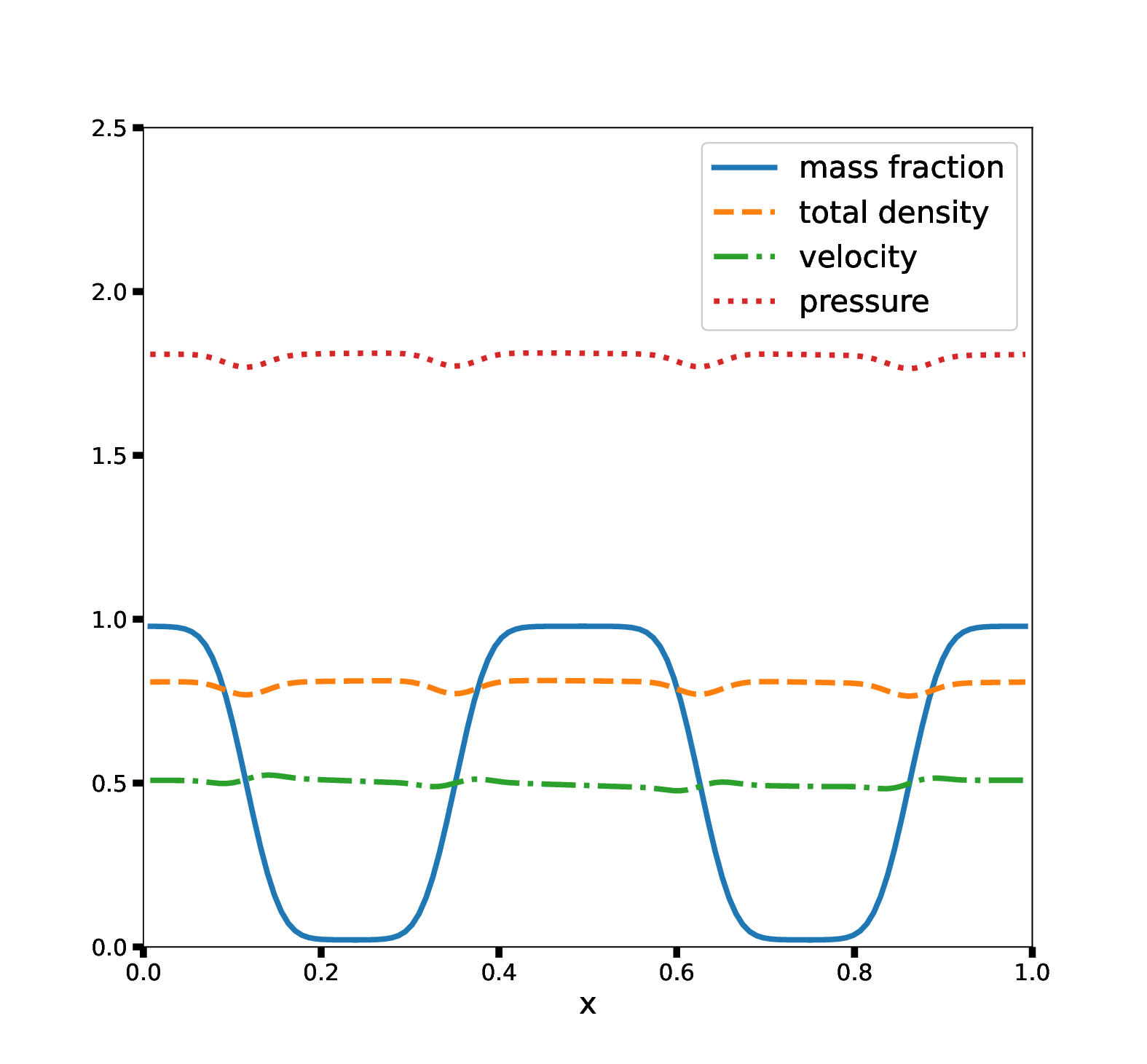}
\caption{$T=0.5$}
\end{subfigure}\\
\begin{subfigure}[b]{0.24\textwidth}
\centering 
\includegraphics[width=0.99\linewidth]{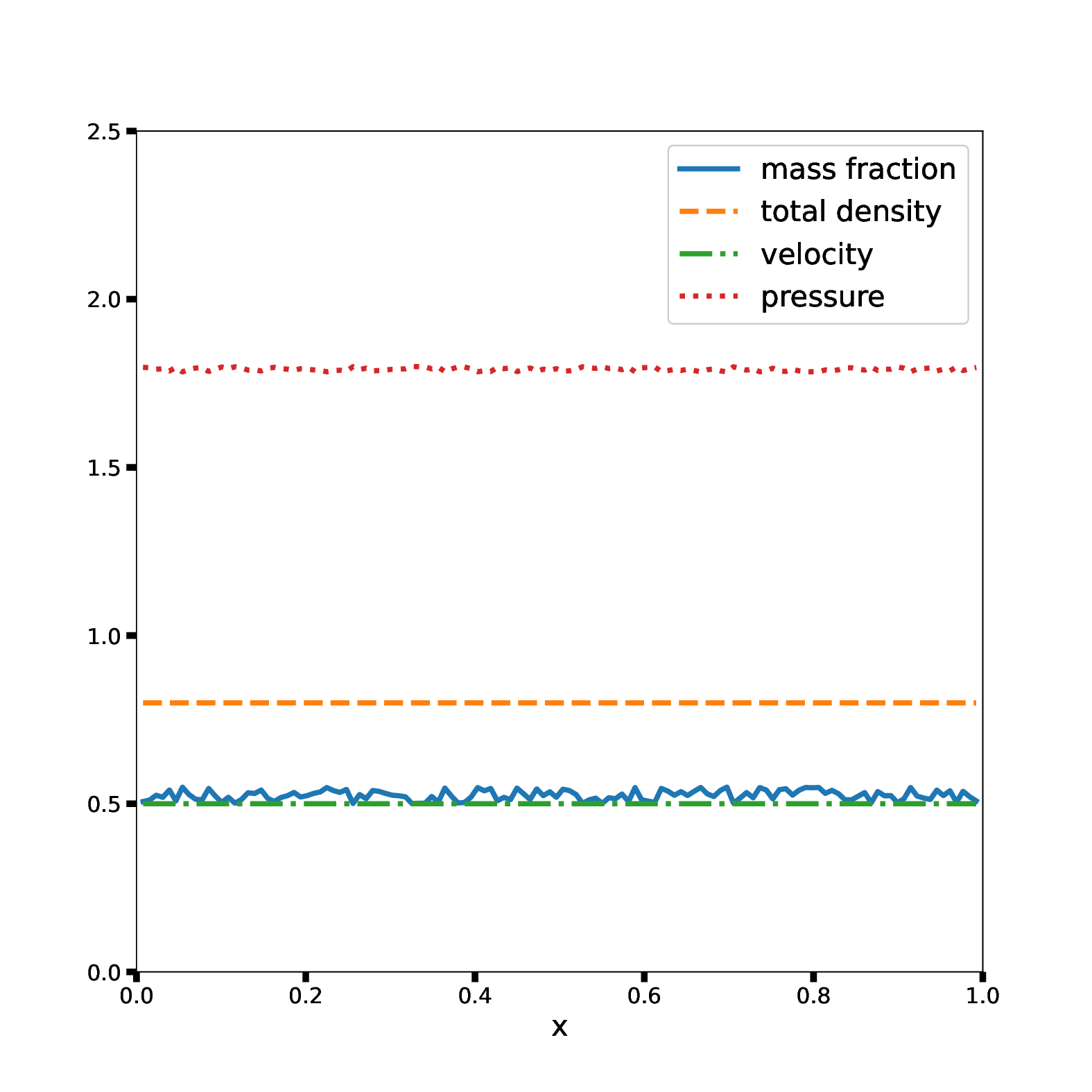}
\caption{$T=0$}
\end{subfigure}
\begin{subfigure}[b]{0.24\textwidth}
\centering 
\includegraphics[width=0.99\linewidth]{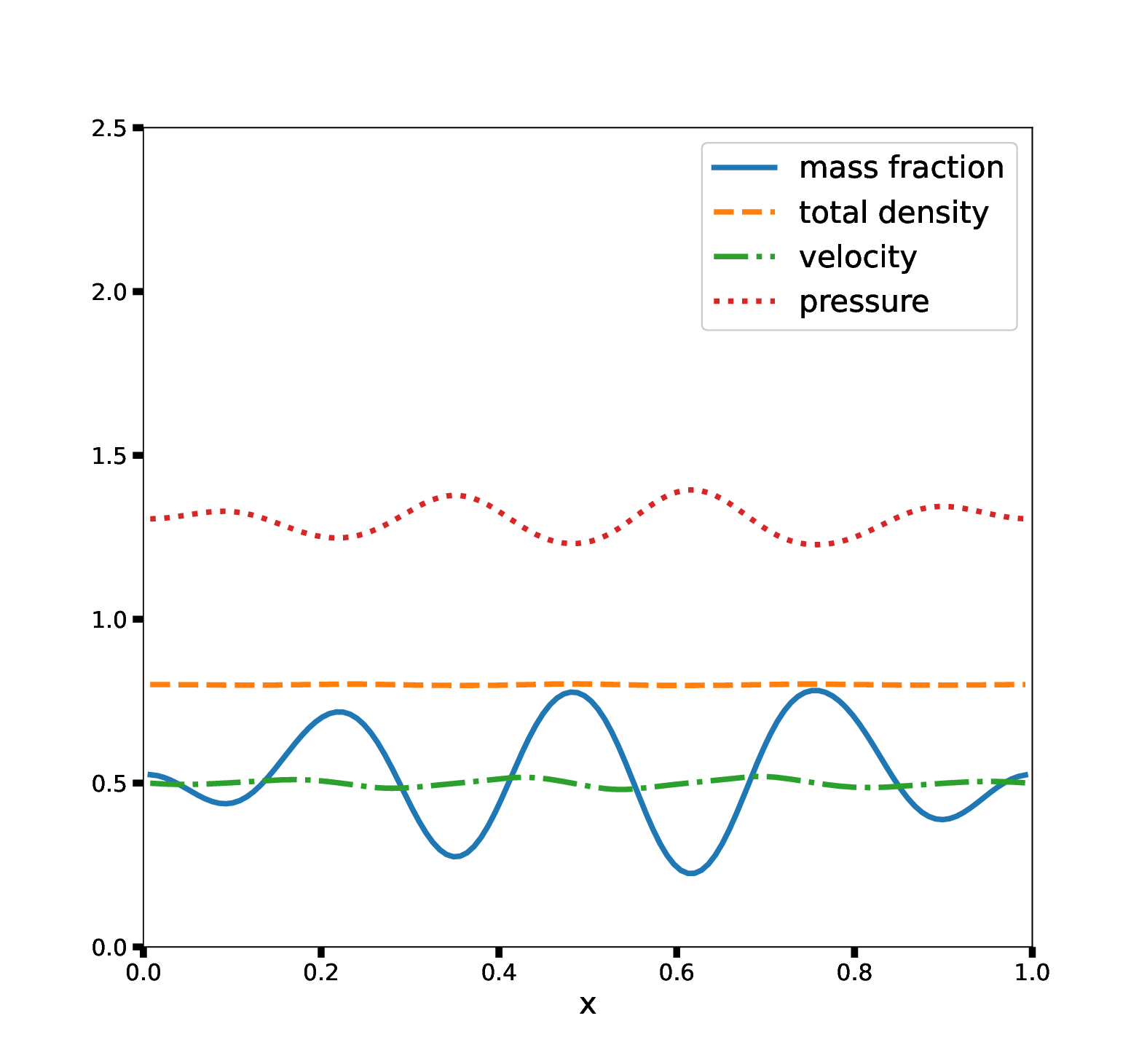}
\caption{$T=0.03$}
\end{subfigure}
\begin{subfigure}[b]{0.24\textwidth}
\centering 
\includegraphics[width=0.99\linewidth]{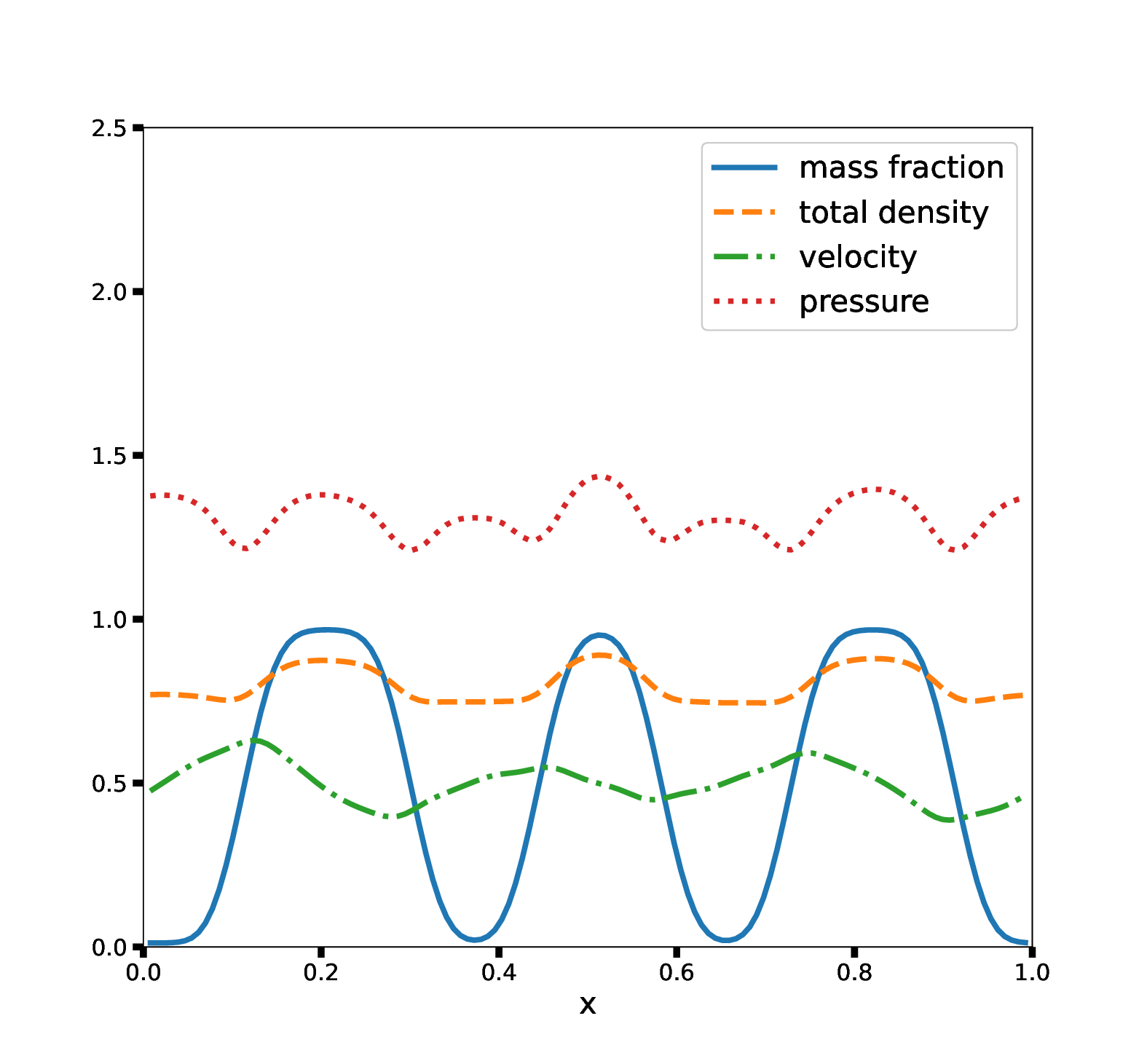}
\caption{$T=0.1$}
\end{subfigure}
\begin{subfigure}[b]{0.24\textwidth}
\centering 
\includegraphics[width=0.99\linewidth]{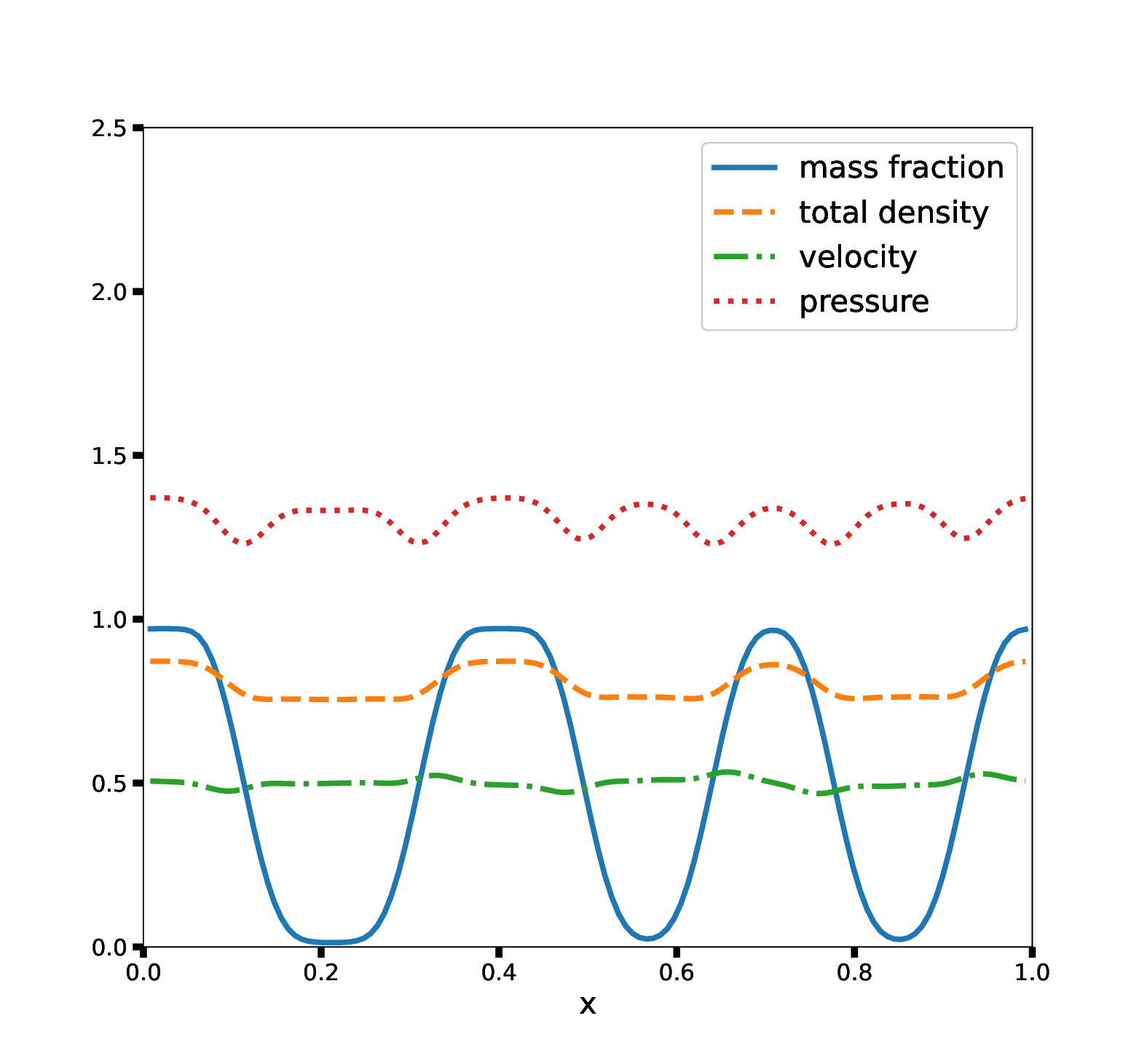}
\caption{$T=0.5$}
\end{subfigure}
\caption{Simulation of compressible Navier-Stokes-Cahn-Hilliard model (with $\kappa = F_c = 0$). Matching densities (Top row) and non-matching densities (bottom row) for the two phases of the fluid.}
\label{fig:1D_comparison_solu}
\end{figure}

Figure~\ref{fig:comparison_properties} shows that, for both cases (\ie matching and non-matching densities), our numerical scheme preserved the properties presented in Proposition~\ref{prop:1dschemeprops}. 
We defined the discrete dissipation of energy 
\begin{equation}\label{eq:discrete_ineq}
\frac{\dd E}{\dd t} = \norm{\sqrt{a} U^{n+1}}^2 + \norm{V^{n+1}}^2 + r^{n+1} -\left[ \norm{\sqrt{a} U^{n}}^2+ \norm{V^{*}}^2 + C^{n+1}r^{n}\right].
\end{equation}
We emphasize that as no exchange term was present in the previous simulation $C^{n+1}$ is bounded from above by $1$, hence we used $C^{n+1} = 1$ for the simulations in this paragraph.
Figure~\ref{fig:comparison_properties} presents the temporal evolution of the dissipation $\frac{\dd E}{\dd t}$, the mass $\int_\Omega \rho c\dd x $, the minimum and maximum values of $c$, and the value of $\xi$. 
We observe for both cases that the dissipation~\eqref{eq:discrete_ineq} is strictly negative, as expected by proposition~\ref{prop:1dschemeprops}. The mass of fluid $1$ is preserved up to a small numerical error (we emphasize that the error on the initial mass at the end of the simulation is less than $10^{-9}$ for both simulations). This latter result is expected as we set $F_c(\rho,c) = 0$ for these simulations. We observe that the physical bounds of the mass fraction are ensured, \ie maximum and minimum values for $c$ lie the interval $(0,1)$. The scalar variable is very close to $1$ (up to an error of order $10^{-5}$) as observed in Figure~\ref{fig:1D_xi}. This verifies that the modified energy $r^{n+1}$ and the real energy of the Cahn-Hilliard part of the model
\[
E^{n+1} = \Delta x \sum_j \frac{\gamma}{2} \abs{(\nabla c)_j}^2 + \rho^{n+1}\left(\frac{\partial \psi_0}{\partial c^{n+1}}\right)_j,
\]
are close. 

\begin{figure}[h!]
     \centering
     \begin{subfigure}[b]{0.49\textwidth}
         \centering
         \includegraphics[width=\textwidth]{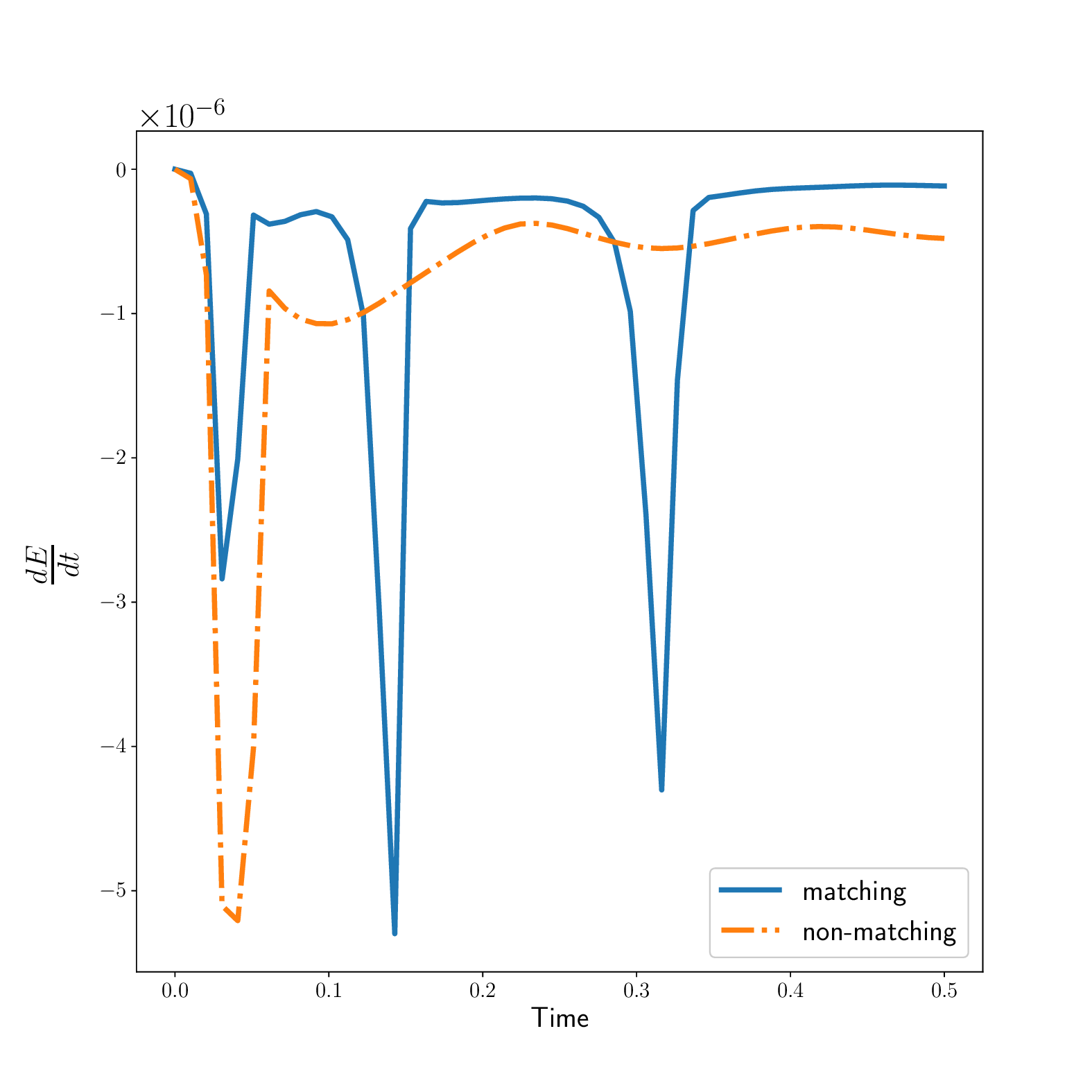}
         \caption{Dissipation $\frac{\dd E}{\dd t}$. The value of the dissipation remains negative ensuring that Inequality~\eqref{eq:dissnum} is satisfied at the discrete level. }
         \label{fig:1D_diss}
     \end{subfigure}
     \hfill
     \begin{subfigure}[b]{0.49\textwidth}
         \centering
         \includegraphics[width=\textwidth]{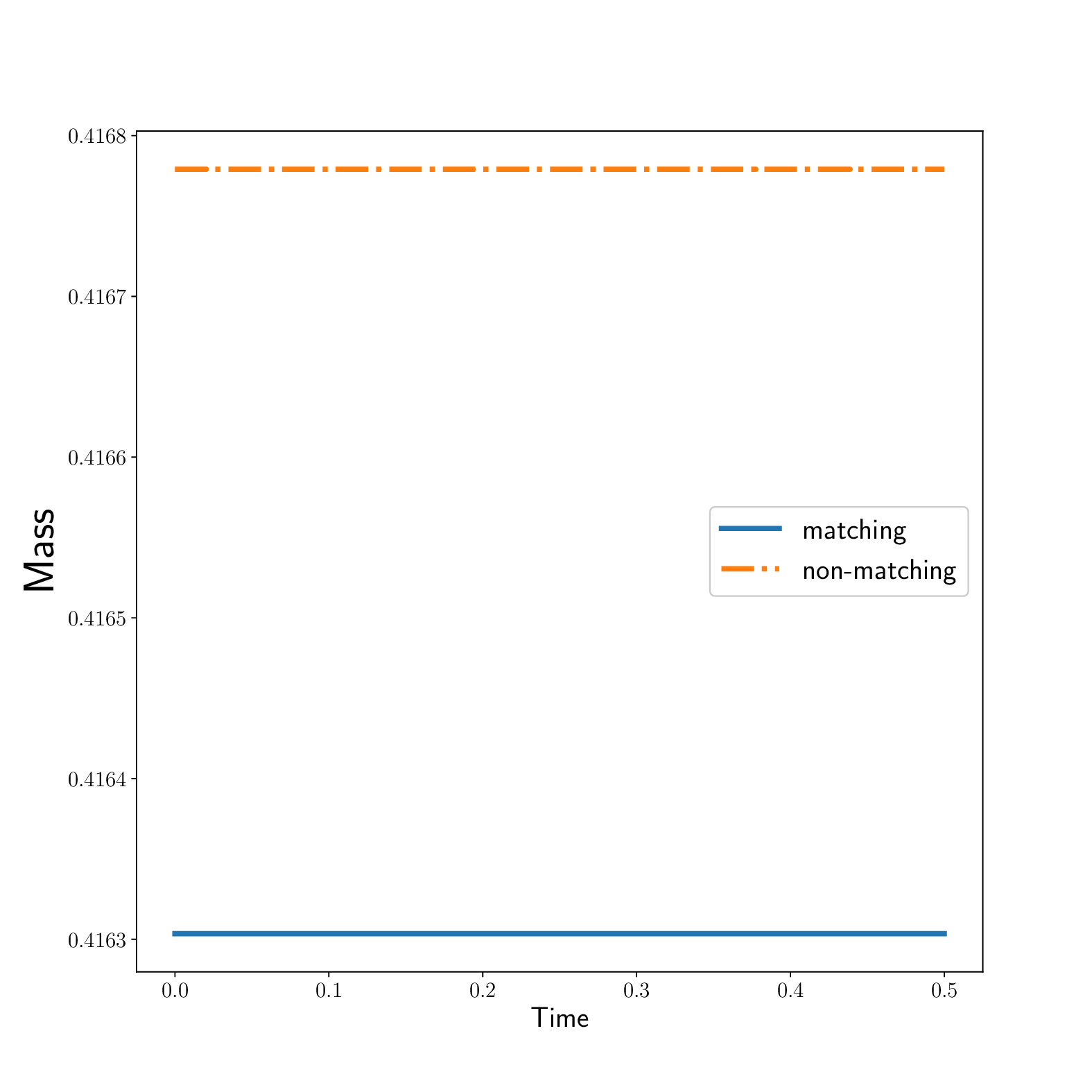}
         \caption{Mass $\int_\Omega \rho c\dd x $ of fluid 1. Difference of initial mass is due to the random initial conditions for the two simulations. }
         \label{fig:1D_mass}
     \end{subfigure}\\
     
     \begin{subfigure}[b]{0.49\textwidth}
         \centering
         \includegraphics[width=\textwidth]{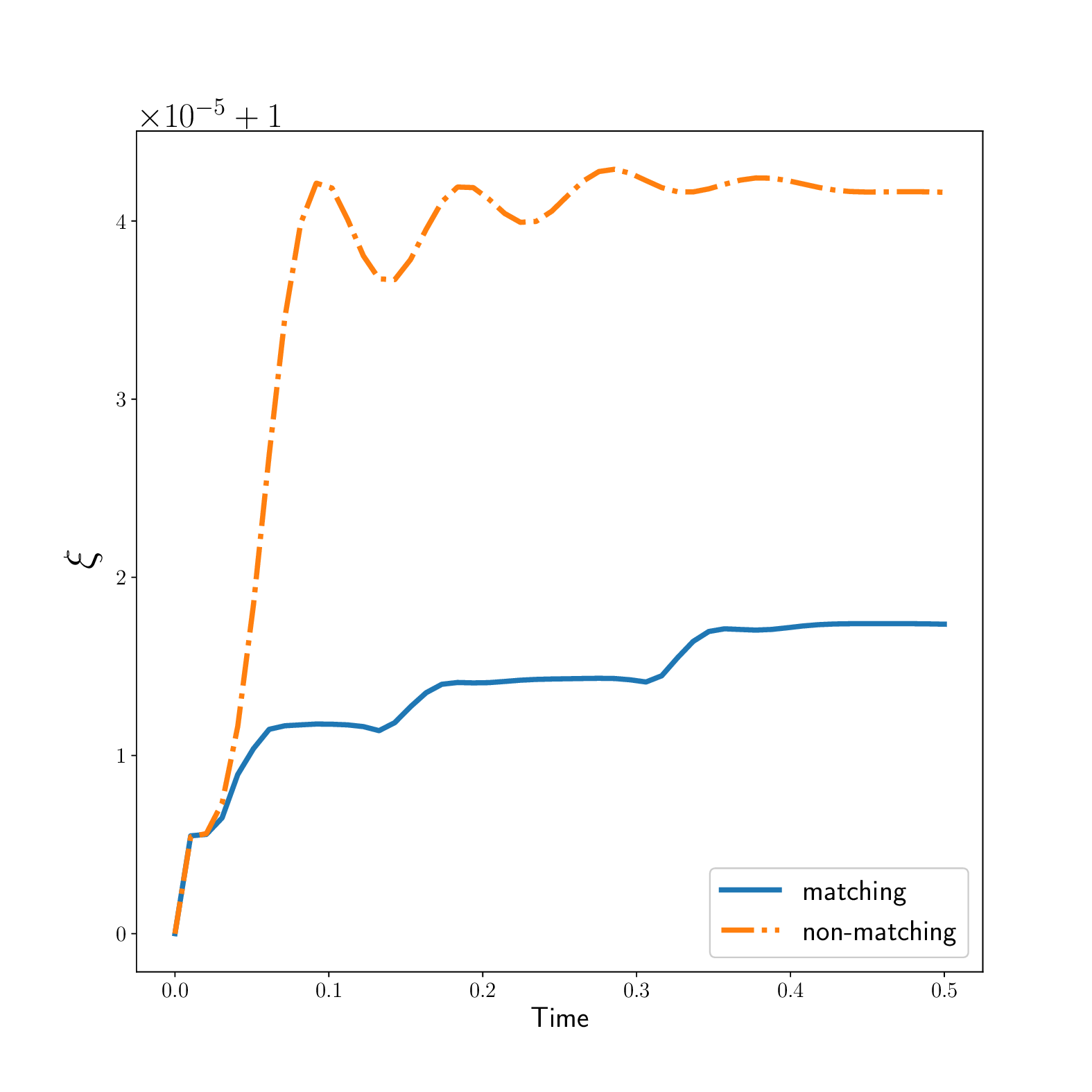}
         \caption{Scalar variable $\xi$. The value remains close to $1$ (error is of order $10^{-5}$), ensuring that the modified energy $r^{n+1}$ is close to the real energy $E^{n+1}$.}
         \label{fig:1D_xi}
     \end{subfigure}
     \hfill 
     \begin{subfigure}[b]{0.49\textwidth}
         \centering
         \includegraphics[width = \textwidth]{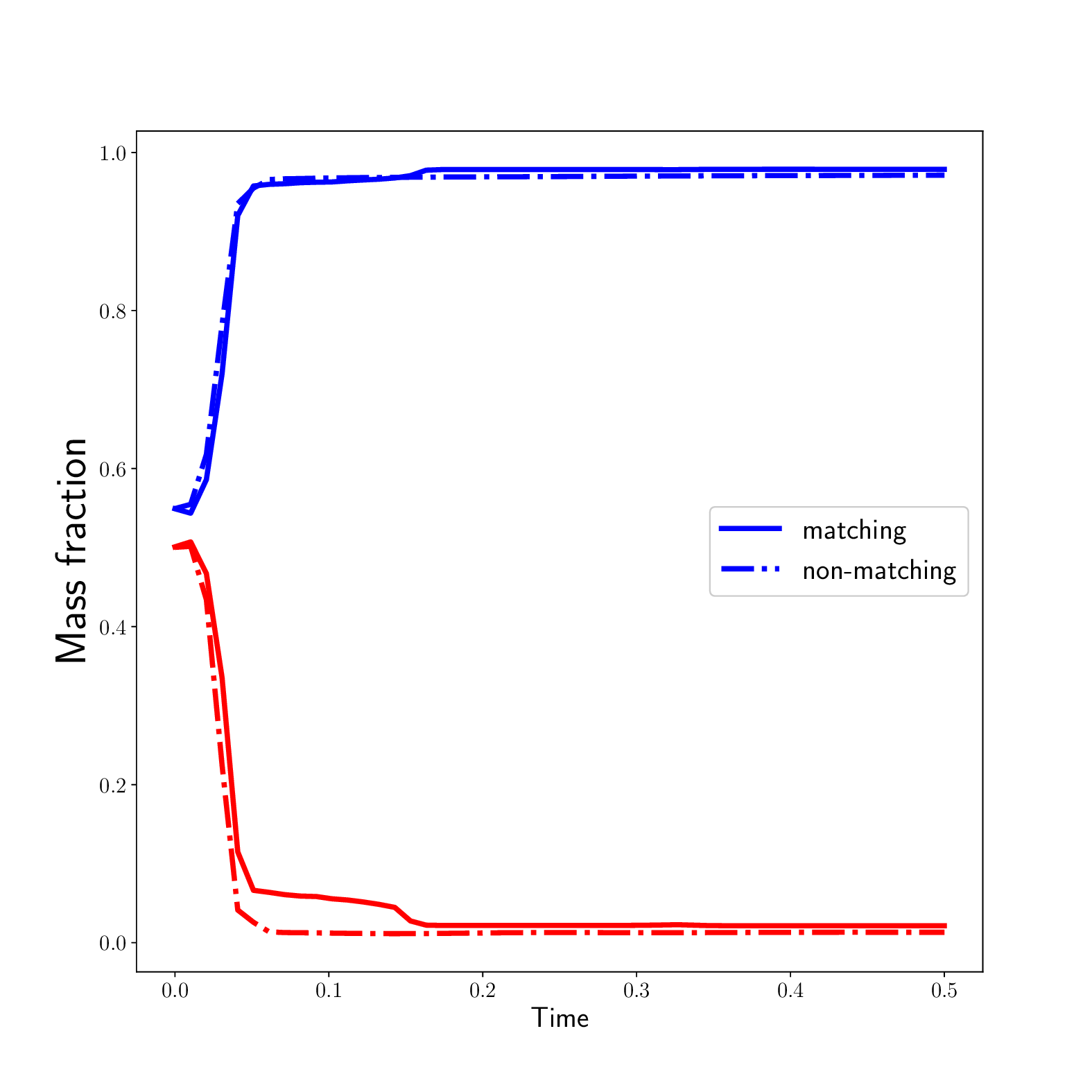}
         \caption{minimum and maximum of $c$. The mass fraction lies in the physically relevant interval $(0,1)$ throughout the simulation.}
         \label{fig:1D_maxmin}
     \end{subfigure}\\
        \caption{Temporal evolution of the dissipation of the energy $\frac{\dd E}{\dd t}$, mass of the fluid $1$ given by $\int_\Omega \rho c \,\dd x$, scalar variable $\xi$, and of the minimum and maximal values of the mass fraction $c$ for matching densities (solid lines) and non-matching densities (dash-dotted lines).}
        \label{fig:comparison_properties}
\end{figure}

\paragraph{Mass exchange and contrast of friction forces. }

In this test case, we consider mass exchange between the two phases and friction effects. We choose
\[
F_c(\rho,c) = r_\text{trans} \rho c (1-c/c_\text{max}), \quad \kappa(\rho, c) = \rho c\kappa_1+ \rho (1-c) \kappa_2,
\]
where $0<c_\text{max}<1$ denotes the mass fraction at which we have an equilibrium for exchange of mass and $r_\text{trans}$ is the rate of mass exchange. 
In this test case, we use $c_\text{max} = 0.9$, and $r_\text{trans} = 1$. 
We compare the solution obtained with no contrast of friction effect, $\ie$ $\kappa_1 = \kappa_2 = 10$, and the solution obtained with $\kappa_1 = 0,\; \kappa_2 = 10$. 
To study the long time behavior of the numerical simulations, we set $T = 5$. 
The rest of the parameters and the initial conditions are chosen as for the previous non-matching densities test case. 

Figure~\ref{fig:1D_comparison_solu_frict_source} compares the solutions for the two cases. In both cases, we observe that the separation of the two phases occurs and that the velocity decreases in time due to friction effects. Furthermore, as time passes, phase $1$ of the fluid increases due to the exchange term $F_c(\rho,c)\neq 0$ and, hence, zones of mass fraction close to the value $1$ enlarge. At the end of both simulations, there is one large aggregate of fluid $1$. The difference between the two simulations appears clearly at time $t = 0.1$. When the friction forces are stronger in phase two compared to phase $1$, \ie $\kappa_2 > \kappa_1$, zones of larger density appear destabilized, \ie the shape of the aggregates is not symmetric (compare the solution for Figure~\ref{fig:stable_aggregates} and Figure~\ref{fig:unstable_aggregates}). For each aggregate of fluid $1$, the density at the right of the aggregate is larger compared to the left. We conclude that the contrast in friction forces is captured well by the model as simulations depict a contrast of velocity for the two phases of the fluid and leading to less regular parterns for the densities. Furthermore, we emphasize that even with non-zero mass exchange and friction forces, the numerical scheme ensures the properties stated in Proposition~\ref{prop:1dschemeprops} as observed in Figure~\ref{fig:comparison_properties_friction}. We emphasize that compared to the simulation without source in which $\xi$ seems to converge to a constant value, the variable $\xi$ increases slightly with time (compare Figures~\ref{fig:1D_xi} and~\ref{fig:1D_xi_frict}). A possible remedy to this issue is discussed in the conclusion of this article. 

\begin{figure}
\centering 
     \begin{subfigure}[b]{0.24\textwidth}
\centering 
\includegraphics[width=0.99\linewidth]{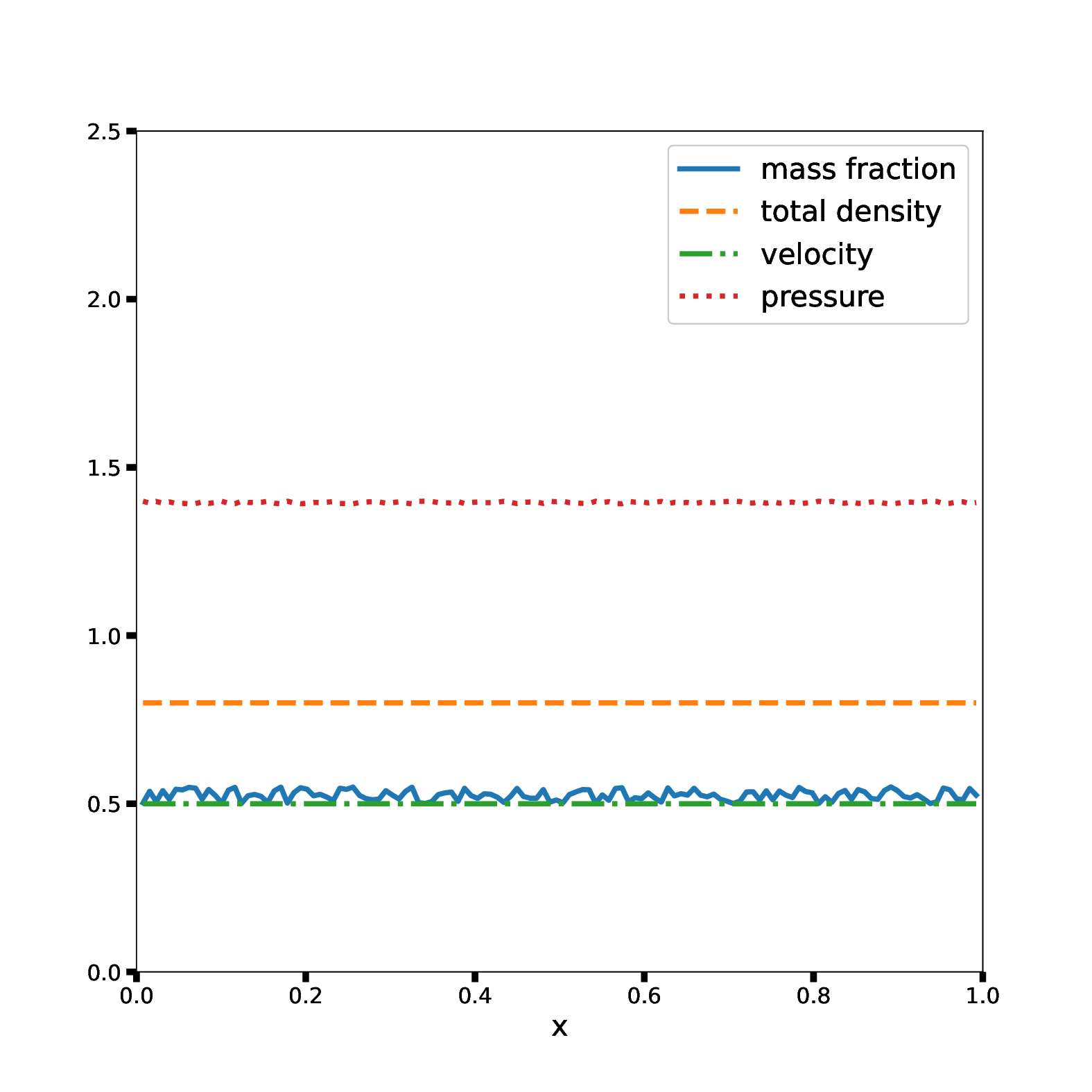}
\caption{$t=0$}
\end{subfigure}
\begin{subfigure}[b]{0.24\textwidth}
\centering 
\includegraphics[width=0.99\linewidth]{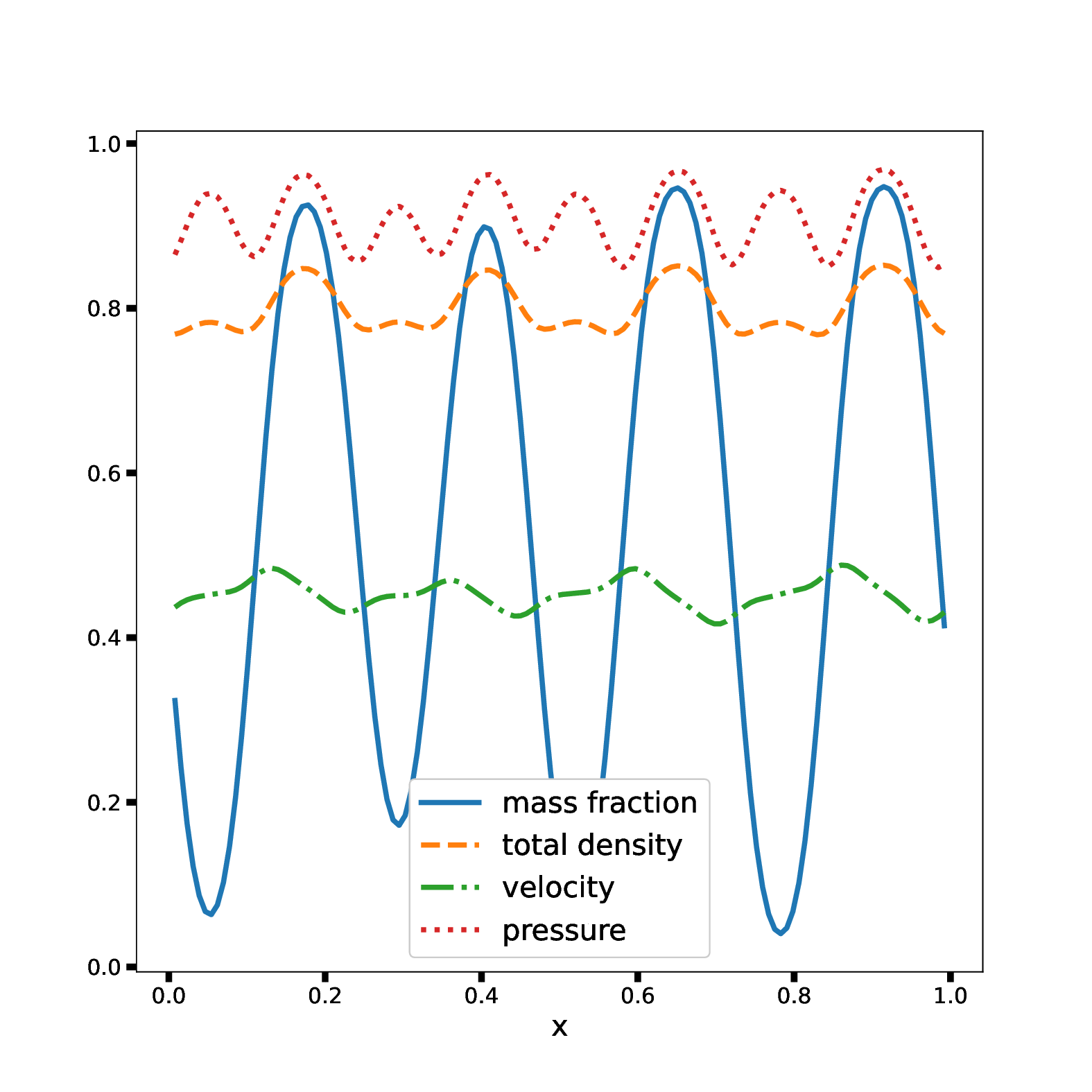}
\caption{$t=0.1$}
\label{fig:stable_aggregates}
\end{subfigure}
\begin{subfigure}[b]{0.24\textwidth}
\centering 
\includegraphics[width=0.99\linewidth]{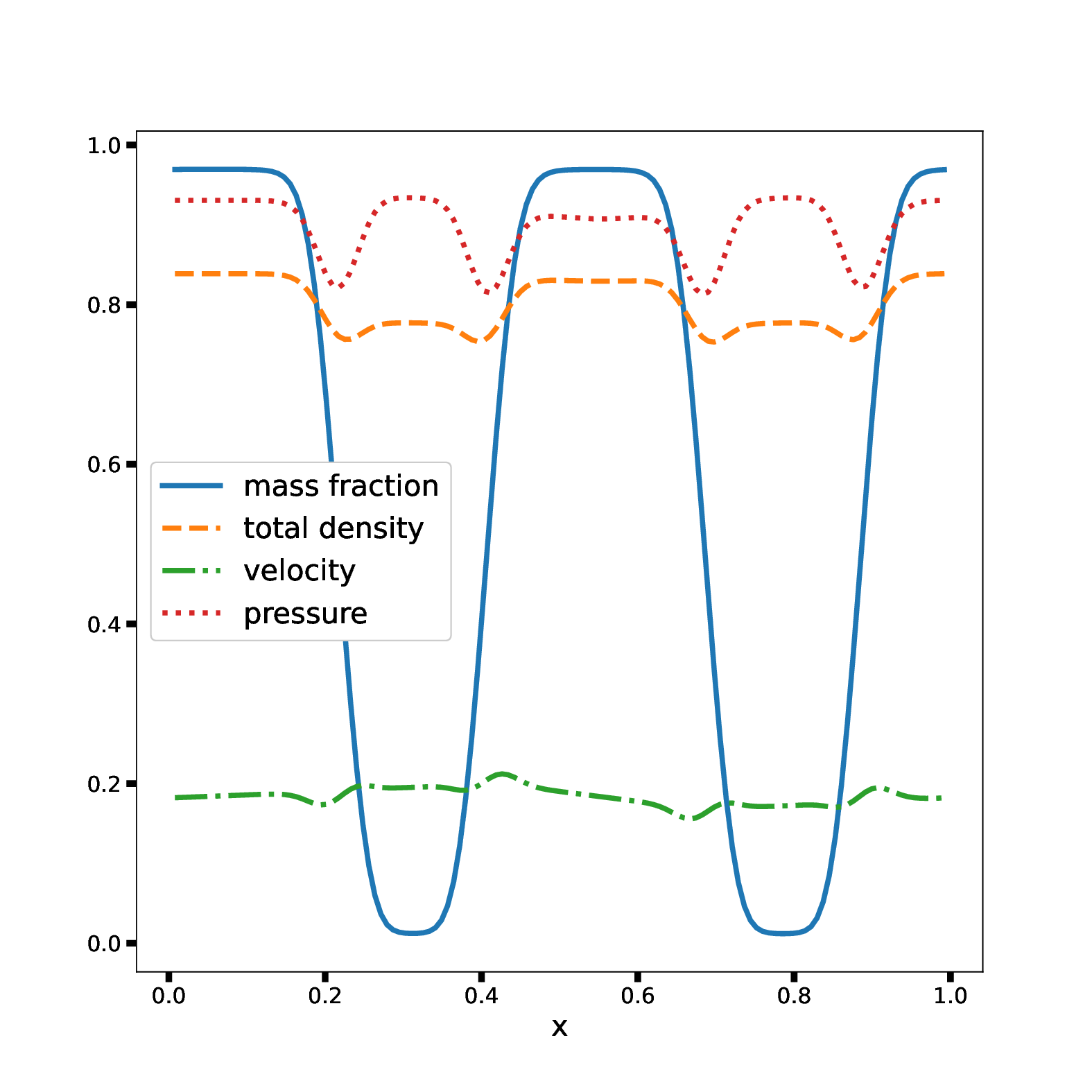}
\caption{$t=1$}
\end{subfigure}
\begin{subfigure}[b]{0.24\textwidth}
\centering 
\includegraphics[width=0.99\linewidth]{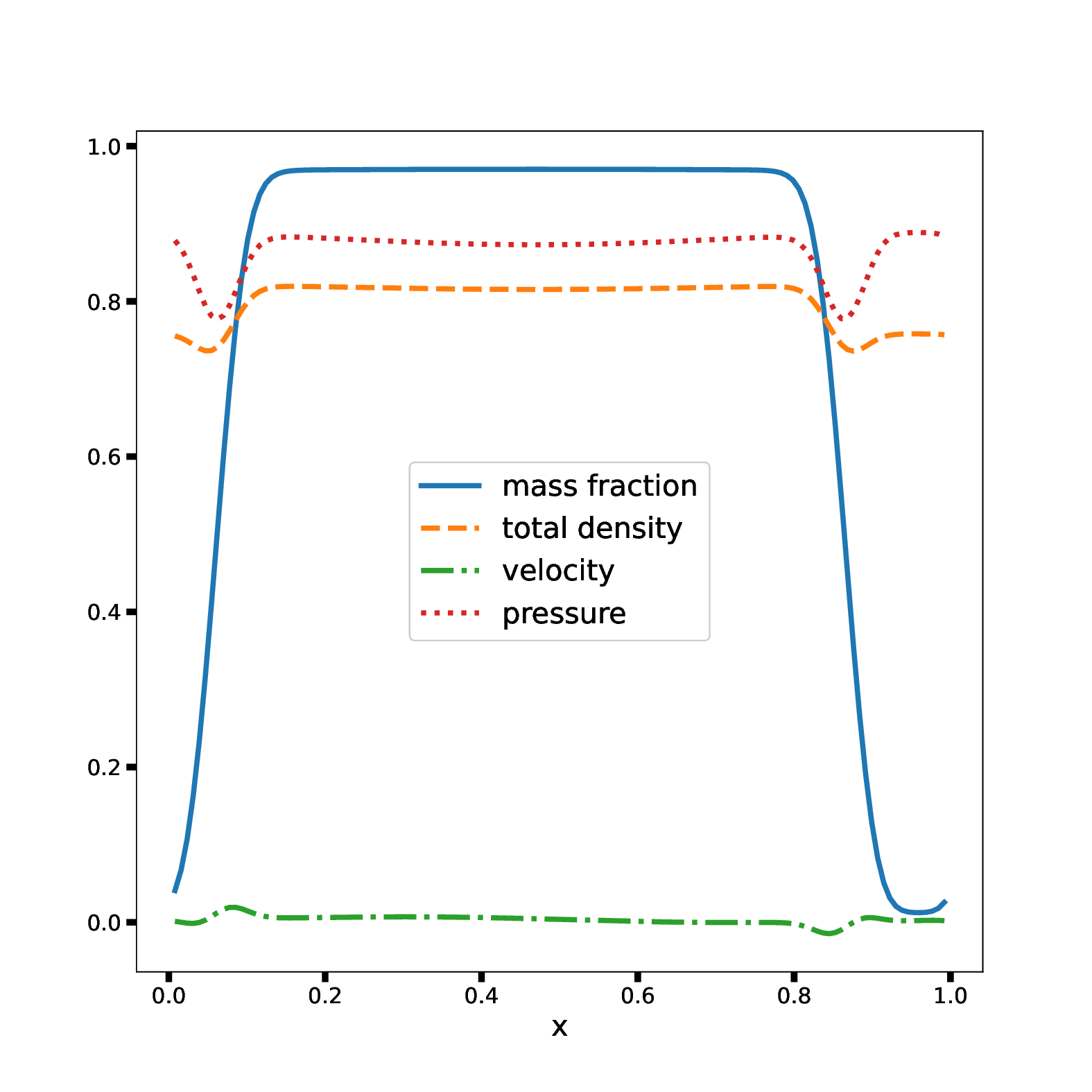}
\caption{$t=5$}
\end{subfigure}\\
\begin{subfigure}[b]{0.24\textwidth}
\centering 
\includegraphics[width=0.99\linewidth]{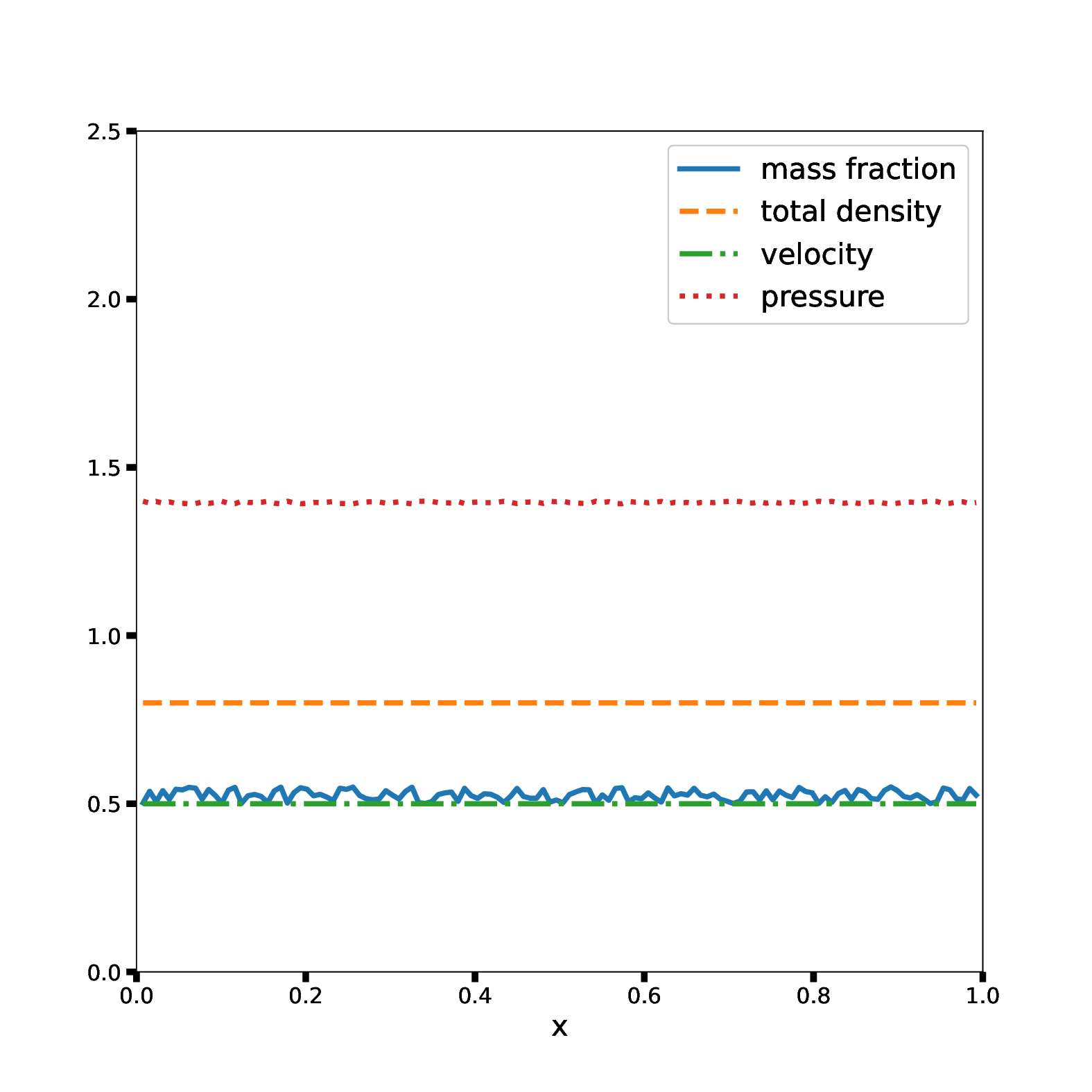}
\caption{$t=0$}
\end{subfigure}
\begin{subfigure}[b]{0.24\textwidth}
\centering 
\includegraphics[width=0.99\linewidth]{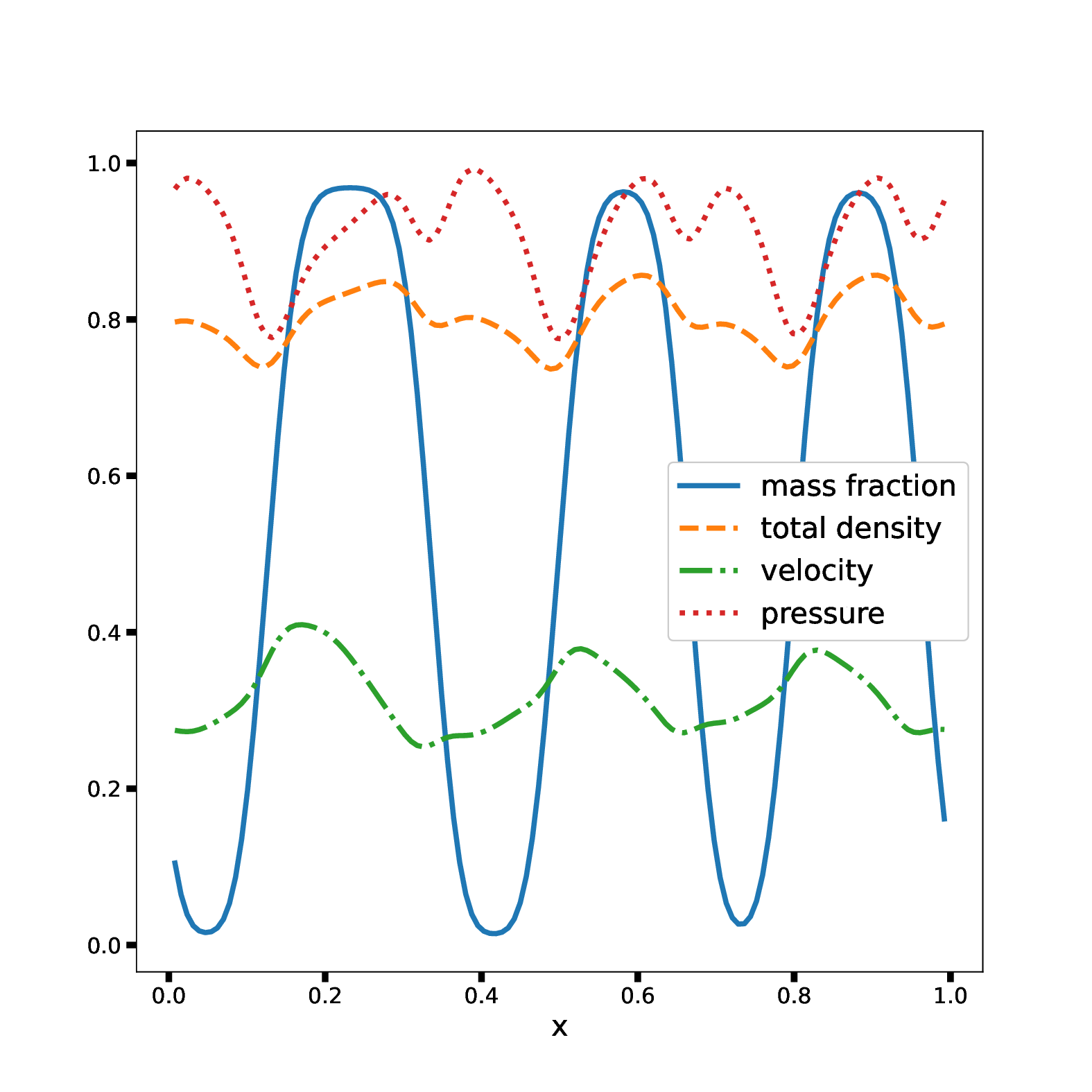}
\caption{$t=0.1$}
\label{fig:unstable_aggregates}
\end{subfigure}
\begin{subfigure}[b]{0.24\textwidth}
\centering 
\includegraphics[width=0.99\linewidth]{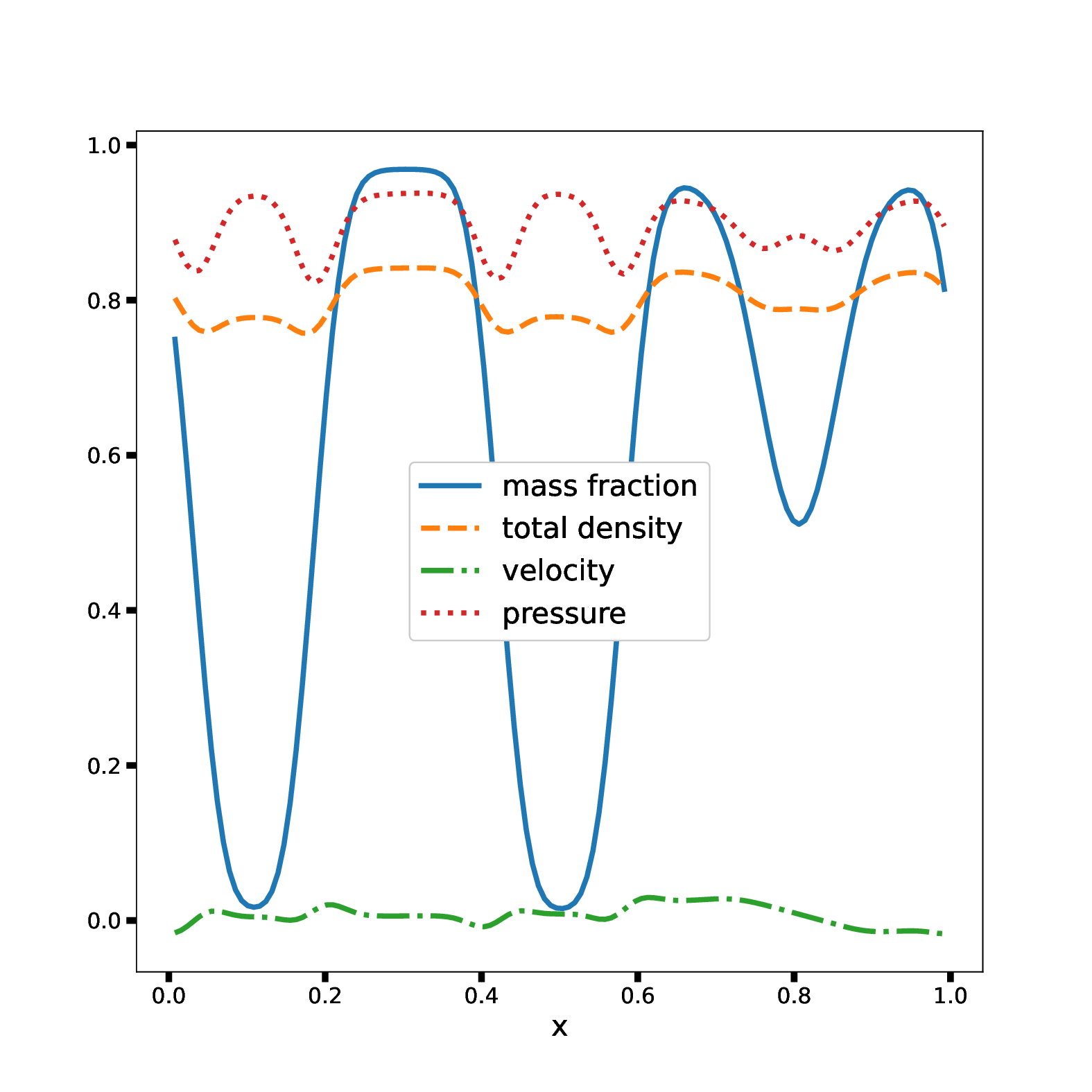}
\caption{$t=1$}
\end{subfigure}
\begin{subfigure}[b]{0.24\textwidth}
\centering 
\includegraphics[width=0.99\linewidth]{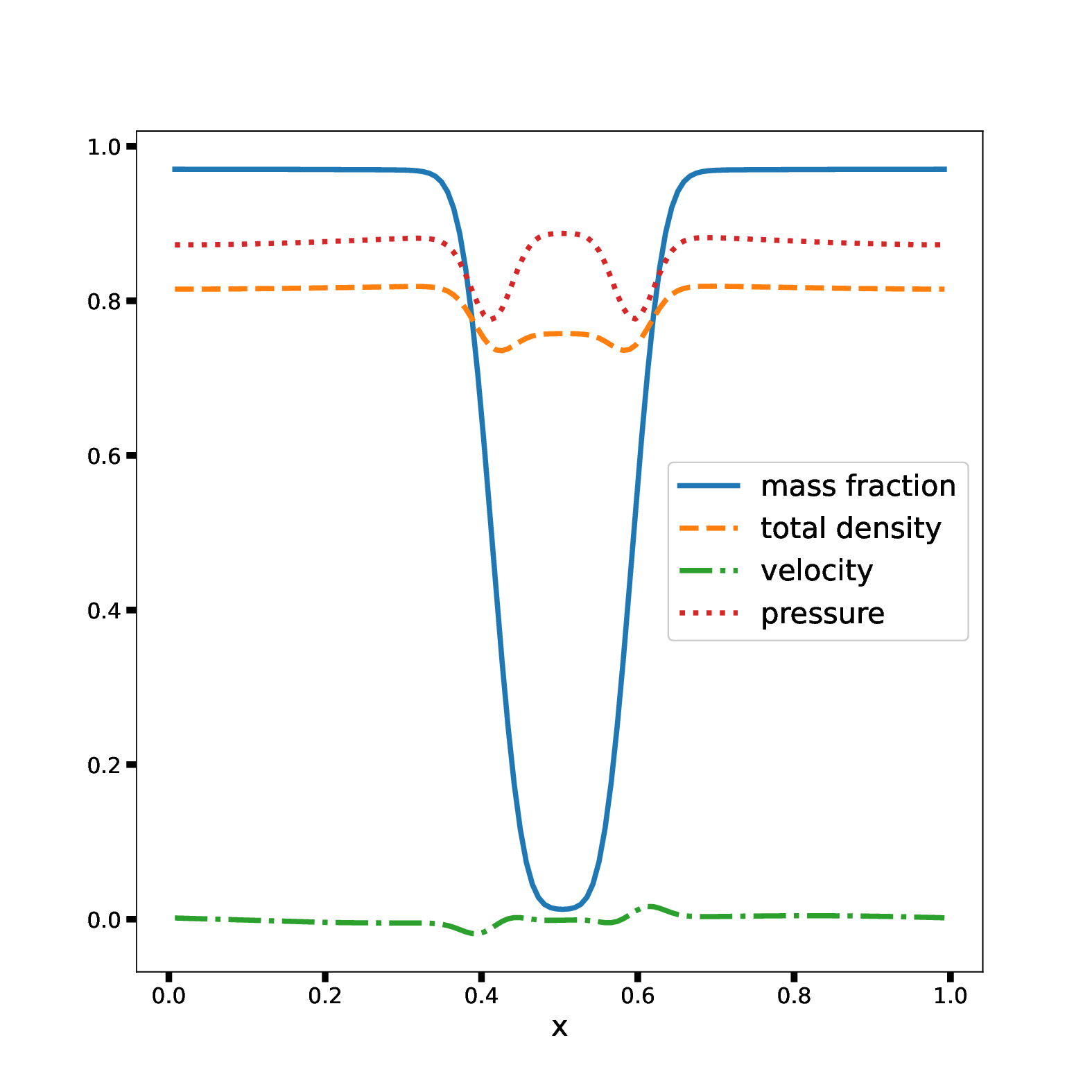}
\caption{$t=5$}
\end{subfigure}
\caption{Simulation of compressible Navier-Stokes-Cahn-Hilliard model nonmatching densities, exchange term ($F_c(\rho,c)\neq 0$), same friction effects for both fluids (top row) and contrast of friction forces (bottom row).}
\label{fig:1D_comparison_solu_frict_source}
\end{figure}

\begin{figure}[h!]
     \centering
     \begin{subfigure}[b]{0.24\textwidth}
         \centering
         \includegraphics[width=\textwidth]{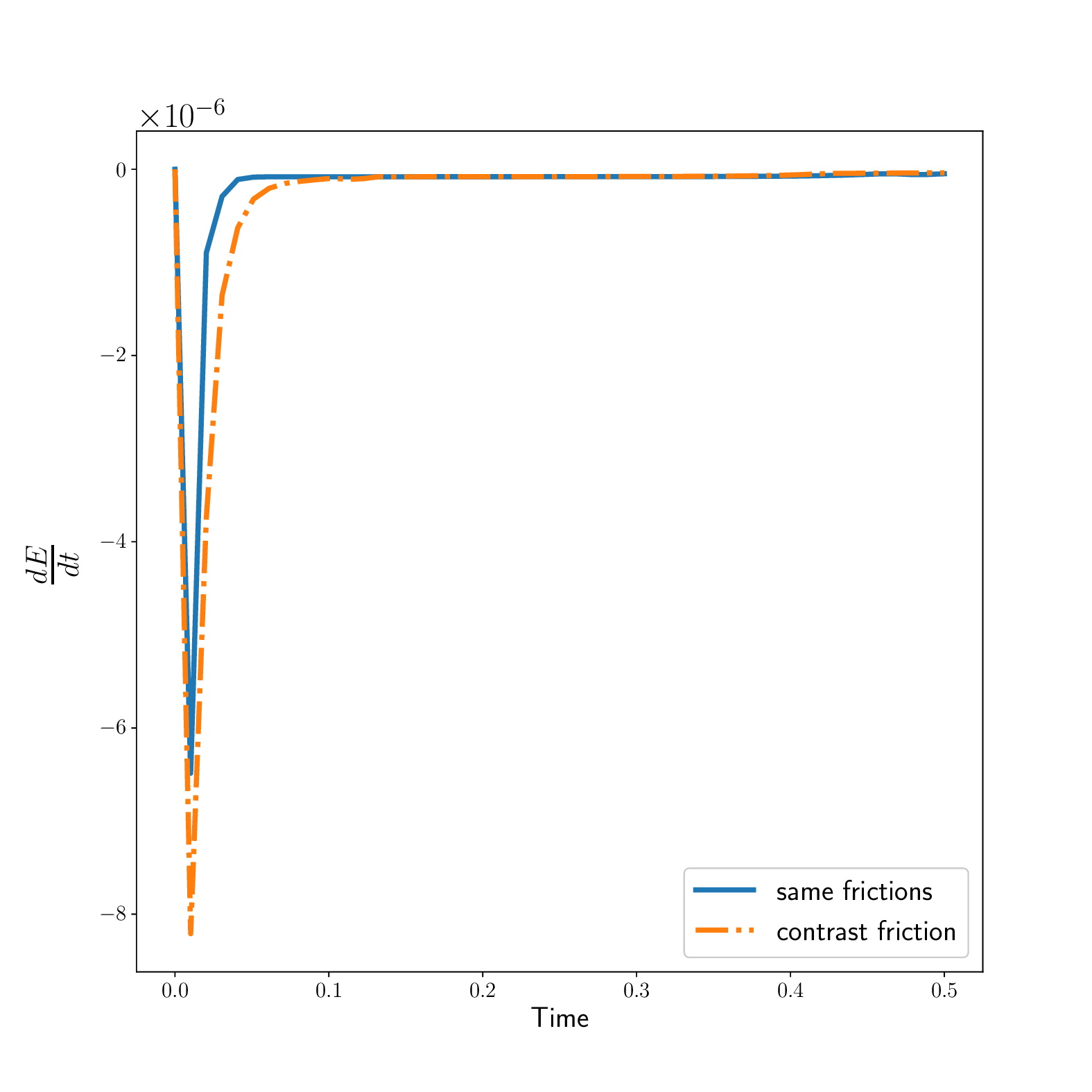}
         \caption{Dissipation $\frac{\dd E}{\dd t}$. }
         \label{fig:1D_diss_frict}
     \end{subfigure}
     \hfill
     \begin{subfigure}[b]{0.24\textwidth}
         \centering
         \includegraphics[width=\textwidth]{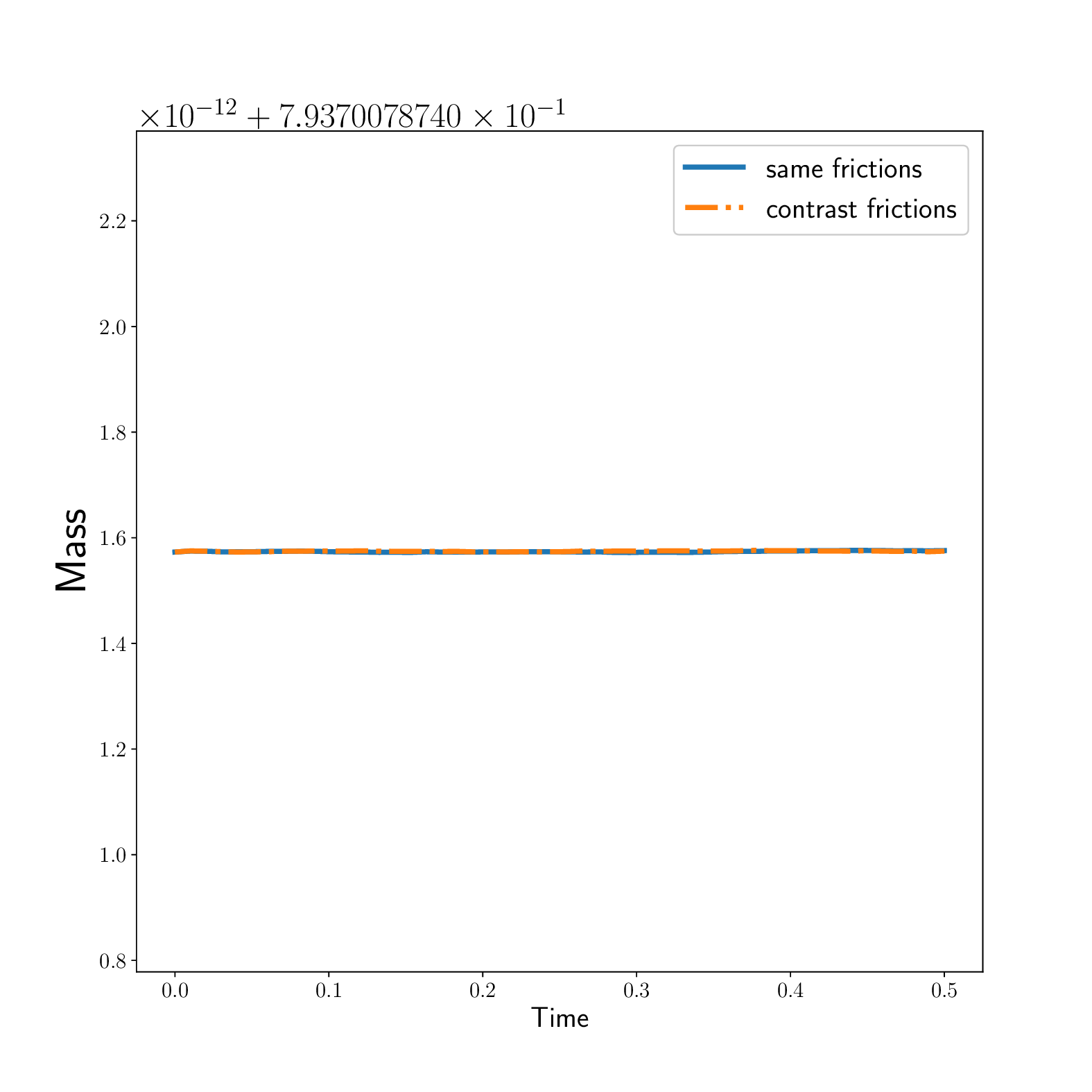}
         \caption{Total mass $\int_\Omega \rho \dd x $.}
         \label{fig:1D_mass_frict}
     \end{subfigure}
     \begin{subfigure}[b]{0.24\textwidth}
         \centering
         \includegraphics[width=\textwidth]{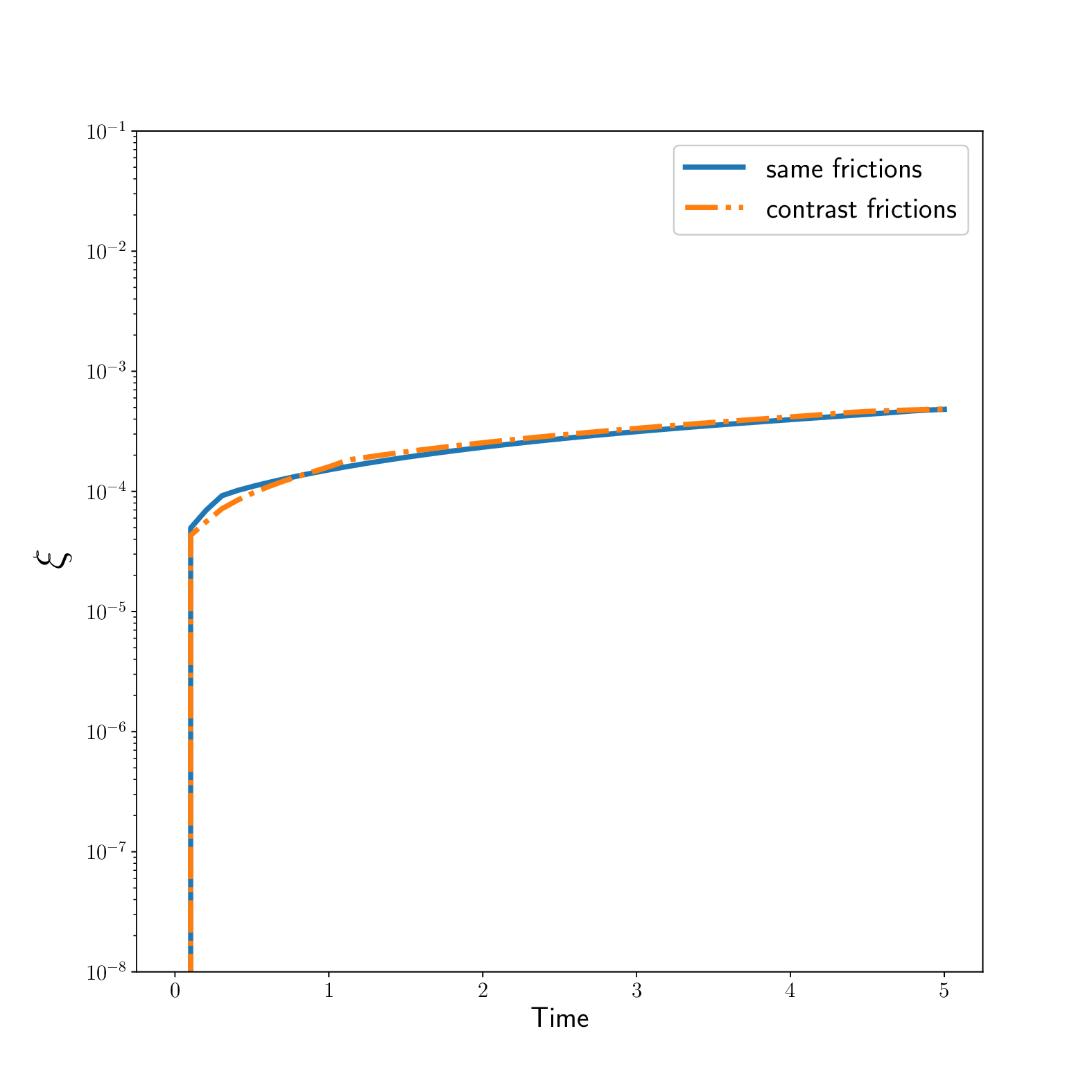}
         \caption{Scalar variable $\xi$. }
         \label{fig:1D_xi_frict}
     \end{subfigure}
     \hfill 
     \begin{subfigure}[b]{0.24\textwidth}
         \centering
         \includegraphics[width = \textwidth]{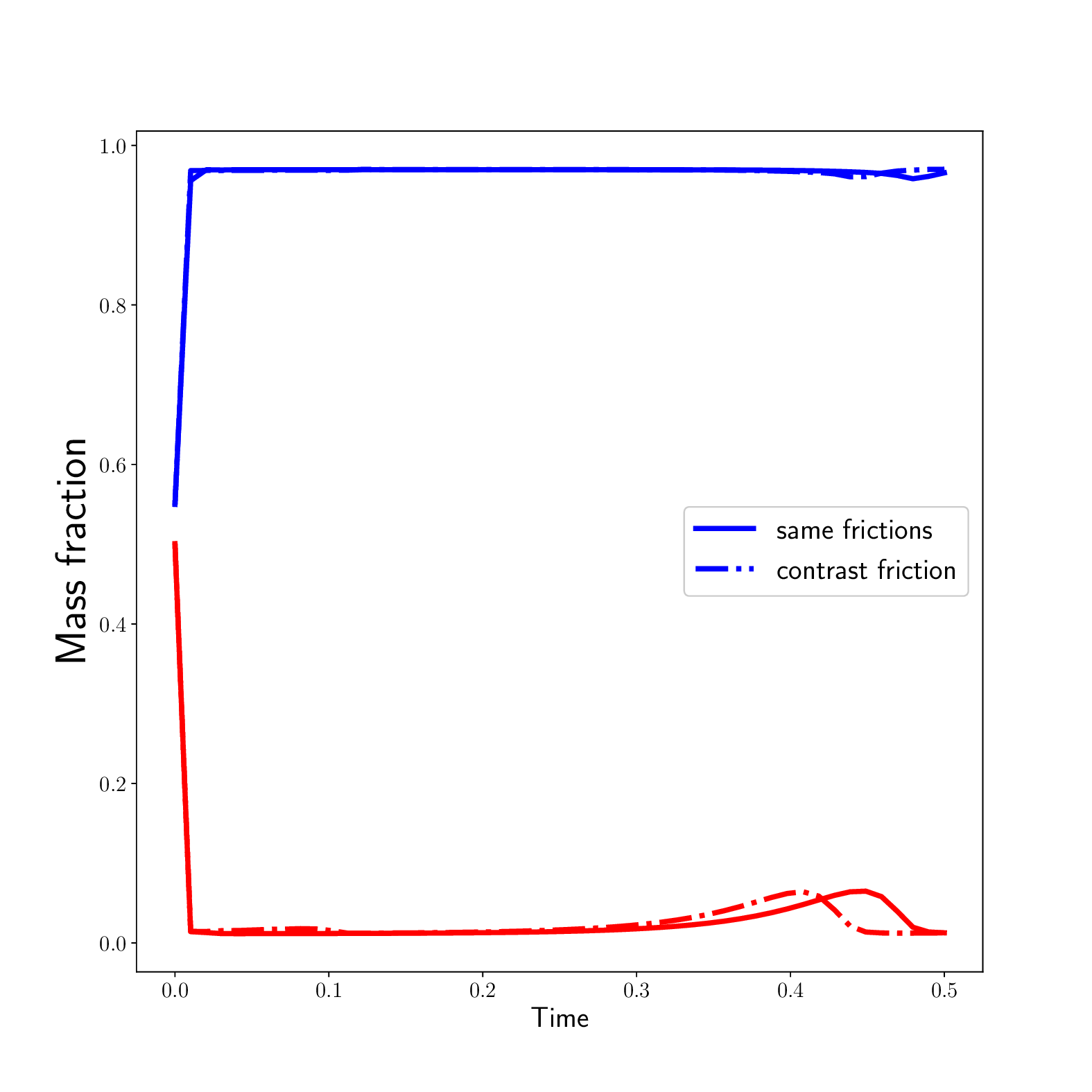}
         \caption{$\min{c}$ and $\max{c}$. }
         \label{fig:1D_maxmin_frict}
     \end{subfigure}\\
        \caption{Temporal evolution of the dissipation of the energy $\frac{\dd E}{\dd t}$, mass of the fluid $1$ given by $\int_\Omega \rho c \,\dd x$, scalar variable $\xi$, and of the minimum and maximal values of the mass fraction $c$ for matching densities (solid lines) and non-matching densities (dash-dotted lines).}
        \label{fig:comparison_properties_friction}
\end{figure}

\subsection{Two-dimensional numerical test cases}

We now simulate the G-NSCH system~\eqref{eq:main1}--\eqref{eq:main4} in two dimensions. Details about the two-dimensional numerical scheme can be found in Appendix~\ref{app:2D_scheme}.

\paragraph{Phase separation with non-matching densities and contrast of friction strengths. }
We use $N_x = N_y = 64$ cells in each direction. We fix the final time at $T=1$.
We set up two simulations, both with no exchange terms $F_c(\rho,c) = 0$. For the first, we consider no contrast of friction effects, \ie $\kappa_1 = \kappa_2 = 10$, $\nu_1 = \nu_2 = 0.01$, and $\eta_1 = \eta_2 = 0.02$.
For the second simulation, we take $\kappa_1 =0$, $\kappa_2 = 10$, $\nu_1 = \nu_2 = 0.01$, and $\eta_1 = \eta_2 = 0.02$. Hence, fluid $2$ has stronger friction effects.

The other parameters are $\gamma = \frac{1}{800}$, $\theta = 4$, $\alpha_1 = 0.8$, $\alpha_2 = 1.2$, $\iota = 10^{-4}$, $C_0=100$, $a=1.5$. The tolerance of GMRES solver is set to $tol = 10^{-10}$. 

The initial velocities in both directions are constants in space $\vecu_x^0 = 0.5$ and $\vecu_y^0 = 0.5$. The initial density is also constant in space $\rho^0= 0.8$. The initial mass fraction is set to a perturbed constant $c^0 = 0.3 - 0.05 r$, with $r$ a random uniform number for each cell center.

Figure~\ref{fig:2D_comparison_solu_frict_source} compares the temporal evolution of the density of fluid $1$ given by $\rho c$ for both cases.  
We observe that both solutions depict phase separation and progressive coarsening of small aggregates into larger ones. This phenomenon occurs as the fluid is transported to the top right corner (we recall that we implemented periodic boundary conditions). Careful inspection of the relative density $\rho c$ distribution inside each aggregates reveals the effect of the contrast of friction between the two solutions. Indeed, as the fluid $1$ encounters a resistance when transported by the flow (because it pushes a fluid that experiences more friction), the mass of fluid $1$ seems to concentrate in the top right corner of each aggregate. This can be observed inspecting the level lines depicted on Figure~\ref{fig:circled}. Indeed on this figure, we see that the top of each aggregate is not localized in the center of the aggregates but is shifted to the top-right.   
This indicates that the 2D scheme captures correctly the effect seen with the 1D numerical scheme when contrast of friction between the two phases is considered. 
\begin{figure}
\centering 
     \begin{subfigure}[b]{0.24\textwidth}
\centering 
\includegraphics[width=0.99\linewidth]{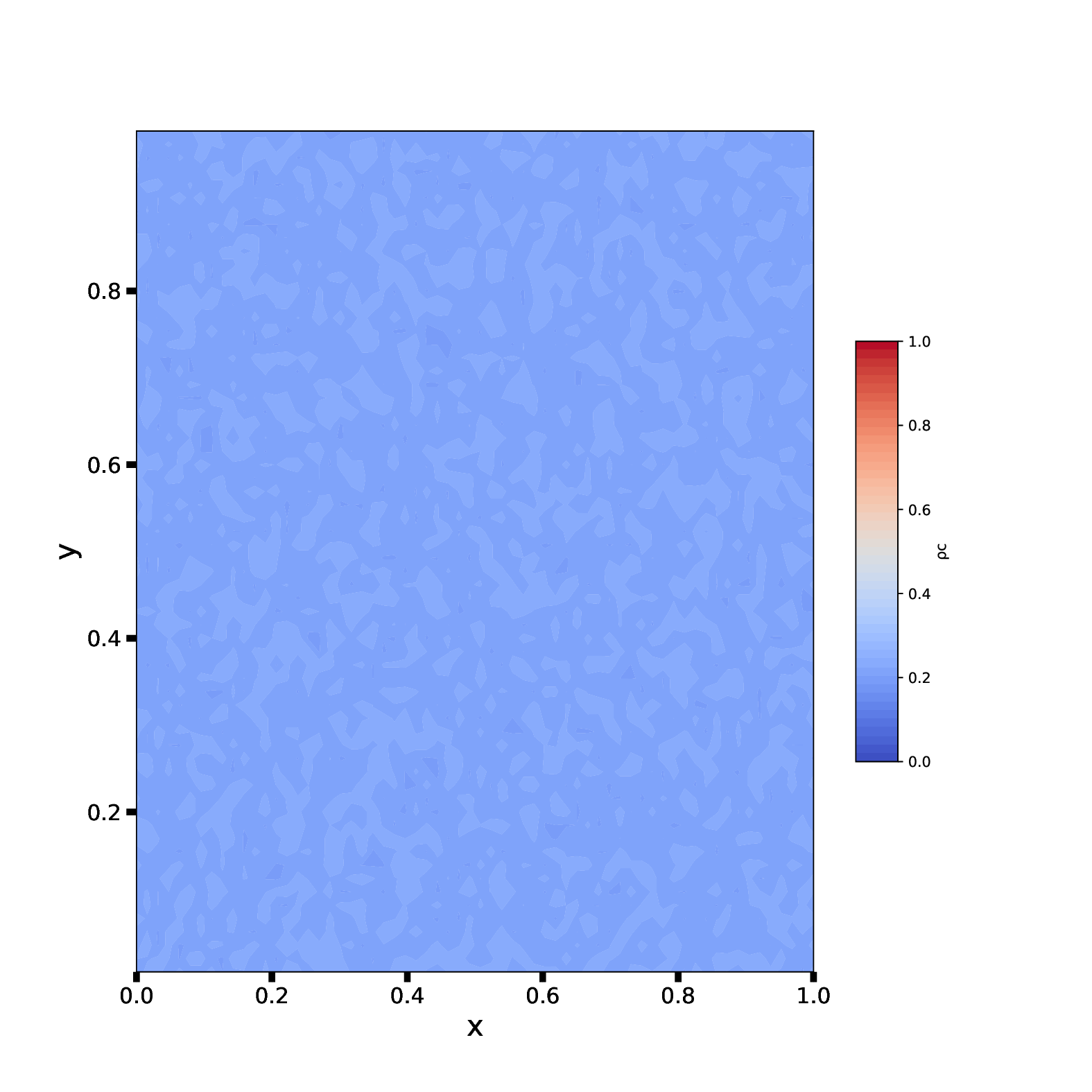}
\caption{$t=0$}
\end{subfigure}
\begin{subfigure}[b]{0.24\textwidth}
\centering 
\includegraphics[width=0.99\linewidth]{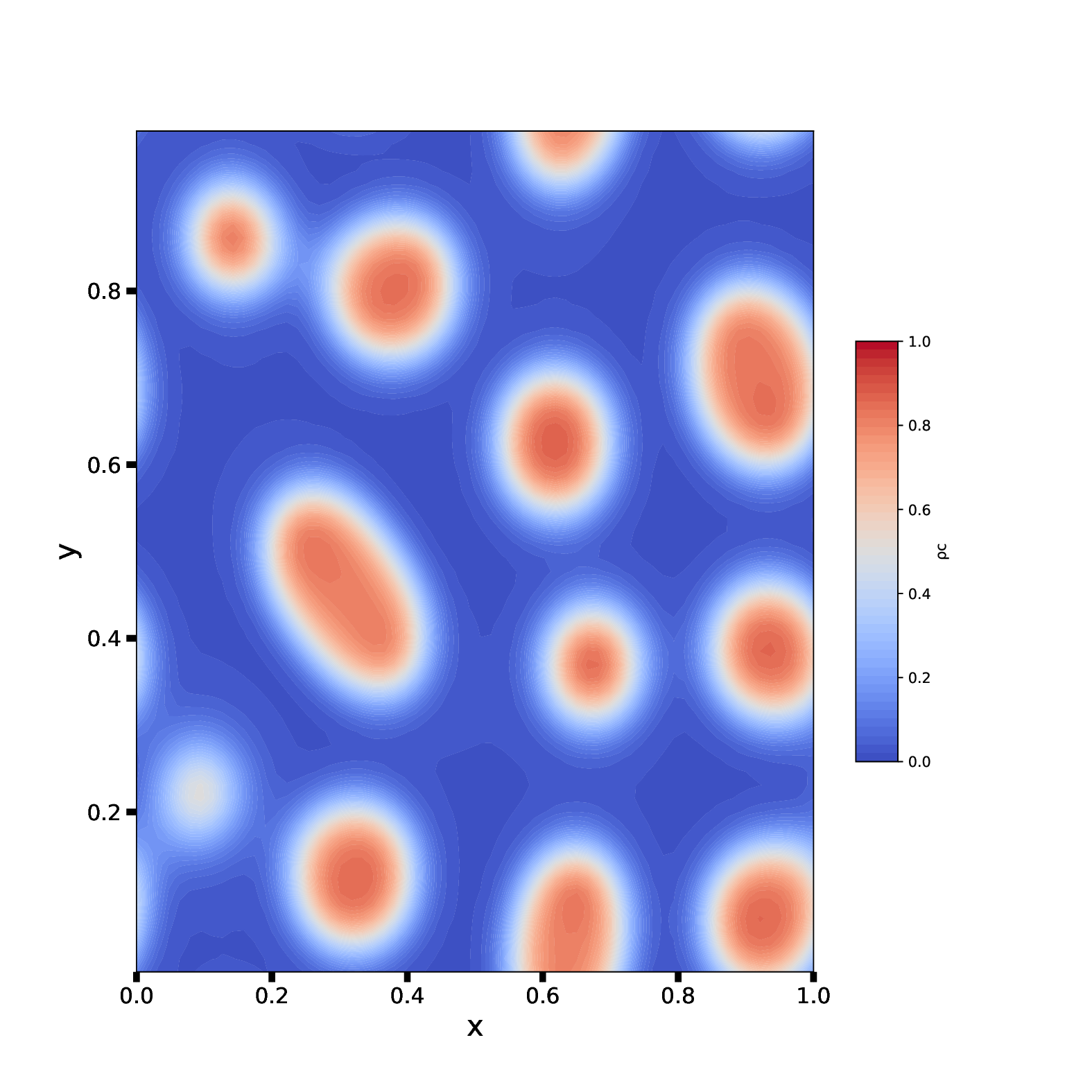}
\caption{$t=0.1$}
\end{subfigure}
\begin{subfigure}[b]{0.24\textwidth}
\centering 
\includegraphics[width=0.99\linewidth]{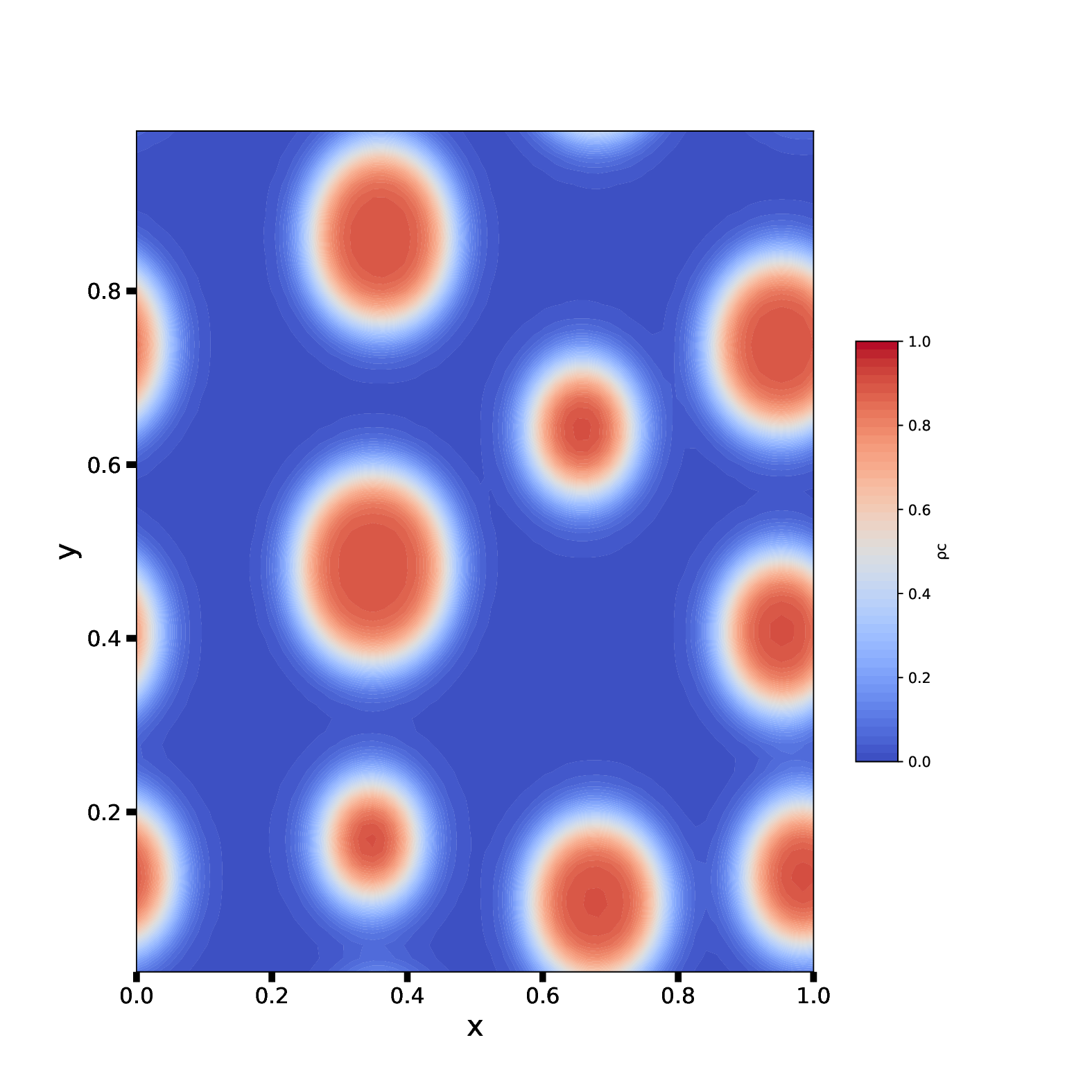}
\caption{$t=0.5$}
\end{subfigure}
\begin{subfigure}[b]{0.24\textwidth}
\centering 
\includegraphics[width=0.99\linewidth]{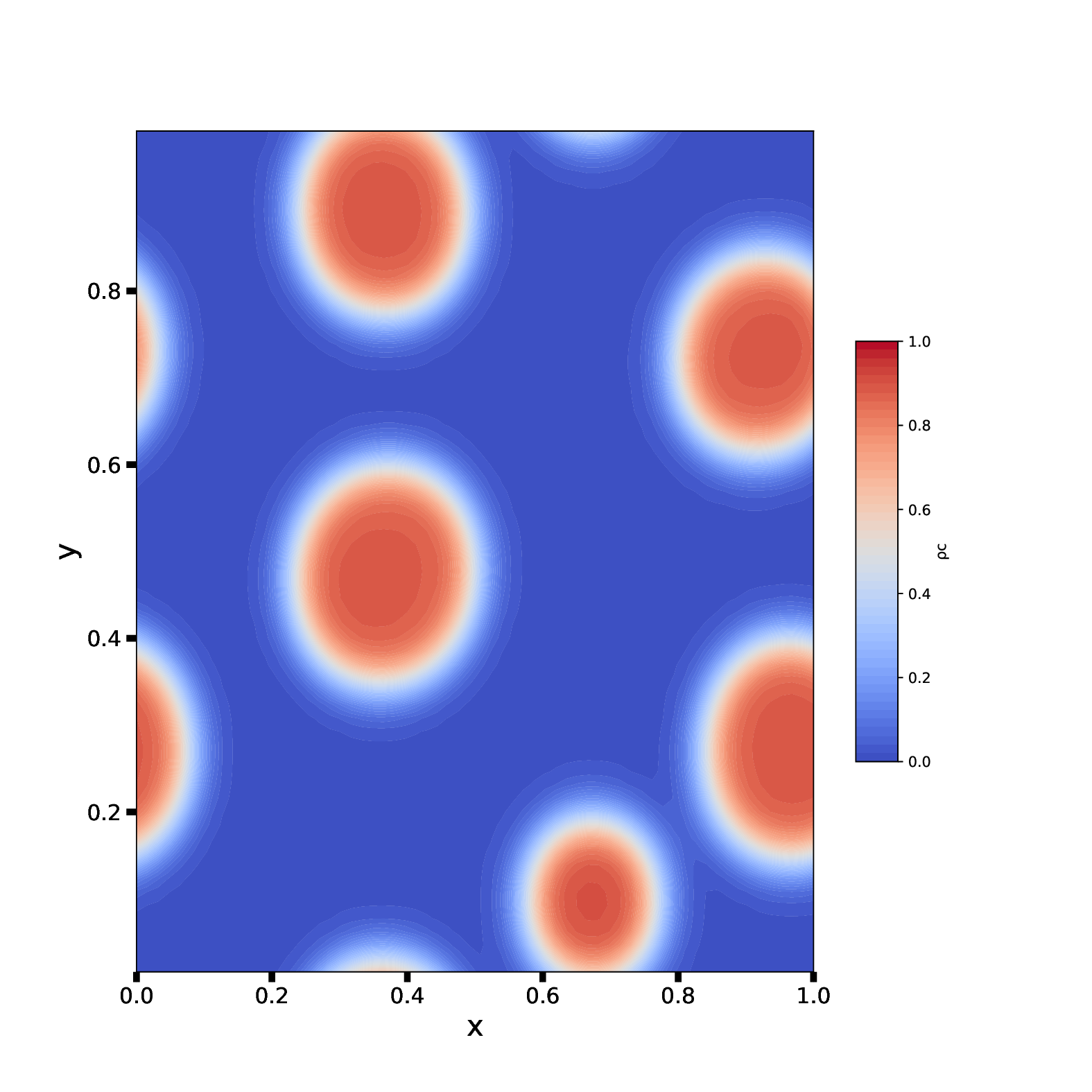}
\caption{$t=1$}
\end{subfigure}\\
\begin{subfigure}[b]{0.24\textwidth}
\centering 
\includegraphics[width=0.99\linewidth]{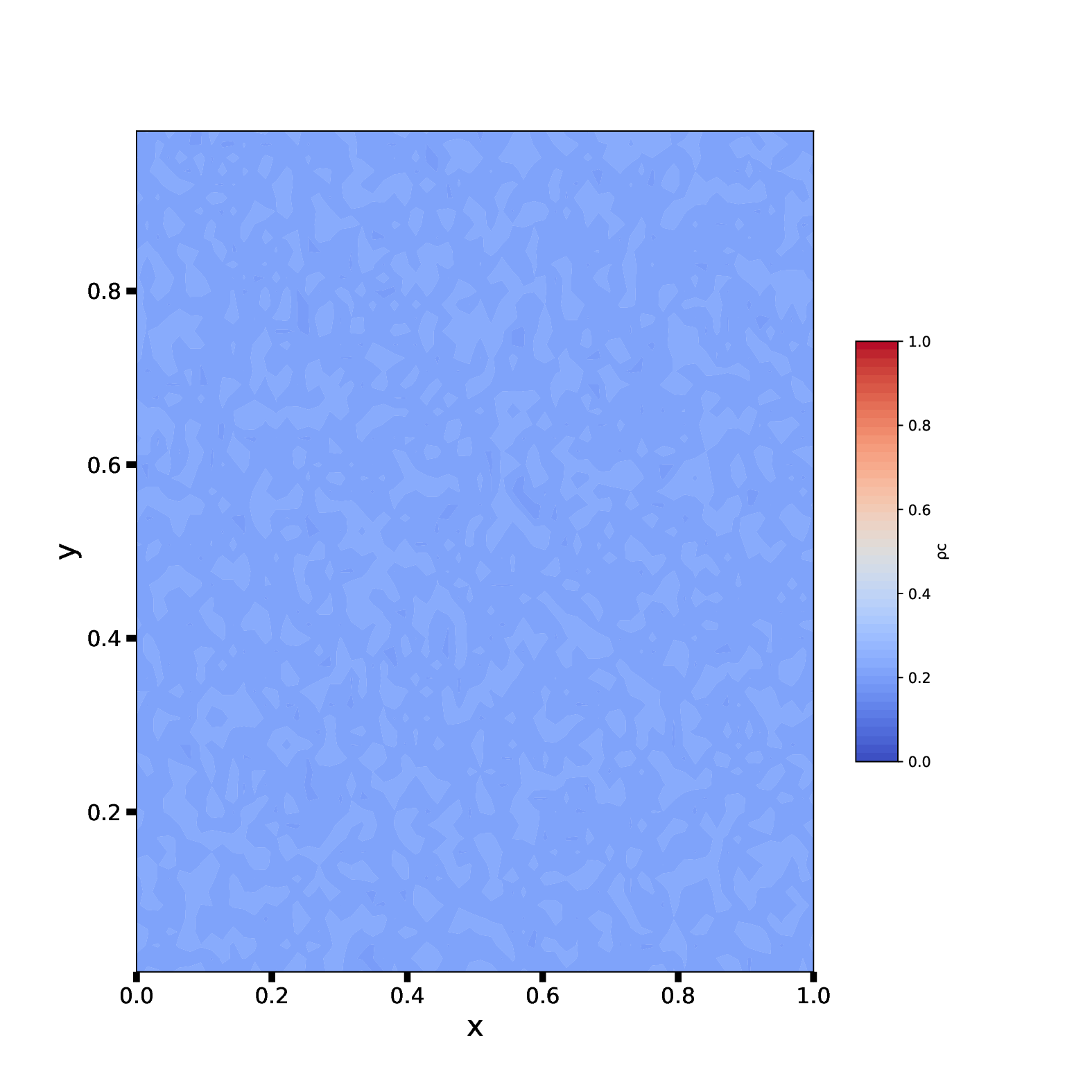}
\caption{$t=0$}
\end{subfigure}
\begin{subfigure}[b]{0.24\textwidth}
\centering 
\includegraphics[width=0.99\linewidth]{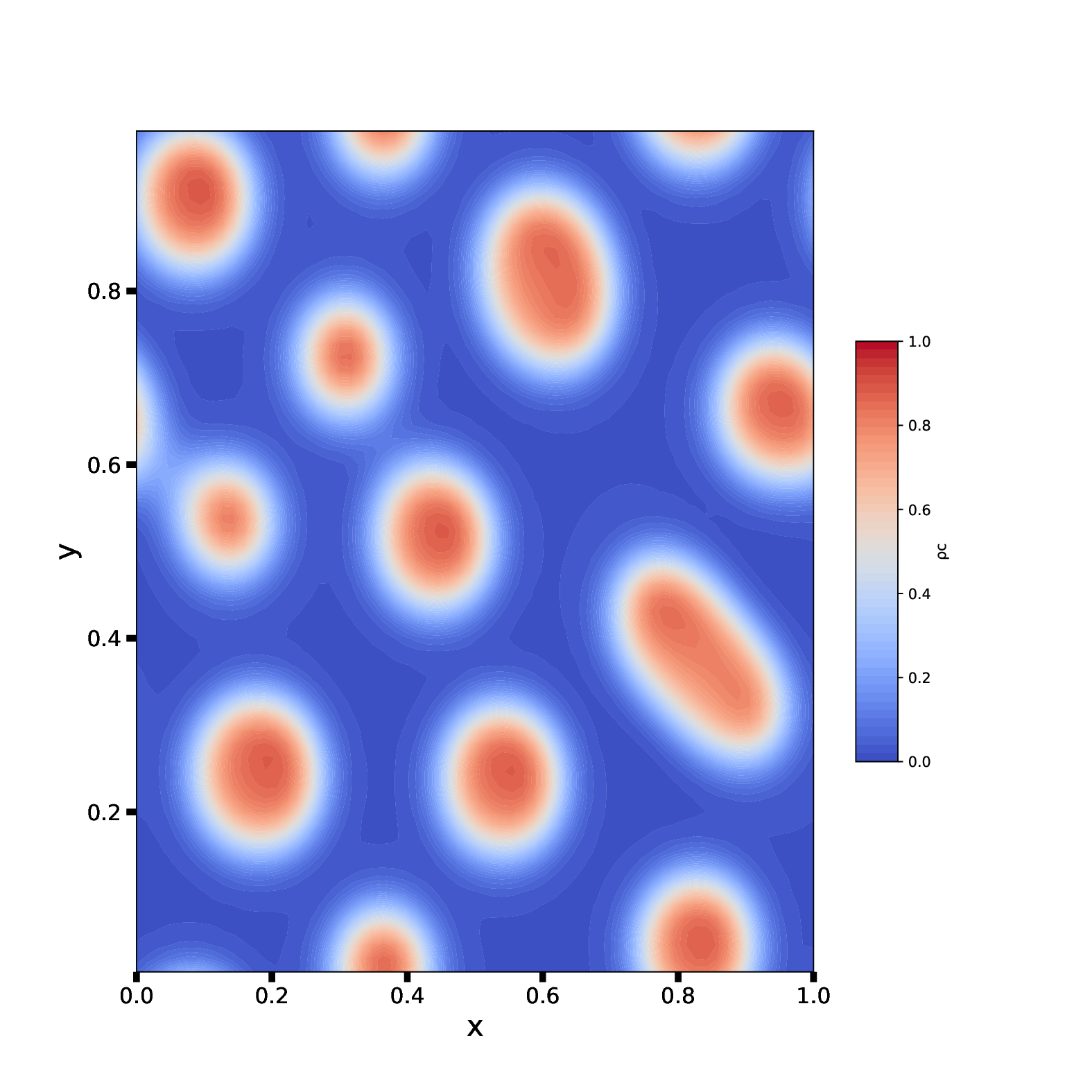}
\caption{$t=0.1$}
\end{subfigure}
\begin{subfigure}[b]{0.24\textwidth}
\centering 
\includegraphics[width=0.99\linewidth]{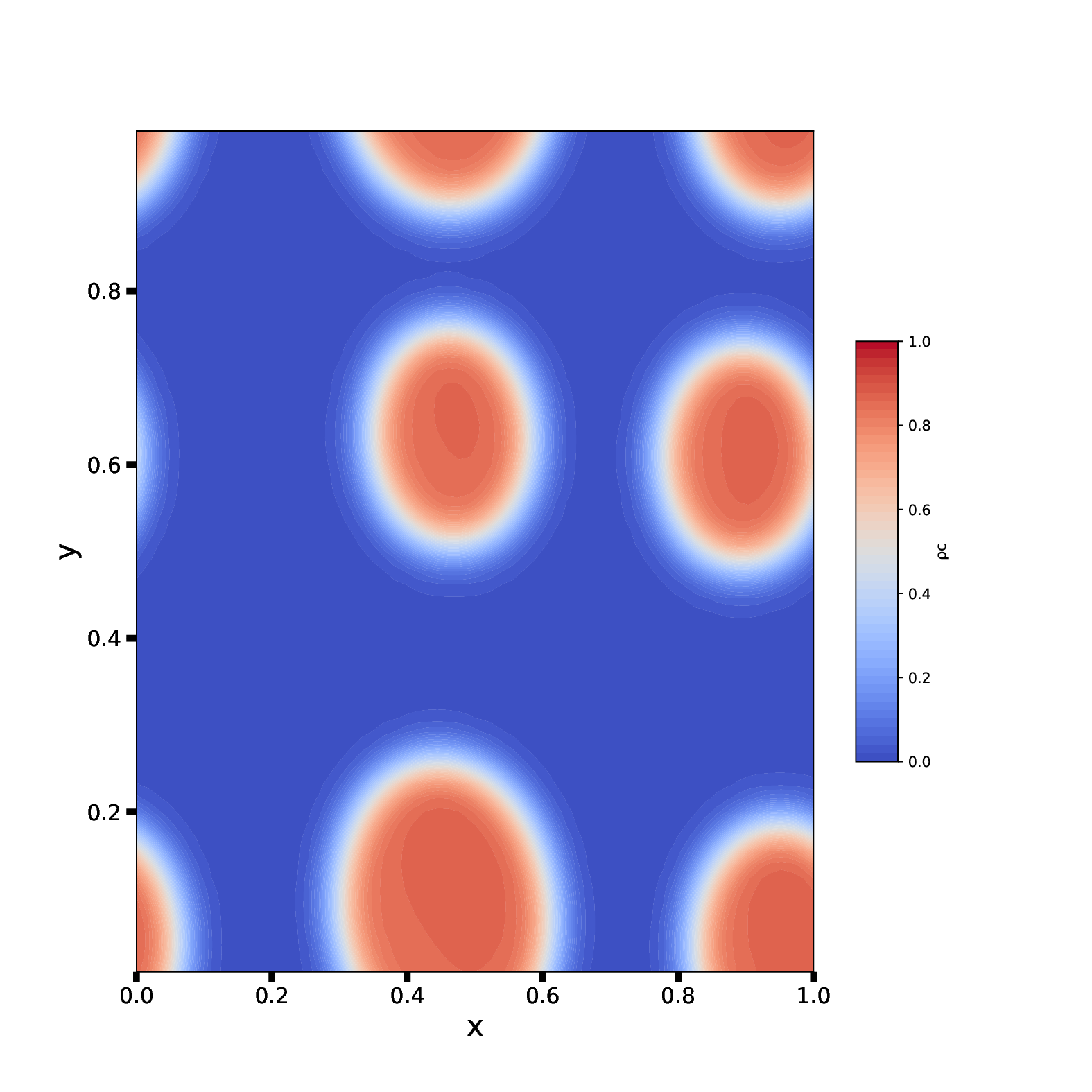}
\caption{$t=1$}
\label{fig:aggregates}
\end{subfigure}
\begin{subfigure}[b]{0.24\textwidth}
\centering 
\includegraphics[width=0.99\linewidth]{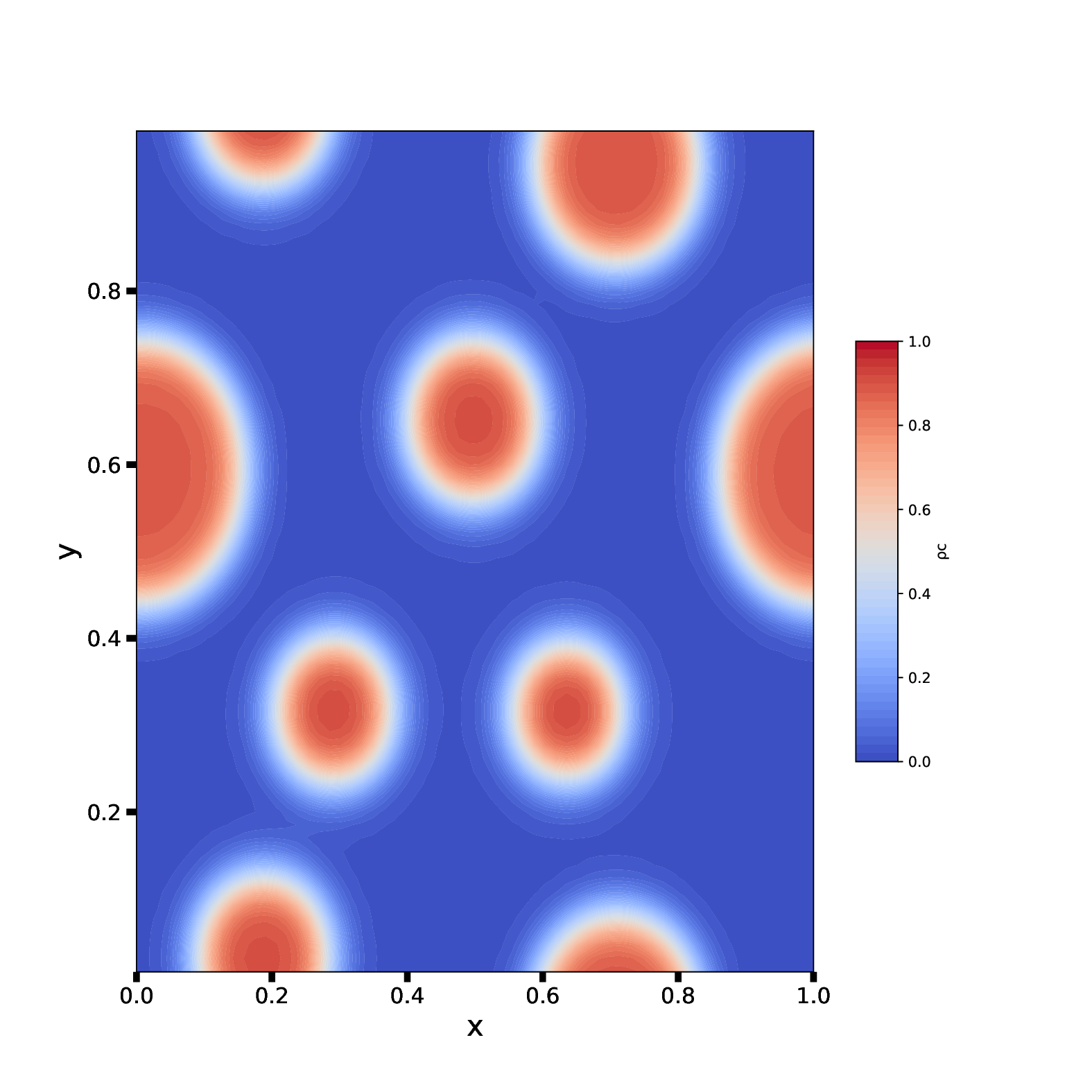}
\caption{$t=5$}
\end{subfigure}
\caption{Two dimensional simulations of compressible Navier-Stokes-Cahn-Hilliard model with nonmatching densities, same friction effects for both fluids (top row) and contrast of friction (bottom row).}
\label{fig:2D_comparison_solu_frict_source}
\end{figure}

\begin{figure}
    \centering
    \begin{subfigure}[b]{0.40\textwidth}
    \includegraphics[width=0.99\linewidth]{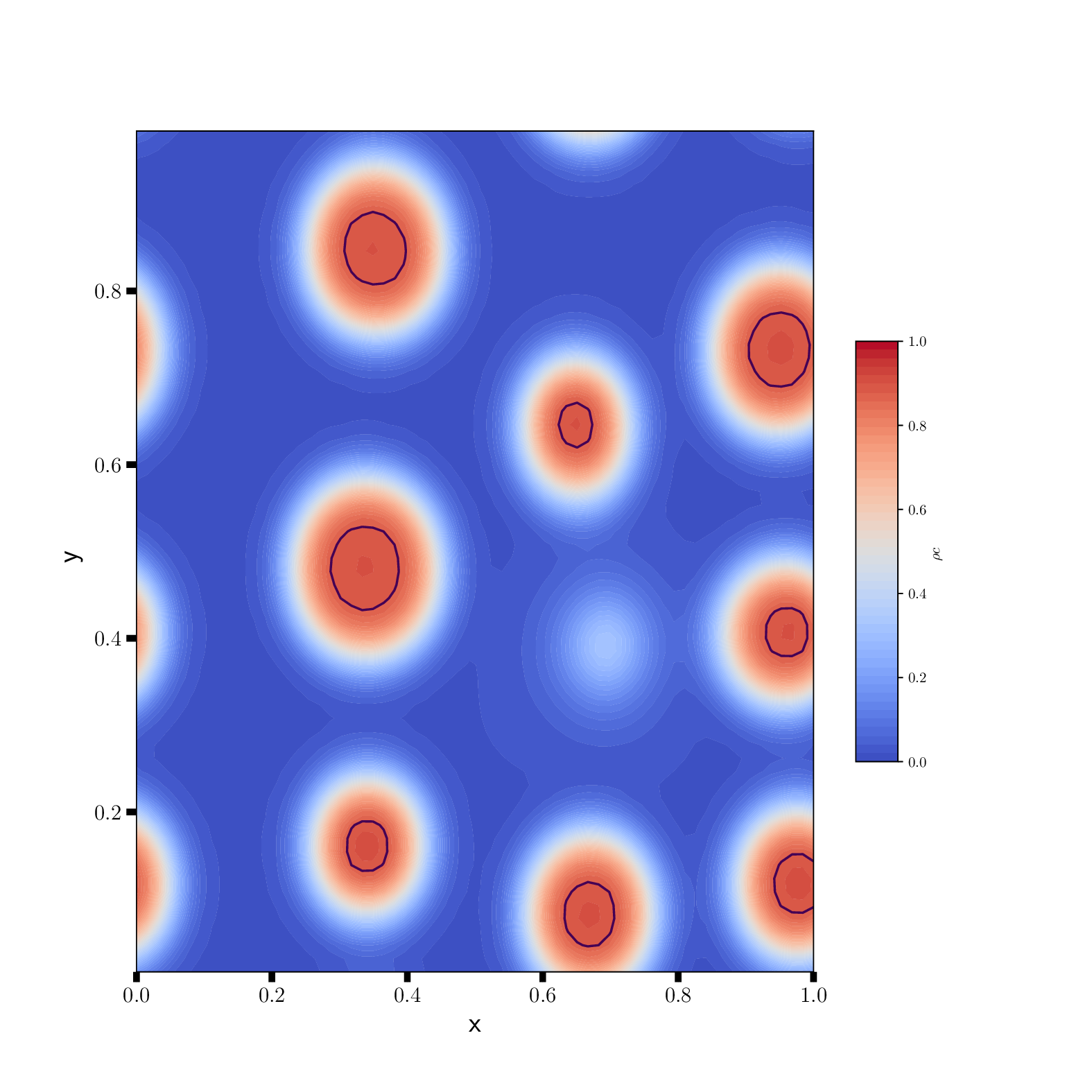}
    \caption{No contrast of friction strengths.}
    \end{subfigure}
    \begin{subfigure}[b]{0.40\textwidth}
    \includegraphics[width=0.99\linewidth]{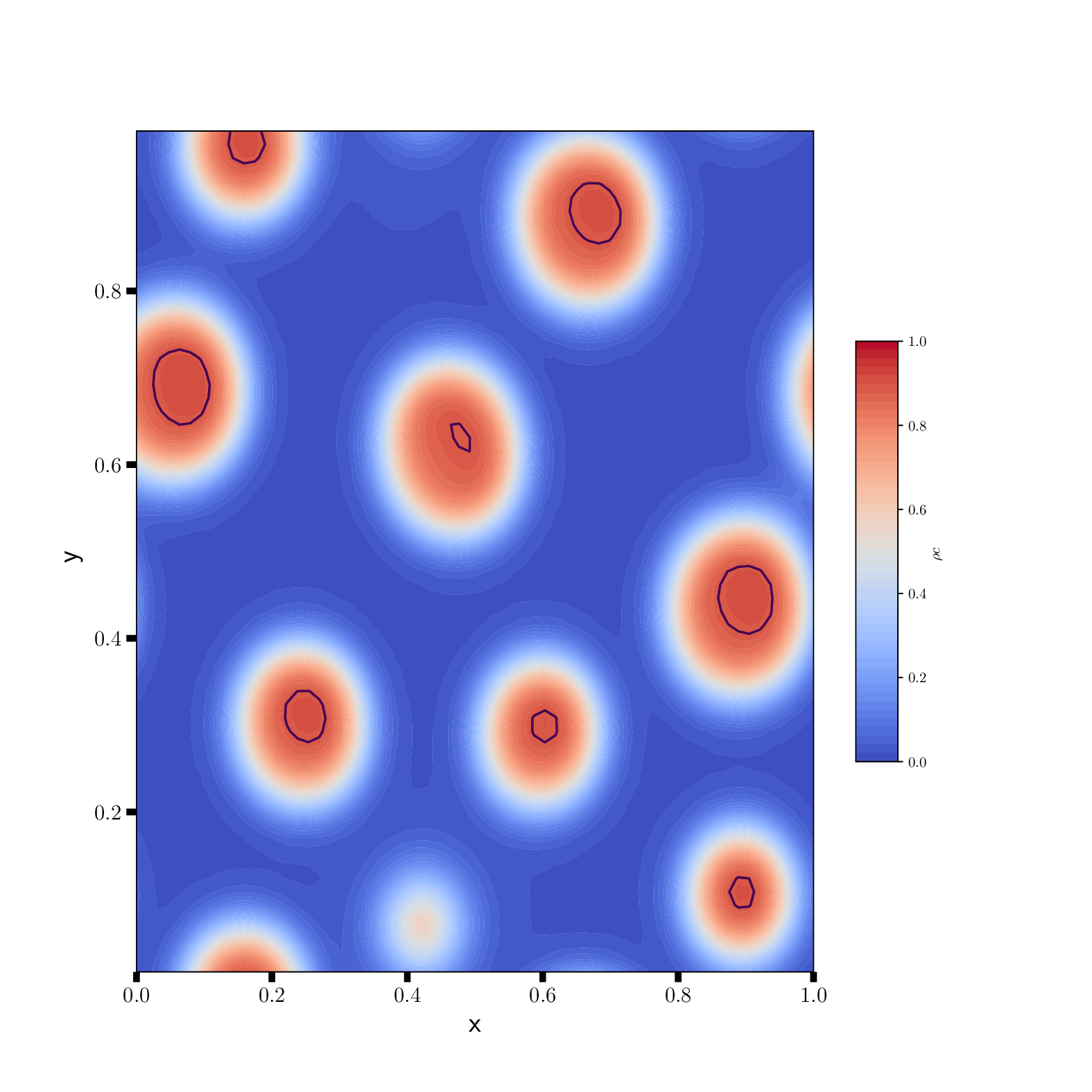}
    \caption{Contrast of friction strengths.}
    \end{subfigure}
    \caption{Relative density of fluid $1$ at time $t=0.25$ considering a contrast of friction strengths between the two fluids. The black circles represent the level $\rho c = \max(\rho c) - 0.03$. This corresponds to the tops of each aggregate.}
    \label{fig:circled}
\end{figure}

\subsection{Convergence tests}

We study the numerical convergence of the one dimensional scheme~\eqref{eq:discrete1D1}--\eqref{eq:discrete1D11} with $\kappa(\rho,c) = 0$ and $F_c(\rho,c) = 0$. The computational domain is $\Omega = (0,1)$. The final time is $T = 0.05$. The other parameters $\gamma$, $\beta$, $\eta$, $\nu$, $a$, $\alpha_1$, $\alpha_2$ are chosen as for the previous 1D test case with non-matching densities (see Subsection~\ref{subsec:testcase1}). The initial condition for the mass fraction is given by 
\[
c^0 = 0.4 + 0.01\cos(6\pi x).  
\]
The initial conditions for the velocity and the total density are chosen as in the previous 1D test cases.

We set the tolerance $\text{rtol}$ of the GMRES algorithm to $\text{rtol} = 10^{-6}$ for the spatial convergence test and $\text{rtol} = 10^{-10}$ for the temporal convergence test. 

\subsubsection{Convergence in space} \label{subsec:spatial-conv}
We fix the time step to $\dt = 1 \times 10^{-5}$ and we vary the grid size. We choose an increasing number of cells $N_x = \{64, 128, 256, 512, 1024, 2048 \}$. 
For each quantity $c,\vecu, \rho$, we compute the discrete errors
\begin{equation}
\begin{aligned}
\text{error}(\rho_\dx, \rho_{\dx/2}) &= \norm{\rho_\dx - \rho_{\dx/2}}_{L^{\infty}(0,T;L^a(\Omega))}, \\
\text{error}(c_\dx, c_{\dx/2}) &= \norm{c_\dx - c_{\dx/2}}_{L^{2}(0,T;L^2(\Omega))}, \\
\text{error}(\vecu_\dx, \vecu_{\dx/2}) &= \norm{\vecu_\dx - \vecu_{\dx/2}}_{L^{2}(0,T;L^2(\Omega))},
\end{aligned}
\label{eq:computerror}
\end{equation}
where $( \rho_{\dx/2},  c_{\dx/2},  \vecu_{\dx/2})$ denotes the solution computed using twice the number of cells of the simulation that computes the solution  $( \rho_{\dx},  c_{\dx},  \vecu_{\dx})$.

To compute the discrete norms, we save the solution every $\dt_\text{save} = 0.001$. The norms in~\eqref{eq:computerror} are computed following
\[
\begin{aligned}
\norm{\rho_\dx - \rho_{\dx/2}}_{L^{\infty}(0,T;L^a(\Omega))} &= \max_{t_\text{save}}\left(\frac{\dx}{2} \sum_{j=1}^{N_x} \left(\rho_\dx(x_j)  - \rho_{\dx/2}(x_j)\right)^a \right)^{1/a} , \\
\norm{c_\dx - c_{\dx/2}}_{L^2(0,T;L^2(\Omega))} &=\left( \sum_{t_\text{save}}\dt_\text{save} \left( \left(\frac{\dx}{2} \sum_{j=1}^{N_x} \left(c_\dx( x_j ) - c_{\dx/2}(x_j)\right)^2 \right)^{1/2} \right)^2 \right)^{1/2},\\
\norm{\vecu_\dx - \vecu_{\dx/2}}_{L^2(0,T;L^2(\Omega))} &=\left( \sum_{t_\text{save}}\dt_\text{save} \left( \left(\frac{\dx}{2} \sum_{j=1}^{N_x} \left(\vecu_\dx( x_j ) - \vecu_{\dx/2}(x_j)\right)^2 \right)^{1/2} \right)^2 \right)^{1/2},
\end{aligned}
\]
with $N_x$ the number of points on the $\dx/2$ grid, and $t_\text{save}$ the array of times at which snapshots of the solutions have been taken. Hence, the solution on the coarse grid $\dx$ is extended on the fine grid $\dx/2$ using the nearest solution from the coarse grid.

We arrive at the results given in Figure~\ref{fig:spatialconv}. As expected by the upwind scheme, the spatial order of convergence is a little less than 1 for the total density $\rho$ (see Figure~\ref{fig:spatialconv-rho}) and the velocity $\vecu$ (see Figure~\ref{fig:spatialconv-u}). We recover first order for the mass fraction. 

\begin{figure}
\centering
\begin{subfigure}[b]{0.32\textwidth}
    \centering
    \includegraphics[width = 0.99\linewidth]{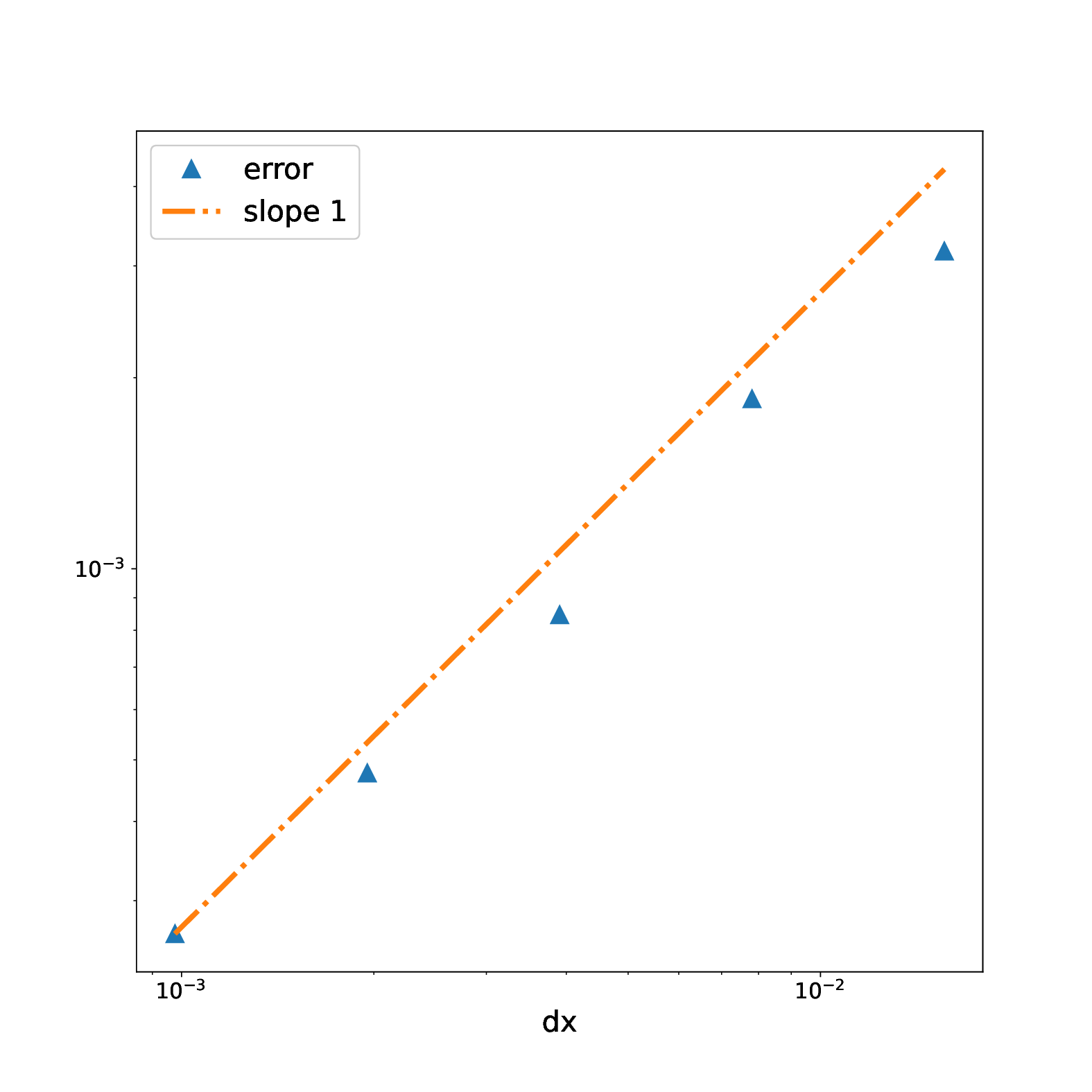}
    \caption{Spatial convergence of density $\rho$.}
    \label{fig:spatialconv-rho}
\end{subfigure}
\hfill
\begin{subfigure}[b]{0.32\textwidth}
    \centering
    \includegraphics[width = 0.99\linewidth]{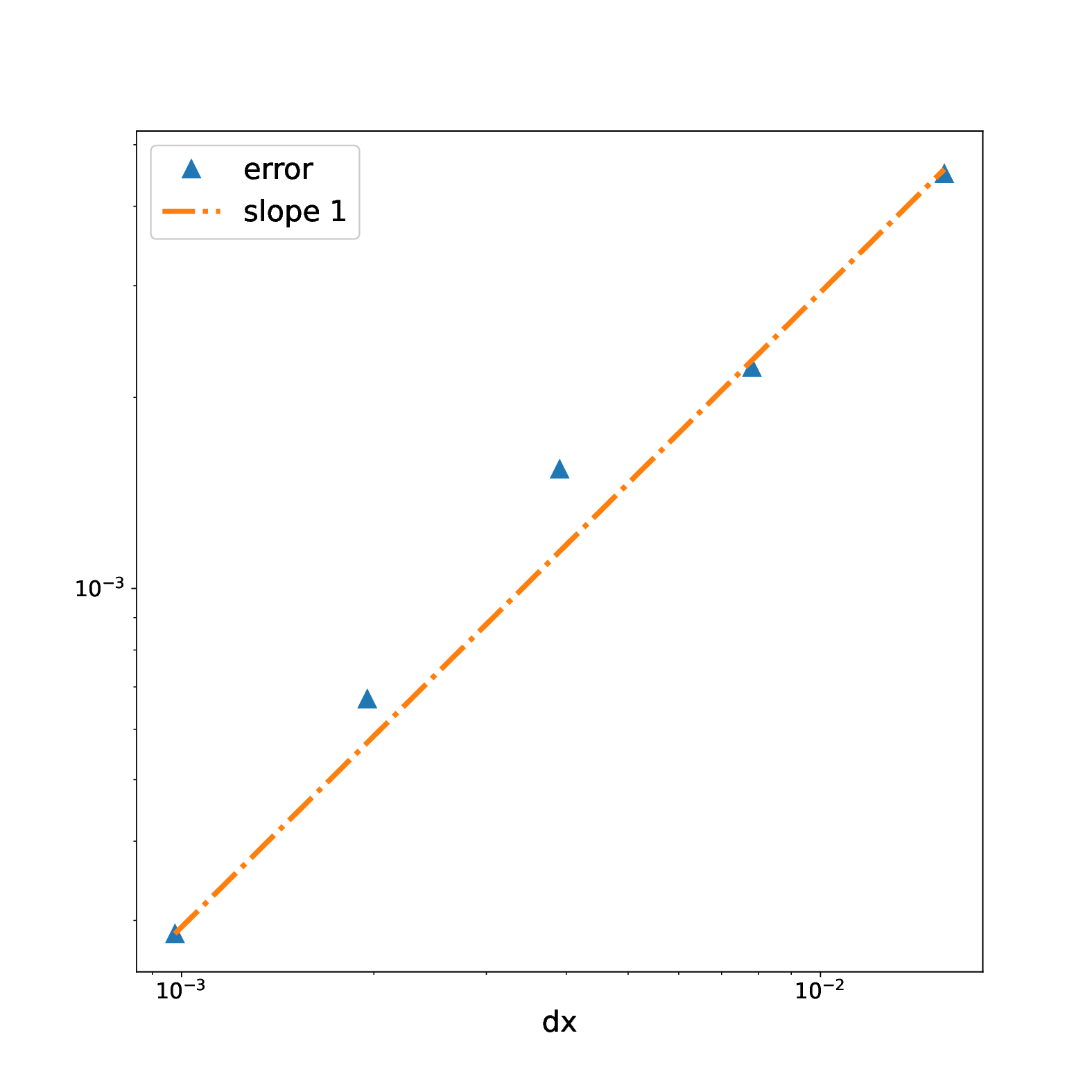}
    \caption{Spatial convergence of mass fraction $c$.}
    \label{fig:spatialconv-c}
\end{subfigure}
\hfill
\begin{subfigure}[b]{0.32\textwidth}
    \centering
    \includegraphics[width = 0.99\linewidth]{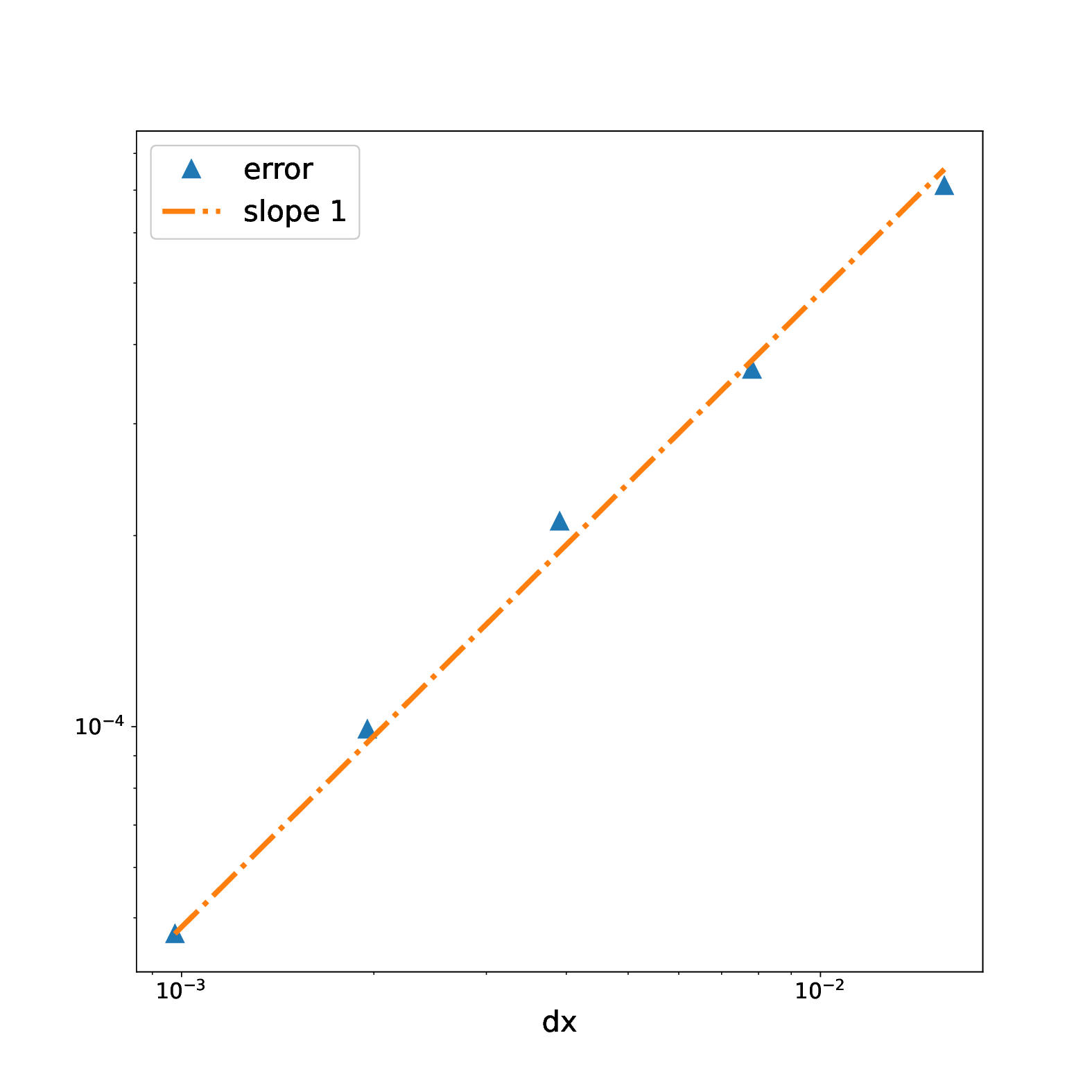}
    \caption{Spatial convergence of velocity $\vecu$.}
    \label{fig:spatialconv-u}
\end{subfigure}
    \caption{Convergence in space for the total density $\rho$, the mass fraction $c$ and the velocity $\vecu$. The orange dashed line represents the slope 1.}
    \label{fig:spatialconv}
\end{figure}

\subsubsection{Convergence in time}
We here fix the grid size and select $N_x = 128$ points. We choose $\dt = 1 \times 10^{-4}$, and decrease the time steps according to $\dt_\text{array} = \{\dt, \f\dt 2, \f \dt 4, \f \dt 8, \f \dt {16}, \f \dt {32}, \f {\Delta t}{64} \}$. We deactivate the time step adaptive strategy from the CFL condition. The other parameters and initial conditions are chosen as in the spatial convergence test (see Subsection~\ref{subsec:spatial-conv}). To check the convergence in time of our scheme, we compute the errors between two solutions computed with two time steps $\dt$ that differ only from a factor $\frac{1}{2}$. We denote these two different solutions by $(\rho_\dt,c_\dt,\vecu_\dt)$ and $(\rho_{\dt/2},c_{\dt/2},\vecu_{\dt/2})$. We use the same method as for the spatial convergence computations, we save the solutions every $\dt_\text{save} = 0.001$. We compute the norms
\begin{equation}
\begin{aligned}
\text{error}(\rho_\dt, \rho_{\dt/2}) &= \norm{\rho_\dt - \rho_{\dt/2}}_{L^{\infty}(0,T;L^a(\Omega))}, \\
\text{error}(c_\dt, c_{\dt/2}) &= \norm{c_\dt - c_{\dt/2}}_{L^{2}(0,T;L^2(\Omega))}, \\
\text{error}(\vecu_\dt, \vecu_{\dt/2}) &= \norm{\vecu_\dt - \vecu_{\dt/2}}_{L^{2}(0,T;L^2(\Omega))}.
\end{aligned}
\end{equation}
with
\[
\begin{aligned}
\norm{\rho_\dt - \rho_{\dt/2}}_{L^{\infty}(0,T;L^a(\Omega))} &= \max_{t_\text{save}}\left(\dx \sum_{j=1}^{N_x} \left(\rho_\dt(x_j)  - \rho_{\dt/2}(x_j)\right)^a \right)^{1/a} , \\
\norm{c_\dt - c_{\dt/2}}_{L^2(0,T;L^2(\Omega))} &=\left( \sum_{t_\text{save}}\dt_\text{save} \left( \left(\dx \sum_{j=1}^{N_x} \left(c_\dt( x_j ) - c_{\dt/2}(x_j)\right)^2 \right)^{1/2} \right)^2 \right)^{1/2},\\
\norm{\vecu_\dt - \vecu_{\dt/2}}_{L^2(0,T;L^2(\Omega))} &=\left( \sum_{t_\text{save}}\dt_\text{save} \left( \left(\dx \sum_{j=1}^{N_x} \left(\vecu_\dt( x_j ) - \vecu_{\dt/2}(x_j)\right)^2 \right)^{1/2} \right)^2 \right)^{1/2},
\end{aligned}
\]
We obtain the results depicted in Figure~\ref{fig:tempconv}
\begin{figure}
\centering
\begin{subfigure}[b]{0.32\textwidth}
    \centering
    \includegraphics[width = 0.99\linewidth]{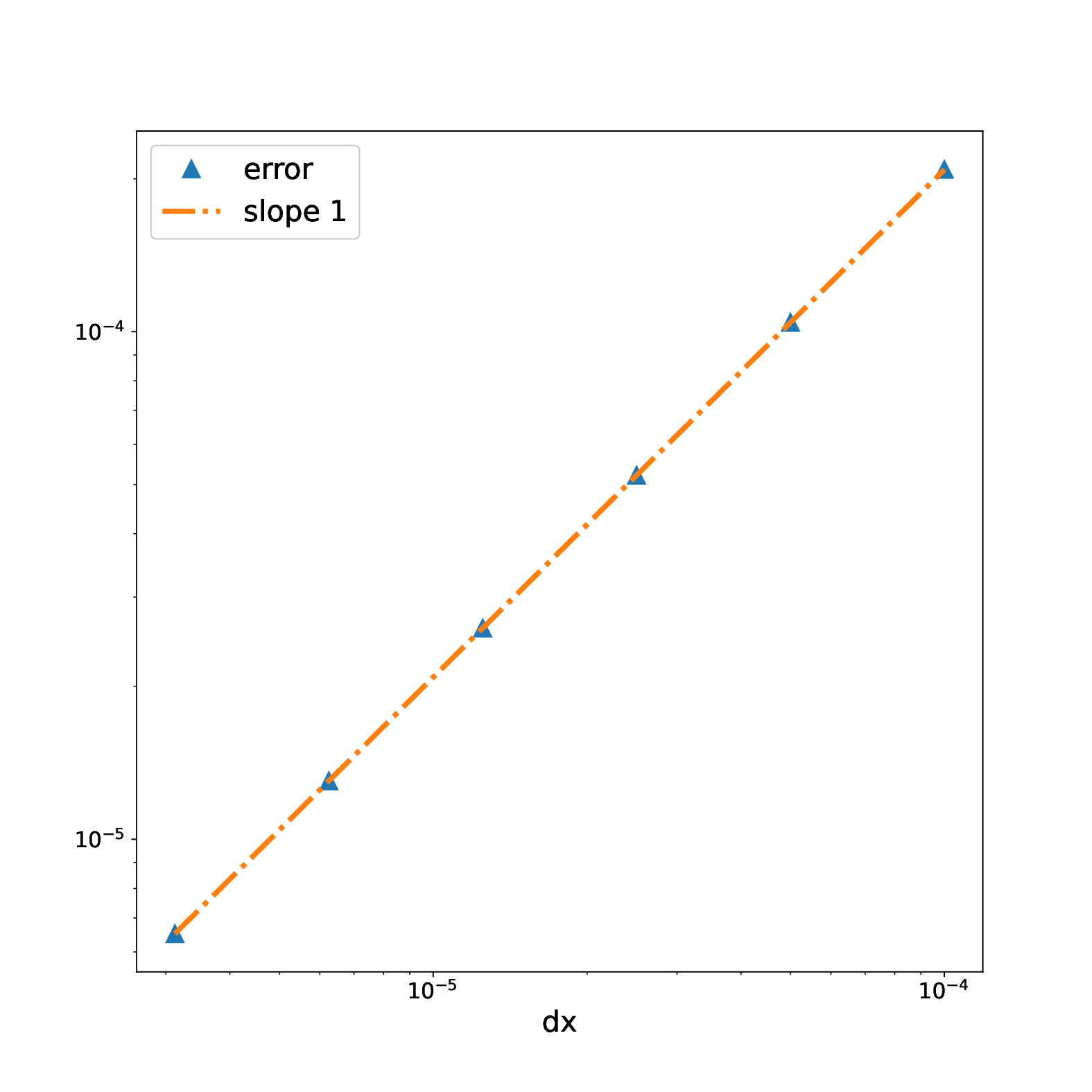}
    \caption{Temporal convergence of density $\rho$.}
    \label{fig:tempconv-rho}
\end{subfigure}
\hfill
\begin{subfigure}[b]{0.32\textwidth}
    \centering
    \includegraphics[width = 0.99\linewidth]{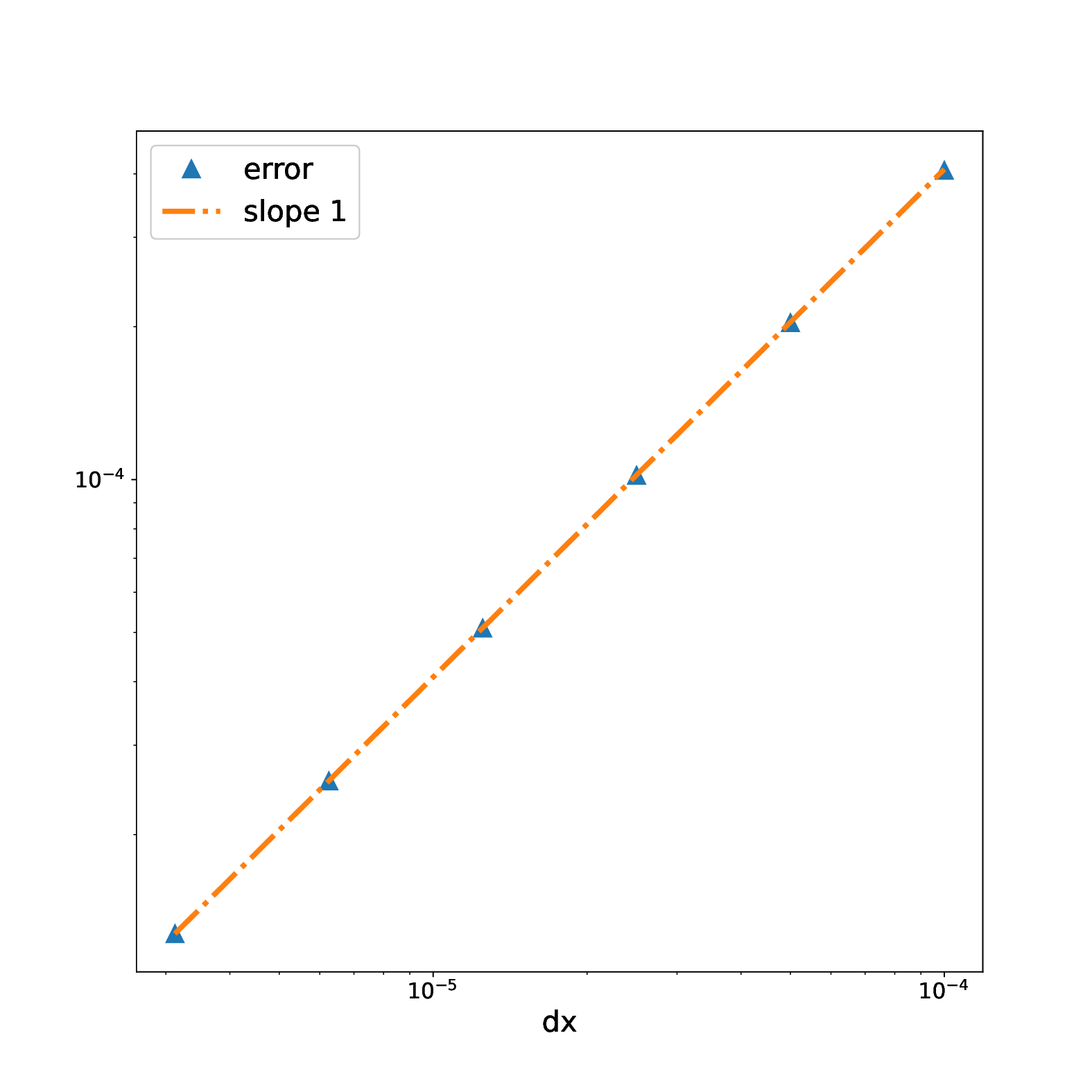}
    \caption{Temporal convergence of mass fraction $c$.}
    \label{fig:tempconv-c}
\end{subfigure}
\hfill
\begin{subfigure}[b]{0.32\textwidth}
    \centering
    \includegraphics[width = 0.99\linewidth]{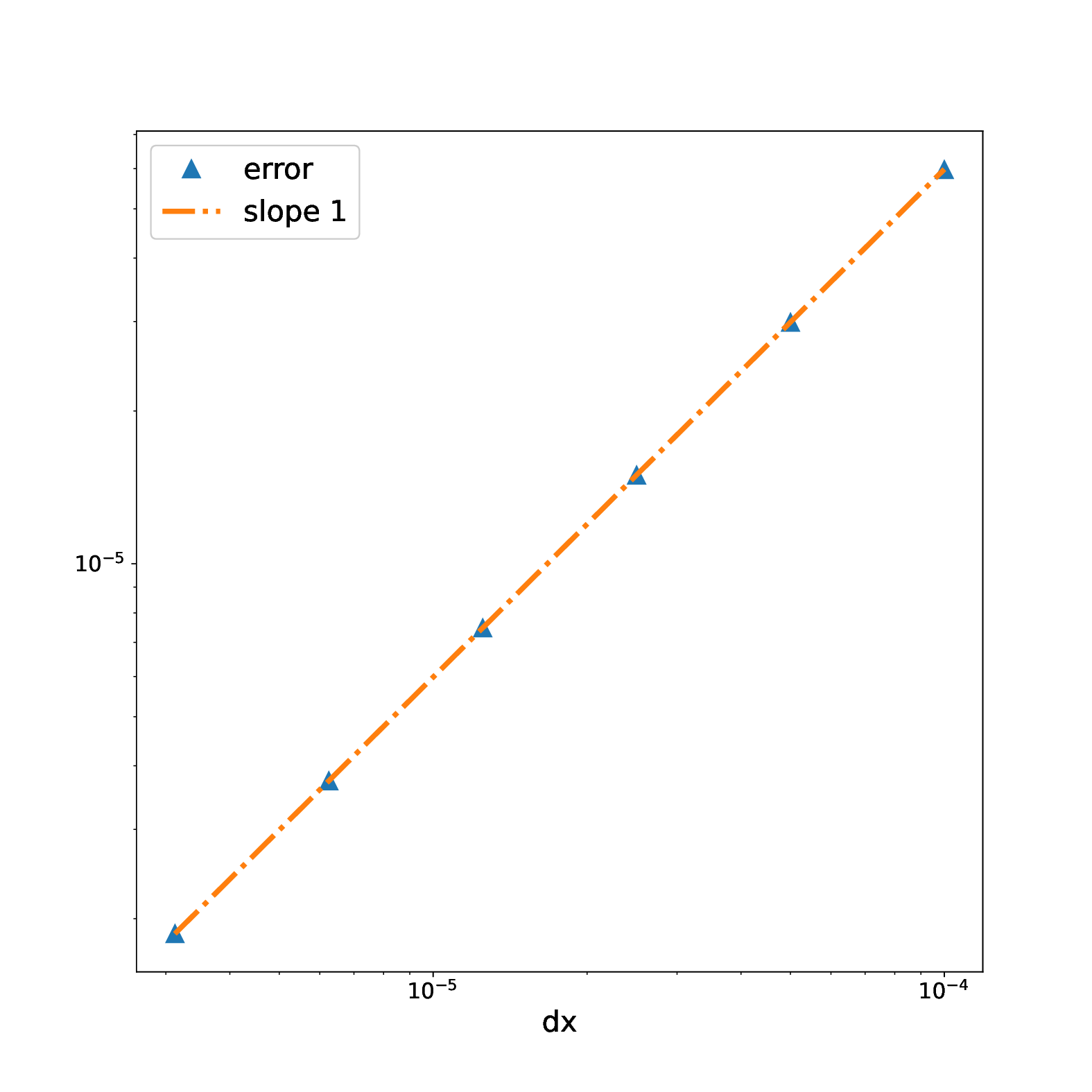}
    \caption{Temporal convergence of velocity $\vecu$.}
    \label{fig:tempconv-u}
\end{subfigure}
    \caption{Convergence in time for the total density $\rho$, the mass fraction $c$ and the velocity $\vecu$. The orange dashed line represents the slope 1. }
    \label{fig:tempconv}
\end{figure}
We observe that the order of convergence in time for our scheme is exactly $1$ for the three quantities.

\section{Conclusion and perspectives}

We presented a generalized model of diphasic compressible fluid termed G-NSCH, that comprises possible mass transfer between the two phases and friction effects. Under simplifying assumptions, summarized in Section~\ref{subsec:assumptions}, we proved the existence of weak solutions of the G-NSCH system. We also proposed a numerical scheme and prove, under the same simplifying assumptions, that it is stable and structure preserving (\ie it ensures the physically relevant bounds for the mass fraction $c$, and it satisfies an energy dissipation inequality). For the numerical simulations, we chose relevant functionals, thus, relaxing the simplifying assumptions that were necessary for the analysis. We presented numerical simulations showing that our numerical scheme possesses the robustness found analytically. 
The numerical simulations allowed us to show the ability of our model to represent diphasic fluids with matching or non-matching densities for the two phases.   
Furthermore, we computed numerically the spatial and temporal convergences our numerical scheme.
 
Our model and numerical scheme allow to use physically relevant choice of functional and to consider contrast of properties between the two phases of the fluid. Our aim is to perform efficient simulation of general compressible diphasic fluids while being able to capture instabilities that could emerge considering contrasts of properties such as Saffman-Taylor or Rayleigh-Taylor instabilities.
However, we emphasize that to achieve this latter goal, we have to be able to capture accurately the possible fine structures appearing during the numerical simulations. 
We plan to improve our numerical scheme in several ways. First, we plan to adapt the Relaxed version of the Generalized SAV method. Indeed, we observed during our numerical simulation that the variable $\xi$ is between $10^{-5}$ and $10^{-3}$. However, with the transfer term $F_c(\rho,c) \neq 0$, as the simulation progresses, the gap between $r$ and the real energy increases, \ie $\xi$ becomes larger. As shown in~\cite{Yanrong_2022_generalizedSAV}, this problem is solved with the relaxed G-SAV method and the use of this method is one of our further developments. In the same work~\cite{Yanrong_2022_generalizedSAV}, it is mentioned that this relaxed method works well even when an \textit{external force} is comprised in the model. In our case, this \textit{external force} will take the form of a mass transfer or mass source term.  
The second improvement concerns the accuracy of our scheme. As shown in our work, the temporal and spatial orders of our scheme do not exceed $1$. 
Thus, we will aim to design a high-order finite element scheme for the generalized compressible NSCH system that will remain structure preserving taking advantage of the flexibility of the relaxed GSAV method. 
On another aspect, we plan to use the reduced version of the G-NSCH model presented in the Appendix~\ref{sec:assumptions-functions-2pop} to represent tumor growth while removing non-necessary effects such as inertia. 
Our goal is to present a model and numerical simulations capturing Saffman-Taylor-like instabilities depicted by the protrusions of the tumor in the healthy tissue and commonly observed in the context of, \eg, skin cancer~\cite{chatelain2011emergence}. 
Furthermore, analytical aspects of this work can also be improved. This direction is challenging because as pointed out in the present work, necessary tools to perform the solutions' existence proof do not work with physically or biologically relevant potentials or mobility functions. In fact, singular potentials, degenerate mobilities and degenerate viscosity functions are not allowed. 
One possible solution is to derive a Bresch-Desjardins entropy estimate~\cite{Bresch_2003_shallow,Bresch_2006_shallowNS} for the compressible NSCH as it has been done recently by Vasseur and Yu~\cite{vasseur_2016_weaksolwithdegvisco}, and Bresch, Vasseur and Yu~\cite{Bresch_2022_NSnonlinearvisco}, for the compressible Navier-Stokes model with degenerate viscosities.   

To conclude, we emphasize that the G-NSCH model is the basis of a reduced system that takes into account only biologically relevant physical effects that play a role in tumor evolution (presented in the present article in Appendix~\ref{sec:assumptions-functions-2pop} as \textit{Problem 2}). Therefore, this work has to be seen as the first part. In a subsequent work, relying heavily on the present one, we will focus on numerical simulations, and sensitivity analysis of the reduced model.

\section*{Acknowledgements}
The authors would like to thank Tommaso Lorenzi for his comments concerning the derivation of our G-NSCH model and for the very interesting discussions we had about the modelling of tumor growth. The authors would like also to thank Alain Miranville for fruitful discussions about the compressible Navier-Stokes-Cahn-Hilliard model and diphasic fluid dynamics.  

\bibliographystyle{siam}
\bibliography{final}

\appendix

\section{Derivation of the model}
\label{sec:derivation-model-2pop}
%Let us describe how the model is obtained and how it is consistent with the second law of thermodynamics and continuum mechanics. In this sections the two cell populations are modelled by the two fluids. We sometime also refer to the two phases of the fluid for the two populations. The mixture is supposed to be viscous with a possible contrast of viscosity between the two phases. 
%Since we aim at representing cells evolving in the extra-cellular matrix (ECM), the mixture moves in a porous domain. Cells of the two populations crawl in the ECM and moves at different speeds depending on their type and metabolism. This situation is represented by the fact that the two fluids exert a friction at the surface of the pores. This effect is taken into account in the equation for the velocity field.  
In this Appendix, we present the rigourous derivation of our G-NSCH model.

\subsection{Notation and definitions}
We formulate our problem in Eulerian coordinates and in a smooth bounded domain $\Omega \subset \R^d$ (where $d=\{1,2,3\}$ is the dimension). The balance laws derived in the following sections are in local form.

We have two fluids in the model where $\rho_1,\rho_2$ are the relative densities of respectively fluid $1$ and $2$. Thus, $\rho_i$ represents the mass $M_i$ of the fluid per volume occupied by the $i$-th phase $V_i$, \ie 
\[
    \rho_i = \f{M_i}{V_i}.
\]  
Then, we define the volume fractions $\vp_1,\vp_2$ which are defined by the volume occupied by the $i$-th phase over the total volume of the mixture
\[
    \vp_i = \f{V_i}{V}.    
\] 
Therefore, the mass density of population $i$ which is the mass of population $i$ in volume $V$ is given by 
\[
    \phi_i = \rho_i\vp_i.  
\]
We further assume that the fluid is saturated, \ie  
\[
    \vp_1 + \vp_2 = 1.
\]
The total density of the mixture is then given by 
\[
    \rho = \phi_1 + \phi_2.  
\]
We also introduce the mass fractions $c_i=M_i/M$ and we have the relations
\begin{equation}
    \rho c_i = \phi_i, \quad \text{and} \quad c_1 = (1-c_2).
    \label{eq:relations-mass-frac}
\end{equation}
We denote by $p$ the pressure inside the mixture and $\vecu_1,\vecu_2$ are the velocities of the different phases. We use a mass-average mixture velocity 
\begin{equation}
    \vecu = \f{1}{\rho}\left(\phi_1 \vecu_1 + \phi_2 \vecu_2\right).
    \label{eq:mass-aver-velo}
\end{equation}
We define the material derivative for a generic function $g$ (scalar or vector-valued) by 
\begin{equation}
    \f{\DD g}{\DD t} = \f{\p g}{\p t}+ \vecu\cdot \nabla g, 
    \label{eq:material-deriv}
\end{equation}
and indicate the definition of the differential operator 
\[
    \vecu \cdot \nabla g= \sum_{j=1}^d \vecu_j \f{\p g}{\p x_j}  .
\]
In the following, we denote vectors by bold roman letters and we use bold Greek letters to denote second-order tensors. 

\subsection{Mass balance equations}
%The constitutive relation for this pressure term will be given in the following sections, however we just specify that $p$ is not an unknown but it is a given function of $\rho$ and $c$. 
We have the mass balance equations
\begin{equation}
    \begin{cases}
        \f{\p \phi_1}{\p t} + \dv \left(\phi_1 \vecu_1  \right) = F_1(\rho,c_1,c_2),\\
        \f{\p \phi_2}{\p t} + \dv \left(\phi_2 \vecu_2  \right) = F_2(\rho,c_1,c_2).
    \end{cases}
    \label{eq:mass-balance-i}
\end{equation}
The functions $F_i(\rho,c_1,c_2)$ ($i=1,2$) act as source or exchange terms of mass.

Summing the two equations, we obtain the continuity equation for the total density of the mixture, using $c=c_1$, and the relations \eqref{eq:relations-mass-frac}, we obtain the balance equation for the total density of the mixture 
\begin{equation}
    \f{\p \rho}{\p t} + \dv \left(\rho \vecu  \right) = F_1 + F_2 =: F_\rho.
    \label{eq:mass-balance-total-dens}
\end{equation} 
To obtain a system analogous to \eqref{eq:mass-balance-i}, we rewrite the first equation of \eqref{eq:mass-balance-i} using the definition of the mass fraction \eqref{eq:relations-mass-frac} to obtain 
\begin{equation}
    \f{\p \rho c}{\p t} + \dv \left(\rho c \vecu_1  \right) = F_1(\rho,c,1-c)=: F_c.
    \label{eq:mass-fraction-original}
\end{equation}
The mass of the component $1$ is transported by the average velocity $\vecu$ and the remaining diffusive flux $\mathbf{J}_1 = \rho c\left(\vecu-\vecu_1\right)$. Therefore, we can replace the previous equation by
\[
    \f{\p \rho c}{\p t} + \dv \left(\rho c \vecu  \right) = \dv \left( \mathbf{J}_1  \right) + F_c.
\]
Then, using the definition of the material derivative \eqref{eq:material-deriv} and the mass balance equation for the total mixture \eqref{eq:mass-balance-total-dens}, the left-hand side of the previous equation reads 
\[
    \f{\p \rho c}{\p t} + \dv \left(\rho c \vecu  \right) = \rho \f{\DD c}{\DD t} + c \left[\f{\p \rho}{\p t} + \dv\left(\rho \vecu\right) \right] = \rho \f{\DD c}{\DD t} + c  F_\rho.
\]
Altogether, we obtain the balance equation for the mass fraction of the component $1$
\begin{equation}
    \rho \f{\DD  c}{\DD t} =  \dv \left( \mathbf{J}_1  \right) + F_c - c F_\rho.
    \label{eq:mass-balance-c}
\end{equation} 
Since $c_2 = 1-c$, solving the equations \eqref{eq:mass-balance-total-dens} and \eqref{eq:mass-balance-c} is equivalent to solving the system \eqref{eq:mass-balance-i}. In the following, we refer to $c$ as the order parameter (terminology often used in the framework of the Cahn-Hilliard model \cite{cahn_free_1958,cahn_spinodal_1961}).

\subsection{Balance of linear momentum}
We write the balance of linear momentum \cite{Eck-2017-Modelling}, which describes the evolution of the velocity $\vecu$ due to internal stresses and external forces. Following continuum mechanics, the Cauchy stress tensor gives the stresses acting inside the mixture due to viscous and non-viscous effects. An additional stress must be taken into account to represent the effect of concentration gradients \cite{Ericksen-1990-Liquid}. Altogether, we assume that the stress tensor is a function of the total density $\rho$, the order parameter $c$ (i.e. the mass fraction of fluid $1$), its gradient $\nabla c$, and the total velocity of the mixture $\vecu$, \ie
\[
    \vecsig = \vecsig(\rho,c,\nabla c, \vecu).  
\]
The friction around the pores of the medium is modeled by a drag term in the balance equation \cite{narsilio_upscaling_2009} with a permeability coefficient $\kappa(\rho,c) = \kappa_1(\rho,c) + \kappa_2(\rho,c)$ (the sum of the two friction coefficients for each component of the mixture). The permeability coefficient relates the properties of the fluid and the porous medium. 

For each dimension (for example if $d=3$, then $j=\{x,y,z\}$), the balance of linear momentum reads \cite{Eck-2017-Modelling}  
\[
    \f{\p \rho \vecu_j}{\p t} + \dv\left(\rho \vecu_j \vecu\right) =  \dv\left(\vecsig\right)_j - \kappa(\rho, c) \vecu_j + F_{\vecu_j},
\]
where $F_{\vecu_j}(\vecu_j,\rho)$ represents the gain or loss of velocity in the $j$-th direction from different effects such as external forces. 
Then, using the continuity equation \eqref{eq:mass-balance-total-dens}, we can rearrange the left-hand side to obtain 
\[
    \f{\p \rho \vecu_j}{\p t} + \dv\left(\rho \vecu_j \vecu\right) =    \rho  \f{\DD \vecu_j}{\DD t} + \vecu_j\left[ \f{\p \rho}{\p t} + \dv\left(\rho \vecu \right)\right] = \rho  \f{\DD \vecu_j}{\DD t} + \vecu_j F_\rho + F_{\vecu_j}.
\]
Therefore, we have
\[
    \rho  \f{\DD \vecu_j}{\DD t} =  \dv(\vecsig)_j - \left(\kappa(\rho, c)+F_\rho\right) \vecu_j + F_{\vecu_j}.
\]
We can rewrite the balance of linear momentum in a more compact form
\begin{equation}
    \rho  \f{\DD \vecu}{\DD t}= \dv(\vecsig) -  \left(\kappa(\rho, c)+F_\rho\right)\vecu + F_\vecu,
    \label{eq:linear-momentum}
\end{equation}
where $F_\vecu(\vecu,\rho)$ is the vector of coordinates $F_{\bv_{j}}$.

\subsection{Energy balance}
The total energy of the mixture is the sum of the kinetic energy $\rho \f12 \abs{\vecu}^2$ and of the internal energy $\rho u$, where $u = u(\rho,c,\nabla c)$ is a specific internal energy. Compared to the classical conservation law for the total energy, we have an additional energy flux $\vectau \f{\DD c}{\DD t}$. Indeed, due to the interface region, surface effects must be taken into account. Following this direction, Gurtin~\cite{gurtin_thermo_1989} proposed to include in the second law of thermodynamics, the effect of an additional force called the \textit{microscopic-stress} which is related to forces acting at the microscopic scale. We denote this supplementary stress by $\vectau$.

Since we assume that the system is maintained in an isothermal state, the balance equation for the energy is given by \cite{Eck-2017-Modelling}
\begin{equation}
\begin{aligned}
    \f{\p}{\p t}\left(\rho \f12 \abs{\vecu}^2 + \rho u\right) &+ \dv\left( \rho\left( \f12 \abs{\vecu}^2 +  u\right) \vecu \right)  \\ 
    &= \dv\left(\vecsig^T \vecu \right) + \dv\left(\vectau \f{\DD c}{\DD t} \right) - \dv\left(\mathbf{q} \right) + \rho g + c_\rho F_\rho + c_c F_c + c_\vecu F_\vecu,
    \end{aligned}
    \label{eq:balance-energy-step1}
\end{equation}
where $\mathbf{q}$ is the heat flux and $\rho g$ is the density of heat sources to maintain the temperature constant. The last three terms in Equation~\eqref{eq:balance-energy-step1} account for the energy supply coming from the mass and velocity sources (see \eg~\cite{gurtin-mechanics-2010, Lam-2018-NSCH}). The prefactors $c_\rho, c_c, c_\vecu$ will be determined later to satisfy the free energy imbalance. 
Then, repeating the same calculations on the left-hand side to use the balance of mass \eqref{eq:mass-balance-total-dens}, we have
\[
    \f{\p}{\p t}\left(\rho \f12 \abs{\vecu}^2 + \rho u\right) + \dv\left( \rho\left( \f12 \abs{\vecu}^2 +  u\right) \vecu \right) = \rho\left[ \f{\DD}{\DD t}\left( \f12 \abs{\vecu}^2 + u\right)\right] + \left( \f12 \abs{\vecu}^2 + u\right) F_\rho.
\]
Applying the chain rule to the kinetic part, we obtain
\[
    \rho \f{\DD }{\DD t} \left(\f12 \abs{\vecu}^2 \right) = \rho \vecu \cdot \f{\DD \vecu}{\DD t},
\] 
and using the balance of linear momentum \eqref{eq:linear-momentum}, we arrive to
\[
    \rho \vecu \cdot \f{\DD \vecu}{\DD t} = \vecu \cdot\dv(\vecsig) - \left(\kappa(\rho,c)+F_\rho\right) \abs{\vecu}^2 + F_{\bv}\cdot \vecu.
\]
Using these previous equations inside \eqref{eq:balance-energy-step1}, we obtain the balance equation for the internal energy 
\[
\begin{aligned}
    \rho \f{\DD u}{\DD t} = \dv\left(\vecsig^T \vecu \right) - \vecu \cdot \dv\left(\vecsig \right) &+ \dv\left(\vectau \f{\DD c}{\DD t} \right)  + \left(\kappa(\rho,c)+F_\rho\right) \abs{\vecu}^2 - F_v \vecu \\
    &- \dv\left(\mathbf{q} \right) + \rho g - \left( \f12 \abs{\vecu}^2 + u\right) F_\rho + c_\rho F_\rho + c_c F_c + c_\vecu F_{\bv} .
\end{aligned}
\]
However, since
\[
    \vecu\cdot \left(\dv\left(\vecsig\right)\right) - \dv\left(\vecsig^T \vecu\right) = - \vecsig\cddot \nabla \vecu,
\] 
where $\nabla \vecu = \left(\p_{x_j}\vecu_i\right)_{i,j=1,\dots,d}$ is the Jacobi matrix and, we have ${A\cddot B = \sum_{i,j}A_{ij}B_{ij}}$, for two matrices $A,B$.
Altogether, we have the balance equation for the internal energy
\begin{equation}
\begin{aligned}
    \rho \f{\DD u}{\DD t} =\vecsig \cddot \nabla \vecu &+ \dv\left(\vectau \f{\DD c}{\DD t} \right) + \left(\kappa(\rho,c)+F_\rho\right) \abs{\vecu}^2 - F_v \vecu \\
    &- \dv\left(\mathbf{q} \right) + \rho g - \left( \f12 \abs{\vecu}^2 + u\right) F_\rho + c_\rho F_\rho + c_c F_c + c_\vecu F_\vecu.
    \end{aligned}
    \label{eq:energy-balance}
\end{equation} 

\subsection{Entropy balance and Clausius-Duhem inequality}
We aim to apply the second law of thermodynamics. To do so, we define the entropy ${s=s(\rho,c,\nabla c)}$ and the Helmholtz free energy $\mathcal{F}=\mathcal{F}(\rho, c,\nabla c)$, both related through the equation
\begin{equation}
    \mathcal{F}=u-T s,
    \label{eq:def-free-energy}
\end{equation}
where $T$ denotes the temperature. 

From the mass balance equation \eqref{eq:mass-balance-total-dens}, we have the entropy balance equation
\begin{equation}
    \f{\p \rho s}{\p t} + \dv(s \rho \vecu) = \rho \f{\DD s}{\DD t} + s\left[\f{\p \rho}{\p t} + \dv\left(\rho \vecu\right)\right] = \rho \f{\DD s}{\DD t} + s F_\rho.
    \label{eq:entropy-step1}
\end{equation}
Then, using the definition of the Helmholtz free energy \eqref{eq:def-free-energy} and the balance of energy \eqref{eq:energy-balance}, we obtain 
\begin{equation}
    \begin{aligned}
    \rho \f{\DD s}{\DD t} &= -\f\rho T \f{\DD \mathcal{F}}{\DD t} + \f\rho T \f{\DD u}{\DD t}\\
    &= -\f\rho T \f{\DD \mathcal{F}}{\DD t} + \begin{multlined}[t][10cm] \f1T\big[\vecsig \cddot \nabla \vecu + \dv\left(\vectau \f{\DD c}{\DD t} \right) + \left(\kappa(\rho,c)+F_\rho\right) \abs{\vecu}^2 - F_v \vecu \\
    - \dv\left(\mathbf{q} \right) + \rho g - \left( \f12 \abs{\vecu}^2 + u\right) F_\rho + c_\rho F_\rho + c_c F_c + c_\vecu F_\vecu\big],
     \end{multlined}
    \end{aligned}
    \label{eq:balance-entropy}
\end{equation}
where we have replaced the material derivative of the internal energy using its balance equation \eqref{eq:energy-balance}.

The constitutive relations for the functions constituting the Navier-Stokes-Cahn-Hilliard model are often derived to satisfy the Clausius-Duhem inequality (Coleman-Noll Procedure) \cite{Eck-2017-Modelling}. Indeed, this inequality provides a set of restrictions for the dissipative mechanisms occurring in the system. However, in our case, due to the presence of source terms, we can not ensure that this inequality holds without some assumptions on the  proliferation and friction of the fluid around the pores. Therefore, we use here a different method: the Lagrange multipliers method. Indeed, the Liu \cite{Liu_lagrange_1972} and M\"uller \cite{muller_thermodynamics_1985} method is based on using Lagrange multipliers to derive a set of restrictions on the constitutive relations that can be applied even in the presence of source terms. 

Following classical Thermodynamics \cite{muller_thermodynamics_1985}, we state the second law as an entropy inequality, \ie, the Clausius-Duhem inequality in the local form \cite{Eck-2017-Modelling}
\begin{equation}
    \rho \f{\DD s}{\DD t} \ge - \dv\left( \f{\bold{q}}{T} \right) +  \f{\rho g}{T}+ \dv\left(\mathcal{J}\right),
    \label{eq:Clausius-Duhem}
\end{equation}
where $\mathcal{J}$ is the entropy flux. The inequality \eqref{eq:Clausius-Duhem} results from the fact that the entropy of the mixture can only increase. Using the equation \eqref{eq:balance-entropy}, we obtain 
\begin{equation}
\begin{aligned}
    \f\rho T \f{D \mathcal{F}}{D t} - \f1T\big[&\vecsig \cddot \nabla \vecu + \dv\left(\vectau \f{\DD c}{\DD t} \right) + \left(\kappa(\rho,c)+F_\rho\right) \abs{\vecu}^2 \\
    &- F_v \vecu - \left( \f12 \abs{\vecu}^2 + u\right) F_\rho + c_\rho F_\rho + c_c F_c + c_\vecu F_\vecu \big] + \dv\left(\mathcal{J}\right) \le 0.
    \end{aligned}
    \label{eq:entropy-inbalance}
\end{equation}
Then, using the chain rule 
\[
   \f{D \mathcal F}{D t} = \f{D \rho}{D t}\f{\p \mathcal F}{\p \rho} + \f{D c}{D t}\f{\p \mathcal F}{\p c} +\f{D \nabla c}{D t}\cdot \f{\p \mathcal F}{\p \nabla c} ,
\]
and 
\[
   \f{D \nabla c}{D t}= \nabla\left[ \f{D  c}{D t}\right] - \left(\nabla \vecu\right)^T \nabla c, \quad \f{\DD \rho}{\DD t} = -\rho \dv(\vecu)+ F_\rho,
\]
in the entropy inequality \eqref{eq:entropy-inbalance}, we obtain 
\begin{equation}
    \begin{aligned}
    \rho& \left[ \left(-\rho \dv(\vecu) + F_\rho\right)\f{\p \mathcal F}{\p \rho} + \f{\DD c}{\DD t}\f{\p \mathcal F}{\p c} +\left(\nabla\left[ \f{D  c}{D t}\right] - \left(\nabla \vecu\right)^T\nabla c \right) \cdot \f{\p \mathcal F}{\p \nabla c} \right] - \dv\left(\vectau \f{\DD c}{\DD t} \right)-\vecsig : \nabla\bv \\
    & - \left[\left(\kappa(\rho,c)+F_\rho\right) \abs{\vecu}^2 - F_v \vecu - \left( \f12 \abs{\vecu}^2 + u\right) F_\rho + c_\rho F_\rho + c_c F_c + c_\vecu F_\vecu\right]  + T\dv\left(\mathcal{J}\right) \le 0.
    \end{aligned}
    \label{eq:entrop-ineq1}
\end{equation}
By the chain rule, we have 
\[
    \dv\left(\vectau \f{\DD c}{\DD t}\right) = \vectau  \nabla\left[ \f{D  c}{D t}\right] +  \f{\DD c}{\DD t} \dv\left(\vectau\right).
\]
Furthermore, we know that
\[
    -\rho^2 \dv\left(\vecu\right)\f{\p \mathcal F}{\p \rho} =  -\rho^2 \f{\p \mathcal F}{\p \rho}\Id \cddot \nabla \vecu,
\]
and
\[
    - \rho\left( \left(\nabla \vecu\right)^T \nabla c \right) \cdot \f{\p \mathcal F}{\p \nabla c} = - \rho \left(\nabla c  \otimes \f{\p \mathcal F}{\p \nabla c} \right)\cddot \nabla \vecu.
\]
Gathering the previous three relations and reorganizing the terms of \eqref{eq:entrop-ineq1}, we obtain
\begin{equation}
    \begin{aligned}
    &\left(  -\rho^2\f{\p \mathcal F}{\p \rho} \Id - \rho \nabla c  \otimes \f{\p \mathcal F}{\p \nabla c}  - \vecsig \right) \cddot \nabla \vecu + \left( \rho \f{\p \mathcal F}{\p c}  - \dv(\vectau)\right)  \f{\DD c}{\DD t} \\
    &+ \left(\rho\f{\p \mathcal F}{\partial \nabla c}-\vectau\right) \nabla \left[\f{\DD c}{\DD t}\right]  + T \dv\left(\mathcal{J}\right)
    \\   &- \left[\left(\kappa(\rho,c)+F_\rho\right) \abs{\vecu}^2 - F_v \vecu - \left( \f12 \abs{\vecu}^2 + u - \rho\f{\p \mathcal F}{\p \rho} \right) F_\rho + c_\rho F_\rho + c_c F_c + c_\vecu F_\vecu\right]\le 0. 
    \end{aligned} 
    \label{eq:entrop-ineq2}
\end{equation}

Then, we use Liu's Lagrange multipliers method \cite{Liu_lagrange_1972}. We denote by $L_c$ the Lagrange multiplier associated with the mass fraction equation \eqref{eq:mass-balance-c}. 
The method of Lagrange multipliers consists in setting the following local dissipation inequality that has to hold for arbitrary values of $(\rho,c,\nabla \rho,\nabla c,\vecu,p)$
\begin{equation}
    \begin{aligned}
    -D_\text{iss}:=&\left(  -\rho^2\f{\p \mathcal F}{\p \rho} \Id - \rho \nabla c  \otimes \f{\p \mathcal F}{\p \nabla c}  - \vecsig \right) \cddot \nabla \vecu \\
    &+ \left( \rho \f{\p \mathcal F}{\p c}  - \dv(\vectau)\right)  \f{\DD c}{\DD t} + \left(\rho\f{\p \mathcal F}{\partial \nabla c}-\vectau\right) \nabla \left[\f{\DD c}{\DD t}\right]  + T\dv\left(\mathcal{J}\right) \\
    &- \left[\left(\kappa(\rho,c)+F_\rho\right) \abs{\vecu}^2 - F_v \vecu - \left( \f12 \abs{\vecu}^2 + u - \rho\f{\p \mathcal F}{\p \rho} \right) F_\rho + c_\rho F_\rho + c_c F_c + c_\vecu F_\vecu\right]\\
    &- L_c \left( \rho \f{\DD  c}{\DD t} -  \dv \left( \mathbf{J}_1  \right) - F_c - c F_\rho  \right)\le 0.
    %& - L_\rho\left( \f{\DD  \rho}{\DD t} + \dv(\vecu)\rho - F_\rho  \right)\\
    %& - L_\vecu\left(\rho \f{\DD  \vecu}{\DD t} - \dv(\vecsig) +  \left(\kappa(\rho, c)+F_\rho\right)\vecu - F_\vecu  \right) \le 0.
    \end{aligned}
    \label{eq:diss-step1}
\end{equation}
Since,
\[
   \dv\left(L_c \mathbf{J}_1\right) = L_c \dv\left(\mathbf{J}_1 \right) + \nabla L_c \cdot \mathbf{J}_1,
\]
we reorganize the terms of \eqref{eq:diss-step1} to obtain
\begin{equation}
    \begin{aligned}
    -D_\text{iss}:=&\left(  -\rho^2\f{\p \mathcal F}{\p \rho} \Id - \rho \nabla c  \otimes \f{\p \mathcal F}{\p \nabla c}  - \vecsig \right) \cddot \nabla \vecu \\
    &+ \left( \rho \f{\p \mathcal F}{\p c}  - \dv(\vectau) - \rho L_c  \right)  \f{\DD c}{\DD t} + \left(\rho\f{\p \mathcal F}{\partial \nabla c}-\vectau\right) \nabla \left[\f{\DD c}{\DD t}\right]  + \dv\left( T \mathcal{J} + L_c \mathbf{J}_1\right)\\ &  - \nabla L_c \cdot \mathbf{J}_1 \\
    & \begin{multlined}[10cm]- 
    \big[\left(\kappa(\rho,c)+F_\rho\right) \abs{\vecu}^2 - F_v \vecu - \left( \f12 \abs{\vecu}^2 + u - \rho\f{\p \mathcal F}{\p \rho} \right) F_\rho \\+ c_\rho F_\rho + c_c F_c + c_\vecu F_\vecu - L_c (F_c+cF_\rho)\big] \le 0.
    \end{multlined}
    \end{aligned}
    \label{eq:entropy-ineq}
\end{equation}

\subsection{Constitutive assumptions and model equations}
First of all, we assume that the free energy density $\mathcal{F}$ is of Ginzburg-Landau type  and has the following form  \cite{cahn_free_1958,cahn_spinodal_1961}
\begin{equation}
    \mathcal{F}(\rho,c,\nabla c) \coloneqq  \psi_0(\rho,c) + \f\gamma{2} \abs{\nabla c}^2,
    \label{eq:constit-F}
\end{equation}
where $\psi_0$ is the homogeneous free energy accounting for the processes of phase separation and the gradient term $\f\gamma{2} \abs{\nabla c}^2$ represents the surface tension between the two phases. This free energy is the basis of the Cahn-Hilliard model which describes the phase separation occurring in binary mixtures. 
Furthermore, as obtained in Wise~\etal~\cite{wise_three-dimensional_2008}, the adhesion energy between different cell species is indeed well represented by such a choice of the free energy functional.

To satisfy the inequality \eqref{eq:entropy-ineq}, we first choose
\[
    \vectau \coloneqq  \rho \f{\p \mathcal F}{\partial \nabla c}= \gamma \rho \nabla c.
\] 
Then, we define the chemical potential $\mu(\rho, c, \nabla c)$ by 
\[
   \mu \coloneqq   \f{\p \mathcal F}{\p c}  - \f{1}{\rho}\dv(\vectau) = \f{\p \mathcal F}{\p c}  - \f{1}{\rho}\dv(\rho \f{\p \mathcal F}{\partial \nabla c}) = \f{\p \psi_0}{\p c} - \f\gamma\rho\dv\left(\rho \nabla c\right),
\]
which in turn gives a condition for the Lagrange multiplier 
\begin{equation}
    L_c = \mu.   
    \label{eq:constutive-lagrange1}
\end{equation}
Using these previous constitutive relations, we have already canceled some terms in the entropy inequality, \ie 
\[
    \left( \rho \f{\p \mathcal F}{\p c}  - \dv(\vectau) - \rho L_c  \right)  \f{\DD c}{\DD t} + \left(\rho\f{\p \mathcal F}{\partial \nabla c}-\vectau\right) \nabla \left[\f{\DD c}{\DD t}\right] = 0.
\]
Then, using classical results on isothermal diffusion \cite{Lowengrub-quasi-incompressible-1998,Eck-2017-Modelling}, we have
\begin{equation}
    \mathcal{J} \coloneqq - \f{\mu \mathbf{J}_1}{T},
    \label{eq:constit-Jentropy}
\end{equation}
and, using a generalized Fick's law, we have 
\begin{equation}
    \mathbf{J}_1 \coloneqq b(c)\nabla \mu,
    \label{eq:constit-J_1}
\end{equation}
where $b(c)$ is a nonnegative mobility function that we will specify in the following.  
The two constitutive relations for the diffusive fluxes \eqref{eq:constit-Jentropy} and \eqref{eq:constit-J_1} together with \eqref{eq:constutive-lagrange1}, we obtain
\[
    \dv\left(T \mathcal J +L_c \mathbf{J}_1\right) - \nabla L_c\cdot \mathbf{J}_1 = -b(c) \abs{\nabla \mu}^2 \le 0.
\]

Following \cite{Lowengrub-quasi-incompressible-1998,abels_diffuse_2008}, we define the pressure inside the mixture 
\begin{equation}
    p \coloneqq  \rho^2 \f{\p \psi_0}{\p \rho}.
    \label{eq:constit-press}
\end{equation}
% Mechanical part 
From standard rheology, we assume that the fluid satisfies Newton's rheological laws.
The stress tensor is composed of two parts for the viscous $ \tilde{\mathbf{P}} $ and non-viscous $\mathbf P$ contributions of stress
\begin{equation}
    \vecsig \coloneqq  \mathbf{P} + \tilde{\mathbf{P}},
    \label{eq:constit-sig}
\end{equation} 
and we have by standard continuum mechanics (see \eg~\cite{Anderson-1998-Diffuse,Eck-2017-Modelling, abels_diffuse_2008})
\begin{equation}
    \begin{cases}
        \mathbf P = -\left(p - \frac\gamma 2\abs{\nabla c}^2\right) \Id - \gamma  \rho \nabla c \otimes \nabla c ,\\
        \tilde{\mathbf P}  = \nu(c) \left(\nabla \vecu + \nabla \vecu^T\right) + \lambda(c)\dv\left( \vecu\right)\Id.
    \end{cases}
    \label{eq:constit-P}
\end{equation}
In~\eqref{eq:constit-P}, $\nu(c)$ denotes the shear viscosity and $\lambda(c) = \eta(c) - \frac{2}{3}\nu(c)$ where $\eta(c)$ is the dilatational viscosity that encodes the response of the fluid to volume changes.
The second term in the non-viscous part of the stress (namely $- \gamma  \left(\rho\nabla c \otimes \nabla c\right)$) represents capillary stresses that act at the interface of the two populations. 

Using \eqref{eq:constit-P}, we can cancel terms in \eqref{eq:entropy-ineq}
\[
    \left(  -\rho^2\f{\p \mathcal F}{\p \rho} \Id - \rho \nabla c  \otimes \f{\p \mathcal F}{\p \nabla c}  - \vecsig \right) \cddot \nabla \vecu  = 0.
\]
The remaining terms of the entropy inequality are the ones associated with proliferation and friction. The last step to satisfy the entropy inequality is to choose arbitrarily a value for $c_\rho$, such that 
\[
\begin{aligned}
       -\big[\left(\kappa(\rho,c)+F_\rho\right) \abs{\vecu}^2 &- F_v \vecu - \left( \f12 \abs{\vecu}^2 + u - \rho\f{\p \mathcal F}{\p \rho} \right) F_\rho \\
   &+ c_\rho F_\rho + c_c F_c + c_\vecu F_\vecu - L_c (F_c+cF_\rho)\big] \le 0.
   \end{aligned}
\]
Reorganizing the terms we have
\[
-\kappa(\rho,c) \abs{\vecu}^2 - F_\rho\left[c_\rho + \abs{\vecu}^2 - \left( \f12 \abs{\vecu}^2 + u - \rho\f{\p \mathcal F}{\p \rho} \right) - \mu c \right] - F_\vecu\left[ c_\vecu -\vecu  \right] - F_c \left[ c_c - \mu \right] \le 0.
\]
The obvious choices are
\[
\begin{cases}
    c_\rho = -\abs{\vecu}^2 + \left( \f12 \abs{\vecu}^2 + u - \rho\f{\p \mathcal F}{\p \rho} \right) + \mu c,\\
    c_\vecu = \vecu,\\
    c_c =  \mu.
    \end{cases}
\]
From the previous constitutive relations, we satisfy the dissipation inequality \eqref{eq:entropy-ineq}.

\subsection{Summary of the model's equations}
Using the previous constitutive relations our general model is the following compressible Navier-Stokes-Cahn-Hilliard system 
\begin{equation}
\begin{aligned}
        \f{\p \rho}{\p t}  &= - \dv\left(\rho \vecu\right)+ F_\rho,\\
        \rho\f{\DD c}{\DD t} &= \dv\left(b(c) \nabla \mu \right) + F_c - c F_\rho,\\ 
        \rho \mu &= -\gamma \dv \left(\rho \nabla c \right) + \rho\f{\p \psi_0}{\p c},\\
        \rho \f{\DD \vecu}{\DD t} &= \begin{multlined}[t][10cm] -\left[\nabla p+ \gamma\dv\left(\rho \nabla c \otimes \nabla c \right) \right] + \dv\left(\nu(c)\left(\nabla \vecu + \nabla \vecu^T \right) \right)\\
        -\f23 \dv\left(\nu(c)\dv\left(\vecu\right)\Id\right) + \dv\left(\eta(c) \dv\left(\vecu\right) \Id \right)  -  \left(\kappa(\rho, c)+F_\rho\right)\vecu + F_\vecu,
        \end{multlined}
        \end{aligned}
    \label{eq:summary-eqs}
\end{equation}

with $p$ defined in~\eqref{eq:constit-press}.

\section{Model reductions, general assumptions and biologically relevant choices of the model's functions}
\label{sec:assumptions-functions-2pop}
\subsection{Specific choices of functionals and model reductions}
\paragraph{Problem 1: General compressible NSCH with friction term and mass transfer.}
Assuming no creation of mass nor transfer of mass from the exterior of the system we have 
\begin{equation}
    F_c = -F_{1-c},
\end{equation}
leading to mass conservation
\begin{equation}
    F_\rho = 0. 
\end{equation}
Furthermore, we assume no external source of velocity and energy, leading to 
\begin{equation}
    F_\vecu = 0,\text{ and } F_u = 0.
\end{equation}

Furthermore, using the same simplifying assumption as in Abels and Feireisl~\cite{abels_diffuse_2008} to avoid vacuum zones, our final reduced system of equations is 
\begin{align}
    &\f{\p \rho}{\p t}  + \dv\left(\rho \vecu\right)\corr{}{=0},\label{eq:problem1-rho}
\\
    &\frac{\p \rho c}{\p t} + \dv\left(\rho c \vecu \right) = \dv\left(b( c) \nabla \mu \right) + F_c,\label{eq:problem1-c}
\\ 
    &\rho \mu = -\gamma\Delta c  + \rho\f{\p \psi_0}{\p c}, \label{eq:problem1-mu}
\\
     &\f{\p \rho \vecu}{\p t} + \dv\left(\rho \vecu\otimes \vecu\right) = \begin{multlined}[t][10cm] -\left[\nabla p+ \gamma\dv\left( \nabla c \otimes \nabla c - \frac{1}{2}\abs{\nabla c}^2 \Id\right) \right] + \dv\left(\nu(c)\left(\nabla \vecu + \nabla \vecu^T \right) \right)\\
   -\f23 \dv\left(\nu(c)\dv\left(\vecu\right)\Id\right) + \dv\left(\eta(c) \dv\left(\vecu\right) \Id \right)  - \kappa(\rho,c) \vecu,
    \end{multlined}\label{eq:problem1-u}
\end{align}

\paragraph{Problem 2: Biologically relevant variant of the system. }
We here refer to the two phases of the mixture as cell populations and not fluids. 
For this variant of the system, we assume the production of mass and neglect certain effects. Namely, we neglect inertia effects, and the viscosity of the fluid, and assume no external source of velocity. This leads to the momentum equation
\[
\nabla p + \kappa(\rho,c)\vecu = -\gamma  \dv\left( \nabla c \otimes \nabla c - \frac{1}{2}\abs{\nabla c}^2 \Id\right) - F_\rho \vecu.
\]

Assuming that one cell population proliferates while the other does not leads to 
\[
F_c = F_\rho = \rho c P_c(p),\quad \text{and}\quad F_{1-c} = 0, 
\]
with a pressure-dependent proliferation rate $P_c(p)\ge 0$. 
The growth function $P_c(p)$ is used to represent the capacity of cells to divide accordingly to the pressure exerted on them. It is well known that cells are able to divide as long as the pressure is not too large. Once a certain pressure $p_\text{max}$ is reached cells enter a quiescent state. Therefore, we assume that 
\begin{equation}
    P_c^\prime(p)\le 0, \quad \text{and} \quad P_c(p) = 0\quad \text{for}\quad p> p_\text{max}.
    \label{eq:assume-growth}
\end{equation}

Combining these changes, the model becomes
\begin{equation}
\begin{cases}
    \f{\p \rho}{\p t}  + \dv\left(\rho \vecu\right) = \rho c P_c(p),\\
    \frac{\p \rho c}{\p t} + \dv\left(\rho c \vecu \right) = \dv\left(b( c) \nabla \mu \right) + \rho c P_c(p),\\ 
    \rho \mu = -\gamma\Delta c  + \rho\f{\p \psi_0}{\p c},\\
     \nabla p + \kappa(\rho,c)\vecu = -\gamma  \dv\left( \nabla c \otimes \nabla c - \frac{1}{2}\abs{\nabla c}^2 \Id\right) - \rho c P_{c}(p) \vecu.
\end{cases}
\end{equation}

\subsection{Biologically consistent choices of functions}
As said in the derivation of the model, the free energy density $\mathcal{F}$ is the sum of two terms: $\f{\gamma}{2}\abs{\nabla c}^2$ taking into account the surface tension effects existing between the phases of the mixture and the potential $\psi_0(\rho,c)$ representing the cell-cell interactions and pressure. 
Thus, we choose 
\begin{equation}
    \psi_0(\rho,c) = \psi_e(\rho) + \psi_\text{mix}(\rho,c),
\end{equation}
with $ \psi_\text{mix}(\rho,c)= H(c)\log \rho + Q(c)$. Then, using the constitutive relation for the pressure we have
\begin{equation}
    p(\rho,c) = \rho^2\f{\p \psi_0}{\p\rho} = p_e(\rho) + \rho H(c).
\end{equation}

The function $b(c)$ is the active mobility of the cells. 

Let us explain how the choices of functions for the free energy density and mobility are motivated by biological observations.

To satisfy the conditions \eqref{eq:cond-mot}, we propose to choose
\begin{equation}
    b(c) = C_b c(1-c)^\alpha, \quad \alpha \ge 1,
    \label{eq:mob}
\end{equation}
where $C_b$ is a positive constant.

We use for the pressure a power law such that 
\begin{equation}
    p_e(\rho) = \f{1}{a-1}\rho^{a-1}.    
\end{equation}

For $H(c)$ and $G(c)$, two choices can be considered depending on the behavior of the cells we want to represent. If the two cell populations exert attractive forces when they recognize cells of the same type and repulsion with the other type, the potential has to take a form of a double-well for which the two stable phases are located at the bottom of the two wells (see e.g. Figure \ref{fig:double-well}). This is a situation close to the phase separation in binary fluids. 
Thermodynamically consistent potentials are of Ginzburg-Landau type with the presence of logarithmic terms. An example of double-well potential is given by 
\begin{equation}
    \psi_\text{mix} = \f12 \left(\alpha_1(1-c)\log(\rho(1-c)) + \alpha_2c\log(\rho c)\right) - \f\theta 2 (c-\frac 1 2 )^2 + k,
    \label{eq:potential-double-well}
\end{equation}
thus giving 
\[
    H(c) = \frac{1}{2}\left(\alpha_1(1-c) +  \alpha_2c\right), \quad Q(c) = \f12 \left(\alpha_1(1-c)\log(1-c) + \alpha_2c\log(c)\right) - \f\theta 2 (c-\frac 1 2 )^2 + k,
\]
where $\theta>1$, and $k,\alpha_1, \alpha_2 >0$ are an arbitrary constants.

\begin{figure}
    \begin{subfigure}[b]{0.49\textwidth}
    \centering
      \includegraphics[width=0.99 \linewidth]{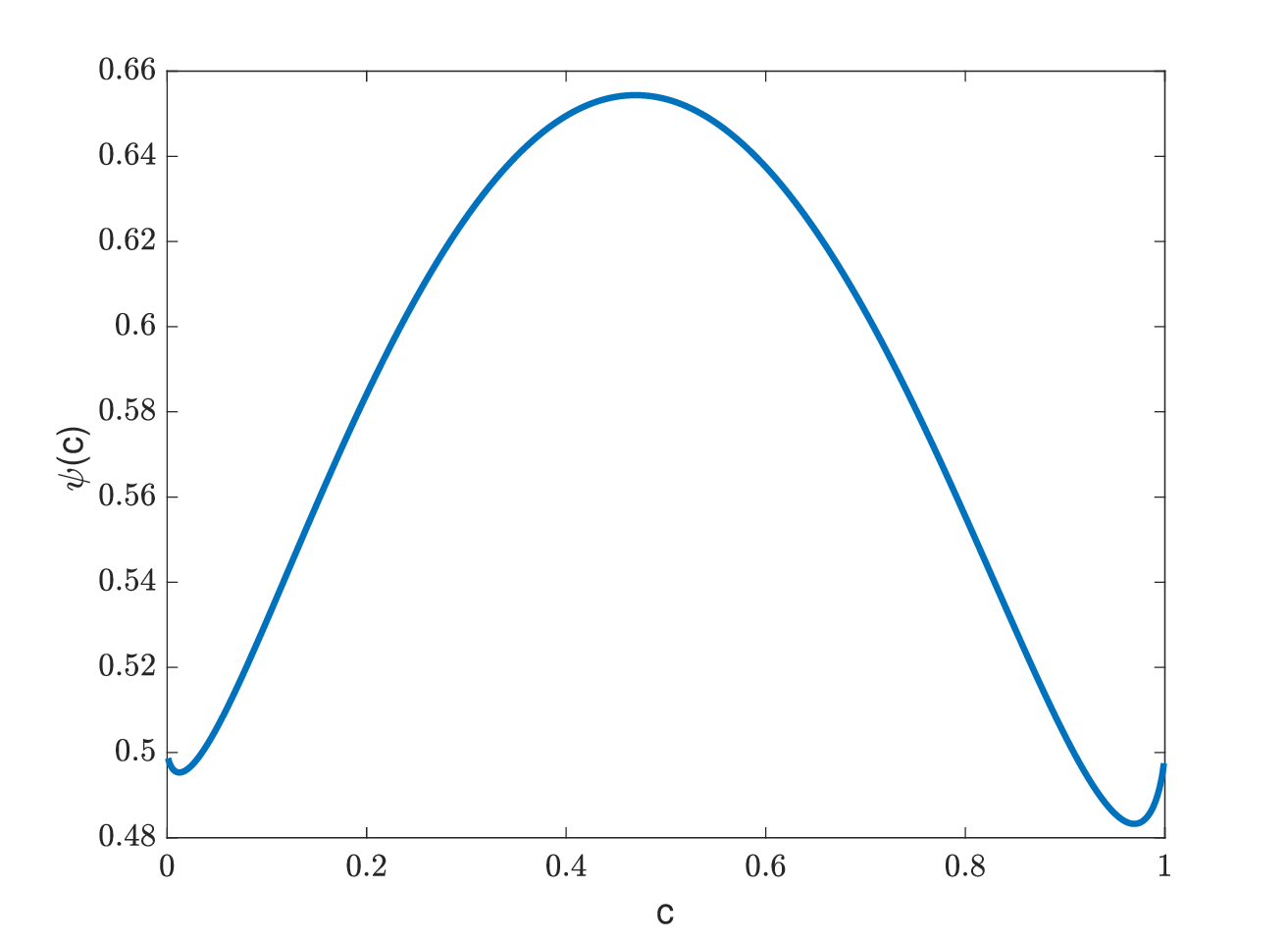}
      \caption{Double-well potential}
       \label{fig:double-well}
    \end{subfigure}
    \hfill  % maximize separation between the subfigures
    \begin{subfigure}[b]{0.49\textwidth}
    \centering
        \includegraphics[width=0.99 \linewidth]{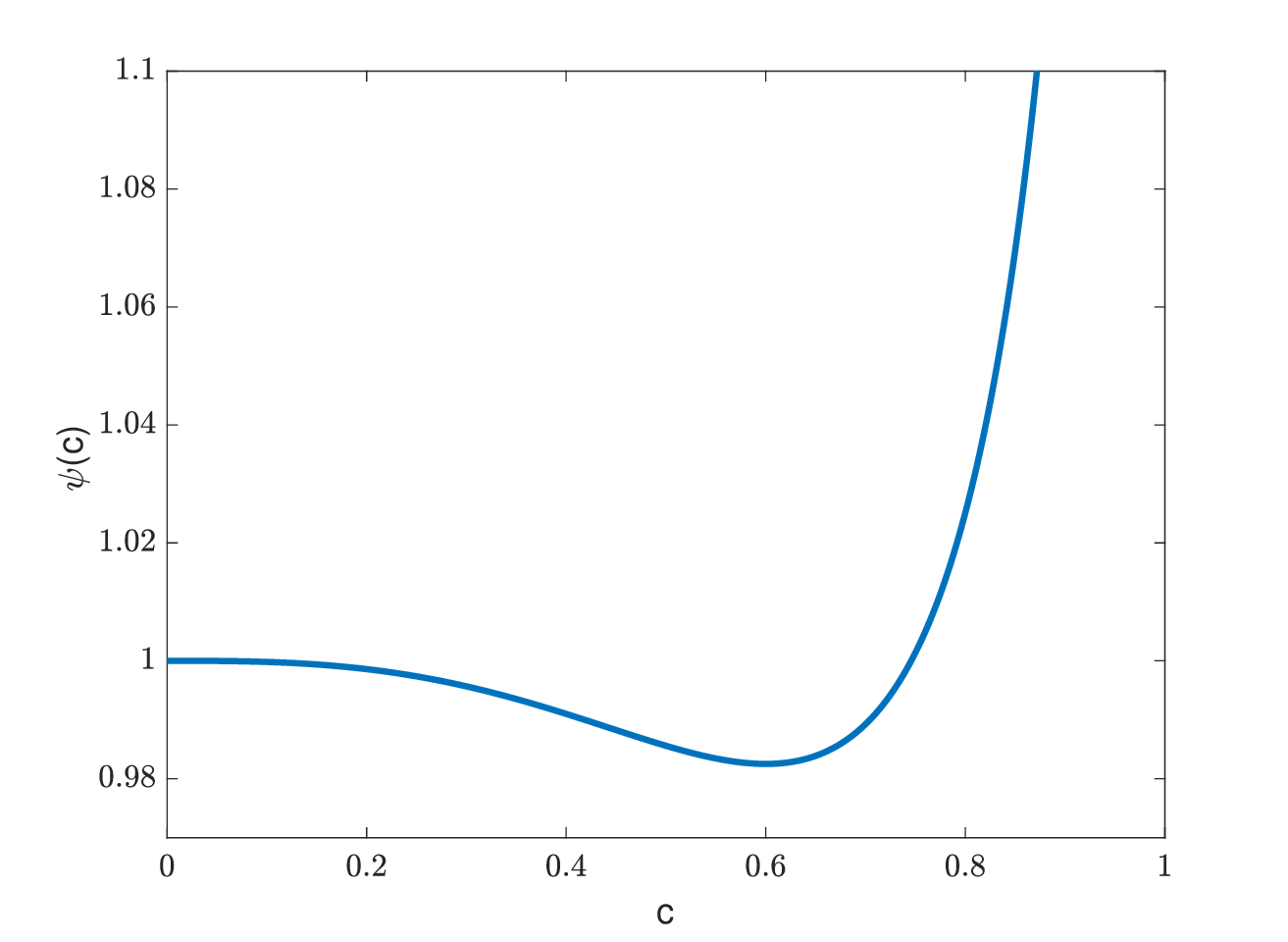}
        \caption{Single-well potential}
       \label{fig:single-well}
    \end{subfigure}
  \caption{For both figures $\rho =1$. Double-well logarithmic potential (left) with $\alpha_1 = 1.2$ and $\alpha_2 = 0.8$ and single-well logarithmic potential (right)} \label{fig:single-well-potential}
\end{figure}

To meet the phenomenological observations of the interaction between cells when the mixture is composed of only one cell population, a single-well potential seems more appropriate \cite{byrne_modelling_2004,chatelain_morphological_2011}. 

Indeed, when the distance between cells falls below a certain value (i.e. if the cell density is large enough), cells are attracted to each other. Then, it exists a threshold value called the mechanical equilibrium for which $ \partial_c \psi_0=0$ i.e. there is an equilibrium between attractive and repulsive forces. For larger cell densities, cells are packed too close to each other, they thus experience a repulsive force. When cells are so packed that they fill the whole control volume, then the repulsive force becomes infinite due to the pressure.  
The representation of such functional is depicted in Figure \ref{fig:single-well}.
A typical example of single-well potential which has been used for the modeling of living tissue and cancer \cite{chatelain_morphological_2011,Agosti-CH-2017} is 
\begin{equation}
    \psi_\text{mix}(\rho,c) = -(1-c_e)\log(\rho(1-c)) - \f{c^3}{3} - (1-c_e)\f{c^2}{2}-(1-c_e)c + k,
\end{equation}
thus giving
\begin{equation}
    H(c) = -(1-c_e),\quad Q(c) = -(1-c_e)\log(1-c)-\f{c^3}{3} - (1-c_e)\f{c^2}{2}-(1-c_e)c + k,
\end{equation}
where $k$ is an arbitrary constant. 

%\section{Analysis of the G-NSCH model}
% assumptions on the functions and results of existence  

\section{Description of the two-dimensional numerical scheme}
\label{app:2D_scheme}
We describe the two-dimensional scheme. This scheme possesses the same properties as the one-dimensional scheme.

We write the velocity field $\vecu = (u_x, u_y)$. 
System~\eqref{eq:main1}--\eqref{eq:main4} with the transformation proposed at the beginning of this section, reads
\begin{equation}
    \p_t \rho + \p_x(\rho u_x) + \p_y(\rho u_y) = 0,
\end{equation}
\begin{equation}
\begin{aligned}
    \p_t \left(\rho \begin{bmatrix}u_x\\u_y \end{bmatrix}\right) &+\begin{bmatrix}
        \p_x(\rho u_x^2 +p)\\ \p_y(\rho u_y^2 +p)
    \end{bmatrix} + \begin{bmatrix}
        \p_y(\rho u_x u_y)\\ \p_x(\rho u_x u_y)
    \end{bmatrix} = 2 \begin{bmatrix}
        \p_{x} \left(\nu(c) \p_x u_x \right)\\ \p_{y}\left(\nu(c)\p_y u_y\right) 
    \end{bmatrix} + \begin{bmatrix}
        \p_y\left(\nu(c) (\p_yu_x + \p_x u_y)\right) \\ \p_x\left(\nu(c) (\p_y u_x + \p_x u_y) \right)
    \end{bmatrix} \\
    &+  \begin{bmatrix}
        \p_x \left( \left(\eta(c)- \frac{2}{3}\nu(c)\right)( \p_x u_x + \p_y u_y)\right) \\ \p_y\left(\left(\eta(c)- \frac{2}{3}\nu(c)\right)(\p_x u_x + \p_y u_y) \right)
    \end{bmatrix} - \f\gamma2 \begin{bmatrix} \p_x ((\p_x c)^2 - (\p_yc)^2 ) \\
    \p_y ((\p_y c)^2 - (\p_xc)^2 ) 
    \end{bmatrix} -\gamma \begin{bmatrix}
        \p_y(\p_x c \p_y c)\\
        \p_x (\p_x c \p_y c)
    \end{bmatrix} \\ &- \kappa(\rho,c)\begin{bmatrix}u_x\\u_y \end{bmatrix},
\end{aligned}
\end{equation}
\begin{align}
    \rho \left( \p_t v + u_x \p_x v + u_y \p_y v \right) &= \frac{1}{T'(v)} \left(\p_x(b(c) \p_x \mu ) + \p_y(b(c) \p_y \mu) \right)  + \frac{1}{T'(v)} F_c,\\
    \rho \mu &= -\gamma T'(v) (\p_{xx} c  + \p_{yy} c ) - \gamma T''(v) \left((\p_x v)^2 + (\p_y v)^2 \right) + \rho \f{\p \psi_0}{\p c},\\
    \frac{\dd r}{\dd t} &= - \frac{r(t)}{E[t] +C_0} \int_\Omega b(c) \abs{\nabla \mu}^2 - \mu F_{c}\,\dd x.    
\end{align}

We introduce the notations $U = (\rho, \rho u_x, \rho u_y)$, $G(U) = (0 , -\kappa u_x, -\kappa u_y)$ and
\[
\begin{aligned}
F(U) = \Huge(\rho u_x, \rho u_x^2 + p - 2\nu(c) \p_x u_x & +\left(\f23\nu(c)-\eta(c)\right) (\p_x u_x +\p_y u_y) +\f12 \gamma \left((\p_x c)^2 -(\p_y c)^2\right), \\
&\rho u_x u_y - \nu(c)\left(\p_y u_x + \p_x u_y  \right) + \gamma \p_x c\p_y c \Huge),
\end{aligned}
\]
\[
\begin{aligned}
K(U) = \Huge(\rho u_y, \rho u_x u_y &- \nu(c)\left(\p_y u_x + \p_x u_y  \right) + \gamma \p_x c\p_y c, \\
&\rho u_y^2 + p - 2\nu(c) \p_y u_y +\left(\f23\nu(c)-\eta(c)\right) (\p_x u_x +\p_y u_y) +\f12 \gamma \left((\p_y c)^2 -(\p_x c)^2\right) \Huge).
\end{aligned}
\]

The stabilization (see~\cite{jin-1995-relaxation,Quaolin-2020-compressible})  of the Navier-Stokes part of our system reads, with $\iota >0$ the relaxation parameter,
\begin{equation}
    \begin{cases}
        \p_t U + \p_x V + \p_y W = G(U),\\
        \p_t V + A \p_x U = -\f1\iota (V - F(U)),\\
        \p_t W + B \p_y U = -\f1\iota(W - K(U)),
    \end{cases}
\end{equation}
in which $A = \text{diag}(a_1,a_2,a_3)$ and $B = \text{diag}(b_1,b_2,b_3)$. In the following, we choose 
\[
\begin{aligned}
    a_1=a_2 = a_3  = \max\{\sup\left(u_x + \sqrt{\partial_\rho p} \right)^2, \sup u_x^2 , \sup \left(u_x - \sqrt{\partial_\rho  p} \right)^2\},\\
    b_1= b_2 = b_3 = \max\{\sup\left(u_y + \sqrt{\partial_\rho  p} \right)^2, \sup u_y^2 , \sup \left(u_y - \sqrt{\partial_\rho p} \right)^2\}.
\end{aligned}
\]

We assume that our two-dimensional domain is a square $[0,L] \times [0,L]$. We discretize the domain using square control volumes of size $\Delta x \times \Delta y$. The cell centers are located at positions $\left(x_{j},y_{j}\right)$, and we approximate the value of a variable at the cell center by its mean, \eg
\[
\rho_{j,i} = \frac{1}{\Delta x \Delta y} \int_{x_{j-\f12}}^{x_{j+\f12}} \int_{y_{j-\f12}}^{y_{j+\f12}} \rho(\mathbf{x},t) \, \dd \mathbf{x} .
\]

Simply employing a first-order time discretization, the numerical scheme becomes 
\begin{align}
    U^*_{j,i} &= U^n_{j,i}, \label{eq:discrete2D1}\\
    V^*_{j,i} &= V^n_{j,i} - \frac{\Delta t}{\eta}\left(V^*_{j,i} - F(U^*_{j,i}) \right),\\
    W^*_{j,i} &= W^n_{j,i} - \frac{\Delta t}{\eta}\left(W^*_{j,i} - K(U^*_{j,i}) \right),\\
    U^{n+1}_{j,i} &= U^*_{j,i} - \frac{\Delta t}{\Delta x}\left(V^*_{j+\frac{1}{2},i} - V^*_{j-\frac{1}{2},i}\right) - \frac{\Delta t}{\Delta y}\left(W^*_{j,i+\frac{1}{2}} - W^*_{j,i-\frac{1}{2}}\right)  + \Delta t G(U_{i,j}^{n+1}),\label{eq:discrete2D3}\\
    V^{n+1}_{j,i} &= V^*_{j,i} - \frac{\Delta t}{\Delta x}A \left(U^*_{j+\frac{1}{2},i} - U^*_{j-\frac{1}{2},i}  \right), \label{eq:discrete2D4}\\
    W^{n+1}_{j,i} &= W^*_{j,i} - \frac{\Delta t}{\Delta y}B\left(U^*_{j,i+\frac{1}{2}} - U^*_{j,i-\frac{1}{2}}  \right), \label{eq:discrete2D4bis}\\
    \frac{\overline{v}^{n+1}_{j,i} - v^n_{j,i}}{\Delta t} &+ \vecu^{n+1}_{j,i}\cdot (\nabla \overline{v}^{n+1})_{j,i} = g(c^n,\mu^{n+1}, \rho^{n+1})_{j,i},\label{eq:discrete2D5}\\
    g(c^n,\mu^{n+1}, \rho^{n+1})_{j,i} &= \begin{multlined}[t][10cm]
    \frac{1}{T'(v^n_{j,i}) \rho^{n+1}_{j,i} \Delta x} \left((b(c^n)\nabla \mu^{n+1})_{j+\frac{1}{2},i} -(b(c^n)\nabla \mu^{n+1})_{j-\frac{1}{2},i}  \right) \\+ \frac{1}{T'(v^n_{j,i}) \rho^{n+1}_{j,i} \Delta y} \left((b(c^n)\nabla \mu^{n+1})_{j,i+\frac{1}{2}} -(b(c^n)\nabla \mu^{n+1})_{j,i-\frac{1}{2}}  \right) \\
    + \frac{F_c(\rho^n_{j,i},c^n_{j,i}) }{T'(v^n_{j,i})\rho^{n+1}_{j,i} },\end{multlined}\\
    \mu^{n+1}_{j,i} &= \frac{1}{\rho^{n}_{j,i}}\left(-\gamma T'(v^n_{j,i})(\Delta \bar{v}^{n+1})_{j,i} -\gamma T''(v^n_{j,i}) \abs{(\nabla v^n)_{j,i}}^2 \right)+ \left(\frac{\p \psi_0}{\p c}\right)_{j,i}^n ,\label{eq:discrete2D7}\\
    \int_\Omega  T(\lambda \overline{v}^{n+1}) \, \mathrm{d}x &= \int_\Omega c^n + \dt F_c \, \mathrm{d}x ,\label{eq:2DTdiscreteinteg} \\
    \overline{c}^{n+1}_{j,i} &= T( \lambda_j \overline {v}^{n+1}_{j,i}),\\
    \frac{1}{\Delta t}\left(r^{n+1} - r^n\right) &= - \frac{r^{n+1}}{E(\overline{c}^{n+1}) + C_0}    \int_\Omega b(\overline{c}^{n+1}) \abs{\nabla \mu^{n+1}}^2\,\dd \mathbf{x}+\nonumber\\
    &+\frac{{\color{blue}r^{n+1}}}{E(\overline{c}^{n+1}) + C_0}    \int_\Omega\mu^{n+1}F_{c}(\rho^{n+1},\overline{c}^{n+1})\,\dd \mathbf{x},\label{eq:discrete2D8}\\
    \xi^{n+1} &= \frac{r^{n+1}}{E(\overline{c}^{n+1}) + C_0},\label{eq:discrete2D9}\\  
    c^{n+1}_{j,i} &= \nu^{n+1} \overline{c}^{n+1}_{j,i}, \quad \text{with}\quad \nu^{n+1} = 1-(1-\xi^{n+1})^2,\label{eq:discrete2D10}\\
    v^{n+1}_{j,i} &= \nu^{n+1}\overline{v}^{n+1}_{j,i}. \label{eq:discrete2D11}
\end{align}

\end{document}